\newtheorem{theorem}{Theorem}[section]
\newtheorem{corollary}[theorem]{Corollary}
\newtheorem{lemma}[theorem]{Lemma}
\newtheorem{proposition}[theorem]{Proposition}
\newtheorem*{theorem*}{Theorem}
\theoremstyle{definition}
\newtheorem{definition}[theorem]{Definition}
\newtheorem{question}[theorem]{Question}
\newtheorem{remark}[theorem]{Remark}
\newcommand{\Coll}{\mathop{\rm Coll}}
\newcommand{\image}{\mathbin{\hbox{\tt\char'42}}}
\newcommand{\smallleq}{\mathrel{\mathchoice{\raise2pt\hbox{$\scriptstyle\leq$}}{\raise1pt\hbox{$\scriptstyle\leq$}}{\raise1pt\hbox{$\scriptscriptstyle\leq$}}{\scriptscriptstyle\leq}}}
\newcommand{\smalllt}{\mathrel{\mathchoice{\raise2pt\hbox{$\scriptstyle<$}}{\raise1pt\hbox{$\scriptstyle<$}}{\raise0pt\hbox{$\scriptscriptstyle<$}}{\scriptscriptstyle<}}}
\newcommand{\GCH}{{\rm GCH}}
\newcommand{\ZFC}{{\rm ZFC}}
\newcommand{\p}{\mathbb{P}}
\newcommand{\la}{\langle}
\newcommand{\ra}{\rangle}
\newcommand{\forces}{\Vdash}
\newcommand{\Los}{\L o\'s}
\newcommand{\restrict}{\upharpoonright}
\newcommand{\crit}{\text{crit}}
\newcommand{\dom}{\text{dom}}
\newcommand{\Ord}{{\rm Ord}}
\title{Cardinals of the $P_\kappa(\lambda)$-filter games}
\author{Tom Benhamou}
\thanks{This research  was supported by the National Science Foundation under Grant
No. DMS-2346680}
\address[Benhamou]{Department of Mathematics, Rutgers University, New Brunswick, NJ 08854-
8019, USA}
\email{tom.benhamou@rutgers.edu}
\author{Victoria Gitman}
\address[Gitman]{ Department of Mathematics, CUNY Graduate Center, New York, NY, USA}
\email{vgitman@gmail.com}
\subjclass[2010]{vgitman@gmail.com}
\keywords{Cut and Choose Games, Filters and Ideals, supercompact cardinals, generic large cardinals, indescribability}
\begin{document}
\begin{abstract}
   We investigate forms of filter extension properties in the two-cardinal setting involving filters on $P_\kappa(\lambda)$. We generalize the filter games introduced by Holy and Schlicht in \cite{HolySchlicht:HierarchyRamseyLikeCardinals} to filters on $P_\kappa(\lambda)$ and show that the existence of a winning strategy for Player II in a game of a certain length can be used to characterize several large cardinal notions such as: $\lambda$-super/strongly compact cardinals, completely $\lambda$-ineffable cardinals, nearly $\lambda$-super/strongly compact cardinals, and various notions of generic super and strong compactness. We generalize a result of Nielson from \cite{NielsenWelch:games_and_Ramsey-like_cardinals} connecting the existence of a winning strategy for Player II in a game of finite length and two-cardinal indescribability. We generalize the result of \cite{MagForZem} to construct a fine $\kappa$-complete precipitous ideal on $P_\kappa(\lambda)$ from a winning strategy for Player II in a game of length $\omega$. Finally, we improve Theorems 1.2 and 1.4 from \cite{MagForZem} and partially answer questions Q.1 and Q.2 from \cite{MagForZem}.
\end{abstract}
\maketitle
\section{Introduction}
In an attempt to generalize reflection and compactness properties of first-order logic, Tarski \cite{Tarski:SomeProblems} discovered cardinal notions whose existence require axiomatic frameworks which are strictly stronger than $\ZFC$. The fundamental discovery of Tarski provided a connection between compactness for certain strong logics and the ability to extend filters to special ultrafilters. Tarski's result initiated a fruitful line of research, isolating new large cardinal notions:
\begin{itemize}
    \item (weak compactness) Suppose that $\kappa$ is inaccessible. Every $\kappa$-complete filter on a $\kappa$-algebra $\mathcal{A}$ ($\kappa$-complete sub-algebra of $P(\kappa)$ of size $\kappa$) can be extended to a $\kappa$-complete ultrafilter on $\mathcal{A}$  $\Longleftrightarrow$ $L_{\kappa,\kappa}$ ($L_{\kappa,\omega}$) satisfies the compactness theorem for theories of size $\kappa$.
    \item (strong compactness) Suppose that $\kappa$ is regular. Every $\kappa$-complete filter (on any set) can be extended to a $\kappa$-complete ultrafilter $\Longleftrightarrow$ $L_{\kappa,\kappa}$ ($L_{\kappa,\omega}$) satisfies the full compactness theorem.
\end{itemize}
The gap between weak compactness and strong compactness is quite large, especially considering Magidor's identity crises \cite{magidor:identityCrisis} that the first strongly compact cardinal can consistently be supercompact. Recently, many intermediate filter-extension-like principles have been studied. For example, Hayut \cite{YairSquare} analyzed level-by-level strong compactness, square principles, and subcompact cardinals which are tightly connected to the results of this paper. 

Weakly compact cardinals can alternatively be characterized by the existence of certain ultrafilters on powersets of $\kappa$ of $\kappa$-sized models of set theory. A \emph{$\kappa$-model} $M$ is an $\in$-model of size $\kappa$ satisfying $\ZFC^-$\footnote{The theory $\ZFC^-$ consists of all the $\ZFC$ axioms minus powerset, with the collection scheme in place of the replacement scheme and with the well-ordering principle in place of the axiom of choice.} that is closed under sequences of length less than $\kappa$. Canonical examples of $\kappa$-models are elementary substructures of $H_{\kappa^+}$ of size $\kappa$ that are closed under ${<}\kappa$-length sequences. Given a $\kappa$-model $M$, a \emph{weak $M$-ultrafilter} is an ultrafilter on $P(\kappa)^M$ that is $\kappa$-complete from the point of view of the model, namely, it is closed for sequences that are elements of $M$. A weak $M$-ultrafilter is an \emph{$M$-ultrafilter}, if it is additionaly normal from the point of view of the model: closed under diagonal intersections for sequences from $M$. It is a folklore result that an inaccessible cardinal $\kappa$ is weakly compact if and only if every $\kappa$-model $M$ has a weak $M$-ultrafilter if and only if it has an $M$-ultrafilter. This characterization is quite different from the extension-like properties considered above, as it only mentioned the existence of certain ultrafilters. A natural question is: can the two be combined? For instance, given a $\kappa$-model $N$ extending a $\kappa$-model $M$ with a weak $M$-ultrafilter $U$, can we find a weak $N$-ultrafilter $W$ extending $U$? Keisler and Tarski \cite{KeislerTarski} (see also \cite[Prop. 2.9]{HolySchlicht:HierarchyRamseyLikeCardinals}) showed that this filter extension property again characterizes weak compactness. However, if we remove the ``weakness" condition from the ultrafilters, then surprisingly the property becomes inconsistent, as shown by the second author \cite[Prop. 2.13]{HolySchlicht:HierarchyRamseyLikeCardinals}. This suggests a more delicate approach, a \textit{strategic} extension of an $M$-ultrafilter to an $N$-ultrafilter. Game variations of this filter extension property were considered first by Holy and Schlicht \cite{HolySchlicht:HierarchyRamseyLikeCardinals}. 
\begin{definition} Suppose that $\kappa$ is an inaccessible cardinal and $\delta\leq\kappa^+$ is an ordinal.\footnote{Here we use game names that correspond to our later notation as opposed to the names originally used by Holy and Schlicht.}
\begin{enumerate}
\item Let $wG_\delta(\kappa)$ be the following two-player game of perfect information played by the challenger and the judge. The challenger starts the game and plays a $\kappa$-model $M_0$ and the judge responds by playing an $M_0$-ultrafilter $U_0$. At stage $\gamma>0$, the challenger plays a $\kappa$-model $M_\gamma$, with $\{\la M_\xi,U_\xi\ra\mid \xi<\gamma\}\in M_\gamma$, elementarily extending his previous moves and the judge plays an $M_\gamma$-ultrafilter $U_\gamma$ extending $\bigcup_{\xi<\gamma}U_\xi$. The judge wins if she can continue playing for $\delta$-many steps. 
\item Let $G_\delta(\kappa)$ be an analogously defined game where we additionally require that the ultrapower of $M=\bigcup_{\xi<\delta}M_\xi$ by $U=\bigcup_{\xi<\delta}U_\xi$ is well-founded.
\end{enumerate}
\end{definition}
\begin{center}
\begin{tikzpicture}
\coordinate (a) at (-4,0);
\coordinate (b) at (4,0);
\coordinate (c) at (-3.5,0.5);
\coordinate (d) at (-3.5,-0.5);
\coordinate (e) at (-3.75,0);
\coordinate (f) at (-3.25,0);
\coordinate (g) at (-2.75, 0);
\coordinate (h) at (-2.25,0);
\coordinate (i) at (-1.75,0);
\coordinate (j) at (-1.25,0);
\coordinate (k) at (-0.75,0);
\coordinate (k0) at (0.25,0);
\coordinate (k1) at (0.75,0);
\coordinate (l) at (1.75,0);
\coordinate (m) at (2.25,0);
\coordinate (n) at (2.75,0);
\coordinate (o) at (3.25,0);

\draw (a)--(b);
\draw (c)--(d);

\node[below] at (e) {j};
\node[above] at (e) {c};
\node[above] at (f) {$M_0$};
\node[below] at (g) {$U_0$};
\node[above] at (h) {$M_1$};
\node[below] at (i) {$U_1$};
\node[above] at (j) {$M_2$};
\node[below] at (k) {$U_2$};
\node[above] at (l) {$M_\xi$};
\node[below] at (m) {$U_\xi$};
\node[above] at (k0){$\cdots$};
\node[below] at (k1){$\cdots$};
\node[above] at (n){$\cdots$};
\node[below] at (o){$\cdots$};
\end{tikzpicture}
\end{center}
In the original definition of the games, there is an additional parameter, a regular cardinal $\theta$, which restricts the challenger to play $\kappa$-models elementary in $H_\theta$. The parameter $\theta$ is important only for $\delta$ of cofinality $\omega$ because otherwise either player has a winning strategy in a game $G_\delta(\kappa)$ if and only if the same player has a winning strategy in the game with the $\kappa$-models elementary in $H_\theta$ \cite{HolySchlicht:HierarchyRamseyLikeCardinals}. 
The weak game $wG_\delta(\kappa)$ and the game $G_\delta(\kappa)$ are easily seen to be equivalent for games of length $\delta$ with $\text{cof}(\delta)\neq\omega$. We can weaken the game $(w)G_\delta(\kappa)$ by requiring the judge to play only weak $M$-ultrafilters and call the resulting game $(w)G^*_\delta(\kappa)$. The game $wG^*_\delta(\kappa)$ was considered in \cite{MagForZem} under the name \textit{Welch game}\footnote{Although in the Welch game, the challenger plays a $\kappa$-algebra rather than a $\kappa$-model, for all practical matters, the games are equivalent as the powerset of a $\kappa$-model is a $\kappa$-algebra and any $\kappa$-algebra can be absorbed into the powerset of a $\kappa$-model.}. 

Ultimately, several large cardinals in the interval between weakly compact cardinals and measurable cardinals have been characterized by the existence of a winning strategy for the judge in one of the filter games. The following list is a partial account of characterizations of this kind. Suppose that $\kappa$ is inaccessible:
\begin{itemize}   
    \item $\kappa$ is weakly compact if and only if the judge has a winning strategy in the game $G^{(*)}_1(\kappa)$ (see the previous paragraph).
    \item (Keisler-Tarski \cite{KeislerTarski}) $\kappa$ is weakly compact if and only if the judge has a winning strategy in the game $wG^*_\omega(\kappa)$.
    \item (Nielsen \cite[Theorem 3.4]{NielsenWelch:games_and_Ramsey-like_cardinals}) If the judge has a winning strategy in the game $G_n(\kappa)$ for some $1\leq n<\omega$, then $\kappa$ is $\Pi^1_{2n}$-describable and $\Pi^1_{2n-1}$-indescribable.
    \item (follows from \cite[Theorem 3.12]{NielsenWelch:games_and_Ramsey-like_cardinals}) The judge has a winning strategy in the game $wG_\omega(\kappa)$ if and only if $\kappa$ is completely ineffable.
    \item (\cite[Obs. 3.5]{HolySchlicht:HierarchyRamseyLikeCardinals}) Suppose $2^\kappa=\kappa^+$. Then $\kappa$ is measurable if and only if the judge has a winning strategy in the game $G^{(*)}_{\kappa^+}(\kappa)$.
\end{itemize}

In a recent work, Foreman, Magidor and Zeman \cite{MagForZem} continued this investigation and proved the following elegant result:
\begin{theorem}\label{thm:Precipintous measurable}The following are equiconsistent:
\begin{enumerate}
    \item There is an inaccessible cardinal $\kappa$ such that the judge has a winning strategy in the game $G_{\omega+1}^*(\kappa)$.
    \item There is a measurable cardinal.
\end{enumerate}
\end{theorem}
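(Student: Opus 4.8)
The direction $(2)\Rightarrow(1)$ costs nothing: if $U$ is a uniform $\kappa$-complete ultrafilter on a measurable $\kappa$, then for every $\kappa$-model $M$ the set $U\cap P(\kappa)^M$ is a weak $M$-ultrafilter, and the identity on functions induces an order-preserving map of $\Ult(M,U\cap P(\kappa)^M)$ into the well-founded $\Ult(V,U)$; thus the judge wins $G^*_\delta(\kappa)$ for every $\delta\le\kappa^+$ --- in particular for $\delta=\omega+1$ --- by always answering $M_\gamma$ with $U\cap P(\kappa)^{M_\gamma}$, since as the models increase the answers increase and cohere. So $\operatorname{Con}(2)\Rightarrow\operatorname{Con}(1)$, and the whole content is the converse. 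My plan is to show that $(1)$ outright implies the existence of a precipitous ideal on $\kappa$; the theorem of Jech, Magidor, Mitchell and Prikry that a precipitous ideal yields an inner model with a measurable cardinal then gives $\operatorname{Con}(2)$.

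So fix an inaccessible $\kappa$ and a winning strategy $\sigma$ for the judge in $G^*_{\omega+1}(\kappa)$. Since $\omega+1$ is not of cofinality $\omega$, I may assume (as in the introduction, the relevant argument being insensitive to the ``weak'' restriction on the ultrafilters) that $\sigma$ also wins the variant in which the challenger is confined to $\kappa$-models elementary in $H_\theta$ for a fixed large regular $\theta$; I also use the standard convention that (weak) $M$-ultrafilters are uniform --- equivalently, contain the co-bounded filter of $M$ --- without which principal ultrafilters would trivialise the games. The ideal will come from the poset $\mathbb{P}_\sigma$ of partial plays of $G^*_{\omega+1}(\kappa)$ in which the judge follows $\sigma$, ordered by reverse end-extension: a generic filter is a complete, judge-won play $\langle(M_\xi,U_\xi):\xi\le\omega\rangle$, so $U_\omega$ is a weak $M_\omega$-ultrafilter with $\Ult(M_\omega,U_\omega)$ well-founded and $M_\omega$ a genuine ${<}\kappa$-closed $\kappa$-model. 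I would set
\[
  I=\{A\subseteq\kappa:\ {\Vdash_{\mathbb{P}_\sigma}}\ (A\in M_\omega\to A\notin U_\omega)\},
\]
so that $A$ is $I$-positive iff some condition forces $A$ into the generic limit ultrafilter. Using density --- the challenger can always swallow a prescribed subset of $\kappa$, or a short sequence of them, into his next model --- together with uniformity and $M_\xi$-$\kappa$-completeness of the $U_\xi$, one checks routinely that $I$ is a proper, non-principal, $\kappa$-complete ideal on $\kappa$: properness because $\kappa\notin I$, non-principality from uniformity, and $\kappa$-completeness because a condition forcing $\bigcup_{i<\gamma}A_i$ into $U_\omega$, with the sequence forced into $M_\omega$, can be refined to force some $A_i$ into $U_\omega$.

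The real work is precipitousness, and here the length $\omega+1$ is not a cosmetic choice: a winning play must, \emph{after} the limit round, still produce a weak ultrafilter $U_\omega$ whose ultrapower of the ${<}\kappa$-closed model $M_\omega$ is well-founded, whereas merely surviving $\omega$ rounds --- the Keisler--Tarski characterisation of weak compactness --- leaves one only with the union $\bigcup_n M_n$, which need not even be $\omega$-closed, and no final well-founded ultrapower. To prove $I$ precipitous I would use the characterisation (Jech) of precipitousness by the non-existence of a winning strategy for the ``emptiness'' player in the cut-and-choose game on $P(\kappa)/I$: given such a strategy $\tau$ below an $I$-positive $S^*$, reflect it into a $\kappa$-model and play its threats against a challenger in $G^*_{\omega+1}(\kappa)$, steering the challenger's models $M_n$ so that the resulting $\subseteq$-decreasing sequence $\langle S_n:n<\omega\rangle$ of $I$-positive sets lies, together with its initial segments, inside the ${<}\kappa$-closed $M_\omega$ with every $S_n\in U_\omega$; then $\kappa\in j_{U_\omega}(S_n)$ for all $n$ in the well-founded (hence transitive) $\Ult(M_\omega,U_\omega)$, and $j_{U_\omega}(\langle S_n\rangle)=\langle j_{U_\omega}(S_n)\rangle$ since $\operatorname{crit}(j_{U_\omega})=\kappa>\omega$, so elementarity of $M_\omega\prec H_\theta$ yields $\bigcap_n S_n\ne\emptyset$, contradicting $\tau$. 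An inner model with a measurable then follows from Jech--Magidor--Mitchell--Prikry, completing $\operatorname{Con}(1)\Rightarrow\operatorname{Con}(2)$.

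I expect the genuine obstacle to lie inside that last simulation, in the ``size mismatch'' between the ideal $I$, which lives on all of $P(\kappa)$, and the models appearing in plays of the game, which have size only $\kappa$: one cannot simply let the challenger play a model containing the emptiness strategy and all of $P(\kappa)$, and --- worse --- the fact that $S_0$ is $I$-positive only supplies \emph{some} $\mathbb{P}_\sigma$-condition driving $S_0$ into the generic limit ultrafilter, not a single model $M\ni S_0$ with $S_0\in\sigma(M)$, since strategies are not monotone under enlarging the model. Making the simulation coherent therefore requires threading $\tau$, and the relevant maximal antichains of $P(\kappa)/I$, through size-$\kappa$ elementary submodels of $H_\theta$ synchronised with the conditions of $\mathbb{P}_\sigma$, and identifying the generic ultrapower of $V$ by $P(\kappa)/I$ with an appropriate direct limit of the $\Ult(M_n,U_n)$. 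That bookkeeping, rather than any one clean idea, is where I expect the difficulty to concentrate; everything before it (the easy direction, the verification that $I$ is a $\kappa$-complete uniform ideal, and the final appeal to Jech--Magidor--Mitchell--Prikry) should be routine.
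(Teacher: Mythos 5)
There is a genuine gap, and it sits exactly where you predict it will: the precipitousness of your ideal is not established, and the object you propose to use is very likely the wrong one. Your $I$ is the hopeless ideal $\mathcal I(\sigma)$ of the \emph{original} strategy: $A$ is $I$-positive iff \emph{some} run according to $\sigma$ drives $A$ into its union ultrafilter. But distinct positive sets are witnessed by distinct, mutually incompatible runs, so when Player I plays a $\subseteq$-decreasing sequence $S_0\supseteq S_1\supseteq\cdots$ of $I$-positive sets in the ideal game, there is no mechanism forcing the witnessing runs to cohere into a single play of $G^*_{\omega+1}(\kappa)$; without that, you never obtain one $<\kappa$-closed $M_\omega$ and one $U_\omega$ containing all the $S_n$, and the intersection argument collapses. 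Your closing paragraph names this obstruction (non-monotonicity of $\sigma$ in the model, the size mismatch) but offers only the hope that ``bookkeeping'' resolves it; in fact a new idea is required, and it is the heart of the Foreman--Magidor--Zeman proof and of the variant given in Section~\ref{sec:precipitousideals} of this paper.

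The missing ingredients are: (i) a dichotomy on whether $\kappa$ carries a $\kappa$-complete $\kappa^+$-saturated ideal --- if it does, one already has an inner model with a measurable, so one may assume it does not; and (ii) using that non-saturation (applied to each conditional hopeless ideal $\mathcal I(R,\sigma)$) to build a tree, indexed by $(\kappa^+)^{<\omega}$, whose nodes are (weak) ultrafilters arising from partial runs of $\sigma$ and whose siblings contain pairwise $\mathcal I(\sigma)$-incompatible sets $A_{\langle\xi_0\ldots\xi_n\rangle}$. One then defines a \emph{new} strategy $\tau$ by restricting the tree's ultrafilters, and takes the precipitous ideal to be $\mathcal I(\tau)$, whose positive sets are exactly $\bigcup_{\vec\xi}U^{\vec\xi}$. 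Precipitousness now follows because Player II, responding to $X_n$ by intersecting with $X_n\cap A_{\langle\xi_0\rangle}\cap\cdots\cap A_{\langle\xi_0\ldots\xi_k\rangle}$ for the node containing $X_n$, exploits the pairwise incompatibility of siblings to force Player I's sequence to climb a single branch of the tree; a branch \emph{is} a legal run according to $\sigma$, whose union ultrafilter is good (indeed has the countable intersection property, since surviving round $\omega$ of $G^*_{\omega+1}(\kappa)$ yields a weak $M_\omega$-ultrafilter on a $<\kappa$-closed model extending all the $U_n$), so the intersection is nonempty. This is Theorem~\ref{th:precipitousIdeal} (in the case $\kappa=\lambda$) together with the footnoted observation that a winning strategy in $G^*_{\omega+1}(\kappa)$ yields one in the strong game of length $\omega$. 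Your easy direction and the final appeal to Jech--Magidor--Mitchell--Prikry are fine; without the dichotomy and the tree, however, the central claim is not proved.
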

Their result is more delicate and involves the construction of ideals containing a dense closed tree.
\begin{theorem}\label{thm: dense tree measurable intro}
Assume that $\kappa$ is inaccessible, $2^\kappa=\kappa^+$, and that $\kappa$ does not carry a $\kappa$-complete $\kappa^+$-saturated ideal. Let $\omega<\delta<\kappa^+$ be a regular cardinal. If the judge has a winning strategy in
the game $G^*_\delta(\kappa)$, then there is a uniform normal ideal $\mathcal{I}$ on $\kappa$ and a set
$D \subseteq \mathcal{I}^+$ such that:
\begin{enumerate}
    \item $(D,\subseteq_\mathcal{I})$ is a downward growing tree of height $\delta$.
    \item $D$ is $\delta$-closed.
    \item  $D$ is dense in $\mathcal{I}^+$.
\end{enumerate}
In fact, it is possible to construct such a dense set $D$ where (1) and (2) above hold with the almost containment $\subseteq^*$
in place of $\subseteq_I$.    
\end{theorem}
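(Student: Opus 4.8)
The plan is to read the ideal $\mathcal I$ and the tree $D$ directly off the winning strategy $\sigma$, indexing the nodes of $D$ by partial plays that follow $\sigma$, and exploiting that along such plays the approximating ultrapowers are automatically well-founded. Fix a winning strategy $\sigma$ for the judge. Since $\delta$ is regular and $>\omega$ we have $\cof(\delta)\neq\omega$, so by the remarks preceding the theorem we may restrict the challenger to $\kappa$-models elementary in $H_\theta$ for a fixed regular $\theta$ large enough that $\sigma\in H_\theta$ (possible since, as $2^\kappa=\kappa^+$, $\sigma$ has hereditary size $\le\kappa^+$). Using $2^\kappa=\kappa^+$, fix an enumeration $\langle S_\beta : \beta<\kappa^+\rangle$ of $P(\kappa)$ and, definably over $H_\theta$, a coding of $\kappa$-models and of partial plays of length $<\delta$ by subsets of $\kappa$; call a partial play \emph{suitable} if it follows $\sigma$ and each of its models is elementary in $H_\theta$ and contains $\sigma$, the enumeration, the coding, and the sequence of earlier moves. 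The first point I would record is that if $p$ is a suitable partial play of length $\gamma<\delta$, then, writing $M_p=\bigcup_{\xi<\gamma}M_\xi$ and $U_p=\bigcup_{\xi<\gamma}U_\xi$, the ultrapower $\Ult(M_p,U_p)$ is well-founded: extending $p$ to a full play $\bar p$ of length $\delta$ following $\sigma$, which the judge wins, $\Ult(M_{\bar p},U_{\bar p})$ is well-founded, and since $U_p=U_{\bar p}\restriction P(\kappa)^{M_p}$ the induced factor embedding $\Ult(M_p,U_p)\to\Ult(M_{\bar p},U_{\bar p})$ is elementary and therefore preserves ill-foundedness. Hence $j_{U_p}\colon M_p\to N_p$ with $N_p$ transitive and $\crit(j_{U_p})=\kappa$, and the \emph{derived normal ultrafilter} $U_p^{*}=\{X\in P(\kappa)^{M_p} : \kappa\in j_{U_p}(X)\}$ is a genuine normal $M_p$-ultrafilter.

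Next I would define the ideal by declaring $S\in\mathcal I^{+}$ iff $S\in M_p\cap U_p^{*}$ for some suitable $p$, and $\mathcal I=P(\kappa)\setminus\mathcal I^{+}$. That $\mathcal I$ is a uniform, normal (hence $\kappa$-complete) ideal should follow from three facts: any suitable $p$ can be extended by one more round whose new model captures a prescribed set or ${<}\kappa$-sequence of sets, the judge's reply being forced by $\sigma$; the $U_p^{*}$ are uniform and normal over $M_p$; and a coherence lemma stating that if $q$ extends the suitable play $p$, then $U_q^{*}\restriction P(\kappa)^{M_p}=U_p^{*}$. For instance, given a regressive $f\colon S\to\kappa$ with $S\in\mathcal I^{+}$ witnessed by $p$, extend $p$ to $q$ with $f,S\in M_q$; then $S\in U_q^{*}$ by coherence, and normality of $U_q^{*}$ over $M_q$ makes $f$ constant on a set in $U_q^{*}\cap M_q$, which is an $\mathcal I$-positive subset of $S$.

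For the tree, to each suitable $p$ I would attach a canonical witness $B_p\in U_p^{*}\cap M_p$ (e.g.\ the $S_\beta$ of least index that $p$ forces into $U_p^{*}$), arranging that distinct one-round extensions of $p$ disagree on a set in their shared model — so that they receive $\mathcal I$-incompatible witnesses — and that $B_q\subseteq B_p$ whenever $q$ one-round-extends $p$ (shrink the new witness by intersecting with $B_p\in U_q^{*}$). Set $D=\{B_p : p\text{ suitable}\}$, ordered by $\subseteq_{\mathcal I}$. The splitting makes $p\mapsto B_p$ respect the tree of suitable plays, so $(D,\subseteq_{\mathcal I})$ is a downward-growing tree, of height $\delta$ since suitable plays exist in every length $<\delta$ ($\sigma$ being winning); this is (1). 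Density is immediate: if $S\in\mathcal I^{+}$ is witnessed by $p$, extend $p$ by a round whose new model contains $S$ and shrink into $S$, getting $B_q\subseteq S$; this is (3). For $\delta$-closure, let $\langle B_{p_\eta} : \eta<\mu\rangle$ be a branch with $\mu<\delta$; the $p_\eta$ form a chain, $p_\mu=\bigcup_\eta p_\eta$ is suitable of length $\sup_\eta\mathrm{lh}(p_\eta)<\delta$ by regularity of $\delta$, and $\mu<\kappa$ as $\mu<\delta\le\kappa$. Extend $p_\mu$ by one more round whose new model contains the sequence $\langle B_{p_\eta} : \eta<\mu\rangle$ (it must, as it contains $p_\mu$ and the defining parameters); then $\kappa$-completeness of the derived ultrafilter over that model, together with coherence, forces $\bigcap_\eta B_{p_\eta}$ into it, so $\bigcap_\eta B_{p_\eta}\in\mathcal I^{+}$, and its witness is a member of $D$ contained in — hence $\subseteq_{\mathcal I}$ — every $B_{p_\eta}$; this is (2). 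Since all the shrinkings above are by genuine containment, the same construction shows $(D,\subseteq^{*})$ is a downward-growing tree of height $\delta$ which is $\delta$-closed, giving the final clause.

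Finally, the hypothesis that $\kappa$ carries no $\kappa$-complete $\kappa^{+}$-saturated ideal is used to exclude the degenerate possibility that the construction forces $\mathcal I$ to be $\kappa^{+}$-saturated, which would contradict the hypothesis and obstruct the splitting needed to realise $D$ as a tree rather than a linear order. The step I expect to be the main obstacle is the coherence lemma $U_q^{*}\restriction P(\kappa)^{M_p}=U_p^{*}$ — equivalently, that the factor embedding $N_p\to N_q$ fixes $\kappa$ — since the judge is only required to play \emph{weak} $M$-ultrafilters, so there is no normality to propagate along the play directly; it has to be extracted from the $\kappa$-completeness of the judge's moves relative to the models together with the $<\kappa$-closure of those models. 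Essentially every verification above (downward closure, normality and $\kappa$-completeness of $\mathcal I$, the faithfulness of $p\mapsto B_p$, and the limit step) rests on it.
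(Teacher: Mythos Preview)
Your coherence lemma—that the factor embedding $N_p\to N_q$ fixes $\kappa$, equivalently $U_q^{*}\restriction P(\kappa)^{M_p}=U_p^{*}$—is not just the main obstacle; it is false in general for weak (non-normal) $M$-ultrafilters. If $f\in M_p$ represents $\kappa$ in $N_p$, then the factor map sends $\kappa$ to $[f]_{U_q}$ in $N_q$, and there is no reason this should equal $\kappa$ there: the extension $U_q\supseteq U_p$ may well decide that $[f]_{U_q}>\kappa$. Nothing in $M$-$\kappa$-completeness or ${<}\kappa$-closure of the models prevents this. Without coherence the construction collapses at the first step: you cannot even show $\mathcal I^{+}$ is upward closed (given $S\in U_p^{*}$ and $S\subseteq T$, after extending to $q$ with $T\in M_q$ you need $S\in U_q^{*}$ to conclude $T\in U_q^{*}$), let alone that $\mathcal I$ is normal, and the map $p\mapsto B_p$ need not be $\subseteq$-monotone.

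The paper (following Foreman--Magidor--Zeman) avoids this by first passing from $G^{*}_\delta(\kappa)$ to the game where the judge plays \emph{normal} $M$-ultrafilters; this transfer is a separate nontrivial argument, valid for regular $\delta\ge\omega$, and once the judge plays normal ultrafilters coherence is automatic since $\kappa=[\mathrm{id}]$ in every ultrapower. The tree is then built not by indexing over all suitable plays but by using the non-saturation hypothesis \emph{constructively}: at each node one chooses a maximal antichain of size $\kappa^{+}$ in the conditional hopeless ideal $\mathcal I(R,\sigma)$, and the successors of that node are runs witnessing positivity of the antichain members. This is what forces the tree structure (distinct successors are $\mathcal I$-incompatible by construction) and, via a new strategy $\tau$ read off the tree, yields density of $D$ in $\mathcal I(\tau)^{+}$. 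Your invocation of non-saturation only at the end, to ``exclude a degenerate possibility,'' misses its actual role, and your assertion that ``distinct one-round extensions of $p$ disagree on a set in their shared model'' is unsupported: the challenger may play the same or overlapping models, and $\sigma$ is deterministic.
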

They also proved a partial converse:
\begin{theorem}\label{thm: Partial converse}
    Let $\delta\leq\kappa$ be uncountable regular cardinals and $J$ be a $\kappa$-complete ideal on $\kappa$ which is $(\kappa^+,\infty)$-distributive\footnote{That is, the forcing $P(\kappa)/\mathcal{J}$ is $\kappa^+$-distributive.} and has a dense $\delta$-closed subset.
Then the judge has a winning strategy in the game $G^*_\delta(\kappa)$,  which is constructed in a natural way from the ideal $\mathcal{J}$.

\end{theorem}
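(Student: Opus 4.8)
The plan is for the judge to answer each challenger move $M_\gamma$ with the \emph{induced} ultrafilter $U_\gamma=\{A\in P(\kappa)^{M_\gamma}\mid S_\gamma\setminus A\in\mathcal{J}\}$ attached to a suitable $\mathcal{J}$-positive set $S_\gamma$, where $\langle S_\gamma\mid\gamma<\delta\rangle$ is built to be a $\leq_{\mathcal{J}}$-descending sequence inside the given dense $\delta$-closed $D\subseteq\mathcal{J}^+$ (here $S\leq_{\mathcal{J}}T$ means $S\setminus T\in\mathcal{J}$, i.e.\ $[S]\leq[T]$ in $P(\kappa)/\mathcal{J}$). Two soft facts drive the construction. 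First, $\kappa$-completeness of $\mathcal{J}$ forces every such $U_S$ to be $\kappa$-complete from the point of view of any $\kappa$-model $N$: if $\langle A_i\mid i<\eta\rangle\in N$ with $\eta<\kappa$ and each $A_i\in U_S$, then $S\setminus\bigcap_i A_i=\bigcup_i(S\setminus A_i)\in\mathcal{J}$; and, as $\mathcal{J}$ (being $\kappa$-complete) extends the bounded ideal, $U_S$ is uniform. Second, $(\kappa^+,\infty)$-distributivity together with $|P(\kappa)^N|\leq\kappa$ makes the set $E_N$ of $\mathcal{J}$-positive $S$ that \emph{decide} $P(\kappa)^N$ --- i.e.\ $S\setminus A\in\mathcal{J}$ or $S\cap A\in\mathcal{J}$ for every $A\in P(\kappa)^N$ --- dense in $P(\kappa)/\mathcal{J}$; it is the intersection of the $\leq\kappa$ many dense open sets $\{[S]\mid S\setminus A\in\mathcal{J}\text{ or }S\cap A\in\mathcal{J}\}$, $A\in P(\kappa)^N$, and is itself open (downward closed). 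For $S\in E_N$ the induced $U_S$ is an actual ultrafilter on $P(\kappa)^N$, hence a weak $N$-ultrafilter.

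The strategy, then: at stage $\gamma$, after the challenger plays $M_\gamma$ (with $M_\xi\prec M_\gamma$ for $\xi<\gamma$), the judge first forms a $\leq_{\mathcal{J}}$-lower bound $S^*_\gamma\in D$ of $\langle S_\xi\mid\xi<\gamma\rangle$ --- taking $S^*_0=\kappa$, $S^*_{\xi+1}=S_\xi$, and at a limit $\gamma<\delta$ invoking the $\delta$-closure of $D$, which applies since the sequence has length $\gamma<\delta$ --- and then refines to some $S_\gamma\in D\cap E_{M_\gamma}$ with $S_\gamma\leq_{\mathcal{J}}S^*_\gamma$ and plays $U_\gamma=U_{S_\gamma}$, with all choices made canonically from a fixed well-order of $H_{\kappa^{++}}$; this is the promised ``natural'' dependence on $\mathcal{J}$. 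Such an $S_\gamma$ exists because below $S^*_\gamma$ one can descend first into the dense open $E_{M_\gamma}$, then into the dense $D$, and stay in $E_{M_\gamma}$ by openness. Legality is then routine: $U_\gamma$ is a weak $M_\gamma$-ultrafilter by the two facts above, and $U_\gamma\supseteq\bigcup_{\xi<\gamma}U_\xi$, since any $A\in U_\xi$ lies in $P(\kappa)^{M_\xi}\subseteq P(\kappa)^{M_\gamma}$ by elementarity and $S_\gamma\setminus A\subseteq(S_\gamma\setminus S_\xi)\cup(S_\xi\setminus A)\in\mathcal{J}$ because $S_\gamma\leq_{\mathcal{J}}S^*_\gamma\leq_{\mathcal{J}}S_\xi$. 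In particular $\langle U_\xi\mid\xi<\delta\rangle$ is $\subseteq$-increasing and the judge survives all $\delta$ rounds.

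It remains to verify the extra clause of $G^*_\delta(\kappa)$: that $\Ult(M,U)$ is well-founded, where $M=\bigcup_{\xi<\delta}M_\xi$ and $U=\bigcup_{\xi<\delta}U_\xi$; this is where $\delta>\omega$ regular is used. If not, fix $f_n\in M$ with $[f_{n+1}]_U\in[f_n]_U$ for all $n<\omega$ and set $A_n=\{\alpha<\kappa\mid f_{n+1}(\alpha)\in f_n(\alpha)\}\in U$; choosing $\xi_n<\delta$ with $A_n\in U_{\xi_n}$ and letting $\gamma=\sup_n\xi_n<\delta$, we get $A_n\in U_\gamma$, i.e.\ $S_\gamma\setminus A_n\in\mathcal{J}$, for every $n$ (the $U_\xi$ increase). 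By $\kappa$-completeness of $\mathcal{J}$ (using $\omega<\kappa$), $S_\gamma\setminus\bigcap_n A_n=\bigcup_n(S_\gamma\setminus A_n)\in\mathcal{J}$, so $\bigcap_n A_n$ is $\mathcal{J}$-positive, in particular nonempty, and any $\alpha\in\bigcap_n A_n$ yields an infinite $\in$-descending chain $\langle f_n(\alpha)\mid n<\omega\rangle$, which is impossible. Hence the strategy is winning.

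I expect the main obstacle to be exactly the tension in the second paragraph: the witnesses $S_\gamma$ must stay inside $D$ (so that $\delta$-closure is available at later limits) yet must decide all of $P(\kappa)^{M_\gamma}$ (so that $U_\gamma$ is an \emph{ultra}filter, not merely a filter). This is resolved only by noticing that ``deciding $P(\kappa)^{M_\gamma}$'' is a dense \emph{open} condition in $P(\kappa)/\mathcal{J}$ --- density from $(\kappa^+,\infty)$-distributivity and $|P(\kappa)^{M_\gamma}|\leq\kappa$ --- which lets one interleave descents into it with descents into $D$. Everything else ($\kappa$-completeness and uniformity of each $U_S$, coherence of the $U_\gamma$, the limit steps, and well-foundedness of the terminal ultrapower) is bookkeeping resting on $\kappa$-completeness of $\mathcal{J}$ and the regularity and uncountability of $\delta$.
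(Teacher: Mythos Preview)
Your proof is correct and matches the approach the paper attributes to this result (which it cites from \cite{MagForZem} and reproves in the $\kappa$-measuring variant, Theorem~\ref{th:decisiveIdealWinningStrategyMeasurableGame}): the judge walks down a $\leq_{\mathcal J}$-descending sequence inside the dense $\delta$-closed set $D$, at each stage refining to an element that decides all of $P(\kappa)^{M_\gamma}$, and plays the induced ultrafilter. Your use of $(\kappa^+,\infty)$-distributivity to get density of the deciding conditions, and your observation that ``deciding $P(\kappa)^{M_\gamma}$'' is open so one can interleave descents into it with descents into $D$, are exactly the points needed; the well-foundedness check via $\mathrm{cof}(\delta)>\omega$ and $\kappa$-completeness of $\mathcal J$ is also the expected one (the paper handles this implicitly by noting that $wG^*_\delta$ and $G^*_\delta$ coincide for $\mathrm{cof}(\delta)\neq\omega$).
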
 
In their paper, the authors asked whether the filter games can be generalized to the two-cardinal setting \cite[Q.5]{MagForZem}.
In this paper we provide analogues of the filter games for filters on $P_\kappa(\lambda)$,  $(w)G_\delta(\kappa,\lambda)$ and $(w)G^*_\delta(\kappa,\lambda)$ (see Definition \ref{Definition:Filter Games}). We also consider the strong game $sG_\delta(\kappa,\lambda)$ (introduced in the one-cardinal context in \cite{MagForZem}), where we require that the ultrafilter resulting from unioning up the judge's moves is countably complete. We add in the parameter $\theta$ when we need the challenger's moves to be elementary in some $H_\theta$, which as in the original filter games, affects only games of length $\delta$ with $\text{cof}(\delta)=\omega$. Often, the one-cardinal $\kappa$-theory of ultrafilters, turns out to be a particular case of the two-cardinal $(\kappa,\lambda)$-theory when considering the case $\kappa=\lambda$. In particular, uniform filters on $\kappa$ can be identified with fine filters on $P_\kappa(\kappa)$, and the notions of normality and completeness coincide. Here we will also generalize the notion of a $\kappa$-model (see Definition \ref{def: basicModel}). In that sense, the two-cardinal games we introduce generalize the one-cardinal games of Holy and Schlicht. We then prove that major parts of the theory of the one-cardinal filter games (and in particular all the results above) generalize to the two-cardinal settings.

When passing from ultrafilters on $\kappa$ to ultrafilters on $P_\kappa(\lambda)$, a distinction appears between the existence of  $\kappa$-complete ultrafilters and normal ultrafilters. Thus, even for games of length 1, it is expected that there will be a difference between the assumption that there exists a winning strategy for the judge in the game $G^*_1(\kappa,\lambda)$ and the game $G_1(\kappa,\lambda)$. We start with a simple observation that $\lambda$-supercompact/strongly compact cardinals play the role of measurable cardinals.  
\begin{theorem*} Assume $2^\lambda=\lambda^+$ and $\lambda^{<\kappa}=\lambda$. \begin{enumerate}
 \item The judge has a winning strategy in the game $G^*_{\lambda^+}(\kappa,\lambda)$ if and only if  $\kappa$ is $\lambda$-strongly compact.
    \item The judge has a winning strategy in the game $G_{\lambda^+}(\kappa,\lambda)$ if and only if  $\kappa$ is $\lambda$-supercompact.
\end{enumerate} \end{theorem*}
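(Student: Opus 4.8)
The plan is to prove all four implications by the ``restrict a genuine measure / read a genuine measure off the play'' technique, exactly as in the characterization of measurable cardinals via $G^{(*)}_{\kappa^+}(\kappa)$ (\cite[Obs.~3.5]{HolySchlicht:HierarchyRamseyLikeCardinals}), the only new feature being that a $(\kappa,\lambda)$-model is closed under ${<}\kappa$-sequences but \emph{not} under $\lambda$-sequences. Recall (Definition~\ref{def: basicModel}) that a $(\kappa,\lambda)$-model $M$ has size $\lambda$, models $\ZFC^-$, is closed under ${<}\kappa$-sequences, and (we shall also arrange) has $\lambda\subseteq M$, so that $P_\kappa(\lambda)\in M$ and $[\lambda]^{<\kappa}\subseteq M$; that a weak $M$-ultrafilter on $P_\kappa(\lambda)$ is an ultrafilter on $P(P_\kappa(\lambda))^M$ which is fine and ${<}\kappa$-complete from the point of view of $M$; and that an $M$-ultrafilter is additionally normal from the point of view of $M$, i.e.\ closed under the diagonal intersection of any $\lambda$-sequence of its members lying in $M$. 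I will use that $\kappa$ is $\lambda$-strongly compact (resp.\ $\lambda$-supercompact) iff $P_\kappa(\lambda)$ carries a fine $\kappa$-complete (resp.\ normal fine $\kappa$-complete) ultrafilter. The forward implications need no cardinal arithmetic; the converses use $2^\lambda=\lambda^+$ and $\lambda^{<\kappa}=\lambda$.

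\emph{Forward directions.} Fix a fine $\kappa$-complete ultrafilter $U$ on $P_\kappa(\lambda)$, chosen normal if we are proving (2). The judge's strategy is to answer the challenger's model $M_\gamma$ with $U_\gamma:=U\cap M_\gamma=\{A\in P(P_\kappa(\lambda))^{M_\gamma}: A\in U\}$. Since $U$ is an ultrafilter on $P(P_\kappa(\lambda))^{\VV}$ it restricts to an ultrafilter on $P(P_\kappa(\lambda))^{M_\gamma}$; each fineness set $\hat\alpha=\{x\in P_\kappa(\lambda):\alpha\in x\}$ ($\alpha<\lambda$) lies in $M_\gamma\cap U$; and for any ${<}\kappa$-sequence (resp.\ $\lambda$-sequence) of members of $U_\gamma$ lying in $M_\gamma$, its intersection (resp.\ diagonal intersection), computed the same in $M_\gamma$ and in $\VV$, is again in $U$ and in $M_\gamma$ — so $U_\gamma$ is a weak $M_\gamma$-ultrafilter, and an $M_\gamma$-ultrafilter when $U$ is normal. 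Coherence $U_\xi\subseteq U_\gamma$ for $\xi<\gamma$ follows from $M_\xi\subseteq M_\gamma$, so the judge responds at every stage $<\lambda^+$. Finally, if $M=\bigcup_{\xi<\lambda^+}M_\xi$ and $\tilde U=\bigcup_{\xi<\lambda^+}U_\xi=U\cap M$, then $[f]_{\tilde U}\mapsto[f]_U$ is a well-defined $\in$-embedding of $\Ult(M,\tilde U)$ into $\Ult(\VV,U)$, which is well-founded since $U$ is countably complete; hence the well-foundedness clause holds and the judge wins $G^*_{\lambda^+}(\kappa,\lambda)$, and $G_{\lambda^+}(\kappa,\lambda)$ when $U$ was taken normal.

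\emph{Converse directions.} From $\lambda^{<\kappa}=\lambda$ we get $|P_\kappa(\lambda)|=\lambda$, hence $|P(P_\kappa(\lambda))|=2^\lambda=\lambda^+=|H_{\lambda^+}|$, and from $2^\lambda=\lambda^+$ also $(\lambda^+)^\lambda=(\lambda^+)^{<\kappa}=\lambda^+$. Fix a winning strategy $\sigma$ for the judge in $G^*_{\lambda^+}(\kappa,\lambda)$ (resp.\ $G_{\lambda^+}(\kappa,\lambda)$). By a recursion of length $\lambda^+$ I build an increasing chain $\langle M_\gamma:\gamma<\lambda^+\rangle$ of $(\kappa,\lambda)$-models, each elementary in $H_{\lambda^+}$ (the parameter $\theta$ is irrelevant since $\cof(\lambda^+)\neq\omega$), with $\lambda\subseteq M_\gamma$ and $M_\gamma^{<\kappa}\subseteq M_\gamma$, arranged so that at each stage the challenger's move $M_\gamma$ together with the judge's $\sigma$-responses $U_\xi$ forms a legal partial play — this requires $\{\langle M_\xi,U_\xi\rangle:\xi<\gamma\}\in M_\gamma$, which is possible since that partial play has size $\le\lambda$ and $M_\gamma$ is chosen after it has been determined — and, by bookkeeping (possible since $|H_{\lambda^+}|=\lambda^+$), so that $\bigcup_{\gamma<\lambda^+}M_\gamma=H_{\lambda^+}$. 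Letting the challenger play the $M_\gamma$, the winning strategy supplies coherent weak $M_\gamma$-ultrafilters (resp.\ $M_\gamma$-ultrafilters) $U_\gamma$, and I put $U:=\bigcup_{\gamma<\lambda^+}U_\gamma$. As $\bigcup_\gamma P(P_\kappa(\lambda))^{M_\gamma}=P(P_\kappa(\lambda))^{\VV}$ and the $U_\gamma$ cohere, $U$ is an ultrafilter on $P(P_\kappa(\lambda))^{\VV}$, and it is fine since each $\hat\alpha$ enters some $M_\gamma$ and hence $U_\gamma$. For $\kappa$-completeness: given $\langle A_i:i<\beta\rangle$ with $\beta<\kappa$ and $A_i\in U_{\gamma_i}$, put $\gamma=\sup_i\gamma_i<\lambda^+$; then $\langle A_i:i<\beta\rangle\in M_\gamma$ by ${<}\kappa$-closure and $A_i\in U_\gamma$ by coherence, so $\bigcap_i A_i\in U_\gamma\subseteq U$; this proves (1). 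For (2): given $\langle A_\alpha:\alpha<\lambda\rangle$ with $A_\alpha\in U_{\gamma_\alpha}$, this $\lambda$-sequence lies in $H_{\lambda^+}=\bigcup_\gamma M_\gamma$, hence in some $M_{\gamma_0}$; with $\gamma=\max(\gamma_0,\sup_\alpha\gamma_\alpha)<\lambda^+$ we get $\langle A_\alpha:\alpha<\lambda\rangle\in M_\gamma$ and all $A_\alpha\in U_\gamma$, so $\triangle_{\alpha<\lambda}A_\alpha\in U_\gamma\subseteq U$ because $U_\gamma$ is an $M_\gamma$-ultrafilter; thus $U$ is a normal fine $\kappa$-complete ultrafilter and $\kappa$ is $\lambda$-supercompact.

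\emph{Main obstacle.} The only genuinely new point, compared with the one-cardinal case, is the converse of (2): since $(\kappa,\lambda)$-models are not closed under $\lambda$-sequences, the normality of $U$ does \emph{not} follow from $U$ being an ultrafilter on all of $P(P_\kappa(\lambda))^{\VV}$ (as $\kappa$-completeness does), and one must engineer into the challenger's play that every $\lambda$-sequence of subsets of $P_\kappa(\lambda)$ eventually enters one of the models — which is where $2^\lambda=\lambda^+$ is used (to make this possible in $\lambda^+$ steps) and is precisely why (2) requires the full game $G$, in which the judge plays genuine $M$-ultrafilters, rather than $G^*$. A subsidiary technical issue is the self-referential recursion defining the $M_\gamma$: each must contain the judge's earlier responses, which depend on the earlier $M_\xi$ through $\sigma$; this is the standard ``play the model against the strategy'' bookkeeping and works because the partial play at stage $\gamma<\lambda^+$ has size at most $\lambda$.
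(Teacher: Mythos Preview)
Your proof is correct and follows essentially the same approach as the paper: restrict a genuine fine (normal) measure for the forward direction, and for the converse have the challenger play an elementary chain of $(\kappa,\lambda)$-models exhausting $P(P_\kappa(\lambda))$ (equivalently, $H_{\lambda^+}$) so that the union of the judge's responses is a fine $\kappa$-complete (normal) measure. The paper's proof is a terse paragraph; you have supplied the details the paper omits, including the explicit well-foundedness check (which the paper handles implicitly via $\cof(\lambda^+)>\omega$) and the verification that arbitrary ${<}\kappa$- and $\lambda$-sequences from $V$ land in some $M_\gamma$.

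Two minor remarks. First, in your normality argument the $\sup_\alpha\gamma_\alpha$ step is redundant: once $\langle A_\alpha:\alpha<\lambda\rangle\in M_{\gamma_0}$, every $A_\alpha$ lies in $M_{\gamma_0}$, and since $U_{\gamma_0}$ is an ultrafilter on $P(P_\kappa(\lambda))^{M_{\gamma_0}}$ it already decides each $A_\alpha$ compatibly with $U$, so $A_\alpha\in U_{\gamma_0}$. Second, your ``main obstacle'' paragraph slightly oversells the novelty: the same mechanism (sequences of the right length eventually appear as elements of some model because the union is $H_{\lambda^+}$) already handles normality in the one-cardinal case $G_{\kappa^+}(\kappa)$; it is not a genuinely new difficulty in the two-cardinal setting. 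But none of this affects correctness.
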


\subsection*{Finite levels of the game}
The role of weakly compact cardinals is filled by nearly $\lambda$-supercompact cardinals and nearly $\lambda$-strongly compact cardinals of Schankar and White respectively \cite{schanker:nearSupercompactness,white:nearlyStronglyCompact}:
\begin{theorem*} 
Assume $\lambda^{<\kappa}=\lambda$. 
\begin{enumerate}
  \item The judge has a winning strategy in the game $G^*_{1}(\kappa,\lambda)$ if and only if  $\kappa$ is nearly $\lambda$-strongly compact.
    \item The judge has a winning strategy in the game $G_{1}(\kappa,\lambda)$ if and only if  $\kappa$ is nearly $\lambda$-supercompact.
  
\end{enumerate}\end{theorem*}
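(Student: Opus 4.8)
The plan is to prove both parts by the same argument, obtaining part~(1) from part~(2) by deleting normality, so I describe the plan for part~(2) first. Since the game has length~$1$, ``the judge has a winning strategy in $G_1(\kappa,\lambda)$'' amounts to: every $(\kappa,\lambda)$-model $M$ carries an $M$-ultrafilter $U$ on $P_\kappa(\lambda)^M$ with $\Ult(M,U)$ well-founded; and $\kappa$ is nearly $\lambda$-supercompact iff for every $A\of\lambda$ there is a $(\kappa,\lambda)$-model having $A$ as an element and admitting an elementary embedding $j$ with $\crit(j)=\kappa$, $j(\kappa)>\lambda$, $j''\lambda$ in the (transitive) target, and the target closed under ${<}\kappa$-sequences from the source. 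The two directions then match up an ultrafilter with its ultrapower embedding.

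For the forward direction of~(2) I would argue as follows. Given $A\of\lambda$, use $\lambda^{<\kappa}=\lambda$ to build a transitive $(\kappa,\lambda)$-model $M$ with $A\in M$, apply the hypothesis to get an $M$-ultrafilter $U$ with $\Ult(M,U)$ well-founded, and let $j\colon M\to N$ be the ultrapower map with $N$ transitive. Standard ultrapower bookkeeping then gives $\crit(j)=\kappa$ (from ${<}\kappa$-completeness of $U$ relative to $M$), $j''\lambda=[\mathrm{id}]_U\in N$ and hence $j(\kappa)>\lambda$ (from fineness and normality of $U$ relative to $M$), and $N$ closed under ${<}\kappa$-sequences from $M$; so $j$ witnesses near $\lambda$-supercompactness at $A$, which is what is needed.

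For the converse of~(2): let the challenger play an arbitrary transitive $(\kappa,\lambda)$-model $M_0$ (if $(\kappa,\lambda)$-models need not be transitive, first pass to the transitive collapse and pull the resulting ultrafilter back). I would fix a code $A_0\of\lambda$ for the membership relation of $M_0$ and apply near $\lambda$-supercompactness to $A_0$, obtaining a $(\kappa,\lambda)$-model $\bar M\ni A_0$ and $j\colon\bar M\to\bar N$ with $\bar N$ transitive, $\crit(j)=\kappa$, $j(\kappa)>\lambda$, $j''\lambda\in\bar N$. Since $\bar M$ is a transitive model of $\ZFC^-$ containing $A_0$, it correctly decodes $A_0$ and forms the Mostowski collapse, which by absoluteness is literally $M_0$; so $M_0\in\bar M$ and $M_0\of\bar M$. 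I would then set
\[
U_0=\{\,X\in\Power(P_\kappa(\lambda))^{M_0}:j''\lambda\in j(X)\,\}.
\]
Since $j''\lambda\in j(P_\kappa(\lambda))^{\bar N}$ (its order type is $\lambda<j(\kappa)$), $U_0$ is an $M_0$-ultrafilter---ultra; fine because $j(\alpha)\in j''\lambda$ for $\alpha<\lambda$; and ${<}\kappa$-complete and normal relative to $M_0$ using $j\restrict\kappa=\mathrm{id}$ and elementarity of $j$---and $[f]_{U_0}\mapsto j(f)(j''\lambda)$ is a well-defined $\in$-preserving map of $\Ult(M_0,U_0)$ into the transitive $\bar N$, so $\Ult(M_0,U_0)$ is well-founded. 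Hence the judge can legally answer $M_0$ with $U_0$ and win, and doing this uniformly is a winning strategy.

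Part~(1) follows by two changes: replace ``$M$-ultrafilter'' with ``weak $M$-ultrafilter'' throughout (drop normality), and use that near $\lambda$-strong compactness gives, in place of ``$j''\lambda\in\bar N$'', a set $s\in\bar N$ with $j''\lambda\of s\of j(\lambda)$ and $|s|^{\bar N}<j(\kappa)$; replace $j''\lambda$ by $s$ in the definition of $U_0$ (now fineness uses $j''\lambda\of s$ and well-foundedness uses $[f]\mapsto j(f)(s)$), and in the forward direction take $s:=[\mathrm{id}]_U$. I expect the main obstacle to be the coding/decoding in the converse directions: one must ensure the challenger's arbitrary model is recovered as the \emph{genuine} $M_0$ inside the model produced by near super/strong compactness (this is why $\bar M$ and $\bar N$ must be transitive), so that the derived ultrafilter on $M_0$ really has a well-founded ultrapower. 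The remaining points---critical point, $j(\kappa)>\lambda$, placement/covering of $j''\lambda$, and ${<}\kappa$-closure of the target---are routine and parallel the one-cardinal theory and Schanker's and White's original definitions.
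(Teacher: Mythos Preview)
Your proposal is correct and matches the paper's approach. The paper factors the argument through Propositions~\ref{prop:nearlyLambdaStrongCompactEquivalence} and~\ref{prop:nearlyLambdaSuperCompactEquivalence}, which establish that near $\lambda$-strong (super)compactness is equivalent to every basic weak $(\kappa,\lambda)$-model having a good (weak) $M$-ultrafilter; the game equivalence is then immediate. Your converse direction---coding $M_0$ into $\bar M$, taking the embedding $j:\bar M\to\bar N$, and deriving $U_0$ from the seed $j\image\lambda$ (or $s$)---is exactly the content of the proof of those propositions, including the reduction to transitive models via Mostowski collapse and the well-foundedness check via the factor map $[f]\mapsto j(f)(j\image\lambda)$ into $\bar N$.
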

By results of Hayut and Magidor \cite{HayutMagidor:subcompact}, these cardinals are tightly connected to $\lambda$-$\Pi^1_1$-subcompact cardinals.

Moving to longer games, more differences arise between the games $G_\delta(\kappa,\lambda)$ and $G^*_\delta(\kappa,\lambda)$ (as they do correspondingly in the one-cardinal games). For the games with weak $M$-ultrafilters, we can strengthen (1) of the above theorem to:
\begin{theorem*} The judge has a winning strategy in the game $wG^*_{\omega}(\kappa,\lambda)$ if and only if $\kappa$ is nearly $\lambda$-strongly compact.\end{theorem*}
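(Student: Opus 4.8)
The plan is to run the one-cardinal Keisler--Tarski template in the two-cardinal setting, leaning on the length-$1$ analysis already in place.

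\emph{Easy direction (judge wins $\Rightarrow$ $\kappa$ nearly $\lambda$-strongly compact).} A winning strategy for the judge in $wG^*_\omega(\kappa,\lambda)$ returns, for every $(\kappa,\lambda)$-model $M$ the challenger may open with, a fine weak $M$-ultrafilter on $P_\kappa(\lambda)^M$; so every $(\kappa,\lambda)$-model carries such an ultrafilter $U$. Its ultrapower is automatically well-founded: since $M$ is closed under ${<}\kappa$-sequences and $\omega<\kappa$, an $\in$-descending $\langle [f_n]_U : n<\omega\rangle$ would come from a sequence $\langle f_n : n<\omega\rangle\in M$, forcing $\bigcap_{n<\omega}\{p\in P_\kappa(\lambda): f_{n+1}(p)\in f_n(p)\}\in U$ to be nonempty and yielding a genuine $\in$-descending $\omega$-sequence inside $M$, against foundation. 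Hence $wG^*_1(\kappa,\lambda)$ and $G^*_1(\kappa,\lambda)$ have the same winning strategies; restricting the given strategy to the first round and invoking the already-proved characterization ``$G^*_1(\kappa,\lambda)$-winnable $\iff$ nearly $\lambda$-strongly compact'' gives the conclusion.

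\emph{Hard direction.} Assume $\kappa$ is nearly $\lambda$-strongly compact. I would first establish a two-cardinal filter extension property, generalizing Holy--Schlicht's for weak compactness: \emph{if $M\prec N$ are $(\kappa,\lambda)$-models with $M\in N$ and $U\in N$ is a fine weak $M$-ultrafilter on $P_\kappa(\lambda)^M$, then there is a fine weak $N$-ultrafilter $W\supseteq U$ on $P_\kappa(\lambda)^N$.} This suffices to win $wG^*_\omega(\kappa,\lambda)$: at round $0$ the judge plays any fine weak $M_0$-ultrafilter (available by $G^*_1(\kappa,\lambda)$-winnability applied to $M_0$), and at round $n>0$ the bookkeeping supplies $M_{n-1}\prec M_n$, $M_{n-1}\in M_n$, and $U_{n-1}\in M_n$ with $U_{n-1}\supseteq\bigcup_{i<n}U_i$, so the extension property applied to $(M_{n-1},M_n,U_{n-1})$ produces $U_n:=W$.

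\emph{Proof of the extension property and the main obstacle.} Form the ultrapower $j_U\colon M\to M^*:=\Ult(M,U)$, which is well-founded (as above), hence transitive and an element of $N$, with seed $a:=[\mathrm{id}]_U$ satisfying $j_U''\lambda\subseteq a\in P_{j_U(\kappa)}(j_U(\lambda))^{M^*}$ and $U=\{X\in P(P_\kappa(\lambda))^M : a\in j_U(X)\}$. Pick a $(\kappa,\lambda)$-model $\bar N$ with $M,N,U,j_U,M^*\in\bar N$ and $N\prec\bar N$, and --- by near $\lambda$-strong compactness, i.e.\ $G^*_1(\kappa,\lambda)$-winnability applied to $\bar N$ --- obtain an elementary $k\colon\bar N\to\bar P$ with $\bar P$ transitive, $\crit(k)=\kappa$, and $s$ with $k''\lambda\subseteq s\in P_{k(\kappa)}(k(\lambda))^{\bar P}$. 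If one can manufacture from $k$ and $j_U$ an elementary $\ell\colon N\to Q$ with $Q$ well-founded, $\crit(\ell)=\kappa$, and $\ell\restrict M=j_U$, then $W:=\{X\in P(P_\kappa(\lambda))^N : a\in\ell(X)\}$ is a fine weak $N$-ultrafilter (completeness from $\crit(\ell)=\kappa$; fineness from $\ell''\lambda=j_U''\lambda\subseteq a$; ultrafilter because $a\in\ell(P_\kappa(\lambda))=P_{\ell(\kappa)}(\ell(\lambda))^Q$) that restricts on $M$ to $\{X\in P(P_\kappa(\lambda))^M : a\in j_U(X)\}=U$. The obstacle is precisely the construction of $\ell$: a generic near-strong-compactness embedding of $\bar N$ need not agree with $j_U$ on $M$. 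I expect to resolve this by a resurrection/factoring argument inside $\bar P$: since $j_U$ and $M^*$ are definable over $\bar N$ from $M$ and $U$, one has $k(j_U)\colon k(M)\to k(M^*)$ together with the commuting square $(k\restrict M^*)\circ j_U=k(j_U)\circ(k\restrict M)$, and transitivity of $\bar P$ yields a factor embedding of $M^*$ into $k(M^*)$ that can be spliced with $k\restrict N$ to define $\ell$ extending $j_U$. Insisting that $W$ use the specific seed $a$ --- rather than some merely $<\ell(\kappa)$-sized cover of $\ell''\lambda$ --- is what forces $W\supseteq U$, and is where the genuinely two-cardinal bookkeeping enters.
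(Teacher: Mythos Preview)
Your easy direction is fine and matches the paper. Your hard direction has the right architecture—reduce to a weak filter extension property and then win round by round—but the proof of the extension property has a genuine gap, and the gap is exactly the ``obstacle'' you flag and do not resolve.

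You try to extend the ultrapower map $j_U\colon M\to M^*$ to an embedding $\ell\colon N\to Q$ agreeing with $j_U$ on $M$, so that the fixed seed $a=[\mathrm{id}]_U$ still derives the larger filter. The factoring you sketch does not produce such an $\ell$: the commuting square $(k\restrict M^*)\circ j_U = k(j_U)\circ(k\restrict M)$ only relates maps with domain $M$ or $k(M)$, and $k(j_U)$ has domain $k(M)$, not $k(N)$; there is no splice that yields a total elementary map on $N$ landing in a structure containing $a$. More importantly, the belief that one \emph{must} keep the specific seed $a$ to ensure $W\supseteq U$ is the misconception driving you down this road.

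The paper's argument (its Proposition on the weak $P_\kappa(\lambda)$-filter extension property) avoids extending $j_U$ altogether. Put $M,U,N$ into a single $(\kappa,\lambda)$-model $\bar M$ that knows $|M|=\lambda$, and take one near-$\lambda$-strong-compactness embedding $j\colon\bar M\to\bar N$ with cover $s\supseteq j\image\lambda$, $|s|^{\bar N}<j(\kappa)$. By elementarity $j(U)$ is a weak $j(M)$-ultrafilter in $\bar N$, in particular $j(M)$-$j(\kappa)$-complete. Using the bijection $\lambda\leftrightarrow M$ in $\bar M$, the set $j\image U$ is covered by some $s'\subseteq j(U)$ with $|s'|^{\bar N}<j(\kappa)$ and $s'\in j(M)$; hence $\bigcap s'\in j(U)$ is nonempty. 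Any $c\in\bigcap s'$ satisfies $c\in j(A)$ for every $A\in U$, so $W=\{A\in P(P_\kappa(\lambda))^N: c\in j(A)\}$ is a weak $N$-ultrafilter with $W\supseteq U$. The point is that \emph{any} seed lying in $\bigcap j\image U$ forces $W\supseteq U$; you do not need the seed to be $[\mathrm{id}]_U$, and you do not need $\ell\restrict M=j_U$.
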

In contrast, the existence of a winning strategy for the judge in the games $G_\delta(\kappa,\lambda)$ for $1<\delta<\omega$ gives a proper consistency strength hierarchy. We generalize Theorem 3.4 of \cite{NielsenWelch:games_and_Ramsey-like_cardinals} using Baumgartner's $\lambda$-$\Pi^1_n$-indescribable cardinals:

\begin{theorem*}
Assume $\lambda^{<\kappa}=\lambda$. 
   \begin{enumerate}
       \item  A winning strategy for the judge in the game $G_n(\kappa,\lambda)$ is expressible by a  $\Pi^1_{2n}$-formula.

   \item  If the judge has a winning strategy in the game $G_n(\kappa,\lambda)$, then  $\kappa$ is $\Pi^1_{2n-1}$-$\lambda$-indescribable.
   
   \end{enumerate}
\end{theorem*}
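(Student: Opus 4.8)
The plan for~(1) is to run Nielsen's quantifier count over the second‑order structure underlying the definition of $\lambda$-$\Pi^1_n$-indescribability. First I would fix a coding: since $|M|=\lambda$, both a $\kappa$-model $M$ (Definition~\ref{def: basicModel}) and any weak or normal $M$-ultrafilter (a subset of $P(P_\kappa(\lambda))^M$, which $M$ sees as having size $\lambda$) are coded by subsets of $\lambda$; this uses $\lambda^{<\kappa}=\lambda$. The clauses ``$M$ is a $\kappa$-model'', ``$\langle M_\xi,U_\xi:\xi<\gamma\rangle\in M$'', ``$M$ elementarily extends the earlier models'', and ``$U$ is an $M$-ultrafilter extending $\bigcup_{\xi<\gamma}U_\xi$'' then have bounded second‑order complexity in these codes (satisfaction for set‑sized structures is definable, and the one genuinely $\Pi^1_1$ clause -- ${<}\kappa$-closure of a model -- occurs negatively and is absorbed). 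The decisive observation is that the remaining winning condition, well‑foundedness of $\Ult(\bigcup_iM_i,\bigcup_iU_i)$, is \emph{first order}, since a putative infinite descending $\in$-chain is a hereditarily small object. Then, by finite determinacy, the judge has a winning strategy in $G_n(\kappa,\lambda)$ iff $\forall M_0\,\exists U_0\cdots\forall M_{n-1}\,\exists U_{n-1}[\text{all moves legal}\to\Ult\text{ is well-founded}]$; here each $M_i$- resp.\ $U_i$-quantifier is second order, so with a first‑order matrix processing $\exists U_{n-1}$ gives $\Sigma^1_1$, then $\forall M_{n-1}$ gives $\Pi^1_2$, and by induction round $n-k$ contributes one $\forall^2\exists^2$ block, yielding $\Pi^1_{2n}$.

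For~(2), fix a $\Pi^1_{2n-1}$ sentence $\varphi\equiv\forall X_1\,\exists X_2\cdots\forall X_{2n-1}\,\psi$ (with $\psi$ bounded, after absorbing first‑order quantifiers) holding over the relevant structure with a predicate $R$; I want the size‑${<}\kappa$ reflecting substructure demanded by $\lambda$-$\Pi^1_{2n-1}$-indescribability. Fix, by choice, rank‑bounded Skolem functions $F_2,\dots,F_{2n-2}$ witnessing the $n-1$ existential blocks of $\varphi$. Now play $G_n(\kappa,\lambda)$ against the judge's winning strategy $\sigma$, dictating the challenger's moves: $M_0$ a $\kappa$-model with $M_0\prec H_\theta$ for large $\theta$ (legitimate since $n$ is finite, so $\theta$ is immaterial) and $R,\vec F\in M_0$; each $M_{i+1}$ likewise, additionally containing $\langle M_\xi,U_\xi:\xi\le i\rangle$ and hence the intermediate ultrapowers $N_k:=\Ult(\bigcup_{\xi\le k}M_\xi,\bigcup_{\xi\le k}U_\xi)$ and the factor maps built so far; let $U_i:=\sigma(\text{history})$. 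Set $M=\bigcup_iM_i$, $U=\bigcup_iU_i$, and let $j=j_U\colon M\to N=\Ult(M,U)$. The winning condition makes $N$ well-founded, so after collapsing $j$ is elementary with $\crit j=\kappa$ and $j(\kappa)>\lambda$ (fineness), and each $N_k$ embeds into $N$ via a factor map, hence is itself well-founded. The core is a secondary induction, peeling off one quantifier block per round while climbing the tower $M\to N_0\to\cdots\to N$, to show that inside $N$ the ordinal $\kappa$ (together with $j(R)$ and the $j(F_{2i})$) makes the relevant collapsed structure satisfy $\varphi$: each universal block transfers up ``for free'' because the ultrapower targets have $\subseteq$-few subsets of the pertinent $P_\kappa(\lambda)$-sized objects, while the witness for the $i$-th existential block is the image of $F_{2i}$, now usable because it was absorbed into the model played at round~$i$. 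This produces, inside $N$, a substructure of the $j$-image structure, of size ${<}j(\kappa)$ and suitably closed under short sequences with critical ordinal below $j(\kappa)$, whose transitive collapse satisfies $\varphi$. Pulling this $\Sigma$-statement back along $j$ and then reflecting from $M\prec H_\theta$ to $V$ witnesses that $\kappa$ is $\lambda$-$\Pi^1_{2n-1}$-indescribable; running the argument over a club of initial models $M_0$ even gives stationarily many reflecting substructures.

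The step I expect to be the main obstacle is the rank/type bookkeeping in ``climbing the tower''. A priori $j_0(F_2)$ applied to a set $N_0$ regards as a subset of $V_\kappa$ need only return a subset of $V_{j_0(\kappa)}^{N_0}$, so the first existential witness is not yet usable at $\kappa$; repairing this one block at a time is exactly why $n$ rounds (not one) are required, and formulating the inductive invariant correctly -- and checking that the two‑cardinal features (fineness and normality of each $U_i$, $j(\kappa)>\lambda$, and $P_\kappa(\lambda)^{M_i}=P_\kappa(\lambda)\cap M_i$) keep every intermediate object bounded by a genuine $P_\kappa(\lambda)$-style structure -- is the delicate part. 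A secondary technical point for~(1) is pinning down the ambient second‑order structure and coding so the count lands at $\Pi^1_{2n}$ rather than $\Pi^1_{2n+1}$, which hinges on the terminal ultrapower's well‑foundedness being first order there, i.e.\ on descending $\omega$-sequences being hereditarily small.
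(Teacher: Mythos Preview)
Your treatment of part~(1) matches the paper's: express ``the judge wins $G_n(\kappa,\lambda)$'' as a $\forall M_0\,\exists U_0\cdots\forall M_{n-1}\,\exists U_{n-1}$ prefix over codes in $P(\lambda)$, with a first-order matrix, and count.

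Your approach to part~(2), however, is genuinely different from the paper's and, as written, has a real gap. The paper argues by contradiction: assuming $\varphi$ fails to reflect, one has for each good $x\in P_\kappa(\lambda)$ a witness $A^1_x$ to $\neg\varphi$ at $(V_{\kappa_x}(\kappa_x,x),\in,R\cap\cdots)$; the challenger puts $x\mapsto A^1_x$ into $M_1$, and $[x\mapsto A^1_x]_{U_1}$ lands in $N_1$ as a subset of $V_\kappa(\kappa,j_1\image\lambda)$---the \emph{correct} level---precisely because the witnesses are indexed by $x$. One then pulls back to $B^1=j_1^{-1}[A^1]$, uses $\varphi$ at the top to produce $C^1$, pushes $C^1\cap V_{\kappa_x}(\kappa_x,x)$ down to each $x$ to get the next witness $A^2_x$, and iterates. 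A technical wrinkle is checking that $j_k\image B^i=[x\mapsto A^i_x]_{U_k}$ persists for later $k$; this is handled via the sets $S^i_x\in U_i\subseteq U_k$.

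Your direct approach fixes Skolem functions $F_{2i}$ for the existential blocks of $\varphi$ at the top level and tries to verify, inside $N$, that $(V_\kappa(\kappa,j\image\lambda),\in,j\image R)\models\varphi$. The obstacle you flag is the crux, and ``climbing the tower'' does not resolve it. The point is that $j(F_2)$ is, by elementarity, a Skolem function in $N$ for $(V_{j(\kappa)}(j(\kappa),j(\lambda)),\in,j(R))$, not for $(V_\kappa(\kappa,j\image\lambda),\in,j\image R)$: applied to $X_1\subseteq V_\kappa(\kappa,j\image\lambda)$ it returns a subset of the $j(\kappa)$-level structure, not of the $\kappa$-level one. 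The alternative---pulling $X_1$ back along the isomorphism to $\bar X_1\subseteq V_\kappa(\kappa,\lambda)$, applying $F_2$ there, and pushing $F_2(\bar X_1)$ forward via $j\image(\cdot)$---requires $\bar X_1\in M$; but $\bar X_1=j^{-1}[X_1]$ is computed from $U=\bigcup_iU_i$, and $U\notin M$. Restricting to $X_1\in N_0$ lets you compute $\bar X_1$ in $M_1$ (since $U_0\in M_1$), but then $j\image F_2(\bar X_1)$ lives in $N$, not $N_0$, and you have not handled $X_1\in N\setminus N_0$; the bookkeeping does not close up in $n$ rounds. In short, the asymmetry is that Skolem data for $\neg\varphi$ at each $x$ are naturally $P_\kappa(\lambda)$-indexed and hence assemble via \L o\'s\ at the right level, whereas Skolem data for $\varphi$ at the top push via $j$ to the wrong level. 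The paper's contradiction scheme exploits exactly this asymmetry.
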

For the game $wG_\omega(\kappa,\lambda)$ we have a simple equivalence:
\begin{theorem*}
    The judge has a winning strategy in the game $wG_\omega(\kappa,\lambda)$ if and only if $\kappa$ is completely $\lambda$-ineffable.
\end{theorem*}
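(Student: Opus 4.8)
The plan is to adapt Nielsen's argument (from \cite[Theorem 3.12]{NielsenWelch:games_and_Ramsey-like_cardinals} in the one-cardinal case) showing that a winning strategy for the judge in the $\omega$-length weak game is equivalent to complete ineffability, now in the $P_\kappa(\lambda)$ setting. Recall that complete $\lambda$-ineffability should be phrased via a maximal collection $R$ of ``stationary-like'' subsets of $P_\kappa(\lambda)$ closed downward under the relevant ineffability operation, so that for every $R$-positive set $A$ and every regressive (or coherent-sequence-valued) function on $A$ there is an $R$-positive homogeneous set. For the forward direction, I would fix a winning strategy $\sigma$ for the judge in $wG_\omega(\kappa,\lambda)$ and let $R$ be the collection of all sets $A \subseteq P_\kappa(\lambda)$ such that $A$ belongs to some weak $M$-ultrafilter arising as a judge's move along a $\sigma$-play; more precisely, $A \in R$ iff there is a finite partial play according to $\sigma$ whose last judge-move is a weak $M$-ultrafilter $U$ with $A \in U$. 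One checks that $R$ witnesses complete $\lambda$-ineffability: given $A \in R$ via a play ending at $(M_n, U_n)$ and a function $f\colon A \to \text{something}$, one has the challenger play a larger $\kappa$-model $M_{n+1}$ containing $f$ and the play so far, let $U_{n+1} = \sigma(\dots)$ be the judge's response, and then $f$ is constant (or coherent) on a set in $U_{n+1}$ because $U_{n+1}$ is a weak $M_{n+1}$-ultrafilter and hence decides, for each potential value, whether $f$ takes it on a large set — the $\kappa$-completeness from $M_{n+1}$'s point of view (which suffices since $f \in M_{n+1}$) collapses this to a single homogeneous set in $U_{n+1} \in R$.

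For the converse, suppose $\kappa$ is completely $\lambda$-ineffable, witnessed by a maximal family $R$ as above. The judge's strategy will maintain, as an invariant, that after round $n$ she has played a weak $M_n$-ultrafilter $U_n$ with the property that every set in $U_n$ is $R$-positive (equivalently, $U_n$ extends to/is compatible with the filter dual to the $\lambda$-ineffability ideal). When the challenger plays $M_{n+1} \supseteq M_n$ with $\{\langle M_i, U_i\rangle : i \le n\} \in M_{n+1}$, the judge must produce a weak $M_{n+1}$-ultrafilter $U_{n+1} \supseteq U_n$, normal from $M_{n+1}$'s viewpoint, all of whose elements are $R$-positive. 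This is exactly where complete $\lambda$-ineffability is used: enumerate $P(P_\kappa(\lambda))^{M_{n+1}}$ and the functions in $M_{n+1}$ in a length-$\kappa$ list (using $\lambda^{<\kappa}=\lambda$ and $|M_{n+1}|=\lambda$ implicitly, or handling the size more carefully), and build $U_{n+1}$ by a transfinite recursion of length $\kappa$ inside which, at stage where we must decide a set $X \in M_{n+1}$ or diagonalize against a sequence $\langle X_\alpha : \alpha < \lambda\rangle \in M_{n+1}$, we use that $R$ is closed under the ineffability operation to keep the generated filter $R$-positive; the normality (closure under diagonal intersection for sequences in $M_{n+1}$) is precisely the content of the ineffable-homogeneity for the regressive function coding the sequence. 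Since $M_{n+1}$ is closed under $<\kappa$-sequences and the recursion has length $\kappa$, the usual bookkeeping works, and $U_{n+1} \supseteq \bigcup_{i \le n} U_i$ can be arranged because that union is a single set (of size $\le \lambda \cdot n$) of $R$-positive sets generating an $R$-positive filter. At limit stage $\omega$ there is nothing to check — the judge has survived all $\omega$ rounds — so $\sigma$ is winning.

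The main obstacle I anticipate is the converse direction, specifically verifying that the recursion of length $\kappa$ that builds $U_{n+1}$ can simultaneously (i) extend the previously-played $U_i$'s, (ii) decide every set in $M_{n+1}$, (iii) be normal from $M_{n+1}$'s perspective, and (iv) remain $R$-positive at every stage — the tension being that ``remaining $R$-positive'' must survive $\kappa$-many steps while $R$-positivity is only a priori preserved under the single ineffability operation, so one needs the right closure property of the $\lambda$-ineffability ideal (it is a normal, $\kappa$-complete, and fine ideal) to push the filter-generation through limit stages of the recursion. A secondary point requiring care is the precise definition of ``completely $\lambda$-ineffable'' and ensuring the $P_\kappa(\lambda)$-version of the ineffability operation (using functions $f\colon A \to P_\kappa(\lambda)$ with $f(x) \subseteq x$, or coherent-sequence-valued maps) is exactly what the normality requirement on weak $M$-ultrafilters demands; once those definitions are lined up, the argument is a routine generalization of Nielsen's. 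I would also double-check that the parameter $\theta$ plays no role here, as expected for $\text{cof}(\delta) = \omega$ games, by noting the judge's strategy never needs the challenger's models to be elementary in any fixed $H_\theta$.
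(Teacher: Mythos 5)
Your forward direction is workable in outline, but as written it leans on the wrong property: you say the homogeneity of $f$ on a set in $U_{n+1}$ follows from ``$\kappa$-completeness from $M_{n+1}$'s point of view,'' yet a function $f\colon P_\kappa(\lambda)\to P_\kappa(\lambda)$ with $f(x)\subseteq x$ has up to $\lambda$-many values, so $\kappa$-completeness cannot collapse anything. What is needed is $M$-normality together with the flip argument: form $B_\xi=\{x\mid \xi\in f(x)\}$, let $A=\{\xi<\lambda\mid B_\xi\in U_{n+1}\}$, and take the diagonal intersection of the chosen sides $\bar B_\xi$ — and since $A$ need not lie in $M_{n+1}$, you must have the challenger play one further model containing $A$ before normality applies. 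You also never verify that the sets in your family $R$ are stationary, which is part of this paper's definition of complete $\lambda$-ineffability; that too costs an extra round of the game (the challenger plays a model containing an arbitrary club).

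The genuine gap is in the converse. You propose to build $U_{n+1}$ by a transfinite recursion deciding the sets of $M_{n+1}$ one at a time while ``remaining $R$-positive,'' and you yourself flag that you do not know why positivity survives the limit stages — that worry is fatal as stated: the witnessing family $\mathcal S$ is only upward closed and closed under a \emph{single} application of the ineffability operation; it is not the coideal of an ideal and is not closed under the iterated intersections your recursion needs (moreover the recursion would have length $\lambda$, not $\kappa$, since $|P(P_\kappa(\lambda))^{M_{n+1}}|=\lambda$). For the same reason, your claim that $\bigcup_{i\le n}U_i$ ``generates an $R$-positive filter'' has no justification from the definition of $\mathcal S$. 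The fix, which is the content of Proposition~\ref{prop:lambdaIneffableEqui} and the backward direction of Theorem~\ref{th:almostGenericallySupercompactCompletelyIneffable}, is to apply the ineffability operation exactly once per round, to the single function $f(x)=\{\xi\in x\mid x\in A_\xi\}$ coding an entire enumeration $\{A_\xi\mid\xi<\lambda\}$ of $P(P_\kappa(\lambda))^{M_{n+1}}$: this yields one set $B^{M_{n+1}}\in\mathcal S$ with $B^{M_{n+1}}\subseteq B^{M_n}$ and an $M_{n+1}$-ultrafilter $U_{n+1}=\{\bar A_\xi\}$ with $B^{M_{n+1}}\subseteq\Delta_{\xi<\lambda}\bar A_\xi$, all at once. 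The invariant to maintain is this single descending chain in $\mathcal S$, not pointwise positivity of the filters, and it is the stationarity of $B^{M_{n+1}}$ that forces $U_{n+1}\supseteq U_n$. Note also that the paper reaches the equivalence by a different route — through the characterization of both properties in terms of a weakly amenable $V$-ultrafilter in a set-forcing extension (Theorems~\ref{th:almostGenericallySupercompactCompletelyIneffable} and~\ref{th:weakWinningNotQuiteGeneric}) — but the one-shot flip argument is the combinatorial heart either way, and it is the piece missing from your proposal.
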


\subsection*{Generic supercompactness}
Generalizing \cite[Theorem 2.1.2]{AbramsonHarringtonKleinbergZwicker:FlippingProperties} on completely ineffable cardinals, we show that completely $\lambda$-ineffable cardinals can be characterized by a form of generic supercompactness, and thus, by the previous theorem, so does the existence of a winning strategy for the judge in the game $wG_\omega(\kappa,\lambda)$. A strong relation between a winning strategy for the judge and various forms of generic supercompactness persists for the stronger games $G_\omega(\kappa,\lambda,\theta)$ and $sG_{\omega}(\kappa,\lambda,\theta)$, where the union ultrafilter is required to produce a well-founded ultrapower. For the various notions of generic supercompactness we use here, see Section \ref{sec:genericSupercompactness}.
 The results below are inspired by analogous results of Nielsen and Welch \cite{NielsenWelch:games_and_Ramsey-like_cardinals}. 

Given a model $M$, we say that an ultrafilter $U$ on $P_\kappa(\lambda)^M$ is \emph{weakly amenable} if the restriction of $U$ to any set in $M$ of size at most $\lambda$ in $M$ is an element of $M$, that is, $M$ contains all sufficiently ``small" pieces of $U$.
\begin{theorem}$\,$
\begin{enumerate}
    \item If the judge has a winning strategy in the game $G_\omega(\kappa,\lambda,\theta)$, then in some set-forcing extension, there is a weakly amenable $H_{\theta}$-ultrafilter with a well-founded ultrapower. Thus, if the judge has a winning strategy in the game $G_\omega(\kappa,\lambda,\theta)$ for every regular $\theta\geq\lambda^+$, then $\kappa$ is generically $\lambda$-supercompact for sets with weak amenability.\item 
If in a set-forcing extension, there is an elementary embedding $j:H_\theta\to M$ with $\crit(j)=\kappa$, $j(\kappa)>\lambda$, $j\image\lambda\in M$, and $M\subseteq V$, then the judge has a winning strategy in the game $G_\omega(\kappa,\lambda,\theta)$.
\end{enumerate}
\end{theorem}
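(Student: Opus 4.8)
The plan is to prove (1) and (2) by adapting the one-cardinal arguments of Nielsen--Welch to the $P_\kappa(\lambda)$-setting, using the union ultrafilter produced by a play of $G_\omega(\kappa,\lambda,\theta)$. For (1), suppose $\sigma$ is a winning strategy for the judge. First I would show that, in $V$, there is a single $H_\theta$-ultrafilter arising from a ``generic run'' of the game against $\sigma$. The idea is to force with the poset $\mathbb{P}$ whose conditions are partial plays of $G_\omega(\kappa,\lambda,\theta)$ in which the judge has followed $\sigma$, ordered by end-extension; since $\sigma$ is winning, the judge can always respond, so $\mathbb{P}$ is nontrivial and (by a density argument) a sufficiently generic filter produces an $\omega$-length run $\langle M_n, U_n\mid n<\omega\rangle$ whose model $M=\bigcup_n M_n$ is an elementary submodel of $H_\theta$ (or can be arranged to be one, using the $\theta$-parameter exactly as in the original games) and whose ultrafilter $U=\bigcup_n U_n$ is a normal $M$-ultrafilter with well-founded ultrapower (well-foundedness is guaranteed because $\sigma$ wins the game $G_\omega$, not merely $wG_\omega$ or $G^*_\omega$). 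The next step is weak amenability: because at each stage $\gamma$ the challenger is allowed — indeed, by a density argument inside $\mathbb{P}$, can be forced — to play a model $M_\gamma$ containing any prescribed subset of $H_\theta^V$ of size $\le\lambda$, and because $\{\langle M_\xi,U_\xi\rangle\mid\xi<\gamma\}\in M_\gamma$, every piece $U\cap A$ for $A\in M$ with $|A|^M\le\lambda$ already appears as some $U_n\restriction A\in M_{n+1}\subseteq M$; hence $U$ is weakly amenable over $M$. Taking the (transitive collapse of the) ultrapower $j_U: M\to N$ gives, in the forcing extension, the desired weakly amenable $H_\theta$-ultrafilter with well-founded ultrapower, and $\crit(j_U)=\kappa$, $j_U(\kappa)>\lambda$, $j_U\image\lambda\in N$ follow from fineness and normality of $U$ exactly as in the classical $P_\kappa(\lambda)$ ultrapower analysis. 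Quantifying over all regular $\theta\ge\lambda^+$ then yields generic $\lambda$-supercompactness for sets with weak amenability, by the definition of that notion in Section~\ref{sec:genericSupercompactness}.

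For (2), suppose that in some set-forcing extension $V[G]$ there is an elementary $j: H_\theta\to M$ with $\crit(j)=\kappa$, $j(\kappa)>\lambda$, $j\image\lambda\in M$, and $M\subseteq V$. The plan is to describe the judge's strategy: whenever the challenger plays a $\kappa$-model $M_\gamma\prec H_\theta^V$ (with the previous moves as an element), the judge sets
\[
U_\gamma=\{X\in P(P_\kappa(\lambda))^{M_\gamma}\mid j\image\lambda\in j(X)\}.
\]
Since $M_\gamma$ has size $\le\lambda$ and $j\image\lambda\in M$, standard arguments show $U_\gamma$ is a fine, normal $M_\gamma$-ultrafilter (the seed $j\image\lambda$ induces the supercompactness measure on $M_\gamma$, and $M_\gamma$-normality follows because $M_\gamma$ thinks it is normal, using $M\subseteq V$ so that the relevant diagonal-intersection computations transfer back to $V$ where the game is played). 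These $U_\gamma$ are coherent — $U_\gamma$ extends $U_\xi$ for $\xi<\gamma$ because all are defined by the same seed — so the judge can continue for $\omega$ steps. It remains to verify well-foundedness of the ultrapower of $M=\bigcup_n M_n$ by $U=\bigcup_n U_n$: the map $[f]_U\mapsto j(f)(j\image\lambda)$ embeds this ultrapower into $M$, which is well-founded (indeed $M\subseteq V$), so the union ultrapower is well-founded and the judge has in fact won $G_\omega(\kappa,\lambda,\theta)$.

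I expect the main obstacle in (1) to be the passage from ``the judge can always respond'' to ``there is, in a forcing extension, an actual $H_\theta$-ultrafilter,'' together with simultaneously arranging (a) $M$ is genuinely elementary in $H_\theta$ rather than merely a $\kappa$-model, (b) the ultrapower is well-founded, and (c) weak amenability holds; all three must be secured by suitable density arguments in the game tree, and the bookkeeping that guarantees every small slice of $U$ lands inside $M$ is the delicate point — this is precisely where the parameter $\theta$ and the requirement $\{\langle M_\xi,U_\xi\rangle\mid\xi<\gamma\}\in M_\gamma$ are used. In (2) the subtle point is checking $M_\gamma$-normality of $U_\gamma$ purely from $\crit(j)=\kappa$, $j\image\lambda\in M$, and $M\subseteq V$ (rather than from full $\lambda$-supercompactness of $j$), since the judge's moves must be legal moves in the game as computed in $V$.
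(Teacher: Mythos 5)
Your overall architecture matches the paper's (for (1): collapse $H_\theta$ to be countable, build a generic $\omega$-run whose models union up to $H_\theta$, take the union ultrafilter; for (2): derive the judge's moves from the generic embedding), but there is a genuine gap in (1). You assert that well-foundedness of the ultrapower of $\bigcup_n M_n$ by $\bigcup_n U_n$ ``is guaranteed because $\sigma$ wins the game $G_\omega$.'' It is not: the $\omega$-run you construct exists only in the forcing extension (its sequence of models witnesses that $|H_\theta|$ has been collapsed), so it is not a run of the game in $V$, and the winning condition of $G_\omega$ --- which is a condition on runs played in $V$ --- does not apply to it. Only the finite initial segments are legal positions in $V$. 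The missing ingredient is an absoluteness argument: assuming the union ultrapower is ill-founded in $V[G]$, one extracts witnessing functions $g_n\in H_\theta$ with $\{x\mid g_{n+1}(x)\in g_n(x)\}\in U$, and builds in $V$ the tree of finite sequences $\la (N_0,U_0,f_0),\ldots,(N_{n-1},U_{n-1},f_{n-1})\ra$ where the $(N_i,U_i)$ form a partial play according to $\sigma$ and the $f_i$ form a partial ill-foundedness witness; the generic run gives a branch in $V[G]$, so by absoluteness of well-foundedness the tree has a branch in $V$, producing a $V$-run according to $\sigma$ with ill-founded union ultrapower --- contradiction. Your phrase ``secured by suitable density arguments in the game tree'' does not capture this; no density argument replaces the well-foundedness transfer.

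In (2) your strategy $U_\gamma=\{X\in P(P_\kappa(\lambda))^{M_\gamma}\mid j\image\lambda\in j(X)\}$ is defined from $j$, which lives only in $V[G]$; while each individual value $U\restrict M_\gamma$ lies in $V$ (by weak amenability, which you correctly get from $M\subseteq V$), the assignment $M_\gamma\mapsto U\restrict M_\gamma$ encodes $U$ and is generally not in $V$, so as stated you have not produced a strategy in $V$, which is what the theorem requires. The paper instead fixes names $\dot U,\dot j$ and a condition $p$, and has the judge choose in $V$ a descending sequence of conditions $p_\gamma$ deciding $\dot U\restrict\check M_\gamma=\check U_\gamma$ and $\dot j\restrict\check M_\gamma=\check j_\gamma$; coherence of the $U_\gamma$ then comes from $p_\gamma\leq p_\xi$. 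This also matters for your well-foundedness verification: the map $[f]\mapsto j(f)(j\image\lambda)$ presupposes a single generic realizing all the decided values simultaneously, which a descending $\omega$-sequence of conditions need not admit. Your embedding idea can still be made to work --- the decided restrictions $j_\gamma\in V$ piece together into an $\in$-preserving map from the union ultrapower into $V$ --- and this is arguably cleaner than the paper's argument, which instead derives from an assumed ill-foundedness witness a genuine $\in$-descending sequence $B_n=j_{i_n}(g_n)(A)$ in $V$ via the forcing relation. But the conditions-deciding mechanism is not optional; it is what makes both the strategy and the well-foundedness argument live in $V$.
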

Although $(1)$ and $(2)$ above are almost converses of each other, it is unclear to us how to get an exact equivalence.

\begin{theorem*}
If the judge has a winning strategy in the game $sG_\omega(\kappa,\lambda)$, then $\kappa$ is generically $\lambda$-supercompact with weak amenability and $\omega_1$-iterability.
\end{theorem*}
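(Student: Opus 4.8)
The plan is to bootstrap from the preceding theorem and then feed in the single extra feature that the strong game provides --- countable completeness of the union ultrafilter --- which is precisely what upgrades ``well-founded ultrapower'' to ``$\omega_1$-iterable''. First I would note that a winning strategy $\sigma$ for the judge in $sG_\omega(\kappa,\lambda)$ is, \emph{a fortiori}, winning in $sG_\omega(\kappa,\lambda,\theta)$ for every regular $\theta\geq\lambda^+$: restricting the challenger to $\kappa$-models elementary in $H_\theta$ only shrinks his options, and ``the union ultrafilter $U$ is countably complete'' is stronger than ``$\Ult(\Union_n M_n,U)$ is well founded''. Working in $V$, let the challenger play an increasing chain $M_0\prec M_1\prec\cdots$ of $\kappa$-models elementary in $H_\theta$, each containing all previous moves; applying $\sigma$ yields responses $U_n$ and, in $V$ itself, a weakly amenable $M^*$-ultrafilter $U=\Union_{n<\omega}U_n$, where $M^*=\Union_{n<\omega}M_n\prec H_\theta$ --- weak amenability coming as usual from $U_n=U\cap M_n\in M_{n+1}$ --- which is countably complete by the strong-game rule.

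The heart of the matter is then the self-contained lemma, the two-cardinal analogue of the corresponding step of Nielsen and Welch, that a weakly amenable countably complete $M$-ultrafilter $U$ is $\omega_1$-iterable. By weak amenability the iteration $\seq{N_\alpha,U_\alpha,j_{\alpha\beta}:\alpha\leq\beta\leq\omega_1}$ is well defined, the external ultrafilters $U_\alpha$ being reassembled from their small pieces inside $N_\alpha$, and one shows by induction on $\gamma<\omega_1$ that $N_\gamma$ is well founded. At successor steps this reduces to countable completeness of $U_\alpha$, which propagates along the iteration because it is a $\Pi_1$ property of $(N_\alpha,U_\alpha)$ pulled back along the fully elementary $j_{0\alpha}$ from countable completeness of $U$. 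At a limit $\gamma$ of cofinality $\omega$, ill-foundedness of the direct limit $N_\gamma$ would give, along a cofinal sequence $\seq{\gamma_n}$, elements $b_n\in N_{\gamma_n}$ with $b_{n+1}\in j_{\gamma_n\gamma_{n+1}}(b_n)$; reading off the associated agreement sets in the $U_{\gamma_n}$'s, transporting them back into $N_0$ by weak amenability, and applying countable completeness of $U$ yields a single point realizing all of them simultaneously, i.e.\ a genuine descending $\in$-chain --- a contradiction. Hence $U$ is $\omega_1$-iterable in $V$; passing to the stated form of generic $\lambda$-supercompactness then goes exactly as in the preceding theorem (one forces to absorb $M^*$ into $(H_\theta)^V$, and --- crucially --- weak amenability and $\omega_1$-iterability, unlike countable completeness, are preserved), and letting $\theta$ range over the regular cardinals $\geq\lambda^+$ finishes the argument.

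The step I expect to be the main obstacle is the iterability lemma: while its shape is fixed by the one-cardinal case, the bookkeeping tracking the external images $U_\alpha$ along the iteration, and the verification that countable completeness really does survive every ultrapower and every limit of cofinality $\omega$, require genuine care in the $P_\kappa(\lambda)$-setting where normality and fineness interact. A secondary delicate point is that countable completeness is fragile under the forcing used to absorb $M^*$ into $H_\theta$, so one must argue that it is exactly $\omega_1$-iterability (and weak amenability) that transfers; here one invokes the absoluteness of winning strategies for the filter games under set forcing, now in its strong-game form.
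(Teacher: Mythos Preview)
Your approach has a real gap at the ``absorption'' step. Working in $V$, the union $M^*=\bigcup_n M_n$ is a basic $(\kappa,\lambda)$-model of size $\lambda$, so your countably complete weakly amenable $U$ lives on $P(P_\kappa(\lambda))^{M^*}$, not on $P(P_\kappa(\lambda))^V$. The target notion, however, asks for a weakly amenable \emph{$V$-ultrafilter} in a forcing extension with $\omega_1$-many well-founded iterates; the phrase ``force to absorb $M^*$ into $H_\theta^V$'' does not name an actual construction that takes your $(M^*,U)$ and produces such a $V$-ultrafilter, and the remark that one ``lets $\theta$ range'' suggests you are conflating this with the weaker ``for sets'' version. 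Nor does the winning strategy $\sigma$ survive the collapse forcing in any useful sense --- after collapsing $H_{\lambda^+}$ to $\omega$, the game has changed and $\sigma$ is no longer a strategy for it --- so the appeal to ``absoluteness of winning strategies'' at the end is not available.

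The paper's proof runs in the opposite order and this is essential. One forces with $\Coll(\omega,H_{\lambda^+})$ \emph{first}; in $V[G]$ one enumerates $H_{\lambda^+}^V$ in type $\omega$ and uses $\sigma$ on finite plays (which are in $V$) to build $M_n$ with $\bigcup_n M_n=H_{\lambda^+}^V$ and a weakly amenable $V$-ultrafilter $U=\bigcup_n U_n$. This $U$ need not be countably complete in $V[G]$, so your iterability lemma does not apply directly. Instead one argues by contradiction: if some iterate $M^{(\alpha)}$ is ill-founded, the witnessing functions $f_n$ and product-sets $A_n$ have $\p$-names; take a countable $N\prec H_\rho$ in $V$ containing all these names together with $\sigma$ and a condition forcing the relevant facts, and interpret the names via an $N$-generic $g\in V$. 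The interpretations $m_n,u_n$ are genuine elements of $V$, the finite plays $\la m_0,u_0,\ldots,m_k,u_k\ra$ are according to $\sigma$, and \emph{now} --- in $V$, for this single run --- the strong-game rule gives countable intersection of $u=\bigcup_n u_n$, hence $\omega_1$-iterability of $(N,u)$, contradicting the reflected witnesses. The countable-submodel reflection is the missing idea; it is what lets you invoke the strong-game winning condition on an actual $V$-run while still concluding something about the $V$-ultrafilter built in $V[G]$.
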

Above $\omega$, we have the following generic supecompactness equivalence:
\begin{theorem*}
The following are equivalent for a cardinal $\kappa$ and an uncountable regular cardinal $\delta\leq\lambda$.

\begin{enumerate}
\item $\kappa$ is generically $\lambda$-supercompact with weak amenability by $\delta$-closed forcing.
\item The judge has a winning strategy in the game $G_{\delta}(\kappa,\lambda)$.
\end{enumerate}
\end{theorem*}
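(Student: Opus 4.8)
The plan is to prove the two implications separately, the common tool being the tree of partial plays of the game that follow a fixed strategy---which, because $\delta$ has uncountable cofinality, is a $\delta$-closed forcing. Since $\cof(\delta)>\omega$, a winning strategy for the judge in $G_\delta(\kappa,\lambda)$ is equivalent to one in $G_\delta(\kappa,\lambda,\theta)$ for every regular $\theta$, so I would prove the equivalence $\theta$ by $\theta$, reading (1) off against the appropriate $\delta$-closed forcing in the sense of the definition of generic $\lambda$-supercompactness with weak amenability by $\delta$-closed forcing from Section~\ref{sec:genericSupercompactness}.

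For $(2)\Rightarrow(1)$: fix a regular $\theta$ and a winning strategy $\sigma$ for the judge in $G_\delta(\kappa,\lambda,\theta)$, and let $\mathbb{P}_\sigma$ be the poset of partial plays $\langle M_\xi,U_\xi:\xi\le\gamma\rangle$ with $\gamma<\delta$ that follow $\sigma$, ordered by end-extension. First I would check that $\mathbb{P}_\sigma$ is $\delta$-closed: the union of a descending sequence of conditions of length $<\delta$ is a partial play following $\sigma$ of some length $\mu<\delta$ (here $\delta$ regular), and when $\mu$ is a limit one extends it to a condition by having the challenger play any $(\kappa,\lambda)$-model, in the sense of Definition~\ref{def: basicModel}, elementary in $H_\theta$ and containing the part of the play built so far---such a model of size $\lambda$ exists, that part having size $\le\lambda$---and then appending $\sigma$'s answer. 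Since for each $\xi<\delta$ the conditions of length $\ge\xi$ form a dense set, a generic filter $G$ yields a full run $\langle M_\xi,U_\xi:\xi<\delta\rangle$ conforming to $\sigma$; by the winning property, $M:=\bigcup_{\xi<\delta}M_\xi$ and $U:=\bigcup_{\xi<\delta}U_\xi$ have a well-founded ultrapower. It then remains to recognize $U$ as the object required by (1): collapsing $\Ult(M,U)$ to a transitive $N$, the map $j_U:M\to N$ is elementary with $\crit(j_U)=\kappa$, $j_U(\kappa)>\lambda$, and $j_U\image\lambda=[\mathrm{id}]_U\in N$ since $U$ is a fine normal $M$-ultrafilter, while $U$ is weakly amenable over $M$ because each $U_\xi$ lies in $M_{\xi+1}\subseteq M$, so any piece of $U$ indexed by a set of size $\le\lambda$ inside $M$ is already captured at some stage $<\delta$ (using that $M$ is the increasing union of $\delta$ many models).

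For $(1)\Rightarrow(2)$: fix $\theta$, a $\delta$-closed forcing $\mathbb{P}$, and a $\mathbb{P}$-name $\dot U$ for a weakly amenable, normal, fine $V$-ultrafilter on $P_\kappa(\lambda)$ with a well-founded ultrapower. I would build a winning strategy $\sigma$ for the judge in $G_\delta(\kappa,\lambda,\theta)$ that maintains, alongside the run $\langle M_\xi,U_\xi:\xi\le\gamma\rangle$, a descending sequence of conditions $\langle p_\xi:\xi\le\gamma\rangle$ with $p_\xi\Vdash\dot U\restrict P(P_\kappa(\lambda))^{M_\xi}=\check U_\xi$; this is legitimate because $P(P_\kappa(\lambda))^{M_\xi}$ has size $\le\lambda$, so by weak amenability $\dot U$ restricted to it is forced to lie in $V$ and is decided below every condition. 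At a successor step, once the challenger plays $M_{\gamma+1}$, the judge picks $p_{\gamma+1}\le p_\gamma$ deciding $\dot U\restrict P(P_\kappa(\lambda))^{M_{\gamma+1}}$ and plays the decided value, which extends $U_\gamma$ as $M_\gamma\subseteq M_{\gamma+1}$; at a limit step $\gamma<\delta$ she first uses $\delta$-closure to get a lower bound of $\langle p_\xi:\xi<\gamma\rangle$ and then proceeds as in the successor case. Forced normality, fineness, and ${<}\kappa$-completeness of $\dot U$ guarantee that each $U_\gamma$ is a genuine $M_\gamma$-ultrafilter extending $\bigcup_{\xi<\gamma}U_\xi$, so the judge never gets stuck; and since each $p_\xi$ forces $\Ult(M_\xi,U_\xi)$ to embed into the well-founded $\Ult(V,\dot U)$, each $\Ult(M_\xi,U_\xi)$ is well-founded by absoluteness, so a hypothetical descending $\omega$-chain in $\Ult(\bigcup_{\xi<\delta}M_\xi,\bigcup_{\xi<\delta}U_\xi)$ would---being carried by functions from one $M_\xi$, as $\cof(\delta)>\omega$---give a descending $\omega$-chain in $\Ult(M_\xi,U_\xi)$, which is impossible. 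Hence the judge wins $G_\delta(\kappa,\lambda,\theta)$ outright, not merely its starred version.

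I expect the main obstacle to be in $(1)\Rightarrow(2)$: the strategy must live in $V$, so the conditions $p_\xi$ have to be chosen by a fixed selection with no generic available, and one must verify at each limit $\gamma<\delta$ that $\delta$-closure genuinely produces a single condition coherent with the whole partial play while still leaving the judge a legal move, and that the terminal ultrapower is well-founded---this last point being exactly where the uncountable regularity of $\delta$ enters, and the dividing line between this theorem and the $\delta=\omega$ case. A secondary, essentially bookkeeping, point, already present in $(2)\Rightarrow(1)$, is matching the ``size $\le\lambda$'' clause of weak amenability with the pieces of $U$ that the union of the judge's moves actually supplies; this is handled by the closure built into the notion of a $(\kappa,\lambda)$-model together with the regularity of $\delta$.
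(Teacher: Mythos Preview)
Your $(1)\Rightarrow(2)$ direction is essentially the paper's argument: build a descending chain of conditions, use $\delta$-closure at limits, and let each condition decide the restriction of $\dot U$ to the challenger's latest model. The paper actually proves this first for the weak game $wG_\delta(\kappa,\lambda)$ (Theorem~\ref{th:weakWinningNotQuiteGeneric}) and then obtains the present statement as an immediate corollary via the equivalence of $wG_\delta$ and $G_\delta$ when $\delta$ is uncountable regular; your direct argument for the good game is fine, and your observation that an $\omega$-chain in the union ultrapower would land in a single $M_\xi$ (since $\cof(\delta)>\omega$) is exactly why that equivalence holds. One small remark: the definition of (1) does not assume $\dot U$ gives a well-founded ultrapower---you get that for free because a $\delta$-closed extension with $\delta>\omega$ adds no $\omega$-sequences, so any $V$-ultrafilter there has the countable intersection property.

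For $(2)\Rightarrow(1)$ you take a genuinely different route. The paper forces with $\Coll(\delta,H_{\lambda^+})$: in the extension one enumerates $H_{\lambda^+}^V$ in order-type $\delta$ and builds, stage by stage, a run of the game whose union model is all of $H_{\lambda^+}^V$, using $\delta$-closure to keep each initial segment in $V$ so that $\sigma$ can be applied. Your poset $\mathbb{P}_\sigma$ of partial plays following $\sigma$ is a legitimate alternative $\delta$-closed forcing, and the generic gives a full run for the same reasons. The paper's collapse makes it transparent that the union of the challenger's models is $H_{\lambda^+}^V$; with your forcing this is a density argument you do not spell out, and this is where your write-up has a real gap. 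You verify that $U$ is a weakly amenable $M$-ultrafilter for $M=\bigcup_\xi M_\xi$, but (1) asks for a weakly amenable \emph{$V$-ultrafilter}: $U$ must measure every $A\in P(P_\kappa(\lambda))^V$, and for every $\lambda$-sequence $\vec A\in V$ one needs $\{\xi<\lambda: A_\xi\in U\}\in V$. Since $|M|\le\delta\cdot\lambda=\lambda$ while $|P(P_\kappa(\lambda))^V|=2^\lambda$, you cannot simply say ``$U$ is weakly amenable over $M$'' and be done. The fix is the density argument: for each $A$ (or $\vec A$) in $V$, the conditions whose last model contains it are dense in $\mathbb P_\sigma$, so by genericity every such object lies in some $M_\xi$ and is decided by $U_\xi\in V$. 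This shows both that $U$ is a $V$-ultrafilter and that it is weakly amenable to $V$; note that it also shows $\mathbb P_\sigma$ collapses $(2^\lambda)^V$, which is why the cardinality mismatch causes no contradiction. Once you add this paragraph, your argument is complete.
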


\subsection*{Precipitous ideal and closed dense subtrees}

Finally, we prove a similar result to Foreman, Magidor and Zeman's Theorem~\ref{thm:Precipintous measurable}:
\begin{theorem*}
    If the judge has a winning strategy in the game $sG^*_\omega(\kappa,\lambda)$, then there is a precipitous ideal on $P_\kappa(\lambda)$. If the judge has a winning strategy in the game $sG_\omega(\kappa,\lambda)$, then we have, moreover, that the ideal is normal. 
\end{theorem*}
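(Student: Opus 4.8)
The plan is to adapt the construction of Foreman--Magidor--Zeman to the two-cardinal setting, reading the ideal directly off a winning strategy for the judge.

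\emph{Constructing the ideal.} Fix a winning strategy $\sigma$ for the judge in $sG^*_\omega(\kappa,\lambda)$; enlarging $\theta$ and having the challenger always code the previous play and any needed parameters, we may assume the partial plays consistent with $\sigma$ are as rich as we need. Define an ideal $\mathcal{I}$ on $P_\kappa(\lambda)$ by declaring $X\in\mathcal{I}^+$ exactly when $X$ belongs to the judge's final ultrafilter $U_n$ along some partial play $\langle M_0,U_0,\dots,M_n,U_n\rangle$ consistent with $\sigma$ into whose algebra $X$ has been played; equivalently, $X\in\mathcal{I}$ when $\sigma$ puts $X^c$ into its ultrafilter at every position where $X$ is considered. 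Checking that $\mathcal{I}$ is a proper, fine, $\kappa$-complete ideal reduces to the corresponding properties of the ultrafilters $\sigma$ produces: downward closure and finite additivity of $\mathcal{I}^+$ from $U_n$ being an ultrafilter on the algebra of $M_n$; fineness of $\mathcal{I}$ from fineness of the (weak) $M_n$-ultrafilters; and $\kappa$-completeness from the $U_n$ being ${<}\kappa$-complete from the point of view of $M_n$, applied to a sequence played into a single model. The recurring elementary point is that a fixed set, once decided at a position $t$ (i.e.\ with its complement in the ultrafilter), is decided the same way at every position comparable to $t$, because the $U_n$ along a play form an increasing chain of ultrafilters.

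\emph{Precipitousness.} Suppose toward a contradiction that $\mathcal{I}$ is not precipitous, and fix an $\mathcal{I}$-positive set $A$ together with functions $\langle f_n:P_\kappa(\lambda)\to\Ord\mid n<\omega\rangle$ witnessing this in the standard way, so that along any descending sequence of $\mathcal{I}$-positive sets refining $A$ the ordinals represented by the $f_n$ are forced to strictly decrease. Since $A$ and $\langle f_n\mid n<\omega\rangle$ are single sets, they are available as elements of models elementary in a large $H_\theta$ (even though their transitive closures are too big to be subsets of such a model), so we run $sG^*_\omega(\kappa,\lambda)$ with the challenger playing models $M_n$ containing $A$, $\langle f_n\rangle$, the previous play, and --- inductively --- an appropriate descending sequence of $\mathcal{I}$-positive sets, while the judge answers by $\sigma$. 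Arranging the run so that each $f_n$ is evaluated before passing to the next positive set, the ordinals $[f_n]_U$ computed in $\Ult(\bigcup_n M_n,U)$, where $U=\bigcup_n U_n$, form an infinite strictly descending sequence. But since the play is a play of the strong game $sG^*_\omega$, the union ultrafilter $U$ is countably complete, so $\Ult(\bigcup_n M_n,U)$ is well-founded --- a contradiction. Hence $\mathcal{I}$ is precipitous.

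\emph{The normal case.} If instead the judge has a winning strategy in $sG_\omega(\kappa,\lambda)$ (no asterisk), then the $U_n$ are additionally normal from the point of view of $M_n$, and we claim $\mathcal{I}$ is then closed under diagonal intersections. Given an $\mathcal{I}$-positive $X$ and a regressive $g$ on $X$ (with $g(p)\in p$), note that $g$ has size $\lambda$ and is therefore genuinely an element of a suitable challenger model; feed $g$ into the game at a position where $X\in U_n$, and normality of $U_n$ in $M_n$ forces $g$ to be constant on a $U_n$-large, hence $\mathcal{I}$-positive, subset of $X$. So $\mathcal{I}$ is normal, and precipitousness holds by the previous paragraph.

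\emph{Main obstacle.} The delicate part is the fusion underlying the precipitousness argument: one must guarantee that all the $\mathcal{I}$-positive sets entering the run can be kept simultaneously in the judge's increasing ultrafilters across the $\omega$ rounds, even though a set that is $\mathcal{I}$-positive --- accepted along \emph{some} partial play --- can still be rejected by $\sigma$ at the ``wrong'' node. Handling this requires passing to a suitable dense subtree of the partial plays consistent with $\sigma$ and showing that $\subseteq_{\mathcal{I}}$-refinements lift to extensions of plays, so that a generic filter for $P(P_\kappa(\lambda))/\mathcal{I}$ can be steered to reconstruct a single play whose union ultrafilter is the one producing the well-founded ultrapower; the countable-completeness hypothesis built into $sG^*_\omega$ here plays the role that $\delta$-closure of the dense tree plays at uncountable lengths $\delta$ in Theorem~\ref{thm: dense tree measurable intro}.
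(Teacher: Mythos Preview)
Your definition of the hopeless ideal $\mathcal I(\sigma)$ and the verification that it is fine, $\kappa$-complete, and (in the normal game) normal are correct and match the paper's Lemma~\ref{lem:propertiesHopelessIdeal}. The gap is entirely in the precipitousness argument.

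The characterization of non-precipitousness you invoke is false. You fix a single sequence $\langle f_n\mid n<\omega\rangle\in V$ and an $A\in\mathcal I^+$ such that ``along any descending sequence of $\mathcal I$-positive sets refining $A$ the ordinals represented by the $f_n$ are forced to strictly decrease''; read literally this says $A\setminus B_n\in\mathcal I$ for every $n$, where $B_n=\{x:f_{n+1}(x)<f_n(x)\}$. But $\mathcal I$ is $\kappa$-complete with $\kappa>\omega$, so $A\cap\bigcap_n B_n\in\mathcal I^+$, and any point in this intersection gives an infinite descending chain of ordinals. Hence no such $A,\langle f_n\rangle$ can exist for \emph{any} countably complete ideal, precipitous or not. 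The witnesses to ill-foundedness of the generic ultrapower live in $V[G]$: each $f_n$ is in $V$, but the sequence $\langle f_n\rangle$ is not, so you cannot feed it into a single challenger model.

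Once one uses the correct characterization (Player~I wins the ideal game $G_{\mathcal I}$), the difficulty you flag in your ``Main obstacle'' paragraph becomes the entire content of the proof, and you do not resolve it: an $\mathcal I(\sigma)$-positive set is one accepted along \emph{some} play, and nothing prevents $\sigma$ from rejecting it at the node you are currently at. The paper's solution is a genuine additional idea you are missing. One first disposes of the case where some (conditional) hopeless ideal is $\lambda^+$-saturated, since saturated ideals are precipitous. Otherwise, non-saturation yields at every node a $\lambda^+$-sized antichain of positive sets, and the paper uses these antichains to build a tree $T(\sigma)$ of ultrafilters of height $\omega$, from which a \emph{new} strategy $\tau$ is read off with the key alignment property $\mathcal I(\tau)^+=\bigcup T(\sigma)$. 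This is exactly what lets Player~II in $G_{\mathcal I(\tau)}$ steer every run along a branch of $T(\sigma)$, so that the resulting union ultrafilter comes from a genuine play according to $\sigma$ and hence has the countable intersection property.
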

A major difference in our approach is that we do not pass through the game where we choose sets determining $M$-ultrafilters instead of $M$-ultrafilters. Instead, we construct a tree of $M$-ultrafilters and prove that this suffices to obtain a precipitous ideal. This approach can be used to slightly simplify the proof of Theorem \ref{thm:Precipintous measurable}. Of course, the observation of \cite{MagForZem} that one can move to a game where the judge plays sets determining ultrafilters is highly interesting on its own merit.

In the last section, we switch back to the one-cardinal setting and provide some additional information related to the results of \cite{MagForZem}. First, we show how to derive one more crucial property of the ideal constructed in Theorem \ref{thm: dense tree measurable intro}, which we call \textit{$\kappa$-measuring}. It is not difficult to see that a $\kappa$-complete ideal is $\kappa$-measuring if and only if forcing with $P(\kappa)/\mathcal{I}$ adds a weakly amenable ultrafilter on $P(\kappa)^V$. This is used to improve Theorem \ref{thm: Partial converse} to a full converse:
\begin{theorem*}
    Suppose $\delta\leq\kappa$ are uncountable regular cardinals and $\mathcal{I}$ is a $\kappa$-complete $\kappa$-measuring ideal on $\kappa$ with a $\delta$-closed dense subset $D$. Then the judge has a winning strategy $\sigma^D$ in the game $G^*_\delta(\kappa)$. 
\end{theorem*}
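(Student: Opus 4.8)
The plan is to build the judge's strategy $\sigma^D$ by recursion on the length of the play, using the density and $\delta$-closure of $D$ together with the $\kappa$-measuring hypothesis to guarantee that the judge can always respond and that the required coherence conditions are maintained. At stage $0$ the challenger plays a $\kappa$-algebra (equivalently the powerset of a $\kappa$-model) $\mathcal{A}_0$, which — since $\mathcal{I}$ is $\kappa$-complete, $\mathcal{A}_0$ has size $\kappa$, and $D$ is dense — determines a condition $d_0\in D$ below which $\mathcal{I}^+$ gives a $\kappa$-complete (and, by normality of $\mathcal{I}$, normal-on-$\mathcal{A}_0$) ultrafilter $U_0$ on $\mathcal{A}_0$. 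The key point is that $\kappa$-measuring is exactly what is needed to ensure that the generic ultrafilter pieces cohere into genuine $M$-ultrafilters along the play: after the challenger plays $\mathcal{A}_\gamma$ at a successor or limit stage, the judge looks at the pattern of moves played so far, which by induction corresponds to a descending sequence $\langle d_\xi\mid\xi<\gamma\rangle$ in $D$; by $\delta$-closure this sequence has a lower bound in $D$, and by density we may refine it to a condition $d_\gamma\in D$ that decides $\mathcal{I}^+$-membership for every element of $\mathcal{A}_\gamma$. Setting $U_\gamma=\{A\in\mathcal{A}_\gamma\mid d_\gamma\leq_{\mathcal{I}}A\}$ gives an $\mathcal{A}_\gamma$-ultrafilter extending $\bigcup_{\xi<\gamma}U_\xi$, and the $\kappa$-measuring property guarantees $U_\gamma\restriction B\in M_\gamma$ for each relevant small $B\in M_\gamma$, i.e. that the judge is really producing legal moves in $G^*_\delta(\kappa)$.

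The main obstacle is the bookkeeping that makes the recursion \emph{strategic} rather than merely a winning play: the judge does not get to choose the challenger's models, so the condition $d_\gamma$ must be selected canonically from the history of the play, and one must verify that this selection is well-defined and monotone along \emph{every} branch the challenger might follow. I would fix, once and for all, a $\kappa$-complete ideal-theoretic coding of the challenger's moves (they are objects of size $\leq\kappa$, so sit inside $H_{\kappa^+}$) and define $\sigma^D$ so that $d_\gamma$ is the $\leq_{\mathcal{I}}$-least element of $D$, in some fixed well-order, that lies below all previously chosen $d_\xi$ and decides $\mathcal{A}_\gamma$; the $\delta$-closure of $D$ is what makes this nonempty at limits of cofinality $<\delta$, and since the game has length $\delta$ we never need a lower bound for a chain of length $\delta$ itself.

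Finally I would check the two defining requirements of $G^*_\delta(\kappa)$ for the resulting play. Normality-on-the-model of each $U_\gamma$ follows from normality of $\mathcal{I}$: if $\langle A_\alpha\mid\alpha<\kappa\rangle\in M_\gamma$ is a sequence of sets in $U_\gamma$, then $d_\gamma$ forces each $A_\alpha$ into the generic ultrafilter, hence forces the diagonal intersection in as well, since $\mathcal{I}$ is normal; and $U_\gamma$ extends $\bigcup_{\xi<\gamma}U_\xi$ because the $d_\xi$ are descending. The judge can continue for all $\delta$ stages precisely because $D$ is $\delta$-closed, so no play following $\sigma^D$ can get stuck; hence $\sigma^D$ is winning. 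I expect that essentially all of this is routine once the coherence role of ``$\kappa$-measuring'' is isolated — that isolation, which is where the hypothesis (as opposed to the weaker hypothesis of Theorem~\ref{thm: Partial converse}) gets used, is the conceptual heart of the argument, and it is exactly the ingredient flagged in the paragraph preceding the statement.
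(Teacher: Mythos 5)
Your proposal is correct and follows essentially the same route as the paper's proof of Theorem~\ref{th:decisiveIdealWinningStrategyMeasurableGame}: thread a $\subseteq_{\mathcal I}$-descending sequence of conditions through $D$, using $\kappa$-measuring together with density to find, below the current condition, an element $d_\gamma\in D$ deciding all of $P(\kappa)^{M_\gamma}$, using $\delta$-closure at limits, and reading off the judge's moves as $U_\gamma=\{A\in P(\kappa)^{M_\gamma}\mid d_\gamma\subseteq_{\mathcal I}A\}$. One small correction: your verification of normality of the $U_\gamma$ ``from normality of $\mathcal I$'' is both unnecessary --- $G^*_\delta(\kappa)$ only asks the judge for weak ($M$-$\kappa$-complete) $M$-ultrafilters, and their completeness follows from the $\kappa$-completeness of $\mathcal I$ --- and unavailable, since $\mathcal I$ is not assumed normal in this statement.
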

The strategy is constructed in the same way as in \ref{thm: Partial converse}

Finally, we address two other problems, Q.1 and Q.2 from \cite{MagForZem}. Q.1 asks whether any the assumptions of Theorem \ref{thm: dense tree measurable intro} can be dropped (see Question \ref{Question1} in \S~\ref{sec:precipitousideals}). We prove that the assumption about $\kappa$ carrying no $\kappa$-complete $\kappa^+$-saturated ideal of Theorem \ref{thm: dense tree measurable intro} is  necessary using the following theorem:
\begin{theorem*}
Suppose that $\mathcal{I}$ is a  $\kappa$-complete $\kappa$-measuring ideal on $\kappa$ and there is a tree $D\subseteq \mathcal{I}^+$ such that:
\begin{enumerate}
    \item $D$ is dense in $\mathcal{I}^+$.
    \item $(D,\subseteq_\mathcal{I})$ is a downward growing tree of height $\delta$.
    \item $D$ is $\delta$-closed.
\end{enumerate}
Then there is a winning strategy $\sigma$ for the game $wG^*_\delta(\kappa)$ such that for every partial run $R$ of the game played according to $\sigma$, the associated hopeless ideal $\mathcal{I}(R,\sigma)$ is not $\kappa^+$-saturated.
\end{theorem*}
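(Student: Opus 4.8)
The plan is to take for $\sigma$ the strategy $\sigma^{D}$ built from $\mathcal I$ and $D$ exactly as in the converse direction (Theorem~\ref{thm: Partial converse} and its refinement for $\kappa$-measuring ideals): the judge maintains a $\subseteq_{\mathcal I}$-decreasing sequence $\langle A_\xi\mid\xi<\delta\rangle$ of nodes of $D$ with $A_\xi$ deciding $P(\kappa)^{M_\xi}$ --- obtained from $A_{\xi-1}$ (or, at limits, from a $\delta$-closed lower bound of the previous nodes) by first using $\kappa$-measuring to pass to a positive deciding set and then density to return into $D$ --- and she plays $U_\xi=\{X\in P(\kappa)^{M_\xi}\mid A_\xi\subseteq_{\mathcal I}X\}$. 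This lets her survive all $\delta$ rounds, so $\sigma^{D}$ wins $wG^{*}_\delta(\kappa)$. We then use the freedom in the choice of deciding node: fixing a way to code a subset of $\kappa$ into a $\kappa$-model, we require that whenever the challenger plays a model $M$ coding a set $X$ with $X\cap A$ positive ($A$ the judge's current node), the judge first passes to some $d\in D$ with $d\subseteq_{\mathcal I}X\cap A$ and only then descends inside $D$ to a node deciding $M$ --- so the node she chooses is $\subseteq_{\mathcal I}X$ --- and otherwise she descends arbitrarily. Call the resulting memory strategy $\sigma$.

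First I would compute the hopeless ideal. For a partial run $R$ played according to $\sigma$, let $A_R$ be the judge's current node ($A_\emptyset=\kappa$). The claim is that $\mathcal I(R,\sigma)=\{X\subseteq\kappa\mid X\cap A_R\in\mathcal I\}$, so that $P(\kappa)/\mathcal I(R,\sigma)\cong P(A_R)/\mathcal I$. Indeed, if $X\cap A_R\in\mathcal I$ then in every continuation by $\sigma$ each later node is $\subseteq_{\mathcal I}A_R$, hence $\mathcal I$-disjoint from $X$, so $X$ never enters the union ultrafilter and $X$ is hopeless; whereas if $X\cap A_R$ is positive the challenger can extend $R$ by a legal $\kappa$-model $M\ni X$ coding $X$, and the compliance clause makes $\sigma$ answer with a node $\subseteq_{\mathcal I}X$ deciding $M$, so $X$ enters $U$ and is not hopeless. (This argument is insensitive to whether the hopeless ideal is defined via a single extension or via all continuations, since membership in $U$ is decided once $X$ appears in a model.)

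It then suffices to prove that for every positive $A$ the ideal $\mathcal I\!\restriction\!A$ is not $\kappa^{+}$-saturated, and this is where $\kappa$-measuring meets the tree. Assume $\mathcal I\!\restriction\!A$ is $\kappa^{+}$-saturated. Distinct nodes of $D$ at a common level are $\mathcal I$-incompatible (as $(D,\subseteq_{\mathcal I})$ is a tree and $D$ is dense), so the levels of the subtree $D_A=\{d\in D\mid d\subseteq_{\mathcal I}A\}$ are antichains of $P(A)/\mathcal I$, of size $\le\kappa$; since $D_A$ has at most $\delta\le\kappa$ levels, $|D_A|\le\kappa$. Now apply $\kappa$-measuring to the $\le\kappa$-sized family $D_A$: the positive $B\subseteq_{\mathcal I}A$ that decide every member of $D_A$ are dense below $A$. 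Since $D_A$ is dense below $A$, such a $B$ has $d_0\subseteq_{\mathcal I}B$ for some $d_0\in D_A$, whence $B=_{\mathcal I}d_0$; and $d_0$ is then an atom of $P(\kappa)/\mathcal I$ (a positive $c\subsetneq_{\mathcal I}d_0$ would contain some $d_1\in D_A$, and $d_0$ deciding $d_1$ forces $d_0=_{\mathcal I}d_1\subseteq_{\mathcal I}c$, absurd). Thus $D$ contains an atom, which therefore has no proper descendant in $D$ and is a leaf at a level $<\delta$ --- contradicting that $D$ is a $\delta$-closed downward-growing tree of height $\delta$ (such a tree has no leaves below level $\delta$). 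Hence $\mathcal I(R,\sigma)$ is not $\kappa^{+}$-saturated.

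The delicate point is this last step, in particular the sub-lemma that a $\kappa$-measuring ideal whose quotient has a dense subset of size $\le\kappa$ is atomic: one must verify that deciding the dense family $D_A$ really pins down an atom (as sketched), and --- crucially --- that ``downward-growing tree of height $\delta$'' together with ``$\delta$-closed'' genuinely forbids leaves at levels $<\delta$, which holds for the trees produced in Theorem~\ref{thm: dense tree measurable intro} and is worth isolating. The bookkeeping in the first two paragraphs (treating $\sigma$ as a memory strategy, since the judge plays ultrafilters rather than her private nodes, and pinning down the coding by which the challenger designates the target set) is routine.
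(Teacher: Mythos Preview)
Your approach is genuinely different from the paper's. The paper does not try to identify $\mathcal I(R,\sigma)$ exactly; instead it first proves a diagonalization lemma (a proper ideal on $\kappa$ containing all bounded sets has $\mathcal I^+$ not generated by $\le\kappa$ sets), deduces that every $D_A$ has a level of size $\kappa^+$, \emph{thins} $D$ to a subtree $D'$ in which every node has $\kappa^+$ immediate successors each of which already measures some $N_\alpha$, and only then defines $\sigma$ by walking down $D'$. The $\kappa^+$-antichain in $\mathcal I(R,\sigma)$ is then simply the set of immediate successors of the current node. Your compliance-clause trick giving $\mathcal I(R,\sigma)=\mathcal I\!\restriction\!A_R$ is a cleaner reduction and avoids the tree refinement, at the cost of a slightly more delicate strategy; both routes ultimately rest on the same combinatorial fact that $\mathcal I\!\restriction\!A$ is never $\kappa^+$-saturated.

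The gap is precisely where you flag it. You correctly produce an atom $d_0\in D_A$ from the hypotheses $|D_A|\le\kappa$ plus $\kappa$-measuring, but the inference ``atom $\Rightarrow$ leaf at level ${<}\delta$ $\Rightarrow$ contradicts $\delta$-closed of height $\delta$'' does not go through: height $\delta$ only means levels are cofinal in $\delta$, and $\delta$-closure only gives bounds for chains, so a short branch terminating in a leaf is perfectly compatible with (1)--(3). (Concretely: a single $\delta$-branch together with an isolated node at level $0$ satisfies all three conditions for suitable $\mathcal I$.) The correct way to derive a contradiction from the atom is by size, not tree shape: if $d_0$ is an atom then $\{d_0\}$ is dense in $(\mathcal I\!\restriction\!d_0)^+$, i.e.\ that positive cone is generated by a single set, which is impossible for a \emph{uniform} $\kappa$-complete ideal by an elementary diagonalization. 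This is exactly the lemma the paper isolates and uses (and note that both your argument and the paper's tacitly need $\mathcal I$ to contain all bounded subsets of $\kappa$, as well as $\delta\le\kappa$ for the level-count estimate $|D_A|\le\kappa$). With that repair your argument goes through.
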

Q.$2$ asks whether there is a correspondence between the ideal constructed in Theorem \ref{thm: dense tree measurable intro} and the strategy constructed in Theorem \ref{thm: Partial converse}. We show how one can slightly refine the construction of the dense tree $D$ in the previous theorem, and  build a special $D^*\subseteq D$ which gives such a correspondence:
\begin{theorem*}
    Let $D$ be a dense subtree of $\mathcal{I}$ satisfying $(1)$-$(3)$. Then the hopeless ideal associated to the strategy $\sigma^D$, $\mathcal{I}(\sigma^D)=\mathcal{I}$ if and only if $D=D^*$.
\end{theorem*}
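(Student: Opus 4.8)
The plan is to factor the equivalence through a single structural feature of $\sigma^D$: the set $R(\sigma^D)$ of all nodes of $D$ that occur as a judge's node along some partial run played according to $\sigma^D$. Recall that, following $\sigma^D$, the judge maintains a $\subseteq_{\mathcal{I}}$-descending chain $\langle d_\xi:\xi<\gamma\rangle$ of elements of $D$, where $d_\xi$ is picked by the fixed selection rule of Theorem~\ref{thm: Partial converse} to decide $P(\kappa)^{M_\xi}$ (at limit stages using $\delta$-closedness of $D$ to obtain a lower bound), and plays $U_\xi=\{A\in P(\kappa)^{M_\xi}\mid d_\xi\subseteq_{\mathcal{I}}A\}$. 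The first step is the reduction: $\mathcal{I}(\sigma^D)=\mathcal{I}$ if and only if $R(\sigma^D)$ is dense in $\mathcal{I}^+$. The inclusion $\mathcal{I}\subseteq\mathcal{I}(\sigma^D)$ is automatic, since $A\in\mathcal{I}$ gives $d\subseteq_{\mathcal{I}}\kappa\setminus A$ for every $d\in D$, so from any position the challenger forces $A$ out by playing a $\kappa$-model containing $A$. For the converse inclusion one shows that, for $A\in\mathcal{I}^+$, $A\notin\mathcal{I}(\sigma^D)$ holds precisely when some $d\in R(\sigma^D)$ satisfies $d\subseteq_{\mathcal{I}}A$: if such a $d$ exists, then at any position whose current node is $d$ every later node refines $d$, so $A$ can never be removed; conversely, if no reachable node lies $\subseteq_{\mathcal{I}}$-below $A$, then from any position the challenger plays a $\kappa$-model containing $A$, the selection rule returns a decisive refinement $d'$, and $d'$ is reachable, hence not $\subseteq_{\mathcal{I}}$-below $A$, hence $\subseteq_{\mathcal{I}}$-below $\kappa\setminus A$, so $A$ is thrown out and $A$ is hopeless. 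Quantifying over $A\in\mathcal{I}^+$ gives the reduction.

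Granting this, the direction ($\Leftarrow$) is immediate from the construction of $D^*$ in the previous section: there $D^*$ is obtained by pruning $D$ down to precisely the nodes that the selection rule can return, while preserving density in $\mathcal{I}^+$ and $\delta$-closedness, so that $R(\sigma^{D^*})=D^*$. Hence if $D=D^*$ then $R(\sigma^D)=D$ is dense in $\mathcal{I}^+$, and the reduction yields $\mathcal{I}(\sigma^D)=\mathcal{I}$. For ($\Rightarrow$) I would argue contrapositively. If $D\neq D^*$ then, as $D^*\subseteq D$, there is a node $e\in D\setminus D^*$; by the defining property of the pruning, $e$ is not $\sigma^D$-reachable and is removed precisely because the selection rule diverts around it, a diversion witnessed by an $\mathcal{I}$-positive set $A=A_e$ with the property that every $\sigma^D$-reachable refinement decides $A$ onto the $\kappa\setminus A$ side. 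Thus no member of $R(\sigma^D)$ is $\subseteq_{\mathcal{I}}$-below $A$, so by the reduction $A\in\mathcal{I}(\sigma^D)\setminus\mathcal{I}$ and $\mathcal{I}(\sigma^D)\neq\mathcal{I}$.

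I expect the main obstacle to be the extraction of the positive witness $A_e$ from a node $e\in D\setminus D^*$ in the direction ($\Rightarrow$). This requires working with the exact recursive definition of $D^*$ from the previous section rather than the informal summary above, and in particular ruling out that deleting $e$ merely substitutes for it another node of the same ``decisive reach'', in which case no $\mathcal{I}$-positive set would be left uncovered by $R(\sigma^D)$; one must verify that the pruning is tight enough to guarantee that a genuine positive set falls through. The remaining points are routine bookkeeping: that $\mathcal{I}(\sigma^D)$ is a proper $\kappa$-complete ideal (it contains $\mathcal{I}$ and is the hopeless ideal of a winning strategy), that the limit stages of a run cause no difficulty because $\delta$-closedness of $D$ keeps the descending chains of nodes inside $D$, and that the challenger always has the requested $\kappa$-models at hand under the standing hypotheses.
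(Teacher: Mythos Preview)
Your reduction in the first paragraph is correct and is exactly the content of items (1) and (3) of the Proposition immediately preceding the Corollary in the paper: $R(\sigma^D)$ is precisely the paper's $D^*$, item (1) says $D^*$ is dense in $\mathcal{I}(\sigma^D)^+$, and together with $\mathcal{I}\subseteq\mathcal{I}(\sigma^D)$ this gives $\mathcal{I}(\sigma^D)=\mathcal{I}$ iff $D^*$ is dense in $\mathcal{I}^+$. Your $(\Leftarrow)$ is fine (and in fact trivial once you note $R(\sigma^D)=D^*$ by definition; the detour through $\sigma^{D^*}$ is unnecessary).

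The gap you flag in $(\Rightarrow)$ is real for the route you sketch, but the paper closes it with no work at all: the witness is $A_e=e$ itself. The point you are missing is that $(D,\subseteq_{\mathcal{I}})$ is a \emph{tree} and the strategy $\sigma^D$ descends through $D$ \emph{one level at a time} (choosing an immediate successor at successor stages, the greatest lower bound at limits). Consequently, for any reachable node $C=A_R$, the entire path in $D$ from the root $\kappa$ down to $C$ is $\{A_{R\restriction\xi}:\xi\leq|R|\}\subseteq D^*$. Now if some $C\in D^*$ satisfied $C\subseteq_{\mathcal{I}} e$, then since both $C,e\in D$ and the tree order is $\subseteq_{\mathcal{I}}$, $e$ lies on the path from the root to $C$, hence $e=A_{R\restriction\xi}\in D^*$, contradiction. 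So no element of $D^*=R(\sigma^D)$ lies $\subseteq_{\mathcal{I}}$-below $e$, and $e\in\mathcal{I}^+$ witnesses that $R(\sigma^D)$ is not dense in $\mathcal{I}^+$; by your reduction, $\mathcal{I}(\sigma^D)\neq\mathcal{I}$.

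Note that this argument uses that the strategy picks \emph{immediate} successors, which is why the paper first passes to a subtree satisfying conditions $(a)$--$(b)$ (every node has $\kappa^+$ immediate successors, cofinally many of which measure any given $P(\kappa)^{N_\alpha}$). Your statement of the hypotheses mentions only $(1)$--$(3)$; without $(a)$--$(b)$ the selection rule of Theorem~\ref{thm: Partial converse} may skip levels of $D$, and then the path-covering argument above breaks down. There is no need to search for a separate positive set ``left uncovered'' by the pruning; the unreachable node itself is the uncovered set.
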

This paper is organized as follows:
\begin{itemize}
    \item In Section \S~\ref{sec:modelsFilters}, we provide  two-cardinal analogues to $\kappa$-models $M$ and $M$-ultrafilters.
    \item In Section \S~\ref{sec:genericSupercompactness}, we define large cardinal variants of generic supercompactness arising from the games, and prove some relations between them.
    \item In Section \S~\ref{sec:games}, we define the two-cardinal filter games.
    \item In Section \S~\ref{sec: winning strategies}, we prove the nearly $\lambda$-super/strongly compact and completely $\lambda$-ineffable characterization, and also the results regarding generic supercompactness.
    \item In Section \S~\ref{sec:finitelength}, we prove the results regarding finite stages of the game and indescribability.
    \item In Section \S~\ref{sec:precipitousideals}, we prove the results about the precipitous ideals on $P_\kappa(\lambda)$.
    \item In Section \S~\ref{sec:moreon themeasurable}, we prove the further results related to the Foreman, Magidor, Zeman construction of \cite{MagForZem}.
    \item In Section \S~\ref{sec:Problems}, we collect some open problems related to this paper.
\end{itemize}
\subsection*{Notation}
For cardinals $\kappa$ and a set $X$, $P(X)$ denotes the powerset of $X$ and $$P_\kappa(X)=\{Y\subseteq X\mid |Y|<\kappa\}.$$ $\text{TrCl}(X)$ denotes the transitive closure of $X$. The set $$H_\lambda=\{X\mid |\text{TrCl}(X)|<\lambda\}.$$ Given an algebra of sets $\mathcal{A}\subseteq P(X)$ (i.e. $\emptyset\in\mathcal{A}$ and $\mathcal{A}$ is closed under finite unions, finite intersections, and complement), an ultrafilter $U$ on $\mathcal{A}$ is a subset of $\mathcal{A}$ which is closed under intersections, upwards closed with respect to inclusion, $\emptyset\notin U$ and for every $Y\in\mathcal{A}$, either $Y\in U$ of $Y^c=X\setminus Y\in U$. For two $\in$-models $N,M$ and for any class of first-order formulas $\Gamma$, we say that $N\prec_{\Gamma}M$ if $N$ is a submodel of $M$ and for any $\phi(x_1,\dots,x_n)\in \Gamma$, and any $a_1,\dots,a_n\in N$, $N\models \phi(a_1,\dots,a_n)$ if and only if $M\models \phi(a_1,\dots,a_n)$.
\section{Models and filters}\label{sec:modelsFilters}
Large cardinals $\kappa$ below a measurable cardinal can often be characterized by the existence of some type of ultrafilters on the powerset of $\kappa$ of $\kappa$-sized $\in$-models of set theory (see, for instance, \cite{gitman:ramsey}, \cite{HolyLuke:smallmodels}). We saw one such characterization for weakly compact cardinals in the introduction. In this section, we will introduce analogues for the two-cardinal context of the types of models and filters used in the theory of smaller large cardinals. 

In the following definition, we would like to capture what a possibly non-transitive, possibly class-size $\in$-model needs to satisfy in order to have enough information about its own version of $P_\kappa(\lambda)$ that we can consider having external ultrafilters on its $P_\kappa(\lambda)$ with which we can form ultrapowers.
\begin{definition}
Suppose $\kappa\leq\lambda$ are ordinals. A set or class $\in$-model $M$ is \emph{$(\kappa,\lambda)$-acceptable} if 
\begin{enumerate}
\item $M\models\ZFC^-$,
\item $M\models``\kappa,\lambda  \text{ are cardinals}\text{ and }P_\kappa(\lambda)\text{ exists}"$,
\item $M\prec_{\Sigma_0} V$, 
\item $\lambda+1\subseteq M$ and $P_\kappa(\lambda)^M\subseteq M$.
\end{enumerate}
\end{definition}
\noindent Conditions (3)-(4) hold automatically if $M$ is transitive, but provide a measure of correctness to $M$ in the case that it is not transitive. Using $\Sigma_0$-elementarity and condition (4), we can verify externally, for instance, that $P_\kappa(\lambda)^M$ consists precisely of those sets $A\in M$ such that $A\subseteq\lambda$ and $M$ has a bijection $f:\alpha\to A$ for some $\alpha<\kappa$.

Given a $(\kappa,\lambda)$-acceptable model $M$, let:
\begin{enumerate}
\item $P(P_\kappa(\lambda))^M=P(P_\kappa(\lambda)^M)\cap M$,
\item $P(\lambda)^M=P(\lambda)\cap M$,
\item $(\lambda^+)^M=\text{sup}\{\alpha\in M\mid M\models``\alpha\in\Ord\text{ and }\alpha=|\lambda|"\}$,
\item $H_{\lambda^+}^M=\{X\in M\mid M\models|\text{TrCl}(X)|\leq\lambda\}$.
\end{enumerate} 
If the sets on the left-hand side of the above definition were elements of $M$, then the definition would be standard. We only add it here because these sets may not be elements of $M$. Observe that even if $M$ is not transitive, $H_{\lambda^+}^M$ is transitive because $\lambda+1\subseteq M$. Given $\alpha<\lambda$, let $$X_\alpha=\{x\in P_\kappa(\lambda)^M\mid \alpha\in x\}.$$ Observe that each $X_\alpha\in M$ by separation. Once more, the transitivity of $M$ can be replaced by $\Sigma_0$-elementarity to see that $M$ and $V$ agree on the definition of $X_\alpha$. 

The next definition generalizes the notion of an $M$-ultrafilter to an external ultrafilter on $P(P_\kappa(\lambda))^M$.
\begin{definition}
Suppose that $M$ is a $(\kappa,\lambda)$-acceptable model and $U\subseteq P(P_\kappa(\lambda))^M$. We say that $U$ is a \emph{weak $M$-ultrafilter} if
\begin{enumerate}
\item $U$ is an ultrafilter on $P(P_\kappa(\lambda))^M$,
\item $U$ is fine, i.e. $X_\alpha\in U$ for every $\alpha<\lambda$,
\item $U$ is $M$-$\kappa$-complete, i.e. for every sequence $\{ A_\xi\mid\xi<\alpha\}\in M$, with $\alpha<\kappa$ and each $A_\xi\in U$, $\bigcap_{\xi<\alpha}A_\xi\in U$.
\end{enumerate}
We say that $U$ is an \emph{$M$-ultrafilter} if it is, additionally, $M$-normal, i.e. for every sequence $\{A_\xi\mid\xi<\lambda\}\in M$, with each \hbox{$A_\xi\in U$}, the diagonal intersection $\Delta_{\xi<\lambda}A_\xi\in U$, where $$\Delta_{\xi<\lambda}A_\xi=\{x\in P_\kappa(\lambda)^M\mid x\in \bigcap_{\xi\in x}A_\xi\}.$$
\end{definition}
If $M$ is $(\kappa,\lambda)$-acceptable and $U$ is an $M$-ultrafilter, then standard arguments show that any $f:P_\kappa(\lambda)^M\to\lambda$ in $M$ which is regressive on a set in $U$, namely, $$\{x\in P_\kappa(\lambda)^M\mid f(x)\in x\}\in U,$$ must be constant on a set in $U$. 

Given a $(\kappa,\lambda)$-acceptable model and a weak $M$-ultrafilter $U$, we may form the ultrapower of $M$ by $U$, denoted 
by $(\text{Ult}(M,U),E_U)$, to consists of all $=_U$-equivalence classes  $[f]_U$, where $ f\in M$ is a function with $\dom(f)=P_\kappa(\lambda)^M$. As is standard,  $[f]_UE_U[g]_U$ if and only if $\{x\in P_\kappa(\lambda)^M\mid f(x)\in g(x)\}\in U$. It is easy to see that the \Los-Theorem holds for the ultrapower embedding. Even though $E_U$ may not be well-founded, we will always assume without loss of generality that the well-founded part of the ultrapower $(\text{Ult}(M,U),E_U)$ has been collapsed, so that $E=\in$ on the well-founded part of $(\text{Ult}(M,U),E_U)$. 

\begin{definition} Suppose that $M$ is a $(\kappa,\lambda)$-acceptable model and $U$ is a weak $M$-ultrafilter. We say that:
\begin{enumerate} 
\item $U$ is \emph{good} if the ultrapower $(\text{Ult}(M,U),E_U)$ is well-founded.
\item $U$ has the \emph{countable intersection property} if for every $\{A_n\mid n<\omega\}$, with each $A_n\in U$, $\bigcap_{n<\omega}A_n\neq\emptyset$.
\item $U$ is \emph{weakly amenable} if for every sequence $\{A_\xi\mid\xi<\lambda\}\in M$, $$\{\xi<\lambda\mid A_\xi\in U\}\in M.$$
\end{enumerate}
\end{definition}
\noindent Observe that if $U$ has the countable intersection property, then $U$ is good. Also, if $M$ is, say, closed under $\omega$-sequences, then every weak $M$-ultrafilter has the countable intersection property because it is $M$-$\kappa$-complete.

\begin{proposition}\label{prop:weakultrapower}
Suppose that $M$ is a $(\kappa,\lambda)$-acceptable model, $U$ is a weak $M$-ultrafilter, and $j:M\to N$ is the ultrapower embedding of $M$ by $U$. Then:
\begin{enumerate}
\item $\kappa$ is an initial segment of the ordinals of $N$.
\item $\crit(j)=\kappa$,
\item For every $\alpha<\lambda$, $N\models j(\alpha)E_U[id]_U$ and $N\models j(\kappa)>|[id]_U|$.
\end{enumerate}
\end{proposition}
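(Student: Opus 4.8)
The plan is to read off all three items from {\L}o\'s's theorem for $j$ together with the two special features of $U$: $M$-$\kappa$-completeness and fineness. The engine for (1) and (2) is the following \emph{partition principle}, which is essentially the only place where $M$-$\kappa$-completeness and $(\kappa,\lambda)$-acceptability are used: if $A\in U$, $\gamma<\kappa$, and $\langle A_\eta\mid\eta<\gamma\rangle\in M$ is a partition of $A$, then $A_\eta\in U$ for some $\eta<\gamma$. Indeed, otherwise each complement $P_\kappa(\lambda)^M\setminus A_\eta$ lies in $U$, and since $M\models\ZFC^-$ and $P_\kappa(\lambda)^M\in M$ the sequence of these complements is an element of $M$ of length $\gamma<\kappa$, so $M$-$\kappa$-completeness gives $\bigcap_{\eta<\gamma}\bigl(P_\kappa(\lambda)^M\setminus A_\eta\bigr)=P_\kappa(\lambda)^M\setminus A\in U$, contradicting $A\in U$.

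For (2) I would first prove by induction on $\gamma<\kappa$ that $j(\gamma)=\gamma$ and, simultaneously, that every $E_U$-predecessor of $j(\gamma)$ is a genuine ordinal $<\gamma$. Given $f\in M$ with $\{x\mid f(x)<\gamma\}\in U$, apply the partition principle to the partition $\langle\{x\mid f(x)=\eta\}\mid\eta<\gamma\rangle\in M$ to obtain $\eta_0<\gamma$ with $[f]_U=j(\eta_0)$, which equals $\eta_0$ by the inductive hypothesis. Since the constant functions $c_\eta$ (value $\eta$) witness that each $\eta<\gamma$ is an $E_U$-predecessor of $j(\gamma)=[c_\gamma]_U$, the $E_U$-predecessors of $j(\gamma)$ are exactly $\{\eta\mid\eta<\gamma\}$, so $j(\gamma)$ lies in the well-founded part of $N$ and collapses to $\gamma$. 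Exactly this argument, run without the inductive hypothesis, also gives (1): if $a$ is an ordinal of $N$ with $a\,E_U\,\gamma$ for some $\gamma<\kappa$, then $a$ is a genuine ordinal $<\gamma$; combined with $j\restriction\kappa=\mathrm{id}$, this says that the ordinals of $N$ lying $E_U$-below some genuine ordinal $<\kappa$ are precisely the genuine ordinals $<\kappa$, i.e. $\kappa$ is an initial segment of the ordinals of $N$.

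To finish (2) it remains to see that $\kappa$ is actually moved. Let $g\in M$ with $g(x)=|x|^M$; then $M\models g(x)<\kappa$ for every $x\in P_\kappa(\lambda)^M$ (as $\kappa$ is a cardinal in $M$), so $N\models[g]_U<j(\kappa)$ by {\L}o\'s. On the other hand, for each $\eta<\kappa$ we have $\{x\mid\eta+1\subseteq x\}=\bigcap_{\alpha\leq\eta}X_\alpha\in U$ by fineness and $M$-$\kappa$-completeness (the sequence $\langle X_\alpha\mid\alpha\leq\eta\rangle$ is in $M$ and has length $\eta+1<\kappa$, using that $\kappa$, being a cardinal $>\omega$, is a limit ordinal), hence $\{x\mid g(x)>\eta\}\in U$ and so $[g]_U\neq j(\eta)$. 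Were $j(\kappa)=\kappa$, then $\kappa$ would be a genuine ordinal of $N$, and by transitivity of its well-founded part every $E_U$-predecessor of $\kappa$ would be a genuine ordinal $<\kappa$, hence of the form $j(\eta)$ with $\eta<\kappa$ by the first part of (2); this contradicts that $[g]_U$ is such a predecessor. Hence $j(\kappa)\neq\kappa$, so $\crit(j)=\kappa$. For (3), both clauses are direct instances of {\L}o\'s's theorem: since $j(\alpha)=[c_\alpha]_U$, we have $N\models j(\alpha)\,E_U\,[id]_U$ iff $\{x\mid\alpha\in x\}=X_\alpha\in U$, which holds by fineness; and since $P_\kappa(\lambda)^M$ is the top element of $P(P_\kappa(\lambda))^M$ it lies in $U$, so $\{x\mid id(x)\in P_\kappa(\lambda)^M\}\in U$ and {\L}o\'s gives $N\models[id]_U\in j(P_\kappa(\lambda)^M)$, whence, as $M$ believes every member of $P_\kappa(\lambda)$ has cardinality $<\kappa$, elementarity yields $N\models j(\kappa)>|[id]_U|$.

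All of the above is routine once the partition principle is in hand; I expect the only genuine care to be needed in the bookkeeping around the possibly ill-founded $N$ --- spelling out precisely what ``$\kappa$ is an initial segment of the ordinals of $N$'' and ``$j(\kappa)\neq\kappa$'' mean in terms of $E_U$ and the collapsed well-founded part, and checking throughout that the partitions and sequences to which $M$-$\kappa$-completeness is applied genuinely lie in $M$, which is exactly where clauses (1) and (4) of $(\kappa,\lambda)$-acceptability enter.
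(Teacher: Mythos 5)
Your overall architecture --- the partition principle extracted from $M$-$\kappa$-completeness, the induction giving $j\restrict\kappa=\mathrm{id}$ together with control of the $E_U$-predecessors of $j(\gamma)$, and the two direct applications of \Los\ for (3) --- is exactly the content of the paper's (much terser) proof, and items (1), (3), and the first half of (2) are fine as you have them. There is, however, a genuinely false step in your argument that $\kappa$ is moved. You infer from $\eta+1\subseteq x$ that $g(x)=|x|^M>\eta$, and hence that $\{x\mid g(x)>\eta\}\in U$ for every $\eta<\kappa$. But $\eta+1\subseteq x$ only gives $|x|^M\geq|\eta+1|^M$, which for infinite $\eta$ equals $|\eta|^M$ and can be $\leq\eta$: take $\eta=\omega$ and $x=\omega+1$, so that $\eta+1\subseteq x$ while $|x|^M=\omega=\eta$. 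What your argument actually yields is $\{x\mid g(x)\geq\eta\}\in U$ for every $M$-cardinal $\eta<\kappa$, hence only that $[g]_U\neq j(\eta')$ for $\eta'$ below the supremum of the $M$-cardinals below $\kappa$. If $\kappa$ were a successor cardinal $\mu^{+}$ in the sense of $M$, that supremum is $\mu<\kappa$ and in fact $[g]_U=j(\mu)$, so the witness you need disappears. (One can show that no fine $M$-$\kappa$-complete ultrafilter exists in that case, so your claim is vacuously true there, but that is a separate argument you have not supplied.)

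Two repairs are available. The local one is to replace $|x|^M$ by $\mathrm{ot}(x\cap\kappa)$ (computed in $M$): then $\eta+1\subseteq x$ really does give $\mathrm{ot}(x\cap\kappa)\geq\eta+1>\eta$, while $M\models\mathrm{ot}(x\cap\kappa)<\kappa$ because $|x|^M<\kappa$ and $\kappa$ is a cardinal of $M$; with this change your contradiction argument goes through verbatim and needs nothing beyond the first half of (2). The paper's implicit route is different and also worth noting: by (3), every genuine $\alpha<\kappa$ satisfies $\alpha=j(\alpha)\mathrel{E_U}[id]_U$, so $[id]_U$ has at least $\kappa$-many pairwise $E_U$-distinct members; if $j(\kappa)=\kappa$, then since $N\models j(\kappa)>|[id]_U|$ there is an $N$-injection of $[id]_U$ into a genuine ordinal $\gamma<\kappa$, which restricts to a genuine injection of $\kappa$ into $\gamma$ --- impossible when $\kappa$ is an actual cardinal of $V$, as it is in every context where the proposition is applied.
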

\begin{proof}
Since $U$ is $M$-$\kappa$-complete, it follows that $j(\alpha)=\alpha$ for all $\alpha<\kappa$ (recall that we have assumed that the well-founded part of $N$ has been collapsed).  Given $\alpha<\lambda$, we have that $j(\alpha)E_U[id]_U$ in $N$ by \Los\, since every $X_\alpha=\{x\in P_\kappa(\lambda)\mid \alpha\in x\}\in U$. Also, since for every $x\in P_\kappa(\lambda)^M$, $\kappa>|x|$, again by \Los, $N\models j(\kappa)>|[id]_U|$. Thus, in particular, $\crit(j)=\kappa$.  
\end{proof}
Note that since we don't have normality, $\kappa$ need not be an element of $N$ and, indeed, the initial segment of order-type $\kappa$ need not have a least upper bound because the ordinals could already be ill-founded there.

Conversely, it is easy to see that if $M$ is $(\kappa,\lambda)$-acceptable and $j:M\to N$ is an embedding with $\crit(j)=\kappa$ and there is $s\subseteq j(\lambda)$ in $N$ such that $$N\models j\image\lambda\subseteq s\text{ and }|s|<j(\kappa),$$ then $$U=\{A\subseteq P_\kappa(\lambda)^M\mid s\in j(A)\}$$ is a weak $M$-ultrafilter.

We can strengthen Proposition~\ref{prop:weakultrapower} significantly by assuming that $U$ is an $M$-ultrafilter, not just a weak $M$-ultrafilter. 
\begin{proposition}\label{prop:ultrapower}
Suppose that $M$ is a $(\kappa,\lambda)$-acceptable model, $U$ is an $M$-ultrafilter, and $j:M\to N$ is the ultrapower embedding of $M$ by $N$. Then:
\begin{enumerate}
\item $(\lambda^+)^M$ is an initial segment of the ordinals of $N$,
\item $j\image\lambda=[id]_U$ in $N$ and $N\models j(\kappa)>\lambda$,
\item $P(\lambda)^M\subseteq N$.
\end{enumerate}
\end{proposition}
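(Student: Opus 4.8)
The plan is to prove the four assertions (counting the two clauses of (2) separately) in the order: first $[id]_U=j\image\lambda$ in $N$ (the first clause of (2)), then (1), then (3), then $N\models j(\kappa)>\lambda$ (the second clause of (2)); each step will feed the next.

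The core step is $[id]_U=j\image\lambda$. The inclusion $\{j(\alpha)\mid\alpha<\lambda\}\subseteq[id]_U$ is already Proposition~\ref{prop:weakultrapower}(3): fineness gives $X_\alpha\in U$, so $j(\alpha)\,E_U\,[id]_U$ by \Los. For the reverse inclusion I would take $b=[f]_U$ with $N\models b\,E_U\,[id]_U$; by \Los\ the set $\{x\in P_\kappa(\lambda)^M\mid f(x)\in x\}$ is in $U$, and after replacing $f$ by $0$ off this set (which does not change $[f]_U$) I may assume $f\colon P_\kappa(\lambda)^M\to\lambda$ lies in $M$ and is regressive on a set in $U$. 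Then the regressive-function property of $M$-ultrafilters recorded right after their definition gives an $\alpha<\lambda$ on which $f$ is constant on a set in $U$, so $b=[f]_U=j(\alpha)$. I expect this normality argument to be the main content of the proof. The only other delicate point throughout is that $M$ may be non-transitive and $N$ ill-founded, so at several places one must check that relations which are well-founded externally are collapsed correctly inside $N$ and that their $N$-ordertypes are the genuine ordinals.

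From $[id]_U=j\image\lambda$ I would next extract that $j\restrict\lambda\in N$: by elementarity $j(\alpha)\,E_U\,j(\beta)\iff\alpha<\beta$, so $([id]_U,E_U)$ is externally isomorphic to $(\lambda,\in)$, hence well-founded, and therefore the transitive collapse $\pi$ of $[id]_U$ computed in $N$ has $N$-ordertype the genuine ordinal $\lambda$ and, by uniqueness of isomorphisms of well-orders, $\pi=j\restrict\lambda$; in particular $\lambda\subseteq\mathrm{wfp}(N)$. For (1) I would fix $\beta<(\lambda^+)^M$, choose a surjection $s\colon\lambda\to\beta$ in $M$, and consider $f\in M$ given by $f(x)=\mathrm{ot}(s\image x)$, which takes ordinal values below $\kappa$ in $M$. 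By \Los, $[f]_U$ is the $N$-ordertype of $j(s)\image[id]_U=j\image(s\image\lambda)=j\image\beta$, and $(j\image\beta,E_U)$ is externally isomorphic to $(\beta,\in)$, a genuine well-order of type $\beta$; hence $[f]_U=\beta$ is an ordinal of $N$. Since $\beta<(\lambda^+)^M$ was arbitrary, $(\lambda^+)^M\subseteq\mathrm{wfp}(N)$, which is then an initial segment of $\Ord^N$.

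For (3), given $A\in P(\lambda)^M$ I would use $g\in M$ with $g(x)=A\cap x\in P_\kappa(\lambda)^M$; the same \Los/regressive-function computation as above shows the $E_U$-members of $[g]_U$ are exactly $\{j(\alpha)\mid\alpha\in A\}$, so, using $\pi=j\restrict\lambda\in N$, we get $A=\{\alpha<\lambda\mid\pi(\alpha)\in[g]_U\}\in N$. Finally, for $N\models j(\kappa)>\lambda$: Proposition~\ref{prop:weakultrapower}(3) gives $N\models|[id]_U|<j(\kappa)$, and since $\pi\in N$ is a bijection $\lambda\to[id]_U$ and $\lambda\in\mathrm{wfp}(N)$, this reads $N\models|\lambda|<j(\kappa)$; as $j(\kappa)$ is a cardinal in $N$ by elementarity, $N\models|\lambda|^+\le j(\kappa)$, and since in $N$ every ordinal is below the successor of its own cardinality, $N\models\lambda<|\lambda|^+\le j(\kappa)$.
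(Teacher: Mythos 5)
Your proposal is correct and follows essentially the same route as the paper: the heart of both arguments is the fineness-plus-regressive-function computation showing that the $E_U$-predecessors of $[id]_U$ are exactly the $j(\alpha)$ for $\alpha<\lambda$, from which one collapses $[id]_U$ inside $N$ to get $\lambda+1\subseteq N$, $j\restrict\lambda\in N$, $j(\kappa)>\lambda$, and then recovers each $A\in P(\lambda)^M$ from $j\image A=j(A)\cap j\image\lambda$ (your $[g]_U$ is literally this set). The only cosmetic difference is that you prove (1) directly via the ordertype functions $x\mapsto\mathrm{ot}(s\image x)$, whereas the paper deduces it from (3) by coding ordinals below $(\lambda^+)^M$ as subsets of $\lambda$; both are fine.
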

\begin{proof}
We already showed that for every $\alpha<\lambda$, $j(\alpha)E_U [id]_U$. So suppose that $[f]_U E_U [id]_U$. Then by \Los, $\{x\in P_\kappa(\lambda)^M\mid f(x)\in x\}\in U$, and hence $f$ is constant on a set in $U$. Thus, there is $\alpha<\lambda$ such that $f(x)=\alpha$ on a set in $U$, and hence $[f]_U=j(\alpha)$. Therefore, $a E_U [id]_U$ if and only if $a=j(\alpha)$ for some $\alpha<\lambda$. Even though $N$ might not be well-founded, it can collapse $[id]_U$ to obtain (an order isomorphic to) $\lambda$. Thus, by our assumption that we collapsed the well-founded part of $N$, $\lambda+1\subseteq N$. It follows that $j(\kappa)>\lambda$ in $N$. Fix $A\subseteq\lambda$ in $M$. From $j\image\lambda$ and $\lambda$ in $N$, we get that $j\restrict\lambda\in N$. Observe that $j\image A=j(A)\cap j\image\lambda$ must be in $N$. Next, observe that $j\restrict \lambda$ and $j\image A$ can be used to recover $A$. Thus, in particular, $(\lambda^+)^M$ is an initial segment of the ordinals of $N$.
\end{proof}
Note that $(\lambda^+)^N$ could well be beyond $(\lambda^+)^M$ and, indeed, $(\lambda^+)^M$ may not be an element of $N$ because the ordinals could be ill-founded there. 

Conversely, it is easy to see that if $M$ is $(\kappa,\lambda)$-acceptable and $j:M\to N$ is an embedding with $\crit(j)=\kappa$, $j\image\lambda\in N$, and $j(\kappa)>\lambda$, then $$U=\{A\subseteq P_\kappa(\lambda)^M\mid j\image\lambda\in j(A)\}$$ is an $M$-ultrafilter.
\begin{proposition}\label{prop:wa}
Suppose that $M$ is a $(\kappa,\lambda)$-acceptable model, $U$ is a weakly amenable $M$-ultrafilter, and $j:M\to N$ is the ultrapower embedding of $M$ by $U$. Then $$P(\lambda)^M=P(\lambda)^N.$$ It follows that $(\lambda^+)^M=(\lambda^+)^N$, and, hence, also $H_{\lambda^+}^M=H_{\lambda^+}^N$.
\end{proposition}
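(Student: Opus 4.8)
The plan is to establish the two inclusions $P(\lambda)^M\of P(\lambda)^N$ and $P(\lambda)^N\of P(\lambda)^M$ separately, and then read off the assertions about $\lambda^+$ and $H_{\lambda^+}$ from the fact that $M$ and $N$ then have exactly the same subsets of $\lambda$. The inclusion $P(\lambda)^M\of P(\lambda)^N$ is essentially already contained in Proposition~\ref{prop:ultrapower}: part (3) gives $P(\lambda)^M\of N$, and the argument for that part shows that each $A\in P(\lambda)^M$ is recovered inside $N$ as $(j\restrict\lambda)^{-1}\image\bigl(j(A)\cap j\image\lambda\bigr)=\{\gamma<\lambda\mid\gamma\in A\}$, so $N$ really does see $A$ as a subset of $\lambda$. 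The substance of the statement is therefore the reverse inclusion, and this is where weak amenability is used.

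Fix $B\in P(\lambda)^N$. Since $(\lambda^+)^M$ is an initial segment of the ordinals of $N$ by Proposition~\ref{prop:ultrapower}(1) and we have collapsed the well-founded part of $N$, the set $B$ is genuinely a subset of $\lambda$ and $N$ computes its elements correctly. The naive test — writing $B=[f]_U$ and testing ``$\gamma\in B$'' by ``$\{x\in P_\kappa(\lambda)^M\mid \gamma\in f(x)\}\in U$'' — works only for $\gamma<\kappa$, because $j(\gamma)=[c_\gamma]_U$ (the class of the constant function with value $\gamma$) equals $\gamma$ only below $\crit(j)=\kappa$; for $\gamma\in[\kappa,\lambda)$ that test detects $j(\gamma)\in^N B$, which is false since $j(\gamma)>\lambda$ in $N$. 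The remedy is to pass to the pushforward $S:=j\image B$. Because $j\restrict\lambda\in N$ (shown in the proof of Proposition~\ref{prop:ultrapower}), the set $S=(j\restrict\lambda)\image B$ is an element of $N$, and $S\of j\image\lambda=[\mathrm{id}]_U$; writing $S=[h]_U$ with $h\in M$ and replacing $h(x)$ by $h(x)\cap x$ (legitimate since $N\models S\of[\mathrm{id}]_U$), we may assume $h(x)\of x$ for all $x\in P_\kappa(\lambda)^M$. Now, since $j$ is injective, for every $\gamma<\lambda$ we have $\gamma\in B\iff j(\gamma)\in S$, and by \Los\ this is equivalent to $\{x\in P_\kappa(\lambda)^M\mid \gamma\in h(x)\}\in U$. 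Set $A_\gamma:=\{x\in P_\kappa(\lambda)^M\mid\gamma\in h(x)\}$; then $A_\gamma\in P(P_\kappa(\lambda))^M$ and the sequence $\la A_\gamma\mid\gamma<\lambda\ra$ is definable in $M$ from $h$, hence lies in $M$. By weak amenability, $\{\gamma<\lambda\mid A_\gamma\in U\}\in M$, and by the displayed equivalence that set is exactly $B$. Hence $B\in M$, i.e.\ $B\in P(\lambda)^M$.

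For the two ``hence'' clauses: an ordinal $\alpha$ has cardinality at most $\lambda$ in a model iff that model contains a well-ordering of a subset of $\lambda$ of order type $\alpha$, which, via a fixed $\Sigma_0$-definable pairing function on $\lambda$, amounts to containing a suitable element of $P(\lambda)$. Since $M$ and $N$ have the same subsets of $\lambda$ and both compute the order type of such a well-ordering correctly as long as it is below $(\lambda^+)^M$ — an initial segment of the ordinals of $N$ by Proposition~\ref{prop:ultrapower}(1) — they have exactly the same ordinals of cardinality $\leq\lambda$, and so $(\lambda^+)^M=(\lambda^+)^N$. Finally, $H_{\lambda^+}^M$ and $H_{\lambda^+}^N$ are both transitive (because $\lambda+1\of M$ and $\lambda+1\of N$), and any transitive set of hereditary cardinality $\leq\lambda$ is coded, via its $\in$-relation transported onto a subset of $\lambda$, by a well-founded extensional relation on a subset of $\lambda$, i.e.\ again by an element of $P(\lambda)$ up to the fixed coding; as $M$ and $N$ agree on $P(\lambda)$ and on $\lambda^+$ and the Mostowski collapse of such a relation is absolute between them, $H_{\lambda^+}^M=H_{\lambda^+}^N$.

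The step I expect to be the real obstacle — and the only place where the one-cardinal argument does not transfer verbatim — is the reverse inclusion $P(\lambda)^N\of P(\lambda)^M$: in the two-cardinal setting the ultrapower embedding moves the ordinals in $[\kappa,\lambda)$, so a subset of $\lambda$ lying in $N$ cannot be read off from $U$ in the usual coordinatewise fashion. Passing from $B$ to $j\image B$ and exploiting $j\restrict\lambda\in N$ (which rests on $M$-normality, not merely $M$-$\kappa$-completeness) is precisely what repairs this; everything else — that $N$ identifies $B$ with a genuine subset of $\lambda$, and the two consequences — is routine bookkeeping powered by the initial-segment facts of Proposition~\ref{prop:ultrapower}.
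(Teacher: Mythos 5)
Your proof is correct and follows essentially the same route as the paper: the forward inclusion is read off from Proposition~\ref{prop:ultrapower}(3), and the reverse inclusion passes from $B$ to $j\image B\in N$ (using $j\restrict\lambda\in N$), represents it as $[h]_U$, characterizes $\gamma\in B$ via \Los\ as $A_\gamma\in U$, and concludes by weak amenability. The extra normalization $h(x)\of x$ and the expanded discussion of the $\lambda^+$ and $H_{\lambda^+}$ consequences are harmless elaborations of what the paper leaves implicit.
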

\begin{proof}
By Proposition~\ref{prop:ultrapower}~(3), it suffices to show that if $A\subseteq\lambda$ in $N$, then $A\in M$. So suppose that $A\subseteq \lambda$ is in $N$. Since $j\restrict\lambda\in N$, we have $j\image A\in N$. Let $[f]_U=j\image A$. Then, for every $\xi<\lambda$, we have $$j(\xi)=[c_\xi]_U E_U [f]_U=j\image A$$ if and only if $$B_\xi=\{x\in P_\kappa(\lambda)^M\mid \xi\in f(x)\}\in U$$ and the sequence $\{ B_\xi\mid\xi<\lambda\}\in M$ because it is definable from $f$. By weak amenability, $A=\{\xi<\lambda\mid B_\xi\in U\}\in M$. 
\end{proof}

For the rest of this section, we suppose that $\kappa\leq\lambda$ are regular cardinals and $\lambda^{{<}\kappa}=\lambda$ (for instance, if $\GCH$ holds). The next definition generalizes the notions of (basic weak) $\kappa$-models which were used for the characterization of smaller large cardinals (see \cite{GitmanSchlicht:babyMeasurableCardinals}). 
\begin{definition}\label{def: basicModel}
A set $\in$-model $M$ of size $\lambda$ is a \emph{basic weak $(\kappa,\lambda)$-model} if 
\begin{enumerate}
\item $M\models\ZFC^-$, 
\item $M\prec_{\Sigma_0} V$, 
\item $\lambda+1\subseteq M$,
\item some $f:\lambda\overset{\text{onto}}{\underset{1-1}{\to}} P_\kappa(\lambda)\in M$.
\end{enumerate}
If we assume additionally that $M$ is transitive, then we remove the adjective ``basic". If we assume additionally that $M$ is closed under sequences of length ${<}\kappa$, $M^{{<}\kappa}\subseteq M$, then we remove the adjective ``weak".
\end{definition}
If $M$ is a basic weak $(\kappa,\lambda)$-model, then $P_\kappa(\lambda)\subseteq M$ since $M$ has a bijection between $\lambda$ and $P_\kappa(\lambda)$ and $\lambda\subseteq M$. Since $M\prec_{\Sigma_0}V$, $P_\kappa(\lambda)^M=P_\kappa(\lambda)$. Thus, $M$ is an acceptable $(\kappa,\lambda)$-model of size $\lambda$ that has the real $P_\kappa(\lambda)$. Let's argue that if $\kappa$ is inaccessible, then $V_\kappa$ is a subset and an element of every basic weak $(\kappa,\lambda)$-model $M$. Since $P_\kappa(\kappa)\subseteq P_\kappa(\lambda)$, $P_\kappa(\kappa)$ is in $M$ by separation. Since every element of $V_\kappa$ can be coded by a subset of an ordinal less than $\kappa$, $V_\kappa\subseteq M$. Thus, every $V_\alpha\in M$ for $\alpha<\kappa$, and so $V_\kappa\in M$ by replacement.
Note that the notion of a $\kappa$-model from the introduction coincides with the notion of a basic $(\kappa,\kappa)$-model.

The characterization of weakly compact cardinals in terms of the existence of $M$-ultrafilters for $\kappa$-models $M$ motivated the definitions of the following large cardinal notions that are weakenings of strong compactness and supercompactness respectively.
\begin{definition} $\,$
\begin{enumerate}
\item (White \cite{white:nearlyStronglyCompact}) A cardinal $\kappa$ is \emph{nearly} $\lambda$-\emph{strongly compact} if every $A\subseteq\lambda$ is an element of a $(\kappa,\lambda)$-model $M$ for which there is a weak $M$-ultrafilter.
\item (Schanker \cite{schanker:nearSupercompactness}) A cardinal $\kappa$ is \emph{nearly} $\lambda$-\emph{supercompact} if every $A\subseteq\lambda$ is an element of a $(\kappa,\lambda)$-model $M$ for which there is an $M$-ultrafilter.
\end{enumerate}
\end{definition}
Note that if $\kappa$ is nearly $\lambda$-supercompact (strongly compact), then $\kappa$ is $\theta$-supercompact (strongly compact) for every $\theta$ such that $2^{\theta^{<\kappa}}\leq\lambda$. 
\begin{proposition}\label{prop:nearlyLambdaStrongCompactEquivalence}
The following are equivalent:
\begin{enumerate}
\item $\kappa$ is nearly $\lambda$-strongly compact.
\item Every $A\subseteq\lambda$ is an element of a weak $(\kappa,\lambda)$-model $M$ for which there is a good weak $M$-ultrafilter $U$.
\item Every basic weak $(\kappa,\lambda)$-model $M$ has a good weak $M$-ultrafilter $U$.
\end{enumerate}
\end{proposition}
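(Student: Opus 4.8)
The plan is to establish the implications $(1)\Rightarrow(3)\Rightarrow(2)\Rightarrow(1)$, which form the cleanest cycle. The implication $(3)\Rightarrow(2)$ is essentially immediate: a basic weak $(\kappa,\lambda)$-model is in particular a weak $(\kappa,\lambda)$-model, and given $A\subseteq\lambda$ one can always build a basic weak $(\kappa,\lambda)$-model $M$ with $A\in M$ — take a Skolem hull of $\lambda+1\cup\{A,f\}$ inside a large enough $H_\theta$ for some $f\colon\lambda\to P_\kappa(\lambda)$ a bijection (this uses $\lambda^{<\kappa}=\lambda$ to keep $|M|=\lambda$), and check the defining clauses (1)–(4). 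Note that in (2) the witnessing ultrafilter is required merely to be \emph{good}, which is weaker than what (3) supplies, so nothing is lost.

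For $(2)\Rightarrow(1)$, I would argue that the extra hypotheses of being ``good'' and being a ``weak'' (rather than full) $(\kappa,\lambda)$-model can simply be forgotten. Given $A\subseteq\lambda$, apply (2) to the set $A$ itself to get a weak $(\kappa,\lambda)$-model $M$ with $A\in M$ and a good weak $M$-ultrafilter $U$. But a weak $(\kappa,\lambda)$-model is by definition a $(\kappa,\lambda)$-model (size $\lambda$, models $\ZFC^-$, $\Sigma_0$-elementary in $V$, contains $\lambda+1$, has the real $P_\kappa(\lambda)$), hence in particular $(\kappa,\lambda)$-acceptable, and $U$ is a weak $M$-ultrafilter on it; dropping ``good'' only weakens the conclusion. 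This is exactly the definition of $\kappa$ being nearly $\lambda$-strongly compact.

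The substantive implication is $(1)\Rightarrow(3)$, and I expect this to be the main obstacle. Here we are given an \emph{arbitrary} basic weak $(\kappa,\lambda)$-model $M$ and must produce a \emph{good} weak $M$-ultrafilter, whereas near $\lambda$-strong compactness only hands us, for some particular $A\subseteq\lambda$ coding $M$, \emph{some} $(\kappa,\lambda)$-model $M'$ containing $A$ together with a weak $M'$-ultrafilter $U'$. The strategy is: code $M$ (which has size $\lambda$ and, via its bijection $f\colon\lambda\to P_\kappa(\lambda)$, can be coded by a subset of $\lambda$) as a set $A\subseteq\lambda$, apply (1) to obtain $M'\ni A$ and a weak $M'$-ultrafilter $U'$; then push $U'$ down to $M$ by setting
\[
U=\{X\in P(P_\kappa(\lambda))^M\mid X\in U'\}.
\]
One must check: (a) this makes sense, i.e. $P(P_\kappa(\lambda))^M$, which is computed inside $M'$ from $A$ and is genuinely a subset of $P(P_\kappa(\lambda))^{M'}=P(P_\kappa(\lambda))$ since both models have the real $P_\kappa(\lambda)$; (b) $U$ is an ultrafilter on $P(P_\kappa(\lambda))^M$ — immediate since $U'$ is an ultrafilter on the larger algebra and the restriction of an ultrafilter to a subalgebra is an ultrafilter; (c) $U$ is fine — because each $X_\alpha$ (for $\alpha<\lambda$) is the same set whether computed in $M$ or $M'$, and $X_\alpha\in U'$ by fineness of $U'$; (d) $U$ is $M$-$\kappa$-complete — given a sequence $\langle A_\xi\mid\xi<\alpha\rangle\in M$ with $\alpha<\kappa$ and each $A_\xi\in U$, this same sequence is coded into $A$ hence is an element of $M'$, so $M'$-$\kappa$-completeness of $U'$ gives $\bigcap_{\xi<\alpha}A_\xi\in U'$, whence it lies in $U$ (noting $\bigcap_{\xi<\alpha}A_\xi\in M$). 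Finally, (e) goodness: the ultrapower $\mathrm{Ult}(M,U)$ embeds into $\mathrm{Ult}(M',U')$ in a way commuting with the ultrapower maps — any $g\colon P_\kappa(\lambda)^M\to M$ in $M$ is coded into $A$ hence is in $M'$, and $[g]_U\mapsto[g]_{U'}$ is a well-defined $\in$-embedding by \Los\ — so well-foundedness of the target transfers to well-foundedness of the source, even though $U'$ itself was only assumed to be a weak $M'$-ultrafilter and \emph{not} assumed good; in fact, since we do not need $U$ to be good for clause (3) anyway (clause (3) asks for a good weak $M$-ultrafilter — here ``good'' \emph{is} required), we must genuinely verify goodness, and the place this could conceivably fail is if the coding of $M$ into $A$ is not arranged carefully enough that all of $M$'s functions on $P_\kappa(\lambda)$ become available inside $M'$; handling this cleanly is the crux, and it is resolved by choosing $A$ to code a well-ordering of $M$ in order-type $\lambda$ together with its $\in$-diagram, so that $M$ is literally (isomorphic to) an element of $M'$.
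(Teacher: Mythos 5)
Your central technique---coding the given basic weak $(\kappa,\lambda)$-model $M$ into a set $A\subseteq\lambda$, obtaining a larger model $M'\ni A$ with a weak $M'$-ultrafilter $U'$, and restricting $U'$ to $P(P_\kappa(\lambda))^M$---is sound and is essentially the paper's argument in different clothing: the paper derives the very same ultrafilter from the restricted embedding $j\restrict M$ and the seed $[id]_{U'}$. However, your cycle $(1)\Rightarrow(3)\Rightarrow(2)\Rightarrow(1)$ does not close, because the step $(2)\Rightarrow(1)$ rests on a misreading of the definitions. You write that ``a weak $(\kappa,\lambda)$-model is by definition a $(\kappa,\lambda)$-model'' and list its properties while omitting closure under ${<}\kappa$-sequences. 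In the paper's terminology, removing the adjective ``weak'' \emph{adds} the requirement $M^{{<}\kappa}\subseteq M$, and the definition of nearly $\lambda$-strongly compact demands a $(\kappa,\lambda)$-model, i.e.\ a transitive \emph{and} ${<}\kappa$-closed one. Statement (2) hands you only a transitive model with no closure, so $(2)\Rightarrow(1)$ is precisely the nontrivial upgrade step, not a triviality. The repair is to run your restriction argument with (2) as the hypothesis---that is, prove $(2)\Rightarrow(3)$: place the arbitrary basic weak model $M$ inside a weak model $\bar M$ supplied by (2), whose ultrafilter is \emph{assumed} good, and restrict; then $(3)\Rightarrow(1)$ is immediate by applying (3) to a ${<}\kappa$-closed transitive elementary submodel of $H_{\lambda^+}$ containing $A$. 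This is the paper's cycle $(1)\Rightarrow(2)\Rightarrow(3)\Rightarrow(1)$.

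Two smaller points. In your $(1)\Rightarrow(3)$ you never actually justify why $\text{Ult}(M',U')$ is well-founded---your sentence even concedes that $U'$ ``was not assumed good'' while simultaneously invoking well-foundedness of the target. The justification is that the $M'$ supplied by (1) is a $(\kappa,\lambda)$-model, hence closed under ${<}\kappa$-sequences (and $\kappa$ is uncountable, indeed inaccessible, here), so $U'$ has the countable intersection property and is therefore good; this must be said. And in $(3)\Rightarrow(2)$ the claim that a basic weak model ``is in particular'' a weak model is backwards---weak models are required to be transitive---so the Skolem hull must be replaced by its transitive collapse, noting (as the paper does) that the collapse fixes $P(P_\kappa(\lambda))^M$ pointwise, so the good weak ultrafilter transfers.
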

\begin{proof}
Clearly, (1) implies (2) and (3) implies (1). Suppose that (2) is true.  Using that $P_\kappa(\kappa)\in M$ combined with the existence of an elementary embedding with critical point $\kappa$ easily implies that $\kappa$ is inaccessible.  Next, observe that it suffices to verify $(3)$ for weak $(\kappa,\lambda)$-models because if $M$ is a basic weak $(\kappa,\lambda)$-model and $M^*$ is the collapse of $M$, then $U$ is a good weak $M$-ultrafilter if and only if it is a good weak $M^*$-ultrafilter. So fix a weak $(\kappa,\lambda)$-model $M$. By (2), there is a weak $(\kappa,\lambda)$-model $\bar M$ with $M\in \bar M$ for which there is a good weak $\bar M$-ultrafilter $U$. Let $j:\bar M\to \bar N$ be the ultrapower embedding. Then there is a set $s\in \bar N$ such that $j\image\lambda\subseteq s$ in $\bar N$ with $|s|^{\bar N}<j(\kappa)$ (namely $s=[id]_U$). Consider the restriction $j\restrict M: M\to j( M)$ and observe that $s$ along with any bijection witnessing that $|s|^{\bar N}<j(\kappa)$ are elements of $V_{j(\kappa)}^{\bar N}$ and hence of $j(M)$ since $V_{j(\kappa)}^{\bar N}\subseteq j(M)$  (recall that $V_\kappa$ is contained in every basic weak $(\kappa,\lambda)$-model if $\kappa$ is inaccessible). Now we can use the restriction $j\restrict M$ to get a good weak $M$-ultrafilter. Thus, (2) implies (3). 
\end{proof}
Observe that the same proposition holds when we replace nearly $\lambda$-strongly compact with nearly $\lambda$-supercompact and weak $M$-ultrafilters with $M$-ultrafilters by carrying out the same proof with $s$ replaced by $j\image\lambda$.
\begin{proposition}\label{prop:nearlyLambdaSuperCompactEquivalence}
The following are equivalent:
\begin{enumerate}
\item $\kappa$ is nearly $\lambda$-supercompact.
\item Every $A\subseteq\lambda$ is an element of a  weak $(\kappa,\lambda)$-model $M$ for which there is a good $M$-ultrafilter $U$.
\item Every basic weak $(\kappa,\lambda)$-model $M$ has a good $M$-ultrafilter $U$.
\end{enumerate}
\end{proposition}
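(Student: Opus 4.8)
The plan is to mimic exactly the proof of Proposition~\ref{prop:nearlyLambdaStrongCompactEquivalence}, substituting $M$-ultrafilters for weak $M$-ultrafilters throughout and using $j\image\lambda$ in the role of the witnessing set $s$. As in that proof, the implications (1)$\Rightarrow$(2) and (3)$\Rightarrow$(1) are immediate from the definitions: for (1)$\Rightarrow$(2), a $(\kappa,\lambda)$-model is in particular a weak $(\kappa,\lambda)$-model and by Proposition~\ref{prop:ultrapower} an $M$-ultrafilter already has a well-founded ultrapower (since $(\lambda^+)^M$ is an initial segment of the ordinals of $N$, so the ordinals of $N$ are well-founded past $\lambda$; combined with $M$-normality the standard regressive-function argument gives well-foundedness everywhere), hence it is good; for (3)$\Rightarrow$(1), given $A\subseteq\lambda$ one builds a basic weak $(\kappa,\lambda)$-model $M$ with $A\in M$ (using $\lambda^{<\kappa}=\lambda$ to get one of size $\lambda$) and applies (3), then collapses $M$ to a transitive $(\kappa,\lambda)$-model, noting that goodness of the $M$-ultrafilter is preserved under the collapse.

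The substantive implication is (2)$\Rightarrow$(3). First I would observe, exactly as in the strongly compact case, that (2) implies $\kappa$ is inaccessible: since $P_\kappa(\kappa)\in M$ and there is an elementary embedding with critical point $\kappa$, inaccessibility of $\kappa$ follows. Next I would reduce (3) to the case of weak $(\kappa,\lambda)$-models, using the remark after Proposition~\ref{prop:nearlyLambdaStrongCompactEquivalence} that for a basic weak $(\kappa,\lambda)$-model $M$ with transitive collapse $M^*$, a subset of $P(P_\kappa(\lambda))^M$ is a good $M$-ultrafilter if and only if its image is a good $M^*$-ultrafilter (the collapse is the identity on $P_\kappa(\lambda)$ and on $P(P_\kappa(\lambda))^M$ since $\lambda+1\subseteq M$, so normality, fineness, $M$-$\kappa$-completeness and well-foundedness of the ultrapower all transfer). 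Then, given a weak $(\kappa,\lambda)$-model $M$, apply (2) to obtain a weak $(\kappa,\lambda)$-model $\bar M$ with $M\in\bar M$ carrying a good $M$-ultrafilter — wait, rather a good $\bar M$-ultrafilter $U$ — and let $j:\bar M\to\bar N$ be the ultrapower embedding. By Proposition~\ref{prop:ultrapower}, $j\image\lambda=[id]_U\in\bar N$ and $j(\kappa)>\lambda$ in $\bar N$. Now restrict to $j\restrict M:M\to j(M)$: since $\kappa$ is inaccessible, $V_\kappa\subseteq\bar M$ and hence $V_{j(\kappa)}^{\bar N}\subseteq j(M)$, and the set $j\image\lambda$ together with a bijection witnessing $|j\image\lambda|<j(\kappa)$ lie in $V_{j(\kappa)}^{\bar N}$, hence in $j(M)$; therefore $j\image\lambda\in j(M)$ and $j(\kappa)>\lambda$ there, so by the converse remark following Proposition~\ref{prop:ultrapower} the set $U_M=\{A\subseteq P_\kappa(\lambda)^M\mid j\image\lambda\in j(A)\}$ is an $M$-ultrafilter, and it is good because its ultrapower embeds into the well-founded structure $j(M)$ (equivalently, by Proposition~\ref{prop:ultrapower} applied to $U_M$ together with the fact that the relevant ordinals sit inside the well-founded $\bar N$).

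The main obstacle — really the only place requiring care beyond transcription — is verifying that $j\restrict M$ genuinely lands inside $j(M)$ in the appropriate sense, i.e. that $j\image\lambda$ is an element of $j(M)$ and not merely of $\bar N$, and that the resulting derived ultrafilter is normal rather than merely $\kappa$-complete. For normality one needs that the witness used is $j\image\lambda$ itself (not an arbitrary superset $s$ as in the strongly compact argument), which is precisely what Proposition~\ref{prop:ultrapower} guarantees is available, namely $[id]_U=j\image\lambda$; then the converse characterization stated just after Proposition~\ref{prop:ultrapower} delivers an $M$-ultrafilter directly. Goodness is then automatic from Proposition~\ref{prop:ultrapower}(1) for $U_M$. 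Hence (2) implies (3), completing the cycle of equivalences.
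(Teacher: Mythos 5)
Your proof follows the paper's own route exactly: the paper establishes this proposition by the one-line instruction ``carry out the same proof as Proposition~\ref{prop:nearlyLambdaStrongCompactEquivalence} with $s$ replaced by $j\image\lambda$,'' and your treatment of the substantive implication (2)$\Rightarrow$(3) --- deducing inaccessibility, reducing (3) to weak models via the transitive collapse, applying (2) to a code for $M$ to obtain $\bar M\ni M$ carrying a good $\bar M$-ultrafilter $U$, and deriving an $M$-ultrafilter from $j\restrict M$ using the witness $j\image\lambda=[id]_U\in j(M)$ --- is precisely that argument. You correctly identify the two points where the supercompact case differs from the strongly compact one: normality of the derived filter requires the witness to be $j\image\lambda$ itself rather than a superset $s$, and goodness follows because $\mathrm{Ult}(M,U_M)$ factors into $j(M)$, which sits inside the well-founded $\bar N$.

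One step is wrong as written, however. In (1)$\Rightarrow$(2) you justify goodness of the $M$-ultrafilter by claiming that Proposition~\ref{prop:ultrapower} together with ``$M$-normality and the standard regressive-function argument'' yields well-foundedness of the whole ultrapower. That is false: Proposition~\ref{prop:ultrapower} only gives well-foundedness up to $(\lambda^+)^M$, and the paper explicitly remarks afterwards that the ordinals of the ultrapower may be ill-founded above that point even for $M$-ultrafilters --- indeed, if normality implied goodness, the distinction between the games $wG_\delta$ and $G_\delta$ would largely evaporate. The correct reason the ultrafilter in (1) is good is that a $(\kappa,\lambda)$-model is by definition closed under ${<}\kappa$-sequences, hence under $\omega$-sequences, so any weak $M$-ultrafilter on it has the countable intersection property and is therefore good. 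Relatedly, in (3)$\Rightarrow$(1) you should take the basic model containing $A$ to be closed under ${<}\kappa$-sequences (possible since $\lambda^{<\kappa}=\lambda$), so that its transitive collapse is a genuine $(\kappa,\lambda)$-model; collapsing a basic \emph{weak} model, as you wrote, only yields a weak $(\kappa,\lambda)$-model, which does not meet the definition of nearly $\lambda$-supercompact. Both repairs are routine and do not change the architecture of the proof, but the justifications would not survive as stated.
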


Using Magidor's result that the least strongly compact cardinal can be the least measurable cardinal \cite{magidor:identityCrisis}, it is easy to consistently separate nearly $\lambda$-strongly compact and nearly $\lambda$-supercompact cardinals for sufficiently large $\lambda$. If $\kappa$ is strongly compact and the least measurable, then $\kappa$ is, in  particular, nearly $\lambda$-strongly compact for every $\lambda$, but it can't be nearly $\lambda$-supercompact for any $\lambda\geq 2^\kappa$ because this would imply that $\kappa$ is a limit of measurable cardinals. 

Although, we will not use it in this paper, we note that nearly $\lambda$-supercompact cardinals are precisely the $|H(\lambda)|$-$\Pi^1_1$-subcompact cardinals \cite[Lemma 8]{HayutMagidor:subcompact}, introduced by Neeman and Steel in \cite{NeemanSteelSubcompact} and Hayut \cite{YairSquare} and used extensively in recent years (e.g. \cite{HAYUT_UNGER_2020,TomYairMoti,HayutMagidor:subcompact,HayutPoveda}).

Next, we show that, analogously to weakly compact cardinals, nearly $\lambda$-strongly compact cardinals satisfy a filter extension property for filters of size $\lambda$. 
\begin{definition}
 Suppose that $\kappa$ is nearly $\lambda$-strongly compact. The \emph{weak $P_\kappa(\lambda)$-filter extension property} holds if for every basic $(\kappa,\lambda)$-model $M$, weak $M$-ultrafilter $U$, and basic $(\kappa,\lambda)$-model $N\supseteq M$, there is a weak $N$-ultrafilter $W\supseteq U$. Suppose  that $\kappa$ is nearly $\lambda$-supercompact. The \emph{$P_\kappa(\lambda)$-filter extension property} holds if we replace weak $M$-ultrafilter and weak $N$-ultrafilter with $M$-ultrafilter and $N$-ultrafilter respectively.
\end{definition}
\begin{proposition}[\cite {BuhagiarDjamonja:squareCompactness}]\label{prop:wFEP}
If $\kappa$ is nearly $\lambda$-strongly compact, then the weak $P_\kappa(\lambda)$-filter extension property holds.
\end{proposition}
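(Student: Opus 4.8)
The plan is to reduce the two-cardinal statement to the one-cardinal filter extension property for weakly compact cardinals, since the hypothesis that $\kappa$ is nearly $\lambda$-strongly compact is a two-cardinal analogue of weak compactness, and the proof of the one-cardinal result (Keisler–Tarski) is a back-and-forth/Skolem-hull argument that adapts cleanly. First I would fix a basic $(\kappa,\lambda)$-model $M$, a weak $M$-ultrafilter $U$, and a basic $(\kappa,\lambda)$-model $N\supseteq M$, and I would choose a large enough regular $\theta$ so that $M,N,U\in H_\theta$. The goal is to produce a weak $N$-ultrafilter $W\supseteq U$. The key point is that ``$U$ is a weak $M$-ultrafilter'' and the containments $M\in N$, $U\supseteq$ the relevant filters, are all statements about objects of size $\lambda$, hence encodable as a subset of $\lambda$.

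Next I would invoke the fact that $\kappa$ is nearly $\lambda$-strongly compact (via Proposition~\ref{prop:nearlyLambdaStrongCompactEquivalence}) applied to a subset $A\subseteq\lambda$ that codes the pair $(N,U)$ together with $M$ and the elementary diagram of $N$: there is a weak $(\kappa,\lambda)$-model $\bar M$ with $A\in\bar M$ (so $N,U,M\in\bar M$) and a good weak $\bar M$-ultrafilter, giving an ultrapower embedding $j:\bar M\to\bar N$ with $\crit(j)=\kappa$, $j(\kappa)>\lambda$, and a set $s\in\bar N$ with $j\image\lambda\subseteq s$, $|s|^{\bar N}<j(\kappa)$ (namely $s=[\mathrm{id}]$). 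Then define
$$W=\{B\in P(P_\kappa(\lambda))^N\mid s\in j(B)\}.$$
By \Los' theorem for $j$ and the usual computation, $W$ is an ultrafilter on $P(P_\kappa(\lambda))^N$; fineness follows because $j\image\lambda\subseteq s$ forces $s\in j(X_\alpha)$ for each $\alpha<\lambda$; and $N$-$\kappa$-completeness follows because any ${<}\kappa$-sequence $\vec B\in N$ of elements of $W$ has $j(\vec B)$ of length $<j(\kappa)$, and $s$ belongs to $j$ of each entry, so $s\in j(\bigcap\vec B)$ since $j$ commutes with ${<}\kappa$-intersections computed in $\bar N$ — here I use that $N$, being a basic $(\kappa,\lambda)$-model, genuinely computes these intersections and that $V_\kappa\subseteq\bar M$ so $N\in\bar M$ is handled correctly. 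Finally, $W\supseteq U$: for $B\in U$, we have $B\in M$, and since $U$ is a weak $M$-ultrafilter with ultrapower embedding $j_U$ whose "seed" is compatible with $s$ under $j$ — more directly, $j(U)$ is a weak $j(M)$-ultrafilter in $\bar N$, and one checks $s\in j(B)$ by absoluteness of the statement $B\in U$ between $\bar M$ and $V$ together with $\Sigma_0$-elementarity. The cleanest route here is to note $U\in\bar M$, so $j(U)$ makes sense, $j\restrict M:M\to j(M)$ is elementary, and $j\image\lambda\in j(M)$-style reasoning (as in the proof of Proposition~\ref{prop:nearlyLambdaStrongCompactEquivalence}) shows $s$ witnesses membership in every set in $U$.

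The main obstacle I expect is verifying $W\supseteq U$ rigorously: one must ensure that the seed $s=[\mathrm{id}]$ for $\bar M$ is "coherent" with the original weak $M$-ultrafilter $U$, i.e. that $s\in j(B)$ for every $B\in U$, not merely for the fine sets $X_\alpha$. This requires knowing that $U$ itself (not just $M$) sits inside $\bar M$ and that $j(U)$ relates to $s$ correctly — concretely, that for $B\in P(P_\kappa(\lambda))^M$, $B\in U$ iff $s\in j(B)$. This is where one uses that $A$ was chosen to code $U$: the statement "$B\in U$" for $B$ ranging over $P(P_\kappa(\lambda))^M$ is a single $\bar M$-definable predicate, and applying $j$ and evaluating at $s$ gives "$s\in j(B)$ iff $B\in U$" by elementarity, since $j\restrict M = \mathrm{id}$ on the relevant parameters below $\kappa$ and $s$ extends $j\image\lambda$. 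Modulo this coding bookkeeping — which is exactly the content of the cited result of Buhagiar–Djamonja \cite{BuhagiarDjamonja:squareCompactness} — the verification is routine, and the argument is a direct two-cardinal transcription of the Keisler–Tarski proof.
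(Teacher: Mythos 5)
There is a genuine gap at precisely the step you flag as the main obstacle: the claim that the seed $s=[\mathrm{id}]$ satisfies $s\in j(B)$ for every $B\in U$ is false, and the elementarity argument you offer does not establish it. Elementarity of $j$ gives $B\in U\iff j(B)\in j(U)$; it does not give $B\in U\iff s\in j(B)$. The set $\{B\in P(P_\kappa(\lambda))^M\mid s\in j(B)\}$ is one particular weak $M$-ultrafilter, the one derived from $(j,s)$, whereas $U$ is an arbitrary weak $M$-ultrafilter handed to you in advance; since $M$ typically carries many distinct weak $M$-ultrafilters, they cannot all coincide with the derived one. Putting a code for $U$ into $\bar M$ makes $j(U)$ available inside $\bar N$ but creates no link between $j(U)$ and the seed $s$, so your $W$ is a perfectly good weak $N$-ultrafilter that simply need not extend $U$.

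The paper repairs exactly this point by choosing a different seed. Since $\bar M$ has a bijection between $M$ and $\lambda$ and $j\image\lambda$ is covered in $\bar N$ by a set of size less than $j(\kappa)$, the family $j\image U\subseteq j(U)$ is covered by some $s'\subseteq j(U)$ of size less than $j(\kappa)$ in $\bar N$. Because $M$ and $\bar M$ are basic $(\kappa,\lambda)$-models, hence closed under ${<}\kappa$-sequences, $U$ genuinely generates a $\kappa$-complete filter and $\bar M$ verifies this; by elementarity $\bar N$ believes every ${<}j(\kappa)$-sized subfamily of $j(U)$ has nonempty intersection, so one may pick $c\in\bigcap s'$ and set $W=\{A\in P(P_\kappa(\lambda))^N\mid c\in j(A)\}$. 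Now $W\supseteq U$ holds by construction, since $j(B)\in s'$ for each $B\in U$, and fineness and $N$-$\kappa$-completeness of $W$ follow as in your verification (note that $j\image\lambda\subseteq c$ because each $X_\alpha\in U$). So the overall architecture of your proposal is right, but the extension step requires replacing $[\mathrm{id}]$ by a point in $\bigcap j\image U$, and this uses the ${<}\kappa$-closure of the models, which your argument never invokes.
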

\begin{proof}
Fix a basic $(\kappa,\lambda)$-model $M$, a weak $M$-ultrafilter $U$, and a basic $(\kappa,\lambda)$-model $N\supseteq M$. Choose a basic $(\kappa,\lambda)$-model $\bar M$ such that $M,U,N\in \bar M$, all are contained in $\bar M$, $\bar M$ knows that $M$  has size $\lambda$, and there is an elementary embedding $j:\bar M\to \bar N$ with $\crit(j)=\kappa$, $j(\kappa)>\lambda$ and $j\image\lambda\subseteq s$ for some $s\in \bar N$ with $|s|<j(\kappa)$ in $\bar N$. By elementarity, $\bar N$ satisfies that $j(U)$ is a weak $j(M)$-ultrafilter. Since $\bar M$ has a bijection between $M$ and $\lambda$ and $j\image \lambda$ is contained in a set of size less than $j(\kappa)$ in $\bar N$, there is a set $s'\subseteq j(U)$ of size less than $j(\kappa)$ in $\bar N$ containing $j(U)\restrict j\image M$. Thus, there is $c\in\bigcap s'$. Let $$W=\{A\in N\mid c \in j(A)\}.$$ Then clearly $W$ is a weak $N$-ultrafilter extending $U$.
\end{proof}
\begin{theorem}\label{th:fepInconsistent}
The $P_\kappa(\lambda)$-filter extension property is inconsistent.
\end{theorem}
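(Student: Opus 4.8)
The plan is to assume the $P_\kappa(\lambda)$-filter extension property and run it along a chain of length $\lambda^+$, producing a normal $M$-ultrafilter on a model $M$ that is ``too correct'', exactly mirroring the one-cardinal argument of Gitman in \cite{HolySchlicht:HierarchyRamseyLikeCardinals}. Observe first that the hypothesis of the property already forces $\kappa$ to be nearly $\lambda$-supercompact, hence inaccessible, and we are assuming $\lambda^{{<}\kappa}=\lambda$, so the structural machinery of Propositions~\ref{prop:ultrapower}, \ref{prop:wa} and \ref{prop:nearlyLambdaSuperCompactEquivalence} is available.

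\emph{Building the chain.} I would recursively construct an elementary chain $\langle M_\alpha:\alpha<\lambda^+\rangle$ of basic $(\kappa,\lambda)$-models together with a $\subseteq$-increasing sequence $\langle U_\alpha:\alpha<\lambda^+\rangle$, where $U_\alpha$ is an $M_\alpha$-ultrafilter, such that $\langle M_\xi,U_\xi:\xi\le\alpha\rangle\in M_{\alpha+1}$ and, by bookkeeping, $\alpha\subseteq M_{\alpha+1}$ and $M_{\alpha+1}$ contains a surjection $\lambda\to\alpha$. Start with any basic $(\kappa,\lambda)$-model $M_0$ and an $M_0$-ultrafilter $U_0$, which exist by near $\lambda$-supercompactness (Proposition~\ref{prop:nearlyLambdaSuperCompactEquivalence}). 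At a successor stage, choose $M_{\alpha+1}\succ M_\alpha$ a basic $(\kappa,\lambda)$-model containing $\langle M_\xi,U_\xi:\xi\le\alpha\rangle$ and the relevant ordinal/surjection data, and apply the filter extension property to $(M_\alpha,U_\alpha,M_{\alpha+1})$ to obtain an $M_{\alpha+1}$-ultrafilter $U_{\alpha+1}\supseteq U_\alpha$. At a limit $\gamma$ put $M_\gamma=\bigcup_{\xi<\gamma}M_\xi$ and $U_\gamma=\bigcup_{\xi<\gamma}U_\xi$; when $\cof(\gamma)\ge\kappa$ the model $M_\gamma$ is again a basic $(\kappa,\lambda)$-model and $U_\gamma$ is an $M_\gamma$-ultrafilter, whereas when $\cof(\gamma)<\kappa$ one first re-closes under ${<}\kappa$-sequences and re-applies the extension property — a technical point handled exactly as in the one-cardinal construction.

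\emph{Extracting the ultrafilter.} Let $M=\bigcup_{\alpha<\lambda^+}M_\alpha$ and $U=\bigcup_{\alpha<\lambda^+}U_\alpha$. Since $\cof(\lambda^+)\ge\kappa$, $M$ is closed under ${<}\kappa$-sequences and models $\ZFC^-$, and $U$ is an $M$-ultrafilter. It is weakly amenable: any $\langle A_\xi:\xi<\lambda\rangle\in M$ lies in some $M_{\alpha+1}$, and then $\{\xi<\lambda: A_\xi\in U\}=\{\xi<\lambda: A_\xi\in U_{\alpha+1}\}$ is definable in $M_{\alpha+2}$ from that sequence and from $U_{\alpha+1}\in M_{\alpha+2}$, hence lies in $M$. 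It has the countable intersection property: any countable $\{A_n:n<\omega\}\subseteq U$ lies, together with its enumeration, in some basic (hence $\omega$-closed) $M_{\alpha+1}$, where $\kappa$-completeness of $U_{\alpha+1}$ gives $\bigcap_n A_n\neq\emptyset$. Therefore $\Ult(M,U)$ is well-founded; let $j\colon M\to N$ be the transitive ultrapower. By Propositions~\ref{prop:ultrapower} and~\ref{prop:wa}: $\crit(j)=\kappa$, $j(\kappa)>\lambda$, $j\image\lambda=[\mathrm{id}]_U\in N$, $P(\lambda)^M=P(\lambda)^N$, and hence $(\lambda^+)^M=(\lambda^+)^N$ and $H_{\lambda^+}^M=H_{\lambda^+}^N$; the bookkeeping moreover makes $M$ (and so $N$) compute $\lambda^+$ correctly.

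\emph{The contradiction.} This is the crux, and the step I expect to be the main obstacle. Since $U$ is a weakly amenable, countably complete $M$-ultrafilter, $j$ may be iterated (Gaifman), yielding well-founded iterates $M=\mathcal M_0\to\mathcal M_1\to\cdots\to\mathcal M_\eta\to\cdots$ with embeddings $j_{0,\eta}$, and $P(\lambda)^{\mathcal M_\eta}=P(\lambda)^M$ for all $\eta$ because the critical points past stage $0$ exceed $\lambda$. One then derives an absurdity from the \emph{length} $\lambda^+$ of the chain together with the coherence $\langle M_\xi,U_\xi:\xi<\alpha\rangle\in M$ for every $\alpha<\lambda^+$: as in the one-cardinal case this forces $j$, or a suitable iterate, either to move an ordinal below $\lambda$ or to violate $j\image\lambda\in N$ (equivalently, $P(\lambda)^M=P(\lambda)^N$). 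Organizing this final argument so that the length $\lambda^+$ genuinely bites is the delicate part; it is precisely here that normality is essential, since the analogue for \emph{weak} $M$-ultrafilters is the weak $P_\kappa(\lambda)$-filter extension property of Proposition~\ref{prop:wFEP}, which is consistent.
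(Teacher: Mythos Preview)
Your construction up through obtaining a weakly amenable, countably complete $M$-ultrafilter $U$ on a large model $M$ is reasonable, but the final paragraph---the one labeled ``the contradiction''---is not an argument. You write that iterating $j$ ``forces $j$, or a suitable iterate, either to move an ordinal below $\lambda$ or to violate $j\image\lambda\in N$,'' but you give no reason why either of these should occur, and indeed neither follows from what you have built. The existence of a weakly amenable, iterable $M$-ultrafilter on a correct model $M$ is not by itself contradictory: it is exactly the kind of object one obtains from genuine $\lambda$-supercompactness, and the paper even studies weaker versions of it under the name ``generically $\lambda$-supercompact with wa.'' The length $\lambda^+$ of your chain plays no role once you pass to the union, so the phrase ``so that the length $\lambda^+$ genuinely bites'' has no content. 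You yourself flag this as ``the delicate part''; it is in fact the entire missing proof.

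You also misremember the one-cardinal argument you cite. Gitman's proof (Proposition~2.13 of \cite{HolySchlicht:HierarchyRamseyLikeCardinals}) does not use iteration or a $\kappa^+$-chain; it is a minimality/reflection argument, and that is precisely what the paper does here. Take $\kappa$ least for which the property holds for some $\lambda$, build only an $\omega$-chain $M_0\prec M_1\prec\cdots\prec H_{\lambda^+}$ with $\la M_n,U_n\ra\in M_{n+1}$, and let $M_\omega$, $U_\omega$ be the unions. Then $U_\omega$ is weakly amenable (no well-foundedness needed), so by Proposition~\ref{prop:wa} the ultrapower $N_\omega$ satisfies $H_{\lambda^+}^{N_\omega}=M_\omega$. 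One checks directly that both near $\lambda$-supercompactness of $\kappa$ and the $P_\kappa(\lambda)$-filter extension property transfer to $N_\omega$, since the relevant witnesses live in $H_{\lambda^+}$. By elementarity of $j$ there is then $\bar\kappa<\kappa$ with the same property in $M_\omega$; since $V_\kappa\subseteq M_\omega$ this holds in $V$, contradicting minimality. No iteration, no $\lambda^+$-chain, no limit-stage re-closure is needed.
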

\begin{proof}
Suppose towards a contradiction that the $P_\kappa(\lambda)$-filter extension property holds for some nearly $\lambda$-supercompact $\kappa$. We can assume that $\kappa$ is least for which there is $\lambda\geq\kappa$ such that $\kappa$ is nearly $\lambda$-supercompact and the $P_\kappa(\lambda)$-filter extension property holds. Let \hbox{$M_0\prec H_{\lambda^+}$} be any $(\kappa,\lambda)$-model and $U_0$ be any $M_0$-ultrafilter. Given a $(\kappa,\lambda)$-model $M_n\prec H_{\lambda^+}$ and $M_n$-ultrafilter $U_n$, let $\la M_n,U_n\ra\in M_{n+1}\prec H_{\lambda^+}$ and $U_{n+1}$ be any $M_{n+1}$-ultrafilter extending $U_n$. Let $M_\omega=\bigcup_{n<\omega}M_n$ and let $U_\omega=\bigcup_{n<\omega}U_n$. Note that \hbox{$M_\omega\prec H_{\lambda^+}$} is a weak $(\kappa,\lambda)$-model and $U_\omega$ is a weakly amenable $M_\omega$-ultrafilter. Let $j:M_\omega\to N_{\omega}$ be the ultrapower embedding of $M_\omega$ by $U_\omega$, with $N_\omega$ not necessarily well-founded. By Proposition~\ref{prop:wa},  \hbox{$M_{\omega}=H_{\lambda^+}^{N_\omega}$}. First, let's argue that $\kappa$ is nearly $\lambda$-supercompact in $N_\omega$. Fix some $M$, which $N_\omega$ believes is a $(\kappa,\lambda)$-model. Since\linebreak $M\in H_{\lambda^+}^{N_\omega}=M_\omega$ and $M_\omega$ satisfies that $M$ is a $(\kappa,\lambda)$-model, $M$ is actually a $(\kappa,\lambda)$-model. It follows that $H_{\lambda^+}$ has an $M$-ultrafilter, and, by elementarity, so does $M_\omega$. Thus, $N_\omega$ has an $M$-ultrafilter. Next, let's argue that the $P_\kappa(\lambda)$-filter extension property holds in $N_\omega$. Working in $N_\omega$, fix $M, U, N\in H_{\lambda^+}^{N_\omega}=M_\omega$ such that $N_\omega$ (and, hence, $M_\omega$) satisfy that $M\subseteq N$ are $(\kappa,\lambda)$-models and $U$ is an  $M$-ultrafilter. Thus, $M,N$ are actually $(\kappa,\lambda)$-models, and so $H_{\lambda^+}$ has a $N$-ultrafilter extending $U$. By elementarity, $M_\omega$ has some $N$-ultrafilter $W\supseteq U$. Since $N_\omega$ satisfies that $\kappa$ is nearly $\lambda$-supercompact, with $\lambda<j(\kappa)$, and the $P_\kappa(\lambda)$-filter extension property holds, by elementarity, $M_\omega$ satisfies that there is a cardinal $\bar\kappa<\kappa$ and some $\bar\kappa\leq\bar\lambda<\kappa$  such that $\bar\kappa$ is nearly $\bar\lambda$-supercompact and the $P_{\bar\kappa}(\bar\lambda)$-filter extension property holds. Since $\kappa$ is inaccessible, $V_\kappa\subseteq M_0\subseteq M_\omega$, which means that $\bar\kappa$ is actually $\bar\lambda$-nearly supercompact and the $P_{\bar\kappa}(\bar\lambda)$-filter extension property actually holds. But this contradicts the leastness of $\kappa$.
\end{proof}
Theorem~\ref{th:fepInconsistent} shows that if we extend a $(\kappa,\lambda)$-model $M$, for which we have an $M$-ultrafilter $U$, to another $(\kappa,\lambda)$-model $N$, we cannot always correspondingly extend $U$ to an $N$-ultrafilter. But if given $M$, we \textit{choose} the $M$-ultrafilter $U$ strategically, then given any $N$ extending $M$, we will be able to extend $U$ to an $N$-ultrafilter. This  motivates the filter extension games we introduce in Section~\ref{sec:games}. The existence of winning strategies in these games will be intimately related to versions of generic supercompactness.

We end this section by giving a characterization of $\lambda$-ineffable cardinals in terms of the existence of certain $M$-ultrafilters for $(\kappa,\lambda)$-models $M$. Recall that a subset $A\subseteq P_\kappa(\lambda)$ is \emph{unbounded} if for every $x\in P_\kappa(\lambda)$, there is $y\in A$ such that $x\subseteq y$, \emph{closed} if for every increasing sequence $\{x_\xi\mid\xi<\alpha\}$, with $\alpha<\kappa$ and all $x_\xi\in A$, $\bigcup_{\xi<\alpha}x_\xi\in A$, \emph{stationary} if it has a non-empty intersection with every closed unbounded set. Jech \cite{jech:lambdaIneffable} defined that a cardinal $\kappa$ is \emph{$\lambda$-ineffable} if for any function $f:P_\kappa(\lambda)\to P_\kappa(\lambda)$ such that $f(x)\subseteq x$ for all $x\in P_\kappa(\lambda)$, there is $A\subseteq \lambda$ such that $$\{x\in P_\kappa(\lambda)\mid A\cap x=f(x)\}$$ is stationary.

The proof of the next proposition is based on the proof of Corollary 1.3.1 in \cite{AbramsonHarringtonKleinbergZwicker:FlippingProperties}, which shows that a cardinal $\kappa$ is ineffable if and only if for every collection $\vec A=\{A_\xi\mid\xi<\kappa\}$ of subsets of $\kappa$, there is a corresponding collection $\{\bar A_\xi\mid\xi<\kappa\}$, a \emph{flip} of $\vec A$, where $\bar A_\xi=A_\xi$ or $\bar A_\xi=\kappa\setminus A_\xi$, such that $\Delta_{\xi<\kappa}\bar A_\xi$ is stationary.

Recall that the stationarity of a diagonal intersection of any $\lambda$-sized collection of sets is independent of the enumeration of those sets as a $\lambda$-sequence. This is because for any collection of sets $\{A_\xi\mid \xi<\lambda\}$ with $A_\xi\subseteq P_\kappa(\lambda)$ for all $\xi$, the equivalence class of $\Delta_{\xi<\lambda}A_\xi$ is the greatest lower bound of $\{[A_\xi]_{I_{\rm NS}}\mid \xi<\lambda\}$ in the Boolean algebra $P(P_\kappa(\lambda))/I_{\rm NS}$, where $I_{\rm NS}$ is the non-stationary ideal.
\begin{proposition}\label{prop:lambdaIneffableEqui}
    A cardinal $\kappa$ is $\lambda$-ineffable if and only if every $(\kappa,\lambda)$-model $M$ has an $M$-ultrafilter $U=\{A_\xi\mid\xi<\lambda\}$ such that $\Delta_{\xi<\lambda} A_\xi$ is stationary. In particular, $\kappa$ is nearly $\lambda$-supercompact.
\end{proposition}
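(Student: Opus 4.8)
The plan is to prove both directions by relating $M$-ultrafilters $U$ on a $(\kappa,\lambda)$-model $M$ with stationary diagonal intersection to the defining property of $\lambda$-ineffability, namely that every regressive $f\colon P_\kappa(\lambda)\to P_\kappa(\lambda)$ has a ``homogeneous'' set $A\subseteq\lambda$ with $\{x\mid A\cap x=f(x)\}$ stationary. The key technical bridge, as the statement itself hints via the Abramson--Harrington--Kleinberg--Zwicker reference, is the \emph{flip} reformulation: $\kappa$ is $\lambda$-ineffable iff for every collection $\vec A=\{A_\xi\mid\xi<\lambda\}$ of subsets of $P_\kappa(\lambda)$ there is a flip $\{\bar A_\xi\mid\xi<\lambda\}$ (with $\bar A_\xi=A_\xi$ or $\bar A_\xi=P_\kappa(\lambda)\setminus A_\xi$) whose diagonal intersection $\Delta_{\xi<\lambda}\bar A_\xi$ is stationary. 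So the first step is to record (or quickly reprove, following Corollary~1.3.1 of \cite{AbramsonHarringtonKleinbergZwicker:FlippingProperties}, adapted to $P_\kappa(\lambda)$) this equivalence between the $\lambda$-ineffability definition of Jech and the flip property; the passage uses that a regressive function on $P_\kappa(\lambda)$ can be coded by a $\lambda$-sequence of subsets of $P_\kappa(\lambda)$ (for each $\xi<\lambda$ let $A_\xi=\{x\mid \xi\in f(x)\}$), and conversely a flip determines, on each $x$, the set $f(x)=\{\xi\in x\mid x\in \bar A_\xi\}$, with stationarity of the homogeneous set matching stationarity of the diagonal intersection.

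For the ($\Leftarrow$) direction, suppose every $(\kappa,\lambda)$-model has an $M$-ultrafilter with stationary diagonal intersection, and fix $\vec A=\{A_\xi\mid\xi<\lambda\}\subseteq P(P_\kappa(\lambda))$; I want a flip of $\vec A$ with stationary diagonal intersection. Choose a $(\kappa,\lambda)$-model $M$ with $\vec A\in M$ (and hence each $A_\xi\in M$, using $\lambda+1\subseteq M$ and $\Sigma_0$-elementarity; a suitable $M\prec H_{\lambda^+}$ works under the running hypothesis $\lambda^{<\kappa}=\lambda$). Let $U=\{B_\zeta\mid\zeta<\lambda\}$ be an $M$-ultrafilter with $\Delta_{\zeta<\lambda}B_\zeta$ stationary. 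Since $U$ is an ultrafilter on $P(P_\kappa(\lambda))^M$, for each $\xi$ exactly one of $A_\xi,\ P_\kappa(\lambda)^M\setminus A_\xi$ lies in $U$; call it $\bar A_\xi$. This is a flip of $\vec A$, and $\{\bar A_\xi\mid\xi<\lambda\}\subseteq U$. Now I use the remark just before the proposition: the $I_{\rm NS}$-equivalence class of a diagonal intersection of a $\lambda$-sized family is the greatest lower bound of the classes of its members in $P(P_\kappa(\lambda))/I_{\rm NS}$, independently of enumeration. Since $\{\bar A_\xi\}\cup\{B_\zeta\}$ is a $\lambda$-sized subfamily of $U$, its diagonal intersection lies (mod $I_{\rm NS}$) below $\Delta_\zeta B_\zeta$ and also, by the glb property, below $\Delta_\xi\bar A_\xi$; conversely $\Delta_\xi\bar A_\xi$ and $\Delta_\zeta B_\zeta$ both contain that common diagonal intersection up to a nonstationary error. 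The cleanest route: enumerate $\{\bar A_\xi\mid\xi<\lambda\}\cup\{B_\zeta\mid\zeta<\lambda\}$ as a single $\lambda$-sequence; its diagonal intersection $D$ is, mod $I_{\rm NS}$, the glb of all these sets, hence $D\subseteq^* \Delta_\xi\bar A_\xi$ and $D\subseteq^*\Delta_\zeta B_\zeta=:S$ which is stationary — but I actually need $\Delta_\xi\bar A_\xi\supseteq S$ mod $I_{\rm NS}$, which follows because each $B_\zeta\in U$ and a standard argument shows $\Delta_\zeta B_\zeta$ is (mod $I_{\rm NS}$) below every $\bar A_\xi\in U$ after re-indexing, so $S\subseteq^*\Delta_\xi\bar A_\xi$ and hence $\Delta_\xi\bar A_\xi$ is stationary. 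By the flip characterization, $\kappa$ is $\lambda$-ineffable.

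For the ($\Rightarrow$) direction, assume $\kappa$ is $\lambda$-ineffable and fix a $(\kappa,\lambda)$-model $M$; build the $M$-ultrafilter directly from the flip property. Enumerate $P(P_\kappa(\lambda))^M=\{A_\xi\mid\xi<\lambda\}$ (possible since $|M|=\lambda$, or after collapsing — as in the proof of Proposition~\ref{prop:nearlyLambdaStrongCompactEquivalence}, a good $M$-ultrafilter survives the collapse, and the same holds for the stationarity condition since it only refers to $P_\kappa(\lambda)$ and subsets thereof, which are unaffected). Apply $\lambda$-ineffability via the flip reformulation to $\vec A=\{A_\xi\mid\xi<\lambda\}$ to get a flip $\{\bar A_\xi\mid\xi<\lambda\}$ with $S:=\Delta_{\xi<\lambda}\bar A_\xi$ stationary. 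Set $U=\{\bar A_\xi\mid\xi<\lambda\}$. Then $U$ is an ultrafilter on $P(P_\kappa(\lambda))^M$ (for each set in $M$ exactly one of it and its complement is of the form $\bar A_\xi$); fineness holds because if $X_\alpha\notin U$ then $P_\kappa(\lambda)^M\setminus X_\alpha\in U$, so it would appear as some $\bar A_\xi$, and then $\Delta\bar A_\xi$ would miss every $x\ni\alpha$ with $\xi\in x$ — one checks this forces $S$ nonstationary, a contradiction (or more simply, $X_\alpha$ contains a club, hence belongs to any ultrafilter extending the club filter, and $U$ does because $S$ stationary and $\{\bar A_\xi\}\subseteq U$ forces every club set in $M$ into $U$); $M$-$\kappa$-completeness and $M$-normality follow from the fact that $S\subseteq \bigcap$ of any ${<}\kappa$-subfamily from $M$ and $S\subseteq \Delta_{\xi<\lambda}A_\xi$ for any $\lambda$-subfamily $\{A_\xi\}\subseteq U$ in $M$, using again the glb characterization of diagonal intersections mod $I_{\rm NS}$ and that $S$ being stationary is incompatible with the diagonal intersection being nonstationary. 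Thus $U$ is an $M$-ultrafilter with $\Delta_{\xi<\lambda}(U\text{-elements})=S$ stationary. The ``in particular'' clause is then immediate: an $M$-ultrafilter exists for every $(\kappa,\lambda)$-model, which is exactly nearly $\lambda$-supercompactness.

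The main obstacle I expect is the bookkeeping around enumerations and diagonal intersections: $\Delta_{\xi<\lambda}B_\xi$ depends superficially on the enumeration, and the cleanest statements ("$S$ is below every $\bar A_\xi\in U$ mod $I_{\rm NS}$") need the glb characterization in $P(P_\kappa(\lambda))/I_{\rm NS}$ handled carefully — in particular, checking that combining the enumeration of $U$ with any additional $\lambda$-family from $U$ does not change the mod-$I_{\rm NS}$ diagonal intersection, so that closure under diagonal intersections and under ${<}\kappa$-intersections both reduce to "$S$ stationary $\Rightarrow$ the relevant diagonal/ordinary intersection is in $U$". A secondary subtlety is ensuring fineness of $U$ from stationarity of $S$ alone; the key point is that for each $\alpha<\lambda$ the set $X_\alpha$ is in $M$ and contains a club (namely $\{x\mid \alpha\in x\}$ is club in $P_\kappa(\lambda)$ for $\alpha<\lambda$ — indeed it is closed and unbounded), so any ultrafilter on $P(P_\kappa(\lambda))^M$ whose chosen sets have stationary diagonal intersection must contain all club sets of $M$ and hence each $X_\alpha$.
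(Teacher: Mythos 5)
Your proposal is correct and follows essentially the same route as the paper's proof: both directions hinge on coding $P(P_\kappa(\lambda))^M$ (respectively a regressive $f$) as a $\lambda$-sequence of subsets of $P_\kappa(\lambda)$, taking the flip determined by the homogeneous set (respectively by $U$), and using the greatest-lower-bound characterization of diagonal intersections modulo $I_{\rm NS}$ together with stationarity to verify the ultrafilter axioms. The only difference is packaging — you isolate the Abramson--Harrington--Kleinberg--Zwicker flip equivalence as an explicit intermediate lemma, whereas the paper carries out the same flip construction inline against Jech's definition (extracting $A=\bigcup_{x\in B}f(x)$ directly) — and this does not change the substance of the argument.
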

\begin{proof}
 Suppose that $\kappa$ is $\lambda$-ineffable. Let $M$ be any $(\kappa,\lambda)$-model. Fix some enumeration $\{A_\xi\mid\xi<\lambda\}$ of $P(P_\kappa(\lambda))^M$.  Define $f:P_\kappa(\lambda)\to P_\kappa(\lambda)$ by $$f(x)=\{\xi\in x\mid x\in A_\xi\},$$ and observe that by definition $f(x)\subseteq x$. Thus, there is a stationary set $B$ and $A\subseteq \lambda$ such that for every $x\in B$, $A\cap x=f(x)$. For $\xi<\lambda$, define $\bar A_\xi$ to be $A_\xi$ if $\xi\in A$ and $P_\kappa(\lambda)\setminus A_\xi$ otherwise. We claim that $B\subseteq\Delta_{\xi<\lambda}\bar A_\xi$. Suppose that $x\in B$. Then, we have $$A\cap x=\{\xi\in x\mid x\in A_\xi\}.$$ We need to show that $x\in \bigcap_{\xi\in x}\bar A_\xi$. Fix $\xi\in x$. First, suppose that $\xi\in A$. Then $\bar A_\xi=A_\xi$ and since $\xi\in x\cap A$, $x\in A_\xi$. Otherwise, $\xi\notin A$. Then $\bar A_\xi=P_\kappa(\lambda)\setminus A_\xi$ and $x\notin A_\xi$. Thus, $x\in \bar A_\xi$. This completes the argument that $B\subseteq \Delta_{\xi<\lambda}\bar A_\xi$. In particular, $\Delta_{\xi<\lambda}\bar A_\xi$ contains a stationary set. 

Next, we claim that $$U=\{\bar A_\xi\mid\xi<\kappa\}$$ is an $M$-ultrafilter. Let $A_\xi=P_\kappa(\lambda)$. Since $$\{x\in P_\kappa(\lambda)\mid \xi\in x\}$$ is a club, there is some $x\in B$ with $\xi \in x$. But then $x\in \bar A_\xi$, which means $\bar A_\xi=A_\xi=P_\kappa(\lambda)\in U$. Next, suppose that $A_\delta\supseteq \bar A_\xi$. Since $$\{x\in P_\kappa(\lambda)\mid \xi,\delta\in x\}$$ is a club, there is some $x\in B$ with $\xi,\delta\in x$, so that $x\in \bar A_\xi,\bar A_\delta$. It follows that $\bar A_\delta=A_\delta\in U$. Next, suppose that for $\eta<\beta<\kappa$, $\bar A_{\xi_\eta}\in U$ and let $A_\delta=\bigcap_{\eta<\beta}\bar A_{\xi_\eta}$. Let $a=\{\xi_\eta\mid\eta<\beta\}\cup\{\delta\}$. Since $$\{x\in P_\kappa(\lambda)\mid a\subseteq x\}$$ is a club, there is some $x\in B$ with $a\subseteq x$. Thus, $x\in \bar A_{\xi_\eta}$ for all $\eta<\beta$ and $x\in \bar A_\delta$. It follows that $\bar A_\delta=A_\delta\in U$. Thus, $U$ is a weak $M$-ultrafilter. It remains to show that $U$ is closed under diagonal intersections of sequences from $M$. Fix a collection $\{A_{\xi_\eta}\mid\eta<\lambda\}\in M$ with each $A_{\xi_\eta}\in U$. We want to argue that $A_\delta=\Delta_{\eta<\lambda}A_{\xi_\eta}\in U$. By our observation above that the equivalence class of the diagonal intersection in $P(\kappa)/I_{\rm NS}$, of a collection of sets, is the greatest lower bound of the sets, it follows that  there is a non-stationary set $X$ such that $B\subseteq \Delta_{\xi<\lambda}\bar A_\xi\subseteq A_\delta\cup X.$ So suppose towards a contradiction that $\bar A_\delta$ is the complement of $A_\delta$. Then, $B\subseteq \bar A_\delta\cup Y$, where $Y=\{x\in P_\kappa(\lambda)\mid \delta\notin x\}$ is non stationary. Thus, $B\subseteq (\bar A_\delta\cup Y)\cap (A_\delta\cup X)$, but this is impossible because $B$ is stationary and its alleged superset is not. Thus, we have reached a contradiction showing that $A_\delta\in U$.

For the other direction, fix a function $f:P_\kappa(\lambda)\to P_\kappa(\lambda)$ such that $f(x)\subseteq x$ for every $x\in P_\kappa(\lambda)$. Fix a $(\kappa,\lambda)$-model $M$ with $f\in M$ and an $M$-ultrafilter $U=\{A_\xi\mid\xi<\lambda\}$ such that $\Delta_{\xi<\lambda}A_\xi$ is stationary. For every $\xi<\lambda$, let $$B_\xi=\{x\in P_\kappa(\lambda)\mid \xi\in f(x)\}.$$ Define $\bar B_\xi$ to be whichever of the $B_\xi$ or $P_\kappa(\lambda)\setminus B_\xi$ is chosen by $U$, and observe that 
$B=\Delta_{\xi<\lambda}\bar B_\xi$ must be stationary. Let $A=\bigcup_{x\in B}f(x)$. We will argue that for every $x\in B$, $A\cap x=f(x)$. So fix $x\in B$. First, suppose that $\xi\in A\cap x$. Then $\xi\in x$ and there is $y\in B$ such that $\xi\in f(y)\subseteq y$. By our definition of the $B_\xi$, it follows that $y\in B_\xi$, and so since $\xi\in y$, it must be that $\bar B_\xi=B_\xi$. Now since $\xi\in x$, it must be that $x\in \bar B_\xi=B_\xi$, and hence $\xi\in f(x)$, by definition of $B_\xi$. Next, suppose that $\xi\in f(x)$. Then $\xi\in x$, since $f(x)\subseteq x$, and $\xi\in A$, since $x\in B$, meaning that $\xi\in A\cap x$.

\end{proof}

\section{Generic supercompactness}\label{sec:genericSupercompactness}
Observe that if $\kappa\leq\lambda$ are cardinals, then $V$ or $H_\theta$, for large enough $\theta$, are $(\kappa,\lambda)$-acceptable models of any forcing extension of $V$. As usual, we suppose that $\kappa\leq\lambda$ are regular cardinals and $\lambda^{{<}\kappa}=\lambda$.

First, we observe that nearly $\lambda$-strongly compact cardinals can be characterized by the existence of certain generic embeddings of $V$.
\begin{definition}
A cardinal $\kappa$ is \emph{almost generically $\lambda$-strongly compact with weak amenability (wa)} if in a set-forcing extension $V[G]$, there is a weakly amenable weak $V$-ultrafilter.
\end{definition}
The analogue of the next proposition for weakly compact cardinals appears in \cite{AbramsonHarringtonKleinbergZwicker:FlippingProperties} (Theorem 2.1.2). 
\begin{proposition}\label{prop:nearlyStrongCompactEquivGenerallyAlmostStrongCompact}
A cardinal $\kappa$ is nearly $\lambda$-strongly compact if and only if it is almost generically $\lambda$-strongly compact with wa.
\end{proposition}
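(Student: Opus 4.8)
The plan is to prove both directions by relating weak $M$-ultrafilters for small models $M$ to weakly amenable weak $V$-ultrafilters in a forcing extension. For the forward direction, suppose $\kappa$ is nearly $\lambda$-strongly compact. I would force with $\Add(\lambda^+,1)$ (or any sufficiently closed, sufficiently large forcing; the point is to shoot a generic enumeration of $P(\lambda)^V$ in order type $\lambda^+$) to obtain $V[G]$ in which $P(\lambda)^V$ can be written as an increasing continuous union $\bigcup_{\alpha<\lambda^+} X_\alpha$ of sets of size $\lambda$. Working in $V[G]$, I would build an increasing chain of basic $(\kappa,\lambda)$-models $\langle M_\alpha \mid \alpha<\lambda^+\rangle$ with $X_\alpha \subseteq M_\alpha$ (each $M_\alpha$ formed inside $V$, which is legitimate since the forcing is highly closed and $\lambda^{<\kappa}=\lambda$ is preserved), together with weak $M_\alpha$-ultrafilters $U_\alpha$ forming an increasing chain, where at each step I use near $\lambda$-strong compactness of $\kappa$ (Proposition~\ref{prop:nearlyLambdaStrongCompactEquivalence}, together with the weak filter extension property, Proposition~\ref{prop:wFEP}, to extend $U_\alpha$ to $U_{\alpha+1}$ on the larger model, and take unions at limits). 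The resulting $U=\bigcup_{\alpha<\lambda^+}U_\alpha$ is then an ultrafilter on $P(P_\kappa(\lambda))^V = \bigcup_\alpha P(P_\kappa(\lambda))^{M_\alpha}$, it is fine and $V$-$\kappa$-complete (any ${<}\kappa$-sequence from $V$ of sets in $U$ lands in some $M_\alpha$), hence a weak $V$-ultrafilter, and it is weakly amenable because any $\lambda$-sized sequence $\langle A_\xi\mid\xi<\lambda\rangle\in V$ lies in some $M_\alpha$ and then $\{\xi<\lambda\mid A_\xi\in U\} = \{\xi<\lambda\mid A_\xi\in U_\alpha\}\in M_\alpha\subseteq V$.

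For the reverse direction, suppose in $V[G]$ there is a weakly amenable weak $V$-ultrafilter $W$. Fix $A\subseteq\lambda$; I must produce, in $V$, a $(\kappa,\lambda)$-model $M\ni A$ carrying a weak $M$-ultrafilter. Let $j\colon V\to N$ be the (possibly ill-founded) ultrapower of $V$ by $W$, which by Proposition~\ref{prop:weakultrapower} has $\crit(j)=\kappa$, has $[\mathrm{id}]_W$ of size ${<}j(\kappa)$ in $N$ with $j\image\lambda\subseteq [\mathrm{id}]_W$, and by weak amenability agrees with $V$ on $P(\lambda)$ (the argument of Proposition~\ref{prop:wa} applies to weak $M$-ultrafilters too, since only fineness and weak amenability are used). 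Since $H_{\lambda^+}^V = H_{\lambda^+}^N$, the model $N$ sees a witness that $\kappa$ is nearly $\lambda$-strongly compact inside $H_{\lambda^+}^N$: concretely, in $V[G]$ one can restrict $j$ to a suitable $(\kappa,\lambda)$-model inside $V$ containing $A$. The cleanest route: take any basic $(\kappa,\lambda)$-model $M\in V$ with $A\in M$; then $j\restrict M\colon M\to j(M)$ is an elementary embedding in $V[G]$ with critical point $\kappa$, and $s=[\mathrm{id}]_W$ satisfies $j\image\lambda\subseteq s$, $|s|^{j(M)}<j(\kappa)$ (the relevant bijection lies in $V_{j(\kappa)}^N\subseteq j(M)$). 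By the converse remark after Proposition~\ref{prop:weakultrapower}, $U = \{ B\subseteq P_\kappa(\lambda)^M \mid s\in j(B)\}$ is a weak $M$-ultrafilter — but a priori only in $V[G]$. The key point is that $U$ is in fact an element of $V$: since $W$ is weakly amenable and $|M|=\lambda$ in $V$, the restriction of the "$U$-membership" predicate to $P(P_\kappa(\lambda))^M$ is coded by a $\lambda$-sized amount of data about $W$, which weak amenability places in $V$. (More carefully, enumerate $P(P_\kappa(\lambda))^M$ in $V$ as $\langle B_\xi\mid\xi<\lambda\rangle\in M$; then $U\restrict M = \{B_\xi \mid \xi\in \{\eta<\lambda : B_\eta\in W\}\}$ and the inner set is in $V$ by weak amenability.) Hence $U\in V$, and $\langle M, U\rangle$ witnesses the instance of near $\lambda$-strong compactness for $A$.

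I expect the main obstacle to be the forward direction's bookkeeping — specifically, verifying that the forcing to enumerate $P(\lambda)^V$ in order type $\lambda^+$ does not disturb $\lambda^{<\kappa}=\lambda$ or the supply of basic $(\kappa,\lambda)$-models, and that the chain construction can indeed proceed through all $\lambda^+$ stages using only the weak filter extension property rather than its (inconsistent, by Theorem~\ref{th:fepInconsistent}) unweakened form; the weakening is exactly what makes the union step legitimate. A secondary subtlety in the reverse direction is being careful that everything claimed to land in $V$ genuinely does: the embedding $j$ itself need not be in $V$, but $U\restrict M$ is, and that is all that is needed. Both of these are routine given the machinery already developed, so the proof should be short.
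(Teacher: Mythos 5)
Your reverse direction is essentially correct, though it takes an unnecessary detour: once you observe (in your final parenthetical) that for an enumeration $\langle B_\eta\mid\eta<\lambda\rangle\in V$ of $P(P_\kappa(\lambda))^M$ weak amenability puts $\{\eta<\lambda\mid B_\eta\in W\}$ in $V$, you have directly that $W\restrict M\in V$ is a weak $M$-ultrafilter, which is all that is needed; the passage through $j$, $[\mathrm{id}]_W$, and $V_{j(\kappa)}^N$ adds nothing (and your side claim that the argument of Proposition~\ref{prop:wa} applies to weak $V$-ultrafilters is dubious, since that proof uses normality to get $j\restrict\lambda\in N$ --- but you never actually use it). This is the same two-line argument the paper gives.

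The forward direction, however, has a genuine gap at the limit stages of your chain. The weak filter extension property (Proposition~\ref{prop:wFEP}) extends a weak $M$-ultrafilter $U$ on a \emph{basic $(\kappa,\lambda)$-model} $M$ --- crucially, a model closed under ${<}\kappa$-sequences, which is what guarantees that $U$ has the ${<}\kappa$-intersection property and hence that $j(U)\restrict j\image M$ has a common point in the proof. At a limit stage $\gamma$ of cofinality less than $\kappa$ (already at $\gamma=\omega$), the filter $\bigcup_{\alpha<\gamma}U_\alpha$ lives on the union model $\bigcup_{\alpha<\gamma}M_\alpha$, which is not closed under ${<}\kappa$-sequences, and a countable subfamily $A_n\in U_{\alpha_n}$ with $\alpha_n\to\gamma$ lies in no single $U_\alpha$; nothing in the step-by-step construction prevents $\bigcap_nA_n=\emptyset$. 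If the intersection is empty, no weak $M_\gamma$-ultrafilter can extend the union (such an ultrafilter would be $M_\gamma$-$\kappa$-complete and the sequence $\langle A_n\rangle$ would be an element of $M_\gamma$), so your recursion halts. This is not a bookkeeping issue: the ability to pass stage $\omega$ is exactly the winning condition of the longer games ($wG^*_{\omega+1}$, equivalently the countable intersection property required in $sG^*_\omega$), which by Theorem~\ref{thm:Precipintous measurable} and Theorem~\ref{th:precipitousIdeal} carries consistency strength far beyond near $\lambda$-strong compactness. The paper avoids the problem entirely by forcing with the poset of pairs $(M,u)$ itself, ordered by extension in both coordinates: the generic filter is merely \emph{directed}, every required property of the union ultrafilter ($\kappa$-completeness for $V$-sequences, weak amenability, deciding every set) is witnessed by a single dense set whose density is one application of Proposition~\ref{prop:wFEP}, and no transfinite recursion through limits is ever performed. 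If you want to keep a chain-style argument you would have to replace blind applications of the extension property by a construction that controls the limit behavior, which is precisely what the forcing formulation buys you for free.
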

\begin{proof}
Suppose that $V[G]$ is a set-forcing extension in which there is a weakly amenable weak $V$-ultrafilter $U$. Fix a $(\kappa,\lambda)$-model $M$ in $V$. By weak amenability, $u=U\restrict M\in V$ and it is clearly a weak $M$-ultrafilter. 

Next, suppose that $\kappa$ is nearly $\lambda$-strongly compact. Consider the poset $\p$ whose conditions are pairs $(M,u)$, where $M$ is a $(\kappa,\lambda)$-model and $u$ is a weak $M$-ultrafilter ordered by extension in both coordinates. Let $G\subseteq \p$ be $V$-generic. In $V[G]$, let $U$ be the union of the filters $u$ coming from the second coordinates of conditions in $G$. Let's argue that $U$ is a weakly amenable weak $V$-ultrafilter. Fix $A\subseteq P_\kappa(\lambda)$. Given any $(M,u)\in \p$, by Proposition~\ref{prop:wFEP}, there is $(N,w)\in \p$ extending $(M,u)$ such that $A\in N$. Thus, $U$ measures $A$ by density. Next, suppose that $\vec A=\{A_\xi\mid \xi<\alpha\}\in V$, with $\alpha<\kappa$, such that all $A_\xi\in U$. Given any $(M,u)\in \p$, there is $(N,w)\in\p$ extending $(M,u)$ such that $\vec A\in N$. By density, some such condition $(N,w)$ is in $G$, and so, in particular, $w\subseteq U$. Thus, all $A_\xi\in w$, and hence $\bigcap_{\xi<\alpha}A_\xi\in w\subseteq U$. Finally, suppose that $\vec A=\{A_\xi\mid\xi<\lambda\}\in V$ with each $A_\xi\subseteq P_\kappa(\lambda)$. Given any $(M,u)\in \p$, there is $(N,w)\in\p$ extending $(M,u)$ such that $\vec A\in N$. By density, some such condition $(N,w)$ is in $G$, and since $w\subseteq U$, $\{\xi<\lambda\mid A_\xi\in U\}=\{\xi<\lambda\mid A_\xi\in w\}\in V$. 
\end{proof}
We will see shortly that the analogously defined almost generic $\lambda$-super compactness with wa is not equivalent to nearly $\lambda$-supercompactness.

The following hierarchy of generic versions of supercompactness generalizes the generic embeddings characterization of nearly $\lambda$-strongly compact cardinals. 
\begin{definition} A cardinal $\kappa$ is:
\begin{enumerate}
\item \emph{generically $\lambda$-supercompact with weak amenability (wa) by $\delta$-closed forcing}, for a regular cardinal $\delta$, if in a set-forcing extension $V[G]$ by a $\delta$-closed forcing, there is a weakly amenable $V$-ultrafilter.  For $\delta=\omega$ we simply say that $\kappa$ is \emph{almost generically} $\lambda$-\emph{supercompact with weak amenability (wa)}.
\item \emph{generically $\lambda$-supercompact with weak amenability (wa) for sets} if for each regular $\theta>\kappa$, in a set-forcing extension $V[G]$, there is a good weakly amenable $H_\theta$-ultrafilter.
\item \emph{generically $\lambda$-supercompact with weak amenability (wa)} if in a set-forcing extension $V[G]$, there is a good weakly amenable $V$-ultrafilter.
\end{enumerate}
\end{definition}
Clearly, if $\kappa$ is generically $\lambda$-supercompact with weak amenability (wa), then it is generically $\lambda$-supercompact with weak amenability (wa) for sets. Also, note that if $\delta>\omega$ and $\kappa$ is generically $\lambda$-supercompact with weak amenability (wa) by $\delta$-closed forcing, then $\kappa$ is generically $\lambda$-supercompact with weak amenability (wa) because $V$ is closed under sequences of length ${<}\delta$ in any $\delta$-closed forcing extension and so any $V$-ultrafilter in such an extension has the countable intersection property.

Since the restriction of a weakly amenable $V$-ultrafilter (from a forcing extension $V[G]$) to any $(\kappa,\lambda)$-model $M$ is an $M$-ultrafilter in $V$, every almost generically $\lambda$-supercompact with wa cardinal is nearly $\lambda$-supercompact. However, in the following proposition we prove that the least $\kappa$ for which there is $\lambda\geq\kappa$ such that $\kappa$ is nearly $\lambda$-supercompact is not almost generically $\lambda$-supercompact with wa. 
\begin{proposition}
Suppose that $\kappa$ is least for which there is $\lambda\geq\kappa$ such that $\kappa$ is nearly $\lambda$-supercompact. Then $\kappa$ is not almost generically $\lambda$-supercompact with wa for any $\lambda\geq\kappa$. 
\end{proposition}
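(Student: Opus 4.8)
The plan is to run the reflection argument of the proof of Theorem~\ref{th:fepInconsistent}, but fed by the generic ultrafilter that the hypothesis supplies directly rather than by one built from an $\omega$-chain. Suppose toward a contradiction that $\kappa$ is almost generically $\lambda$-supercompact with wa for some $\lambda\geq\kappa$. Since the restriction of a weakly amenable $V$-ultrafilter (living in some forcing extension $V[G]$) to any $(\kappa,\lambda)$-model $M$ is an $M$-ultrafilter lying in $V$, the cardinal $\kappa$ is nearly $\lambda$-supercompact for this $\lambda$. Fix a forcing extension $V[G]$ containing a weakly amenable $V$-ultrafilter $U$, and let $j\colon V\to N=\Ult(V,U)$ be the ultrapower embedding; as usual $N$ need not be well-founded, but we assume its well-founded part has been collapsed. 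By Propositions~\ref{prop:weakultrapower}, \ref{prop:ultrapower} and~\ref{prop:wa}, applied with $V$ in the role of the acceptable model, we have $\crit(j)=\kappa$, $N\models j(\kappa)>\lambda$, $\lambda+1\subseteq N$, $P_\kappa(\lambda)^N=P_\kappa(\lambda)^V$, $P(\lambda)^N=P(\lambda)^V$, and $H_{\lambda^+}^V=H_{\lambda^+}^N$; in particular this common set is a transitive set sitting inside the well-founded part of $N$.

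The crux is the claim that $N\models$ ``$\kappa$ is nearly $\lambda$-supercompact''. Using the two equalities on powersets one first checks that the standing hypotheses ``$\kappa\leq\lambda$ are regular and $\lambda^{<\kappa}=\lambda$'' hold in $N$. Proposition~\ref{prop:nearlyLambdaSuperCompactEquivalence} then applies both in $V$ and in $N$, so in each of these models ``$\kappa$ is nearly $\lambda$-supercompact'' is equivalent to the statement ``every basic weak $(\kappa,\lambda)$-model has a good $M$-ultrafilter''. This latter statement is a statement of $H_{\lambda^+}$: a basic weak $(\kappa,\lambda)$-model is automatically well-founded (being a $\Sigma_0$-elementary submodel of the universe) and of size $\lambda$, hence lies in $H_{\lambda^+}$; any good $M$-ultrafilter for it has size $\leq\lambda$ and likewise lies in $H_{\lambda^+}$; and goodness, being well-foundedness of a relation on a set of size $\leq\lambda$, is absolute to $H_{\lambda^+}$, since a ranking function (if one exists) has range inside $\lambda^+$ and an infinite $E_U$-descending sequence (if one exists) is a countable object. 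Since all of these witnesses lie in the transitive set $H_{\lambda^+}^N$, this absoluteness is unaffected by the possibly ill-founded part of $N$. As $H_{\lambda^+}^V=H_{\lambda^+}^N$ and $V$ thinks $\kappa$ is nearly $\lambda$-supercompact, so does $N$, proving the claim.

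Granting the claim, we finish as in Theorem~\ref{th:fepInconsistent}: in $N$ the cardinal $\kappa$ satisfies $\kappa\leq\lambda<j(\kappa)$ and is nearly $\lambda$-supercompact, so $N$ satisfies $\exists\bar\kappa<j(\kappa)\,\exists\bar\lambda\,\big(\bar\kappa\leq\bar\lambda\,\wedge\,\phi(\bar\kappa,\bar\lambda)\big)$, where $\phi(\bar\kappa,\bar\lambda)$ expresses ``$\bar\kappa$ is nearly $\bar\lambda$-supercompact''. Since $j(\kappa)$ is the image of $\kappa$ under $j$, elementarity of $j$ gives that $V$ satisfies $\exists\bar\kappa<\kappa\,\exists\bar\lambda\,\big(\bar\kappa\leq\bar\lambda\,\wedge\,\phi(\bar\kappa,\bar\lambda)\big)$, contradicting the minimality of $\kappa$. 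The step I expect to be the main obstacle is the crux claim: one must be careful that the equivalences of Proposition~\ref{prop:nearlyLambdaSuperCompactEquivalence} carried out inside $N$, and the absoluteness of ``good $M$-ultrafilter'', only ever refer to the transitive set $H_{\lambda^+}^N=H_{\lambda^+}^V$, so that the possibly ill-founded part of $N$ above it is never invoked. Verifying that the standing hypotheses transfer to $N$ and performing the final pull-back through $j$ are routine.
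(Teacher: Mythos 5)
Your overall strategy is the paper's: derive near $\lambda$-supercompactness of $\kappa$ in $V$ from the weakly amenable generic ultrafilter, transfer it to the ultrapower $N$ using $H_{\lambda^+}^V=H_{\lambda^+}^N$ (Proposition~\ref{prop:wa}), and then pull back through $j$ to contradict minimality. The final elementarity step and the verification of the standing hypotheses in $N$ are fine. However, the justification of your crux claim contains a false step. You assert that a basic weak $(\kappa,\lambda)$-model, ``being \ldots of size $\lambda$, hence lies in $H_{\lambda^+}$.'' That is not true: $H_{\lambda^+}$ consists of sets whose \emph{transitive closure} has size at most $\lambda$, and a basic weak $(\kappa,\lambda)$-model is in general non-transitive and may contain elements of arbitrarily large rank (e.g.\ a $\lambda$-sized elementary submodel of some huge $H_\theta$), so it need not belong to $H_{\lambda^+}$ at all. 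Consequently, formulation (3) of Proposition~\ref{prop:nearlyLambdaSuperCompactEquivalence} is \emph{not} a statement about $H_{\lambda^+}$: when you try to verify it inside $N$, the quantifier ``for every basic weak $(\kappa,\lambda)$-model'' ranges over objects of $N$ that may lie outside $H_{\lambda^+}^N$, where the agreement $H_{\lambda^+}^V=H_{\lambda^+}^N$ gives you nothing, and such models need not even be elements of $V$. (A related issue: goodness of an ultrafilter on such a non-transitive model is well-foundedness of a relation on a set of functions of possibly large rank, so your ``absolute to $H_{\lambda^+}$'' remark also does not apply verbatim.)

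The repair is exactly what the paper does: work with formulation (1), i.e.\ the original definition in terms of \emph{transitive} $(\kappa,\lambda)$-models. There the quantifiers range only over $A\subseteq\lambda$, transitive models of size $\lambda$ (which genuinely lie in $H_{\lambda^+}$), and their $M$-ultrafilters (subsets of $P(P_\kappa(\lambda))^M$, also in $H_{\lambda^+}$), and goodness is automatic since a $(\kappa,\lambda)$-model is closed under ${<}\kappa$-sequences. So: given $A\subseteq\lambda$ in $N$, note $A\in V$ by Proposition~\ref{prop:wa}, find in $V$ a $(\kappa,\lambda)$-model containing $A$ with an $M$-ultrafilter, and observe both witnesses lie in $H_{\lambda^+}^V=H_{\lambda^+}^N$ and that $N$ correctly recognizes them as such. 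Alternatively you could keep formulation (3) but first perform, inside $N$, the reduction to transitive models by collapsing (as in the proof of Proposition~\ref{prop:nearlyLambdaStrongCompactEquivalence}). Either way the argument closes; as written, the step does not.
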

\begin{proof}
Suppose towards a contradiction that $\kappa$ is almost generically $\lambda$-supercompact with wa for some $\lambda\geq\kappa$. Let $V[G]$ be a set-forcing extension in which there is a weakly amenable $V$-ultrafilter $U$. Let $j:V\to M$ be the ultrapower embedding of $V$ by $U$. Although $M$ maybe not be well-founded it has the same $H_{\lambda^+}$ as $V$ by Proposition~\ref{prop:wa}. Fix $A\subseteq\lambda$ in $M$. Since $A\in V$ and $\kappa$ is nearly $\lambda$-supercompact in $V$, $V$ has a $(\kappa,\lambda)$-model $N$, with $A\in N$, and an $N$-ultrafilter $U$. Since $H_{\lambda^+}\subseteq M$, $N, U\in M$. Since $A$ was arbitrary, we have verified that $\kappa$ is nearly $\lambda$-supercompact in $M$. By elementarity, $V$ satisfies that there is $\bar\kappa<\kappa$ and $\bar\lambda$ such that $\bar\kappa$ is nearly $\bar\lambda$-supercompact, contradicting our minimality assumption.
\end{proof}
\begin{question}
Can nearly $\lambda$-supercompact cardinals be characterized by the existence of some form of generic elementary embeddings of $V$?
\end{question}

Next, we show that almost generically $\lambda$-supercompact with wa cardinals are analogues of completely ineffable cardinals.
\begin{definition}
A cardinal $\kappa$ is \emph{completely $\lambda$-ineffable} if there is a non-empty upward closed collection $\mathcal S$ of stationary subsets of $P_\kappa(\lambda)$ such that for every $B\in\mathcal S$ and function \hbox{$f:P_\kappa(\lambda)\to P_\kappa(\lambda)$} with $f(x)\subseteq x$ for all $x\in P_\kappa(\lambda)$, there is $A\subseteq \lambda$ and $B'\subseteq B$, with $B'\in \mathcal S$, such that for all $x\in B'$, $A\cap x=f(x)$. 
\end{definition}

The next theorem is the analogue of  \cite[Thm 2.2]{AbramsonHarringtonKleinbergZwicker:FlippingProperties}
\begin{theorem}\label{th:almostGenericallySupercompactCompletelyIneffable}
A cardinal $\kappa$ is almost generically $\lambda$-supercompact with wa if and only if it is completely $\lambda$-ineffable.
\end{theorem}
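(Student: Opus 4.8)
The plan is to prove both directions by adapting the one-cardinal argument of Abramson--Harrington--Kleinberg--Zwicker \cite[Thm 2.2]{AbramsonHarringtonKleinbergZwicker:FlippingProperties}, replacing powersets of $\kappa$ by $P(P_\kappa(\lambda))^M$, diagonal intersections over $\kappa$ by diagonal intersections over $\lambda$, and regressive functions $f:\kappa\to\kappa$ by $f:P_\kappa(\lambda)\to P_\kappa(\lambda)$ with $f(x)\subseteq x$. Throughout I will use Proposition~\ref{prop:lambdaIneffableEqui} as the base case and the flip-style reformulation of $\lambda$-ineffability implicit in its proof.

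For the forward direction, assume $\kappa$ is almost generically $\lambda$-supercompact with wa, so in some set-forcing extension $V[G]$ there is a good weakly amenable $V$-ultrafilter $U$; let $j:V\to M$ be the (well-founded, by goodness) ultrapower embedding. The idea is to define $\mathcal S$ in $V$ to be the collection of all stationary $B\subseteq P_\kappa(\lambda)$ such that $\one_{\p}$ (where $\p$ is the forcing giving $U$, or equivalently: it is forced that $B$ belongs to the generic $V$-ultrafilter) — more carefully, $\mathcal S=\{B\subseteq P_\kappa(\lambda)\mid B\text{ stationary and }\Vdash_{\p}\check B\in\dot U\}$. One checks $\mathcal S$ is nonempty (it contains $P_\kappa(\lambda)$, which is stationary, and membership in a fine $M$-ultrafilter is consistent) and upward closed in the stationary sets. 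Given $B\in\mathcal S$ and a regressive $f:P_\kappa(\lambda)\to P_\kappa(\lambda)$, I pass to $V[G]$: by \Los\ applied in $M$, $j(f)(j\image\lambda)$ is a subset of $j\image\lambda$, so $j(f)(j\image\lambda)=j\image A$ for a unique $A\subseteq\lambda$ in $V$ (using Proposition~\ref{prop:ultrapower}(2) and weak amenability to recover $A$ in $V$). Then $B'=\{x\in B\mid A\cap x=f(x)\}$ satisfies $j\image\lambda\in j(B')$, so $B'\in U$ in $V[G]$; since $B'$ is determined in $V$, weak amenability gives $B'\in V$, and one argues $B'$ is stationary and forced into $\dot U$, hence $B'\in\mathcal S$. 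The one subtle point here is ensuring $B'$ remains stationary \emph{in $V$}: since $U$ is fine, every club (coded in $V$) meets $B'$, and club-ness is absolute enough, so a club $C\in V$ disjoint from $B'$ would put $P_\kappa(\lambda)\setminus B'\in U$, contradicting $B'\in U$.

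For the reverse direction, assume $\kappa$ is completely $\lambda$-ineffable, witnessed by $\mathcal S$. I define a forcing $\p$ whose conditions are pairs $(M,u)$ where $M$ is a $(\kappa,\lambda)$-model, $u$ is an $M$-ultrafilter, and additionally $\{A_\xi\mid\xi<\lambda\}=u$ admits a stationary diagonal intersection $\Delta_{\xi<\lambda}A_\xi\in\mathcal S$; order by extension in both coordinates together with $\Delta^{u'}\subseteq\Delta^{u}$ (mod nonstationary) for $u'$ extending $u$. Genericity over this poset yields a good weakly amenable $V$-ultrafilter $U$ — goodness because each step's diagonal intersection lies in $\mathcal S\subseteq$ stationary sets, so the associated regressive-function/flipping argument (exactly as in Proposition~\ref{prop:lambdaIneffableEqui}) forces $U$ to be normal in the limit and $j\image\lambda=[\mathrm{id}]_U$ to be a genuine ordinal-indexed object, killing ill-foundedness below $(\lambda^+)^M$; weak amenability because $\mathcal S$'s shrinking property lets us meet, densely, any $\lambda$-sequence of subsets of $P_\kappa(\lambda)$ from $V$, and the ``flip" chosen is read off inside the condition. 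The crux — and what I expect to be the main obstacle — is verifying that the relevant sets are \emph{dense} in $\p$: given a condition $(M,u)$ with $\Delta^u\in\mathcal S$ and a new set $A\subseteq P_\kappa(\lambda)$ (or a new $\lambda$-sequence, or a new regressive $f$), I must produce $(N,w)\leq(M,u)$ measuring $A$ (resp.\ diagonalizing the sequence, resp.\ with $\Delta^w$ homogenizing $f$ as $A\cap x=f(x)$) while keeping $\Delta^w\in\mathcal S$; this is precisely where the full strength of complete $\lambda$-ineffability (the ability to shrink within $\mathcal S$ for \emph{every} regressive function, not just one) is used, applied to the regressive function coding membership in the enumeration of $P(P_\kappa(\lambda))^N$ for a suitable $N\ni A,u,M$. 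Assembling the density arguments and checking the limit normality/amenability computations is the bulk of the work; the rest is bookkeeping parallel to the one-cardinal case.
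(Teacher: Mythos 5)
Your overall strategy is the paper's: in the forward direction, push a regressive $f$ through the generic ultrapower to find $A\subseteq\lambda$ with $j(f)(j\image\lambda)=j\image A$ and use weak amenability (Proposition~\ref{prop:wa}) to pull $A$ back into $V$; in the reverse direction, force with pairs $(M,u)$ where $u$ is the flip-ultrafilter attached to a stationary set in $\mathcal S$ as in Proposition~\ref{prop:lambdaIneffableEqui}. There are, however, two genuine problems.

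First, you have inserted ``good'' where the definition has none: \emph{almost} generically $\lambda$-supercompact with wa asks only for a weakly amenable $V$-ultrafilter in some extension, with no well-foundedness requirement. In the forward direction this is harmless, since your computation uses only \Los\ and weak amenability, never well-foundedness of $M$. In the reverse direction it is a real error: you claim your poset yields a \emph{good} weakly amenable $V$-ultrafilter, but it cannot in general. The paper shows by a least-counterexample argument that the least completely $\lambda$-ineffable cardinal is not generically $\lambda$-supercompact with wa for sets; if complete $\lambda$-ineffability produced a good $U$ as you assert, that separation would collapse. Your heuristic that stationary diagonal intersections ``kill ill-foundedness below $(\lambda^+)^M$'' says nothing about ill-foundedness higher up. Dropping goodness, your reverse direction is essentially the paper's proof, with the stationary set carried implicitly as $\Delta^u$ instead of as an explicit third coordinate $B^M\in\mathcal S$ of the condition; your density argument (apply the shrinking property of $\mathcal S$ to the regressive function coding membership in an enumeration of $P(P_\kappa(\lambda))^N$, and check that the new flip extends the old because $B'\subseteq\Delta^u$ is stationary) is the right one.

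Second, in the forward direction your $\mathcal S=\{B\mid B\text{ stationary and }\Vdash_\p\check B\in\dot U\}$ does not satisfy the shrinking clause. Given $B\in\mathcal S$ and $f$, the set $A$ you extract depends on the generic $G$ (it is $j_G(f)(j_G\image\lambda)$ pulled back), so $B'=\{x\in B\mid A\cap x=f(x)\}$ is forced into $\dot U$ only by \emph{some} condition lying in $G$, not by $\one$; hence $B'\notin\mathcal S$ as you defined it, and the required member of $\mathcal S$ below $B$ is not produced. The fix is to take $\mathcal S=\{B\mid \exists p\in\p\ (p\Vdash\check B\in\dot U)\}$; with that quantifier the same computation works (and is arguably a cleaner packaging than the paper's, which instead iterates the shrinking operation transfinitely to a fixed point $\mathcal S_\theta$ and shows $U\subseteq\mathcal S_\xi$ for every $\xi$ to conclude $\mathcal S_\theta\neq\emptyset$). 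As written, though, the step ``one argues $B'$ is \dots forced into $\dot U$'' fails.
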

\begin{proof}
Suppose that $\kappa$ is almost generically $\lambda$-supercompact with wa. Let $V[G]$ be a set-forcing extension in which there is a weakly amenable $V$-ultrafilter $U$. Let $j:V\to M$ be the ultrapower embedding of $V$ by $U$. Let $\mathcal S_0$ be the collection of all stationary subsets of $P_\kappa(\lambda)$. Given that we have defined $\mathcal S_\xi$, we let $\mathcal S_{\xi+1}$ consist of all $B\in\mathcal S_\xi$ such that for every $f:P_\kappa(\lambda)\to P_\kappa(\lambda)$, with $f(x)\subseteq x$ for every $x\in P_\kappa(\lambda)$, there is $A\subseteq \lambda$ and $B'\subseteq B$ in $\mathcal S_\xi$ such that for all $x\in B'$, $A\cap x=f(x)$. If $\alpha$ is a limit ordinal, then $\mathcal S_\alpha=\bigcap_{\xi<\alpha} \mathcal S_\xi$. Since the sequence of the $\mathcal S_\xi$ is weakly decreasing, there must be some $\theta$ such that $\mathcal S_\theta=\mathcal S_{\theta+1}$. Clearly, either $\mathcal S_\theta=\emptyset$ or $\mathcal S_\theta$ witnesses that $\kappa$ is completely $\lambda$-ineffable. We will show that $\mathcal S_\theta\neq\emptyset$ by showing that $U\subseteq \mathcal S_\xi$ for every $\xi$. Clearly, $U\subseteq \mathcal S_0$ because it consists of stationary sets. Also, clearly, if $\alpha$ is a limit ordinal and $U\subseteq \mathcal S_\xi$ for all $\xi<\alpha$, then $U\subseteq \mathcal S_\alpha$. So suppose that $U\subseteq\mathcal S_\xi$. Fix $B\in U$ and $f:P_\kappa(\lambda)\to P_\kappa(\lambda)$ with $f(x)\subseteq x$ for every $x\in P_\kappa(\lambda)$. Let $A'=j(f)(j\image\lambda)\subseteq j\image\lambda$ and let $A$ be the preimage of $A'$ under $j$. Since $V$ and $M$ have the same subsets of $\lambda$ by Proposition~\ref{prop:wa}, $A\in V$. Observe that $j(A)\cap j\image\lambda=j(f)(j\image\lambda)$. Thus, $X=\{x\in P_\kappa(\lambda)\mid A\cap x=f(x)\}\in U$ and we can let $B'=X\cap B\in U\subseteq S_\xi$, which witnesses that $B\in S_{\xi+1}$. This completes the proof that $\kappa$ is completely $\lambda$-ineffable.

For the other direction, suppose that $\kappa$ is completely $\lambda$-ineffable. Fix a collection $\mathcal S$ of stationary subsets of $P_\kappa(\lambda)$ witnessing the complete $\lambda$-ineffability of $\kappa$. Observe that given a $(\kappa,\lambda)$-model $M$, we can obtain a stationary set $B$ (from an enumeration of $P(P_\kappa(\lambda))^M$) as in the forward direction of the proof of Proposition~\ref{prop:lambdaIneffableEqui} (which we then use to obtain an $M$-ultafilter with a stationary diagonal intersection), but ensuring that $B\in\mathcal S$. 

Now suppose that we are given a $(\kappa,\lambda)$-model $M$ with a stationary set $B^M\in\mathcal S$ obtained as above. Let $U$ be the $M$-ultrafilter constructed from $B^M$ as in the proof of Proposition~\ref{prop:lambdaIneffableEqui}. Next, suppose we are given a $(\kappa,\lambda)$-model $N$ with $M\in N$. Again, using an enumeration of $P(P_\kappa(\lambda))^N$, we obtain a stationary set $B^N$ as in the proof of the forward direction of Proposition~\ref{prop:lambdaIneffableEqui}, but now using the properties of $\mathcal S$, we can ensure that  $B^N\in \mathcal S$ and $B^N\subseteq B^M$. Let $W$ be the ultrafilter obtained from $B^N$ as in the proof of Proposition~\ref{prop:lambdaIneffableEqui}.  Since $B^N\subseteq B^M$, it easily follows that $W$ must extend $U$. Thus, more generally, 
given a $(\kappa,\lambda)$-model $M$, there is an associated set $B^M\in\mathcal{S}$ from which we can construct an $M$-ultrafilter $U$ such that given any $(\kappa,\lambda)$-model $N$ with $M\in N$, there is an associated set $B^N\subseteq B^M$, $B^N\in\mathcal{S}$, from which we can construct an $N$-ultrafilter $W\supseteq U$.  Now consider a poset $\p$ whose conditions are triples $(M,u,B^M)$ such that $M$ is a $(\kappa,\lambda)$-model and $u$ is an $M$-ultrafilter constructed as above from its associated set $B^M\in \mathcal S$. The triples are ordered so that $(N,w,B^M)\leq (M,u,B^N)$ whenever $M\in N$, $w$ extends $u$ and $B^N\subseteq B^M$. It is now easy to see that if $G\subseteq \p$ is $V$-generic, then $U$, the union of all second coordinates of conditions in $G$, is a weakly amenable $V$-ultrafilter.

\end{proof}

The characterization of almost generically $\lambda$-supercompact with wa cardinals as completely $\lambda$-ineffable cardinals allows us to separate them from generically $\lambda$-supercompact with wa for sets cardinals.

\begin{proposition}
Suppose $\kappa$ is the least cardinal for which there is $\lambda\geq\kappa$ such that $\kappa$ is almost generically $\lambda$-supercompact with wa, then $\kappa$ is not generically $\lambda$-supercompact with wa for sets for any $\lambda\geq\kappa$.
\end{proposition}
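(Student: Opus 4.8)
The plan is to show that the least $\kappa$ which is almost generically $\lambda$-supercompact with wa (for some $\lambda\geq\kappa$) reflects its own complete $\lambda$-ineffability, and then derive a contradiction from minimality — exactly as in the two preceding propositions. By Theorem~\ref{th:almostGenericallySupercompactCompletelyIneffable}, $\kappa$ is completely $\lambda$-ineffable. Suppose toward a contradiction that $\kappa$ is also generically $\lambda$-supercompact with wa for sets. Then for each regular $\theta>\kappa$ there is a set-forcing extension $V[G]$ carrying a good weakly amenable $H_\theta$-ultrafilter $U$; let $j\colon H_\theta\to M$ be the (well-founded, hence transitive) ultrapower embedding. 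Choosing $\theta$ large enough that $H_\theta$ correctly computes complete $\lambda$-ineffability and contains a witnessing family $\mathcal S$, I would argue that $\kappa$ remains completely $\lambda$-ineffable in $M$: by weak amenability and Proposition~\ref{prop:wa}, $H_{\lambda^+}^M=H_{\lambda^+}^V$, so $M$ and $V$ agree on subsets of $\lambda$, on stationarity of subsets of $P_\kappa(\lambda)$, and on regressive functions $f\colon P_\kappa(\lambda)\to P_\kappa(\lambda)$; hence the family $\mathcal S$ (suitably represented) still witnesses complete $\lambda$-ineffability from $M$'s point of view, using that $j(\kappa)>\lambda$ so that $P_\kappa(\lambda)^M=P_\kappa(\lambda)$.

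Next I would push the statement down below $\kappa$. Since $\crit(j)=\kappa$ and, by Proposition~\ref{prop:ultrapower}, $j(\kappa)>\lambda$, the model $M$ satisfies ``$\kappa$ is completely $\bar\lambda$-ineffable'' for $\bar\lambda=\lambda$, with $\kappa<j(\kappa)$. By elementarity, $H_\theta$ — and hence $V$, since $H_\theta$ correctly computes this first-order property of $\kappa$ and $\lambda$ (both being below $\theta$) — satisfies that there is $\bar\kappa<\kappa$ and some $\bar\lambda\geq\bar\kappa$ with $\bar\kappa$ completely $\bar\lambda$-ineffable. Applying Theorem~\ref{th:almostGenericallySupercompactCompletelyIneffable} in the other direction, such a $\bar\kappa$ is almost generically $\bar\lambda$-supercompact with wa, contradicting the minimality of $\kappa$. (If $\kappa$ is inaccessible one can alternatively invoke $V_\kappa\subseteq H_\theta$ to transfer the reflected instance back to $V$ directly, mirroring the earlier proofs, but the $H_\theta$-correctness argument already suffices since complete $\lambda$-ineffability is expressible over $H_{\lambda^{++}}$ or so.)

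The main obstacle I anticipate is the bookkeeping around non-well-foundedness and the choice of $\theta$: we must make sure the specific $H_\theta$ whose ultrafilter we use is large enough to (a) contain a complete-$\lambda$-ineffability witness $\mathcal S$ as an actual set, (b) correctly compute that $\mathcal S$ works — which needs $H_\theta$ to see all relevant clubs and regressive functions on $P_\kappa(\lambda)$, so $\theta>2^{\lambda^{<\kappa}}=\lambda^+$ suffices — and (c) be such that the good weakly amenable $H_\theta$-ultrafilter it carries yields a transitive $M$ with $H_{\lambda^+}^M=H_{\lambda^+}^V$. Property (c) is exactly Proposition~\ref{prop:wa} together with goodness, so no new work is needed; the only genuine care is in verifying that complete $\lambda$-ineffability is absolute between $V$, $H_\theta$, and $M$ in the directions we use, which follows once we have agreement on subsets of $\lambda$ and stationary subsets of $P_\kappa(\lambda)$. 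Everything else is a verbatim repetition of the reflection-contradiction template used in the two propositions immediately preceding this one.

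\begin{proof}
By Theorem~\ref{th:almostGenericallySupercompactCompletelyIneffable}, $\kappa$ is completely $\lambda$-ineffable. Suppose toward a contradiction that $\kappa$ is additionally generically $\lambda$-supercompact with wa for sets. Fix a regular $\theta>\lambda^+$ large enough that $H_\theta$ contains a witnessing family $\mathcal S$ for the complete $\lambda$-ineffability of $\kappa$ and correctly computes stationarity of subsets of $P_\kappa(\lambda)$ and the statement ``$\kappa$ is completely $\lambda$-ineffable''. Let $V[G]$ be a set-forcing extension with a good weakly amenable $H_\theta$-ultrafilter $U$, and let $j\colon H_\theta\to M$ be the ultrapower embedding; since $U$ is good, $M$ is well-founded, so we take it to be transitive. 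By Proposition~\ref{prop:wa}, $P(\lambda)^{H_\theta}=P(\lambda)^M$, hence also $H_{\lambda^+}^{H_\theta}=H_{\lambda^+}^M$, and by Proposition~\ref{prop:ultrapower}, $\crit(j)=\kappa$ and $j(\kappa)>\lambda$, so $P_\kappa(\lambda)^M=P_\kappa(\lambda)$.

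Since $M$ and $H_\theta$ agree on subsets of $\lambda$ and on $P_\kappa(\lambda)$, they agree on which subsets of $P_\kappa(\lambda)$ are stationary and on which functions $f\colon P_\kappa(\lambda)\to P_\kappa(\lambda)$ are regressive. Consequently, the family $\mathcal S$ still witnesses, in $M$, that $\kappa$ is completely $\lambda$-ineffable: for each $B\in\mathcal S$ and each regressive $f$ in $M$, the set $A\subseteq\lambda$ and $B'\subseteq B$ in $\mathcal S$ furnished by complete $\lambda$-ineffability in $H_\theta$ lie in $M$ and retain their defining property there. Thus $M\models$ ``$\kappa$ is completely $\lambda$-ineffable'', and since $\kappa<j(\kappa)$, $M\models$ ``there is $\bar\kappa<j(\kappa)$ and $\bar\lambda\geq\bar\kappa$ such that $\bar\kappa$ is completely $\bar\lambda$-ineffable''. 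By elementarity of $j$, $H_\theta$ satisfies the same statement with $j(\kappa)$ replaced by $\kappa$, and since $H_\theta$ correctly computes complete ineffability for cardinals below $\theta$, there really are $\bar\kappa<\kappa$ and $\bar\lambda\geq\bar\kappa$ with $\bar\kappa$ completely $\bar\lambda$-ineffable. By Theorem~\ref{th:almostGenericallySupercompactCompletelyIneffable}, $\bar\kappa$ is almost generically $\bar\lambda$-supercompact with wa, contradicting the minimality of $\kappa$.
\end{proof}
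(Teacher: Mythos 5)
Your overall architecture matches the paper's: use Theorem~\ref{th:almostGenericallySupercompactCompletelyIneffable} to translate into complete $\lambda$-ineffability, pass to the transitive ultrapower $M$ of $H_\theta$, show $\kappa$ is completely $\lambda$-ineffable in $M$, and reflect below $\kappa$ to contradict minimality. However, the central step --- that $M\models$ ``$\kappa$ is completely $\lambda$-ineffable'' --- has a genuine gap as you carry it out. You fix a witnessing family $\mathcal S$ in $H_\theta$ and assert that ``the family $\mathcal S$ still witnesses, in $M$,'' the complete $\lambda$-ineffability of $\kappa$. But complete $\lambda$-ineffability asserts the \emph{existence of a collection} $\mathcal S$ of stationary subsets of $P_\kappa(\lambda)$, so for $M$ to satisfy it, $M$ must contain such a collection \emph{as an element}. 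The agreement $H_{\lambda^+}^{H_\theta}=H_{\lambda^+}^M$ from Proposition~\ref{prop:wa} gives you that every individual $B\in\mathcal S$ lies in $M$ (each is a subset of $P_\kappa(\lambda)$, hence coded in $H_{\lambda^+}$), i.e.\ $\mathcal S\subseteq M$; it does not give $\mathcal S\in M$. Indeed $\mathcal S$ is a subset of $P(P_\kappa(\lambda))$, potentially of size $2^\lambda$, living two levels above $\lambda$, and elements of $M$ are classes $[f]_U$ of functions from $H_\theta$ --- there is no reason an arbitrarily chosen witness is so represented. Your phrase ``suitably represented'' in the plan is exactly where the missing work sits, and it is not a bookkeeping issue.

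The paper circumvents this by never transferring a witness: it runs, \emph{inside} $M$, the canonical decreasing recursion $\mathcal S_0\supseteq\mathcal S_1\supseteq\cdots$ (all stationary sets, then repeatedly thinning), each stage of which is an element of $M$ by separation since it is definable from $P(P_\kappa(\lambda))^M$ and $H_{\lambda^+}^M$, and then shows the recursion stabilizes at a non-empty collection by proving $U\subseteq\mathcal S_\xi^M$ for every $\xi$, exactly as in the proof of Theorem~\ref{th:almostGenericallySupercompactCompletelyIneffable}. Your argument can be repaired by replacing your arbitrary $\mathcal S$ with this canonical maximal witness (using that $M$ and $H_\theta$ compute the same $P(P_\kappa(\lambda))$, hence the same stationary sets and the same recursion), but as written the key claim is unjustified. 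The remaining steps (reflection via $\kappa<j(\kappa)$ and pulling back through $j$ and then from $H_\theta$ to $V$) are fine, modulo choosing $\theta$ above $(2^\lambda)^+$ rather than merely above $\lambda^+$ so that $H_\theta$ computes complete ineffability correctly.
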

\begin{proof}
Suppose that $\kappa$ is $\lambda$-generically supercompact with wa for sets. Then $\kappa$ is clearly almost generically $\lambda$-supercompact with wa, and hence completely $\lambda$-ineffable. Fix some very large $\theta$ and a forcing extension $V[G]$ having a good weakly amenable $H_\theta$-ultrafilter $U$. Let $j:H_\theta\to M$ be the ultrapower embedding by $U$. Let's argue that $\kappa$ is completely $\lambda$-ineffable in $M$. Now we work inside the transitive model $M$. We let $\mathcal S_0$ be the collection of all stationary subsets of $P(P_\kappa(\lambda))$, given $S_\xi$, we let $\mathcal S_{\xi+1}$ consist of all $B\in\mathcal S_\xi$ such that for every $f:P_\kappa(\lambda)\to P_\kappa(\lambda)$, with $f(x)\subseteq x$ for every $x\in P_\kappa(\lambda)$, there is $A\subseteq \lambda$ and $B'\in \mathcal S_\xi$ such that for all $x\in B'$, $A\cap x=f(x)$, and if $\alpha$ is a limit ordinal,  we let $\mathcal S_\alpha=\bigcap_{\xi<\alpha} \mathcal S_\xi$. Since there must be some $\theta$ such that $\mathcal S_\theta=\mathcal S_{\theta+1}$, it remains to show that $S_\theta\neq\emptyset$. But now we argue as in the proof of Theorem~\ref{th:almostGenericallySupercompactCompletelyIneffable} that $U\subseteq S_\xi$ for all $\xi$. Thus, by elementarity, there must be $\bar\kappa<\kappa$ and $\kappa\leq \bar \lambda<\kappa$ such that $\bar \kappa$ is completely $\bar\lambda$-ineffable in $H_\theta$, which means that $\bar\kappa$ is actually completely $\bar\lambda$-ineffable, contradicting the minimality assumption on $\kappa$.
\end{proof}
\begin{question}
Can we separate generically $\lambda$-supercompact with wa cardinals from generically $\lambda$-supercompact with wa for sets cardinals?
\end{question}

\section{Filter games}\label{sec:games}
As usual, we suppose that $\kappa\leq \lambda$ are regular cardinals and $\lambda^{{<}\kappa}=\lambda$. We define the following two-player games of perfect information.
\begin{definition}\label{Definition:Filter Games} Suppose that $\delta\leq\lambda^+$ and $\theta\geq\lambda^+$ are regular cardinals.
\begin{enumerate}
\item (weak games) Let $wG_\delta(\kappa,\lambda)$ be the following two player game of perfect information played by the challenger and judge. The challenger starts the game and plays a basic $(\kappa,\lambda)$-model $M_0$ and the judge responds by playing an $M_0$-ultrafilter $U_0$. At stage $\gamma>0$, the challenger plays a basic $(\kappa,\lambda)$-model $M_\gamma$, with $\{\la M_\xi,U_\xi\ra\mid \xi<\gamma\}\in M_\gamma$ elementarily extending his previous moves, and the judge plays an $M_\gamma$-ultrafilter $U_\gamma$ extending $\bigcup_{\xi<\gamma}U_\xi$. The judge wins if she can continue playing for $\delta$-many steps.
\item Let $G_\delta(\kappa,\lambda)$ be the analogous game, but where to win the judge must satisfy the additional requirement that  $U=\bigcup_{\gamma<\delta}U_\gamma$ is a good $M=\bigcup_{\gamma<\delta}M_\gamma$-ultrafilter.
\item (strong games) Let $sG_\delta(\kappa,\lambda,\theta)$ be the analogous game, but where to win the judge must satisfy the additional requirement that  $U=\bigcup_{\gamma<\delta}U_\gamma$ has the countable intersection property.
\item  (non-normal games) Let $(s/w)G^{*}_\delta(\kappa,\lambda)$ be the analogously defined games, but where the judge is only required to play weak $M_\xi$-ultrafilters.
\end{enumerate}
We add the parameter $\theta$ to obtain the analogous games $(w/s)G^{(*)}(\kappa,\lambda,\theta)$, where the challenger instead plays basic $(\kappa,\lambda)$-models elementary in $H_\theta$.
\end{definition}
For compactness reasons, the expression $(s/w)G^{(*)}_\delta(\kappa,\lambda,(\theta))$ will refer to all possible $12$ different versions of the games defined above. If we leave out one of the parenthesis, then we are referring to the obvious subset of these games.

\begin{proposition}
Suppose $\text{cof}(\delta)\neq\omega$ and $\theta\geq\lambda^+$ is regular. Then the games $wG^{(*)}_\delta(\kappa,\lambda,\theta)$, $G^{(*)}_\delta(\kappa,\lambda,\theta)$, and $sG^{(*)}_\delta(\kappa,\lambda,\theta)$ are all equivalent. 
\end{proposition}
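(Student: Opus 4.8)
The plan is to show that when $\mathrm{cof}(\delta)\neq\omega$, the three winning conditions — producing a good $M$-ultrafilter, producing a weak $M$-ultrafilter with the countable intersection property, and merely surviving $\delta$ steps — all coincide, so that a winning strategy for one game is automatically a winning strategy for all three, and likewise for the challenger. The only thing to check, therefore, is that in \emph{any} run of length $\delta$ with $\mathrm{cof}(\delta)\neq\omega$, the limit object $U=\bigcup_{\gamma<\delta}U_\gamma$ over $M=\bigcup_{\gamma<\delta}M_\gamma$ automatically has the countable intersection property and is good. Since $U$ has the countable intersection property $\Rightarrow$ $U$ is good (noted right after the definition of ``good'' in \S\ref{sec:modelsFilters}), it suffices to prove the countable intersection property.

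The key step is this: fix a countable family $\{A_n\mid n<\omega\}$ with each $A_n\in U$. Each $A_n$ entered $U$ at some stage, i.e. $A_n\in U_{\gamma_n}$ for some $\gamma_n<\delta$. Because $\mathrm{cof}(\delta)\neq\omega$, the countable set $\{\gamma_n\mid n<\omega\}$ is bounded below $\delta$, say by some $\gamma<\delta$. Then at stage $\gamma$ the challenger has played a basic $(\kappa,\lambda)$-model $M_\gamma$ with $\{\langle M_\xi,U_\xi\rangle\mid\xi<\gamma\}\in M_\gamma$, so the sequence $\langle A_n\mid n<\omega\rangle$, being definable from this object (together with the $A_n$'s, which lie in $M_{\gamma_n}\prec M_\gamma$), is an element of $M_\gamma$ — here I use that a basic $(\kappa,\lambda)$-model is in particular a $\kappa$-model-like object closed enough to form such an $\omega$-sequence, and that $\omega<\kappa\le\lambda$ so $M_\gamma$ can code a length-$\omega$ sequence of its own elements. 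Wait — more carefully: each $A_n\in M_{\gamma_n}\subseteq M_\gamma$, and $M_\gamma$ knows the sequence $\langle U_\xi\mid\xi\le\gamma_n\rangle$, hence can recover $A_n$ from an index; since $M_\gamma\models\mathrm{ZFC}^-$ and $\omega\in M_\gamma$, the full sequence $\langle A_n\mid n<\omega\rangle$ is in $M_\gamma$. Now $U_\gamma$ is an $M_\gamma$-ultrafilter (or weak $M_\gamma$-ultrafilter) extending $\bigcup_{\xi<\gamma}U_\xi\supseteq\bigcup_n U_{\gamma_n}$, so all $A_n\in U_\gamma$; by $M_\gamma$-$\kappa$-completeness applied to this length-$\omega<\kappa$ sequence in $M_\gamma$, $\bigcap_{n<\omega}A_n\in U_\gamma\subseteq U$, and in particular $\bigcap_{n<\omega}A_n\neq\emptyset$. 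This gives the countable intersection property, hence goodness, hence all three winning conditions agree.

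The main obstacle I anticipate is the bookkeeping around \emph{which stage} $A_n$ first appears and ensuring $M_\gamma$ genuinely contains the sequence $\langle A_n\mid n<\omega\rangle$: one must use that the challenger's move at a successor or limit stage $\gamma$ carries $\{\langle M_\xi,U_\xi\rangle\mid\xi<\gamma\}$ as an element, and that $M_\gamma$ elementarily extends the earlier models, so the individual $A_n$ (each living in some $M_{\gamma_n}$ with $\gamma_n<\gamma$) together with $M_\gamma$'s knowledge of the $U_\xi$'s lets $M_\gamma$ assemble the $\omega$-sequence internally; this is where $\mathrm{cof}(\delta)\neq\omega$ is essential, since it is exactly what makes $\sup_n\gamma_n<\delta$. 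Once this is in place, the equivalence of the games is immediate: a play is a win in $sG^{(*)}_\delta$ iff it is a win in $G^{(*)}_\delta$ iff it is a win in $wG^{(*)}_\delta$, for all runs, so the collections of winning strategies (for either player) literally coincide, uniformly in the $(*)$ and $(\theta)$ options. A symmetric remark handles the challenger: he wins a run precisely when the judge fails to survive $\delta$ steps, which is the same condition in all three games.
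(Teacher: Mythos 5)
Your proposal is correct and follows essentially the same route as the paper: since $\mathrm{cof}(\delta)\neq\omega$, any countably many sets from $U=\bigcup_{\gamma<\delta}U_\gamma$ already lie in some $U_\gamma$ with $\gamma<\delta$, and the $\omega$-sequence of them belongs to $M_\gamma$ (because basic $(\kappa,\lambda)$-models are closed under ${<}\kappa$-sequences), so $M_\gamma$-$\kappa$-completeness gives the countable intersection property, hence goodness, hence all three winning conditions coincide run-by-run. The only caveat is that your detour about recovering each $A_n$ "from an index" is unnecessary and not quite self-justifying; the clean justification is exactly the ${<}\kappa$-closure of $M_\gamma$ that you invoke first.
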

\begin{proof}
In this case, in any of the games, the union $M$ of the challenger's moves is closed under countable sequences and so the (weak) $M$-ultrafilter $U$, the union of the judge's moves, has the countable intersection property.
\end{proof}
The following proposition holds because, as long as the union ultrafilter is not required to be good, whether the judge wins or loses depends on the $(\kappa,\lambda)$-models $H_{\lambda^+}^{M_\xi}$, where the $M_\xi$ are the challenger's moves. This argument does not work in the case whether the union ultrafilter $U$ of the judge's moves is required to be good because it can produce a well-founded ultrapower of $H_{\lambda^+}^M$, where $M$ is the union of the challenger's moves, but at the same time the full ultrapower of $M$ could be ill-founded because the witnessing function is very large.
\begin{proposition}
In the games $wG^{(*)}_\delta(\kappa,\lambda,\theta)$ and $sG^{(*)}_\delta(\kappa,\lambda,\theta)$, either player has a winning strategy for some $\theta$ if and only if the same player has a winning strategy for all $\theta$ if and only if the same player has a winning strategy in the game $wG^{(*)}_\delta(\kappa,\lambda)$ and $sG^{(*)}_\delta(\kappa,\lambda)$ respectively.
\end{proposition}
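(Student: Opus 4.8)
The plan is to show that whether a player has a winning strategy in $wG^{(*)}_\delta(\kappa,\lambda,\theta)$ (resp.\ $sG^{(*)}_\delta(\kappa,\lambda,\theta)$) depends only on the ``$H_{\lambda^+}$-parts'' of the $(\kappa,\lambda)$-models involved, and that these parts behave identically regardless of $\theta$ (and even when no $\theta$ is specified). The central observation is that for a basic $(\kappa,\lambda)$-model $M$ (with or without the requirement $M\prec H_\theta$), the ultrafilters the judge is allowed to play — weak $M$-ultrafilters or $M$-ultrafilters — live on $P(P_\kappa(\lambda))^M = P(P_\kappa(\lambda))^{H_{\lambda^+}^M}$, and all the winning conditions in the weak and strong games (fineness, $M$-$\kappa$-completeness, $M$-normality, extending earlier moves, the countable intersection property) refer only to sequences coded in $H_{\lambda^+}^M$ and to the resulting filter, never to the ultrapower of $M$ itself. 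Thus a run of the game is ``the same'' as a run of the corresponding game played with the models $H_{\lambda^+}^{M_\xi}$ in place of $M_\xi$, and the judge wins one iff she wins the other.

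First I would fix regular $\theta_0 \le \theta_1$ (both $\ge\lambda^+$), or compare one of them with the no-$\theta$ version, and set up a translation of strategies. Given a winning strategy $\sigma$ for a player in one game, I would describe how the opponent's moves in the other game can be ``reflected'' into legal moves in the first game. The key tool is that any basic $(\kappa,\lambda)$-model $N$ can be elementarily extended to a basic $(\kappa,\lambda)$-model $N' \prec H_{\theta}$ containing any prescribed finite (or, over the course of the game, ${<}\delta$-length) amount of data, while $H_{\lambda^+}^{N} = H_{\lambda^+}^{N'}$ — or at least the extension can be arranged so that the relevant $H_{\lambda^+}$-structure and the prior moves are preserved. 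Since $\lambda^{<\kappa} = \lambda$, such models of size $\lambda$ exist in abundance, and the chain condition on building an increasing sequence of length $\delta \le \lambda^+$ causes no trouble. When it is the challenger who needs extra strength ($N \prec H_\theta$), he can play $N'$; when it is the judge responding, she applies $\sigma$ to the $H_{\lambda^+}$-part, which is unchanged, and plays the resulting (weak) ultrafilter, which is automatically a legal move in the other game because all the defining conditions are absolute between $N$ and $N'$ and only mention $H_{\lambda^+}^{N}$. Carrying this out in both directions gives the first ``iff''; taking $\theta_1$ arbitrarily large and noticing the no-$\theta$ game is the union of all moves with no $H_\theta$-constraint gives the second ``iff''.

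The reason this must be done separately for $wG^{(*)}$ and $sG^{(*)}$, but does \emph{not} work for $G^{(*)}_\delta$, is exactly as flagged in the remark preceding the statement: for $G^{(*)}_\delta$ the extra winning condition is that $\mathrm{Ult}(M,U)$ be well-founded, and this can genuinely depend on functions in $M$ that live far above $H_{\lambda^+}^M$ — so passing to $H_{\lambda^+}$-parts loses information. In the weak and strong games, by contrast, the countable intersection property of $U = \bigcup_\xi U_\xi$ is a property of the filter $U$ alone (a family of subsets of $P_\kappa(\lambda)$), hence completely insensitive to which ambient models $M_\xi$ produced it; so the translation goes through verbatim, with ``countable intersection property'' simply copied across.

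I expect the main obstacle to be the bookkeeping in the strategy translation over limit stages $\gamma < \delta$: one must make sure that when the challenger (in whichever game) is obligated to play a model containing $\{\langle M_\xi, U_\xi\rangle \mid \xi<\gamma\}$, the reflected sequence of moves is itself an element of a suitable basic $(\kappa,\lambda)$-model, and that the two parallel runs stay synchronized on their $H_{\lambda^+}$-parts at every stage, including limits where one takes unions. This is routine given $\lambda^{<\kappa}=\lambda$ and the closure properties of basic $(\kappa,\lambda)$-models established in \S\ref{sec:modelsFilters}, but it is where the argument needs care rather than a one-line appeal to absoluteness.
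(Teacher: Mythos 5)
Your proposal is correct and is essentially the paper's argument: the paper justifies this proposition with exactly the observation you make, namely that the legal moves and the winning conditions of the weak and strong games depend only on the structures $H_{\lambda^+}^{M_\xi}$ (equivalently on $P(P_\kappa(\lambda))^{M_\xi}$ and the ${\leq}\lambda$-length sequences coded there), which is insensitive to $\theta$, whereas well-foundedness of the full ultrapower is not. The strategy-translation bookkeeping you describe (restricting a weak $N$-ultrafilter to a submodel whose $H_{\lambda^+}$-part it contains) is the same device the paper uses elsewhere, e.g.\ in the proof of Theorem~\ref{th:precipitousIdeal}.
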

\begin{corollary}\label{cor:goodGameNotCofOmega}
Suppose $\text{cof}(\delta)\neq\omega$. In the game $G^{(*)}_\delta(\kappa,\lambda,\theta)$, either player has a winning strategy for some $\theta$ if and only if the same player has a winning strategy for all $\theta$ if and only if the same player has a winning strategy in the game $G^{(*)}_\delta(\kappa,\lambda)$.
\end{corollary}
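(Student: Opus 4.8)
The plan is to obtain this corollary by chaining together the two propositions immediately preceding it, plus one routine observation. The crucial point is that, when $\text{cof}(\delta)\neq\omega$, the ``goodness'' requirement distinguishing $G^{(*)}_\delta(\kappa,\lambda,\theta)$ from $wG^{(*)}_\delta(\kappa,\lambda,\theta)$ is vacuous: it is automatically met by any run the judge survives for all $\delta$ rounds, so those two games are equivalent, and likewise for the $\theta$-free versions. Once this is in hand, the $\theta$-invariance of the weak game transfers to the good game.

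In detail: fix a regular $\theta\geq\lambda^+$. By the first of the two preceding propositions, since $\text{cof}(\delta)\neq\omega$, the games $wG^{(*)}_\delta(\kappa,\lambda,\theta)$, $G^{(*)}_\delta(\kappa,\lambda,\theta)$ (and $sG^{(*)}_\delta$) are all equivalent — the reason is that in any run the union $M=\bigcup_{\gamma<\delta}M_\gamma$ of the challenger's models is closed under $\omega$-sequences (any such sequence of elements of $M$ already lies in a single $M_{\gamma_0}$ with $\gamma_0<\delta$, since $\text{cof}(\delta)\neq\omega$ and the $M_\gamma$ form an $\subseteq$-increasing chain of models closed under $<\kappa$-sequences), whence the union (weak) $M$-ultrafilter $U=\bigcup_{\gamma<\delta}U_\gamma$ has the countable intersection property and is therefore good. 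The verbatim argument with the parameter $\theta$ deleted shows that $wG^{(*)}_\delta(\kappa,\lambda)$ and $G^{(*)}_\delta(\kappa,\lambda)$ are equivalent. Finally, the second preceding proposition says that in $wG^{(*)}_\delta(\kappa,\lambda,\theta)$ a player has a winning strategy for some $\theta$ iff for all $\theta$ iff in $wG^{(*)}_\delta(\kappa,\lambda)$. Combining: a player has a winning strategy in $G^{(*)}_\delta(\kappa,\lambda,\theta)$ for some $\theta$ iff in $wG^{(*)}_\delta(\kappa,\lambda,\theta)$ for some $\theta$ iff in $wG^{(*)}_\delta(\kappa,\lambda)$ iff in $G^{(*)}_\delta(\kappa,\lambda)$ iff, running the chain back with ``for all $\theta$'', in $G^{(*)}_\delta(\kappa,\lambda,\theta)$ for all $\theta$. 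This is exactly the corollary, simultaneously for the $G$- and $G^{*}$-versions since every result invoked applies to both.

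There is essentially no obstacle. The only ingredient not literally contained in the preceding propositions is the $\theta$-free equivalence of $wG^{(*)}_\delta(\kappa,\lambda)$ and $G^{(*)}_\delta(\kappa,\lambda)$, but this is proved exactly as the first preceding proposition; everything else is bookkeeping.
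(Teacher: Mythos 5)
Your proof is correct and follows exactly the route the paper intends: combine the proposition that, for $\cof(\delta)\neq\omega$, the weak, good, and strong games coincide (both with and without the parameter $\theta$, the latter by the verbatim argument) with the $\theta$-invariance proposition for the weak games. Nothing further is needed.
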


\section{Winning strategies and large cardinals}\label{sec: winning strategies}
In this section, we establish several filter-extension-like properties characterizing large cardinals in the interval [nearly $\lambda$-strongly compact, $\lambda$-supercompact]. More precisely, we connect the existence of a winning strategy for the judge in some of the various games from the previous section with the well-known large cardinal notions considered in Section~\ref{sec:modelsFilters}.  For the other games, we would like to connect the property of the judge having a winning strategy with one of the generic supercompact large cardinal notions  tailored in Section \ref{sec:genericSupercompactness} to fit this purpose.

As usual, we suppose that $\kappa\leq\lambda$ are regular cardinals and $\lambda^{{<}\kappa}=\lambda$.
Assuming $2^\lambda=\lambda^+$, the existence of a winning strategy for the judge in the game $G^*_{\lambda^+}(\kappa,\lambda)$ ($G_{\lambda^+}(\kappa,\lambda)$) is equivalent to $\lambda$-strong compactness ($\lambda$-supercompactness).
\begin{proposition} Suppose $2^\lambda=\lambda^+$. Then $\kappa$ is $\lambda$-strongly compact $($$\lambda$-supercompact$)$ if and only if
the judge has a winning strategy in the game $G^*_{\lambda^+}(\kappa,\lambda)$ $(G_{\lambda^+}(\kappa,\lambda))$.
\end{proposition}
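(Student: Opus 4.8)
The plan is to prove both directions simultaneously, treating the supercompact/$G_{\lambda^+}$ case and the strongly compact/$G^*_{\lambda^+}$ case in parallel, since they differ only by whether one works with $M$-ultrafilters (closed under diagonal intersections) or weak $M$-ultrafilters. For the forward direction, suppose $\kappa$ is $\lambda$-supercompact and fix a normal fine measure $\mathcal{U}$ on $P_\kappa(\lambda)$ with ultrapower embedding $j\colon V\to N$, so $N^\lambda\subseteq N$ and $j\image\lambda\in N$. The judge's strategy is: in response to any challenger move $M_\gamma$ (a basic $(\kappa,\lambda)$-model, possibly not an element of $V$ of small rank but certainly an element of $V$ of size $\lambda$, hence in $H_{(2^\lambda)^+}$), let $U_\gamma=\{A\in P(P_\kappa(\lambda))^{M_\gamma}\mid j\image\lambda\in j(A)\}$. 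By the remark following Proposition~\ref{prop:ultrapower}, each $U_\gamma$ is an $M_\gamma$-ultrafilter, and since all the $U_\gamma$ are restrictions of the single external measure derived from $j\image\lambda$, they cohere: $U_\gamma\supseteq\bigcup_{\xi<\gamma}U_\xi$ automatically. This defines a legal play for the judge at every stage $\gamma<\lambda^+$, so the judge survives all $\lambda^+$ moves. For the ``good'' requirement of $G_{\lambda^+}$, note that $U=\bigcup_{\gamma<\lambda^+}U_\gamma$ is precisely the $M$-ultrafilter on $M=\bigcup_{\gamma<\lambda^+}M_\gamma$ derived from $j\image\lambda$, and the ultrapower of $M$ by $U$ embeds into $N$ (via the map $[f]_U\mapsto j(f)(j\image\lambda)$, which is well-defined and $\in$-preserving by \L o\'s), hence is well-founded. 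For the strongly compact case, replace the normal measure by a fine $\kappa$-complete ultrafilter and $j\image\lambda$ by a set $s$ with $j\image\lambda\subseteq s$, $|s|^N<j(\kappa)$; then each $U_\gamma$ is a weak $M_\gamma$-ultrafilter, and again the union gives a good weak $M$-ultrafilter.

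For the converse, suppose the judge has a winning strategy $\sigma$ in $G_{\lambda^+}(\kappa,\lambda)$. Here is where the hypothesis $2^\lambda=\lambda^+$ enters: it lets us enumerate $P(P_\kappa(\lambda))=\{A_\alpha\mid\alpha<\lambda^+\}$ and build an increasing continuous chain $\langle M_\gamma\mid\gamma<\lambda^+\rangle$ of basic $(\kappa,\lambda)$-models, each of size $\lambda$, played by the challenger against $\sigma$, arranged so that $A_\gamma\in M_{\gamma+1}$ and $\{\langle M_\xi,U_\xi\rangle\mid\xi<\gamma\}\in M_\gamma$ at limits (using $\lambda^{<\kappa}=\lambda$ and closure of the construction to keep the models of size $\lambda$; at limit stages of cofinality $<\kappa$ the union is already closed, and at cofinality $\geq\kappa$ one takes an elementary hull). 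Since $\sigma$ is winning and the run has length $\lambda^+$, the judge produces $U=\bigcup_{\gamma<\lambda^+}U_\gamma$, which is a good $M$-ultrafilter on $M=\bigcup_{\gamma<\lambda^+}M_\gamma$; because every $A\in P(P_\kappa(\lambda))$ appears as some $A_\gamma\in M_{\gamma+1}$, $U$ measures all of $P(P_\kappa(\lambda))$, and by construction $M$ contains all subsets of $\lambda$ and all $\lambda$-sequences of subsets of $P_\kappa(\lambda)$ relevant to fineness, $\kappa$-completeness, and normality. Thus $U$ is an honest normal fine $\kappa$-complete ultrafilter on $P_\kappa(\lambda)$ in $V$, and its good-ness means the ultrapower is well-founded — witnessing $\lambda$-supercompactness. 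For $G^*_{\lambda^+}$, the same argument yields a good weak (i.e. fine, $\kappa$-complete but not necessarily normal) ultrafilter on $P_\kappa(\lambda)$, which is exactly what $\lambda$-strong compactness asserts.

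The main obstacle I anticipate is the bookkeeping in the converse: one must verify that the chain of challenger moves can actually be built legally against an arbitrary winning strategy — in particular that at each stage $M_\gamma$ can be chosen to be a \emph{basic} $(\kappa,\lambda)$-model (size exactly $\lambda$, $\Sigma_0$-elementary in $V$, containing $\lambda+1$ and a bijection $\lambda\to P_\kappa(\lambda)$) while simultaneously (i) extending the previous moves elementarily, (ii) containing the previous play as an element, and (iii) absorbing the prescribed set $A_\gamma$. This is a standard Löwenheim–Skolem plus closure argument given $\lambda^{<\kappa}=\lambda$ and $2^\lambda=\lambda^+$, but it needs to be done carefully so that continuity at limits does not destroy the ``basic'' property; I would handle limit stages of cofinality $\geq\kappa$ by passing to a size-$\lambda$ elementary substructure of the union that still contains everything required, noting that the judge's coherence condition $U_\gamma\supseteq\bigcup_{\xi<\gamma}U_\xi$ is preserved under such trimming. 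A secondary point worth a line: one should check that the judge's strategy in the forward direction is genuinely a function of the play so far (it is, being uniformly defined from the fixed embedding $j$), and that in the strongly compact forward direction the choice of $s$ can be made once and for all rather than stage-by-stage.
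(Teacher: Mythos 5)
Your proposal is correct and follows essentially the same route as the paper: in the forward direction the judge plays restrictions of a single fixed (normal) fine measure, and in the converse the challenger uses $2^\lambda=\lambda^+$ to exhaust $P(P_\kappa(\lambda))$ over a run of length $\lambda^+$, so that the union of the judge's moves is an honest (normal) fine $\kappa$-complete ultrafilter. Your extra bookkeeping about keeping the challenger's moves basic $(\kappa,\lambda)$-models and absorbing the relevant ${<}\kappa$- and $\lambda$-sequences is exactly the detail the paper leaves implicit.
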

\begin{proof} We prove the equivalence for $\lambda$-strongly compact cardinals, and observe that the argument for $\lambda$-supercompact cardinals is analogous.
If $\kappa$ is $\lambda$-strongly compact, then the judge can fix a fine measure $\mu$ on $P_\kappa(\lambda)$ and play pieces of it for her winning strategy in any game $G^*_{\lambda^+}(\kappa,\lambda)$, namely if the challenger plays a $(\kappa,\lambda)$-model $M_\xi$, then the judge plays $M_\xi\cap \mu$. Suppose now that the judge has a winning strategy $\sigma$ in the game $G^*_{\lambda^+}(\kappa,\lambda)$. Consider a run of $G^*_{\lambda^+}(\kappa,\lambda)$ where the challenger plays an elementary sequence of $(\kappa,\lambda)$-models $M_\xi$ such that $P(P_\kappa(\lambda))\subseteq\bigcup_{\xi<\lambda^+}M_\xi$, which is possible since $2^\lambda=\lambda^+$. Then $U$, the union of the judge's moves according to $\sigma$ for this run of the game, is a fine measure on $P_\kappa(\lambda)$.
\end{proof}
Since given any $A\subseteq \lambda$, the challenger can play a $(\kappa,\lambda)$-model containing $A$, the judge having a winning strategy in the game $G^*_1(\kappa,\lambda)$ ($G_1(\kappa,\lambda))$) of length 1 is equivalent to nearly $\lambda$-strong compactness (nearly $\lambda$-supercompactness). Using that nearly $\lambda$-strongly compact cardinals satisfy the weak filter extension property (Proposition~\ref{prop:wFEP}),  and that for the judge to win the weak game $wG^*_\omega(\kappa,\lambda)$,  the union ultrafilter is not required to be good, we get the stronger result that nearly $\lambda$-strong compactness is equivalent to the judge having a winning strategy in the game $wG^*_\omega(\kappa,\lambda)$. 
\begin{proposition}\label{prop:nearlyStrongCompact}
The following are equivalent for a cardinal $\kappa$.
\begin{enumerate}
\item $\kappa$ is nearly $\lambda$-strongly compact.
\item $\kappa$ is almost generically $\lambda$-strongly compact with wa.
\item The judge has a winning strategy in the game $G^*_1(\kappa,\lambda)$.
\item The judge has a winning strategy in the game $wG^*_\omega(\kappa,\lambda)$.
\end{enumerate}
\end{proposition}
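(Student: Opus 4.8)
The plan is to prove the four conditions equivalent via a ``star'' around $(1)$: namely $(1)\Leftrightarrow(2)$, $(1)\Rightarrow(3)$, $(1)\Rightarrow(4)$, and one common argument giving both $(3)\Rightarrow(1)$ and $(4)\Rightarrow(1)$; throughout we use the section's standing hypotheses that $\kappa\leq\lambda$ are regular and $\lambda^{{<}\kappa}=\lambda$. The equivalence $(1)\Leftrightarrow(2)$ is precisely Proposition~\ref{prop:nearlyStrongCompactEquivGenerallyAlmostStrongCompact}, so there is nothing new to do there.

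For $(1)\Rightarrow(3)$: whenever the challenger plays a basic $(\kappa,\lambda)$-model $M_0$, note that $M_0$ is in particular a basic weak $(\kappa,\lambda)$-model, so Proposition~\ref{prop:nearlyLambdaStrongCompactEquivalence}(3) supplies a \emph{good} weak $M_0$-ultrafilter; fixing one such choice for each possible $M_0$ (using $\AC$) defines a strategy, and since $G^*_1(\kappa,\lambda)$ has length $1$, the only winning condition beyond legality is exactly that the judge's ultrafilter be good. For $(1)\Rightarrow(4)$: the crucial observation is that winning $wG^*_\omega(\kappa,\lambda)$ imposes \emph{no} goodness requirement, so the judge only has to keep producing legal moves through the successor stages $n+1<\omega$. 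At stage $0$ she plays any weak $M_0$-ultrafilter, which exists by Proposition~\ref{prop:nearlyLambdaStrongCompactEquivalence}. Maintaining inductively that $U_n$ is a weak $M_n$-ultrafilter and that $M_n\prec M_{n+1}$ are basic $(\kappa,\lambda)$-models, the weak $P_\kappa(\lambda)$-filter extension property (Proposition~\ref{prop:wFEP}) delivers a weak $M_{n+1}$-ultrafilter $U_{n+1}\supseteq U_n=\bigcup_{\xi<n+1}U_\xi$, which is a legal move; hence the judge never gets stuck and wins.

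For the reverse implications $(3)\Rightarrow(1)$ and $(4)\Rightarrow(1)$, which are one and the same argument: a winning strategy for the judge in either game yields, as her response to any basic $(\kappa,\lambda)$-model played as the first move, a weak $M_0$-ultrafilter. Given an arbitrary $A\subseteq\lambda$, use $\lambda^{{<}\kappa}=\lambda$ to build, by the standard elementary-submodel-and-transitive-collapse construction (take $X\prec H_{\lambda^+}$ of size $\lambda$ with $A\in X$, $\lambda+1\subseteq X$, $X$ closed under ${<}\kappa$-sequences and containing a fixed bijection $\lambda\to P_\kappa(\lambda)$, then collapse), a transitive $(\kappa,\lambda)$-model $M_0$ with $A\in M_0$. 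Such an $M_0$ is a legal first move, and the judge's response then witnesses that $A$ lies in a $(\kappa,\lambda)$-model carrying a weak $M$-ultrafilter; as $A$ was arbitrary, $\kappa$ is nearly $\lambda$-strongly compact.

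No individual step is hard; the actual work is the bookkeeping about the various notions of model — basic, basic weak, transitive, all of size $\lambda$ — and checking that whatever the challenger is permitted to play matches exactly what Propositions~\ref{prop:nearlyLambdaStrongCompactEquivalence} and~\ref{prop:wFEP} require. The one conceptual point worth flagging is why the longer game $wG^*_\omega(\kappa,\lambda)$ comes essentially for free while, say, $G_\omega(\kappa,\lambda)$ does not: the filter extension property lets us extend ultrafilters one step at a time but gives no control over the well-foundedness of the ultrapower of the union model, so the argument for $(1)\Rightarrow(4)$ genuinely breaks for the ``good'' game — as it must, since $G_\omega(\kappa,\lambda)$ is strictly stronger.
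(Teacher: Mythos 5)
Your proof is correct and follows essentially the same route the paper sketches in the discussion surrounding the proposition: $(1)\Leftrightarrow(2)$ is Proposition~\ref{prop:nearlyStrongCompactEquivGenerallyAlmostStrongCompact}, $(1)\Leftrightarrow(3)$ comes from the challenger being able to play a $(\kappa,\lambda)$-model containing any given $A\subseteq\lambda$ (with Proposition~\ref{prop:nearlyLambdaStrongCompactEquivalence} supplying the good weak ultrafilter for the forward direction), and $(1)\Rightarrow(4)$ is the iterated application of the weak filter extension property of Proposition~\ref{prop:wFEP}. You also correctly close the cycle by sending $(4)$ back to $(1)$ directly rather than to $(3)$, which is the right move since a winning strategy in the weak game need not produce good ultrafilters.
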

\noindent Recall that $(1)\Longleftrightarrow (2)$ follows from Proposition~\ref{prop:nearlyStrongCompactEquivGenerallyAlmostStrongCompact}.
\begin{proposition}\label{Prop: G_1 wining strategy}
A cardinal $\kappa$ is nearly $\lambda$-supercompact if and only if the judge has a winning strategy in the game $G_1(\kappa,\lambda)$.
\end{proposition}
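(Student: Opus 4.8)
The plan is to prove both directions directly, using the already-established two-cardinal machinery. For the forward direction, assume $\kappa$ is nearly $\lambda$-supercompact; I want to exhibit a winning strategy for the judge in $G_1(\kappa,\lambda)$. Since the game has length $1$, the challenger plays a single basic $(\kappa,\lambda)$-model $M_0$, and the judge must respond with an $M_0$-ultrafilter $U_0$ such that the ultrapower $\mathrm{Ult}(M_0,U_0)$ is well-founded, i.e. $U_0$ is good. By Proposition~\ref{prop:nearlyLambdaSuperCompactEquivalence}, every basic weak $(\kappa,\lambda)$-model — in particular every basic $(\kappa,\lambda)$-model — has a good $M$-ultrafilter, so the judge can simply pick such a $U_0$ for whatever $M_0$ the challenger plays. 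To make this an honest (deterministic) strategy rather than a mere existence claim, I would fix in advance, using the well-ordering principle available in $V$, a choice function selecting for each basic $(\kappa,\lambda)$-model $M$ a good $M$-ultrafilter; the strategy is then the function sending $M_0$ to that chosen ultrafilter. This is clearly winning.

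For the converse, suppose the judge has a winning strategy $\sigma$ in $G_1(\kappa,\lambda)$. Given any $A\subseteq\lambda$, I need to produce a basic (or just weak) $(\kappa,\lambda)$-model $M$ with $A\in M$ that carries an $M$-ultrafilter. The challenger can play a basic $(\kappa,\lambda)$-model $M_0$ with $A\in M_0$ — such a model exists because $\lambda^{<\kappa}=\lambda$ lets us build an elementary submodel of a large enough $H_\theta$ of size $\lambda$ containing $A$, closing under the requisite bijection $\lambda\to P_\kappa(\lambda)$ and under $\lambda+1$, and then collapsing its transitive part appropriately (or just taking it as a basic model). The judge's response $U_0=\sigma(M_0)$ is, by the rules of $G_1(\kappa,\lambda)$, an $M_0$-ultrafilter whose ultrapower is well-founded, hence in particular an $M_0$-ultrafilter. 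Thus $A$ lies in a $(\kappa,\lambda)$-model with an $M$-ultrafilter, and since $A$ was arbitrary, $\kappa$ is nearly $\lambda$-supercompact by definition (using Proposition~\ref{prop:nearlyLambdaSuperCompactEquivalence} to pass between basic weak models and the official definition if needed).

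The only slightly delicate point — and the place I would be most careful — is the bookkeeping around ``basic'' versus transitive models and whether the existence statement in Proposition~\ref{prop:nearlyLambdaSuperCompactEquivalence}(3) (every basic weak $(\kappa,\lambda)$-model has a good $M$-ultrafilter) applies verbatim to the models the challenger is allowed to play in $G_1(\kappa,\lambda)$, which by Definition~\ref{Definition:Filter Games} are basic $(\kappa,\lambda)$-models (not merely basic \emph{weak} ones). Since a basic $(\kappa,\lambda)$-model is in particular a basic weak $(\kappa,\lambda)$-model, the implication goes through; and conversely, for the backward direction it suffices to note that the challenger is \emph{free} to play any basic $(\kappa,\lambda)$-model containing a given $A$, which is all we need. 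No real obstacle remains once these definitional matchups are checked; the argument is essentially a restatement of Proposition~\ref{prop:nearlyLambdaSuperCompactEquivalence} in the language of the length-$1$ game, exactly paralleling the already-noted equivalence for $G^*_1(\kappa,\lambda)$ and nearly $\lambda$-strong compactness in Proposition~\ref{prop:nearlyStrongCompact}.
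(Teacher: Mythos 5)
Your proof is correct and matches the paper's (essentially implicit) argument: the paper derives this proposition directly from the remark that the challenger can play a basic $(\kappa,\lambda)$-model containing any prescribed $A\subseteq\lambda$, together with the equivalences of Proposition~\ref{prop:nearlyLambdaSuperCompactEquivalence}, exactly as you do. The definitional bookkeeping you flag (basic versus weak versus transitive models, and goodness of the ultrafilter) is handled the same way in the paper, via the collapse argument already present in the proof of Proposition~\ref{prop:nearlyLambdaStrongCompactEquivalence}.
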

The next result shows that if the judge has a winning strategy in the game $G_2(\kappa,\lambda)$, then $\kappa$ is at least $\lambda$-ineffable.

\begin{proposition}
If the judge has a winning strategy in the game $G_2(\kappa,\lambda)$, then $\kappa$ is $\lambda$-ineffable.
\end{proposition}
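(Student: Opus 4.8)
The plan is to verify the form of $\lambda$-ineffability recalled just before Proposition~\ref{prop:lambdaIneffableEqui}: given an arbitrary $f\colon P_\kappa(\lambda)\to P_\kappa(\lambda)$ with $f(x)\subseteq x$ for all $x$, I must produce $A\subseteq\lambda$ with $S_f:=\{x\in P_\kappa(\lambda)\mid A\cap x=f(x)\}$ stationary. Fix such an $f$ and a winning strategy $\sigma$ for the judge in $G_2(\kappa,\lambda)$. Note $\kappa$ is inaccessible: $\sigma$ restricted to a single round is a winning strategy in $G_1(\kappa,\lambda)$ (factor the ultrapower of the first model by $U_0$ into the well-founded ultrapower of the second model), so $\kappa$ is nearly $\lambda$-supercompact by Proposition~\ref{Prop: G_1 wining strategy}. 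Fix a regular $\theta$ so large that $f$, $P(\lambda)$ and $P(P_\kappa(\lambda))$ all lie in $H_\theta$; throughout, the challenger will play only basic $(\kappa,\lambda)$-models elementary in $H_\theta$, which is a legal way for him to play. Let the challenger open with some $M_0\prec H_\theta$ containing $f$, and $U_0=\sigma(M_0)$ the judge's response. For $\alpha<\lambda$ put $B_\alpha=\{x\in P_\kappa(\lambda)\mid\alpha\in f(x)\}\in M_0$ and set
$$A=\{\alpha<\lambda\mid B_\alpha\in U_0\}.$$
(This is the preimage under $j_0$ of $j_0(f)(j_0\image\lambda)$, where $j_0\colon M_0\to N_0$ is the ultrapower and $j_0\image\lambda=[\mathrm{id}]_{U_0}$ by Proposition~\ref{prop:ultrapower}, but we will not need that.) I claim $S_f$ is stationary, which completes the proof.

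Suppose not, and fix a club $C\subseteq P_\kappa(\lambda)$ with $C\cap S_f=\varnothing$. By Menas's theorem fix $h\colon[\lambda]^{<\omega}\to\lambda$ with $C_h:=\{x\mid h\image[x]^{<\omega}\subseteq x\}\subseteq C$, so $C_h\cap S_f=\varnothing$. Have the challenger now play a basic $(\kappa,\lambda)$-model $M_1\prec H_\theta$ with $M_0\subseteq M_1$ and $\langle M_0,U_0\rangle, A, h\in M_1$; since $M_0,M_1\prec H_\theta$ and $M_0\subseteq M_1$ we have $M_0\prec M_1$, so this is legal, and note that $f$, $\langle B_\alpha\mid\alpha<\lambda\rangle$ and $C_h$ also lie in $M_1$. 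Let $U_1=\sigma(M_0,U_0,M_1)$. Since $\sigma$ is winning, $U_1$ is an $M_1$-ultrafilter extending $U_0$; moreover, as $\delta=2$ forces $M=M_0\cup M_1=M_1$ and $U=U_0\cup U_1=U_1$, the winning condition for the judge says precisely that $\mathrm{Ult}(M_1,U_1)$ is well-founded. I reach a contradiction by showing $S_f\in U_1$ and $C_h\in U_1$.

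For $S_f\in U_1$: put $D_\alpha=B_\alpha$ if $\alpha\in A$ and $D_\alpha=P_\kappa(\lambda)\setminus B_\alpha$ otherwise, so $\langle D_\alpha\mid\alpha<\lambda\rangle\in M_1$. Because $U_1\cap P(P_\kappa(\lambda))^{M_0}=U_0$ (both being ultrafilters with $U_1\supseteq U_0$) and each $B_\alpha\in M_0$, we get $B_\alpha\in U_1\iff B_\alpha\in U_0\iff\alpha\in A$, so every $D_\alpha\in U_1$; hence $\Delta_{\alpha<\lambda}D_\alpha\in U_1$ by $M_1$-normality, and unwinding the diagonal intersection (using $f(x)\subseteq x$) shows $\Delta_{\alpha<\lambda}D_\alpha=\{x\mid\forall\alpha\in x\,(\alpha\in A\leftrightarrow\alpha\in f(x))\}=S_f$. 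For $C_h\in U_1$: let $j_1\colon M_1\to N_1$ be the ultrapower by $U_1$, now well-founded and hence transitive, so $j_1\image\lambda=[\mathrm{id}]_{U_1}\in N_1$ by Proposition~\ref{prop:ultrapower}. Every finite $t\subseteq j_1\image\lambda$ equals $j_1(s)$ for a finite $s\subseteq\lambda$ (transitivity of $N_1$ makes ``finite'' absolute), so $j_1(h)(t)=j_1(h(s))\in j_1\image\lambda$; thus $j_1\image\lambda$ is closed under $j_1(h)$ in $N_1$, whence $j_1\image\lambda\in j_1(C_h)$ and, by \Los, $C_h\in U_1$. Since $C_h\cap S_f=\varnothing$, $U_1$ contains two disjoint sets, which is impossible. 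So $S_f$ is stationary, and $\kappa$ is $\lambda$-ineffable.

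The engine of the proof is the second move: $\sigma$'s opening response $U_0$ determines the candidate $A$, and whatever club might witness $S_f$ being non-stationary can be slipped into $M_1$ as a Menas function $h$; then $M_1$-normality forces $S_f=\Delta_{\alpha<\lambda}D_\alpha\in U_1$, while the same ``club-absorption'' computation forces $C_h\in U_1$, which is incompatible. The step demanding the most care is $C_h\in U_1$: its ultrapower computation genuinely uses that $\mathrm{Ult}(M_1,U_1)$ is well-founded (so that a finite subset of $[\mathrm{id}]_{U_1}$ is a standard finite set $j_1\image s$), and this well-foundedness is precisely the winning requirement that separates $G_2$ from $wG_2$. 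The only piece of bookkeeping one must not overlook is to place $A$ into $M_1$, so that $\langle D_\alpha\mid\alpha<\lambda\rangle\in M_1$ and $M_1$-normality can be invoked.
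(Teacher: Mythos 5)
Your proof is correct and follows essentially the same two-round argument as the paper: extract the candidate set from the judge's first response, then feed the alleged disjoint club into the second model and derive a contradiction because the extended ultrafilter must contain both the (diagonal-intersection) set $S_f$ and the club; you simply verify Jech's definition directly instead of routing through Proposition~\ref{prop:lambdaIneffableEqui}, and you supply the Menas-function/ultrapower details for $C_h\in U_1$ that the paper leaves implicit. One small caveat about your closing commentary: well-foundedness of $\mathrm{Ult}(M_1,U_1)$ is not actually needed for the step $C_h\in U_1$, since Proposition~\ref{prop:ultrapower} already gives $j_1\image\lambda=[\mathrm{id}]_{U_1}$ with its $E_{U_1}$-elements exactly the $j_1(\alpha)$ for $\alpha<\lambda$ (and $\omega$ lies in the well-founded part, so ``finite'' is absolute), so your argument in fact works from a winning strategy in the weak game as well --- consistent with the paper's own proof, which never invokes goodness.
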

\begin{proof}
Suppose the judge has a winning strategy $\sigma$ in the game $G_2(\kappa,\lambda)$. Suppose that some $(\kappa,\lambda)$-model $M$ doesn't have an $M$-ultrafilter with a stationary diagonal intersection. Let $M_0=M$, and have the challenger play $M_0$ as his first move. Let $U_0$ be the judge's response according to $\sigma$. Fix some enumeration $U_0=\{A_\xi\mid\xi<\lambda\}$. By our assumption $A=\Delta_{\xi<\lambda} A_\xi$ is not stationary, and so we can fix a club $C$ such that $A\cap C=\emptyset$. Next, we have the challenger play a $(\kappa,\lambda)$-model $M_1$ with $\{A_\xi\mid\xi<\lambda\}$ and $C$ both in $M_1$. The judge then chooses an $M_1$-ultrafilter $U_1$ extending $U_0$ according to $\sigma$. But then each $A_\xi\in U_1$ and $C\in U_1$, and so $A$ and $C$ are both in $U_2$, which is impossible since $A\cap C=\emptyset$. Thus, we have derived a contradiction, showing that every $(\kappa,\lambda)$-model $M$ has an $M$-ultrafilter with a stationary diagonal intersection, and so by Proposition~\ref{prop:lambdaIneffableEqui} $\kappa$ is $\lambda$-ineffable. 
\end{proof}
In the next section, we will use indescribability properties to separate the judge having a winning strategy in the game $G_n(\kappa,\lambda)$ for various finite $n$.

\begin{theorem}\label{th:weakWinningNotQuiteGeneric}
Suppose $\omega\leq\delta\leq\lambda$. Then $\kappa$ is generically $\lambda$-supercompact with wa by $\delta$-closed forcing if and only if the judge has a winning strategy in the game $wG_\delta(\kappa,\lambda)$.
\end{theorem}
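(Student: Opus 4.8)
The plan is to prove both directions by moving between a winning strategy for the judge and a generic embedding, using the poset whose conditions are partial runs of the game. For the forward direction, assume the judge has a winning strategy $\sigma$ in $wG_\delta(\kappa,\lambda)$. First I would define a forcing $\p_\sigma$ whose conditions are partial runs $\la M_\xi, U_\xi\mid\xi\leq\gamma\ra$ of $wG_\delta(\kappa,\lambda)$ played according to $\sigma$ for $\gamma<\delta$, ordered by end-extension. The key point is that $\p_\sigma$ is $\delta$-closed: given a descending sequence of conditions of length $<\delta$, their union is again a partial run according to $\sigma$ of length $<\delta$ (this uses that $\sigma$ is a strategy, so the judge's moves along the increasing union are determined and coherent, and that $\delta\leq\lambda^+$ so the $(\kappa,\lambda)$-model obtained as a union of $<\delta$-many moves is still a legal challenger position to continue from — we may have the challenger play it). If $G\subseteq\p_\sigma$ is $V$-generic, let $U=\bigcup\{U_\xi\mid \la M_\xi,U_\xi\ra \text{ appears in some condition in } G\}$. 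A genericity/density argument (for each $A\subseteq P_\kappa(\lambda)$ in $V$, the challenger can play a model containing $A$, and more generally each $\lambda$-sequence $\vec A\in V$ can be absorbed into some $M_\gamma$) shows $U$ is an ultrafilter on $P(P_\kappa(\lambda))$, fine, $\kappa$-complete, and normal, i.e. a $V$-ultrafilter; weak amenability follows because for any $\vec A=\{A_\xi\mid\xi<\lambda\}\in V$ there is a condition in $G$ whose top model $M_\gamma$ contains $\vec A$, and then $\{\xi<\lambda\mid A_\xi\in U\}=\{\xi<\lambda\mid A_\xi\in U_\gamma\}\in M_\gamma\subseteq V$.

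For the converse, suppose $\kappa$ is generically $\lambda$-supercompact with wa by $\delta$-closed forcing, so in some $\delta$-closed extension $V[G]$ there is a weakly amenable $V$-ultrafilter $W$. I would describe the judge's strategy: on any partial run, when the challenger has just played a basic $(\kappa,\lambda)$-model $M_\gamma$ (elementarily extending earlier moves and containing $\la M_\xi,U_\xi\mid\xi<\gamma\ra$), the judge responds with $U_\gamma = W\restrict M_\gamma = W\cap P(P_\kappa(\lambda))^{M_\gamma}$. By weak amenability, since $|M_\gamma|=\lambda$, this restriction lies in $V$ (indeed it can be computed from $W$ restricted to a coding set of size $\lambda$), so it is a legal move; it is an $M_\gamma$-ultrafilter because $W$ is a $V$-ultrafilter and $M_\gamma\prec_{\Sigma_0} V$ with $P_\kappa(\lambda)^{M_\gamma}=P_\kappa(\lambda)$; and it extends $\bigcup_{\xi<\gamma}U_\xi$ since all the $U_\xi = W\restrict M_\xi$ are restrictions of the same $W$ and the models are increasing. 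The one subtlety is that the strategy must be a function in $V$ of the partial run, not of $W$ — but the restriction $W\restrict M_\gamma$ depends only on $M_\gamma$ (not on $G$) as an object of $V$ by weak amenability, so the map $M_\gamma\mapsto W\restrict M_\gamma$ is definable in $V$ from $W\restrict H_{\lambda^+}^V$ if that is in $V$; more carefully, one fixes in $V$ a choice of $W$-restriction for each model, using that weak amenability gives such restrictions uniformly. The judge survives all $\delta$ rounds since she can always respond, so this is a winning strategy in $wG_\delta(\kappa,\lambda)$ (no goodness requirement).

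The main obstacle I anticipate is the $\delta$-closure of $\p_\sigma$ in the forward direction together with the corresponding issue at limit stages of the game: one must check that after a sequence of $<\delta$-many coherent moves-according-to-$\sigma$, the union $\la M_\mu,U_\mu\ra$ (where $M_\mu=\bigcup_{\xi<\mu}M_\xi$, $U_\mu=\bigcup_{\xi<\mu}U_\xi$) is a \emph{legal position}, i.e. $M_\mu$ is a basic $(\kappa,\lambda)$-model and $U_\mu$ is an $M_\mu$-ultrafilter, and that the judge's strategy $\sigma$ "continues" past it — in the games as defined the challenger plays the next model, so the judge only needs $\sigma$ to be defined on runs where the challenger has just moved; one arranges the condition to always end with a judge move, and the closure argument then says a descending chain of such conditions has a lower bound obtained by having the challenger play $M_\mu$ and the judge respond by $\sigma$. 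Here $\delta\leq\lambda$ (as hypothesized) guarantees $M_\mu$ still has size $\lambda$ and is a basic $(\kappa,\lambda)$-model, so the construction does not break down; this is exactly where the hypothesis $\delta\leq\lambda$ is used. The rest of the argument is routine bookkeeping of fineness, completeness, normality, and weak amenability via density.
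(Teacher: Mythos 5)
Your direction ``winning strategy $\Rightarrow$ generic embedding'' is fine and takes a genuinely different route from the paper: you force with the poset of partial runs played according to $\sigma$ (ordered by end-extension), whereas the paper forces with $\Coll(\delta,H_{\lambda^+})$ and assembles the run inside the extension using the fact that the collapse is $\delta$-closed, so every partial play lies in $V$. Both work; your poset is essentially the same device the paper uses in Proposition~\ref{prop:nearlyStrongCompactEquivGenerallyAlmostStrongCompact}, and the $\delta$-closure and density arguments you sketch go through (modulo the usual caveat that $\delta$ should be regular for the closure argument at limits, an assumption the paper also implicitly makes).

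The other direction has a genuine gap. You define the judge's move at stage $\gamma$ to be $U_\gamma=W\restrict M_\gamma$, where $W$ is the weakly amenable $V$-ultrafilter living in $V[G]$. Weak amenability does guarantee that each individual restriction $W\restrict M_\gamma$ is an element of $V$, so each move is legal; but the \emph{strategy}, i.e.\ the function $M_\gamma\mapsto W\restrict M_\gamma$, is not an element of $V$ — indeed $W$ itself is recoverable as the union of its values over a cofinal family of models, so if this function were in $V$ then $W\in V$. Your proposed repairs do not close this: ``definable in $V$ from $W\restrict H_{\lambda^+}^V$'' fails because $W\restrict H_{\lambda^+}^V$ is all of $W$ (weak amenability only puts $\lambda$-sized pieces into $V$, not the whole family coherently), and ``fix in $V$ a choice of $W$-restriction for each model'' is circular, since making that choice requires $W$. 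The correct fix is the paper's: fix a name $\dot U$ and a condition $p$ forcing $\dot U$ to be a weakly amenable $V$-ultrafilter, and have the strategy carry along a descending sequence of conditions $p_\xi\leq p$ with $p_\xi$ \emph{deciding} $\dot U\restrict M_\xi$; the decided values cohere because the conditions descend, and at limit stages one needs a condition below all earlier ones deciding $\dot U\restrict M_\gamma$ — which is exactly where the $\delta$-closure of the forcing is used. The fact that your argument never invokes $\delta$-closure in this direction is the symptom of the missing step.
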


\begin{proof}
Suppose that $\kappa$ is generically $\lambda$-supercompact with wa by $\delta$-closed forcing. Fix a $\delta$-closed forcing notion $\p$ such that in a generic extension by $\p$, there is a weakly amenable $V$-ultrafilter. Fix a condition $p\in\p$ and a $\p$-name $\dot U$ such that $p$ forces that $\dot U$ is a weakly amenable $V$-ultrafilter. We define a winning strategy $\sigma$ for the judge in the game $wG_{\delta}(\kappa,\lambda)$ as follows. Suppose the challenger starts by playing a $(\kappa,\lambda)$-model $M_0$. Let $p_0\leq p$ decide $U_0=\dot U\restrict M_0$, and have the judge play $U_0$. Suppose inductively that at step $\gamma>0$, we have a descending sequence of conditions $\{p_\xi\mid\xi<\gamma\}$ such that $p_\xi$ decides $U_\xi=\dot U\cap M_\xi$ and $U_\xi$ is the judge's move. Suppose the challenger plays a $(\kappa,\lambda)$-model $M_\gamma$. By $\delta$-closure, there is a condition $p_\gamma$ below $\{p_\xi\mid\xi<\gamma\}$ deciding $U_\gamma=\dot U\restrict M_\gamma$. Since the sequence of the $p_\xi$ for $\xi<\gamma$ is descending, $U_\gamma$ must extend $\bigcup_{\xi<\gamma}U_\xi$. Thus, we can have the judge play $U_\gamma$. Clearly, the judge can continue playing this way for $\delta$-many steps.

Suppose the judge has a winning strategy $\sigma$ in $wG_\delta(\kappa,\lambda)$. Let $\p=\Coll(\delta,H_{\lambda^+})$ and let $G\subseteq\p$ be $V$-generic. We work in $V[G]$. Fix an enumeration $H_{\lambda^+}=\{a_\xi\mid \xi<\delta\}.$ Let $M_0\prec H_{\lambda^+}$ be a $(\kappa,\lambda)$-model from $V$ with $a_0\in M_0$ and let $U_0$ be the response of $\sigma$ to the challenger playing $M_0$. Suppose, inductively, that we have a play according to $\sigma$ of length some $\gamma<\delta$, $\la M_0,U_0\ldots,M_{\xi}, U_{\xi},\dots\ra$,  such that $a_\xi\in M_\xi$ for $\xi<\gamma$. Note that this play is in $V$ by the $\delta$-closure of the collapse. Hence, we can let $M_\gamma$ be a $(\kappa,\lambda)$-model from $V$ with $a_\gamma\in M_\gamma$ extending the previous moves, and let $U_\gamma$ be the response of the judge according to $\sigma$. In this manner, we obtain the sequence of $(\kappa,\lambda)$-models $\{ M_\xi\mid \xi<\delta\}$ and filters $\{U_\xi\mid \xi<\delta\}$. Clearly, $\bigcup_{\xi<\delta} M_\xi=H_{\lambda^+}$ (note that $\{M_\xi\mid \xi<\delta\}\notin V$ since it witnesses $|H_{\lambda^+}|$ is collapsed). Let $U=\bigcup_{\xi<\delta} U_\xi$. This is clearly a $V$-ultrafilter. To see that $U$ is weakly amenable, let $\vec A=\{A_\eta\mid\eta<\lambda\}\in V$ be a sequence of subsets of $P_\kappa(\lambda)$. Then $\vec A\in M_\xi$ for some $\xi<\delta$, hence $U_\xi$ decides $\vec A$, but since $U_\xi\in M_{\xi+1}$, it is in $V$.

Suppose the judge has a winning strategy $\sigma$ in the game $wG_\delta(\kappa,\lambda)$. Let $\p=\Coll(\omega,H_{\lambda^+})$ and let $G\subseteq\p$ be $V$-generic. We work in $V[G]$. Fix an enumeration $$H_{\lambda^+}=\{a_n\mid n<\omega\}.$$ Let $M_0\prec H_{\lambda^+}$ be a $(\kappa,\lambda)$-model from $V$ with $a_0\in M_0$ and let $U_0$ be the response of $\sigma$ to the challenger playing $M_0$. Given a play $\la M_0,U_0\ldots,M_{n-1}, U_{n-1}\ra$ according to $\sigma$ with $a_i\in M_i$ for $i<n$, let $M_n$ be a $(\kappa,\lambda)$-model from $V$ with $a_n\in M_n$ and let $U_n$ be the response of the judge according to $\sigma$. 
In this manner, we obtain the sequence of $(\kappa,\lambda)$-models $\{ M_n\mid n<\omega\}$ and filters $\{U_n\mid n<\omega\}$. Clearly, $\bigcup_{n<\omega} M_n=H_{\lambda^+}$. Let $U=\bigcup_{n<\omega} U_n$. Let's argue that $U$ is a weakly amenable $V$-ultrafilter. Suppose $A\subseteq P_\kappa(\lambda)$ in $V$. Then $A\in M_n$ for some $n<\omega$, and so $U_n\subseteq U$ decides $A$. Suppose that $\vec A=\{A_\xi\mid\xi<\beta\}\in V$, with $\beta<\kappa$, such that all $A_\xi\in U$. Then $\vec A\in M_n$ for some $n<\omega$, and so all $A_\xi\in U_n$, which means that $\bigcap_{\xi<\beta}A_\xi\in U_n\subseteq U$. The same argument works for diagonal intersections of sequences from $V$. Finally, suppose that  $\vec A=\{A_\xi\mid\xi<\lambda\}\in V$ is a sequence of subsets of $P_\kappa(\lambda)$. Then $\vec A\in M_n$ for some $n<\omega$, and hence $U_n$ decides $\vec A$, but since $U_n\in H_{\lambda^+}$, it is in $M$, verifying weak amenability.
\end{proof}
Since the games $wG_\delta(\kappa,\lambda)$ and $G_\delta(\kappa,\lambda)$ are equivalent if $\delta>\omega$ is regular, we obtain the following immediate corollary.
\begin{corollary}\label{Thm: Equivalence for closed forcing} Suppose $\delta>\omega$ is regular. A cardinal $\kappa$ is generically $\lambda$-supercompact with wa by $\delta$-closed forcing if and only if the judge has a winning strategy in the game $G_\delta(\kappa,\lambda)$.
\end{corollary}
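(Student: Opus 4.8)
The plan is to deduce this directly from Theorem~\ref{th:weakWinningNotQuiteGeneric} together with the cofinality observations already recorded. Since $\delta>\omega$ is regular we have $\text{cof}(\delta)=\delta\neq\omega$, so in any run of a game of length $\delta$ the union $M$ of the challenger's $(\kappa,\lambda)$-models is closed under countable sequences; hence whenever the union $U$ of the judge's moves is a (weak) $M$-ultrafilter it automatically has the countable intersection property, and is in particular good. This is precisely the content of the proposition preceding Corollary~\ref{cor:goodGameNotCofOmega}: for $\text{cof}(\delta)\neq\omega$ the games $wG^{(*)}_\delta(\kappa,\lambda)$, $G^{(*)}_\delta(\kappa,\lambda)$ and $sG^{(*)}_\delta(\kappa,\lambda)$ are all equivalent, in the sense that the same player has a winning strategy in one if and only if in the others (and, by the accompanying propositions, also if and only if the same holds with any parameter $\theta\geq\lambda^+$). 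In particular, the judge has a winning strategy in $wG_\delta(\kappa,\lambda)$ if and only if she has one in $G_\delta(\kappa,\lambda)$.

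First I would invoke Theorem~\ref{th:weakWinningNotQuiteGeneric}, which applies since $\omega<\delta\leq\lambda$, to obtain that $\kappa$ is generically $\lambda$-supercompact with wa by $\delta$-closed forcing if and only if the judge has a winning strategy in $wG_\delta(\kappa,\lambda)$. Chaining this equivalence with the equivalence of $wG_\delta(\kappa,\lambda)$ and $G_\delta(\kappa,\lambda)$ from the previous paragraph yields the corollary: generic $\lambda$-supercompactness with wa by $\delta$-closed forcing $\Longleftrightarrow$ the judge wins $wG_\delta(\kappa,\lambda)$ $\Longleftrightarrow$ the judge wins $G_\delta(\kappa,\lambda)$.

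There is essentially no obstacle here; the statement is an immediate corollary, and all the genuine work has already been carried out, namely the two collapse-forcing arguments inside the proof of Theorem~\ref{th:weakWinningNotQuiteGeneric} and the cofinality lemmas. The only point that deserves a moment's care is the range of $\delta$: Theorem~\ref{th:weakWinningNotQuiteGeneric} is stated for $\omega\leq\delta\leq\lambda$, so the corollary is understood within that range, and one simply observes that a regular $\delta$ with $\delta>\omega$ satisfies $\text{cof}(\delta)\neq\omega$ so that the equivalence of the weak and the ``good'' games of length $\delta$ is available.
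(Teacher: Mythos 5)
Your proposal is correct and is exactly the paper's argument: the corollary is deduced from Theorem~\ref{th:weakWinningNotQuiteGeneric} together with the equivalence of $wG_\delta(\kappa,\lambda)$ and $G_\delta(\kappa,\lambda)$ when $\mathrm{cof}(\delta)\neq\omega$. Your remark about the implicit restriction $\delta\leq\lambda$ inherited from Theorem~\ref{th:weakWinningNotQuiteGeneric} is a fair point of care.
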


For the rest of this section, let us focus on the games of length $\omega$. First, by the characterization of almost generically $\lambda$-supercompact cardinals with wa in Theorem~\ref{th:almostGenericallySupercompactCompletelyIneffable},
we conclude that the judge has a winning strategy in the game $wG_\omega(\kappa,\lambda)$ if and only if $\kappa$ is completely $\lambda$-ineffable:
\begin{corollary}\label{th:weakWinningOmegaNotQuiteGeneric}
The following are equivalent for $\kappa$.
\begin{enumerate}
\item $\kappa$ is almost generically $\lambda$-supercompact with wa.
\item $\kappa$ is completely $\lambda$-ineffable.
\item The judge has a winning strategy in the game $wG_\omega(\kappa,\lambda)$.
\end{enumerate}
\end{corollary}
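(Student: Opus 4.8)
The plan is to obtain all three equivalences as immediate consequences of results already established, so that the ``proof'' is essentially a bookkeeping combination of two theorems. The equivalence $(1)\Longleftrightarrow(2)$ is literally the content of Theorem~\ref{th:almostGenericallySupercompactCompletelyIneffable} (a cardinal is almost generically $\lambda$-supercompact with wa if and only if it is completely $\lambda$-ineffable), so nothing new is required there.

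For $(1)\Longleftrightarrow(3)$ I would invoke Theorem~\ref{th:weakWinningNotQuiteGeneric} with $\delta=\omega$, which is permitted since the hypothesis there only asks $\omega\leq\delta\leq\lambda$. The single small point to check is that the $\delta=\omega$ instance of ``generically $\lambda$-supercompact with wa by $\delta$-closed forcing'' coincides with ``almost generically $\lambda$-supercompact with wa''. This is immediate from the definitions of Section~\ref{sec:genericSupercompactness}: an $\omega$-closed forcing notion is just an arbitrary set forcing, since closure under sequences of length ${<}\omega$ is vacuous, so ``in a set-forcing extension by an $\omega$-closed forcing there is a weakly amenable $V$-ultrafilter'' is exactly the stipulated defining property of ``almost generically $\lambda$-supercompact with wa''. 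Thus Theorem~\ref{th:weakWinningNotQuiteGeneric} yields $(1)\Longleftrightarrow(3)$ directly. One should note here that the relevant $V$-ultrafilter is \emph{not} required to be good, which is precisely why the game appearing is the weak game $wG_\omega(\kappa,\lambda)$ rather than $G_\omega(\kappa,\lambda)$; the distinction matters because $\mathrm{cof}(\omega)=\omega$, so one cannot appeal to Corollary~\ref{cor:goodGameNotCofOmega} to collapse the two games.

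Since there is essentially no mathematical obstacle — every implication has already been supplied — the only thing worth flagging is that the chain of definitions (completely $\lambda$-ineffable $\to$ weakly amenable generic embedding of $V$ $\to$ winning strategy for the judge) must be traversed consistently in the $\omega$ case, without inadvertently demanding well-foundedness of the ultrapower. Assembling these observations, the three conditions are seen to be equivalent, which completes the argument.
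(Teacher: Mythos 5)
Your proposal is correct and is essentially the paper's own derivation: the paper obtains $(1)\Leftrightarrow(2)$ from Theorem~\ref{th:almostGenericallySupercompactCompletelyIneffable} and $(1)\Leftrightarrow(3)$ from the $\delta=\omega$ instance of Theorem~\ref{th:weakWinningNotQuiteGeneric}, using exactly the definitional identification of ``generically $\lambda$-supercompact with wa by $\omega$-closed forcing'' with ``almost generically $\lambda$-supercompact with wa'' that you spell out. Your remark that the $\omega$ case concerns the weak game (no goodness requirement) is the right point to flag and is consistent with the paper.
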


Not surprisingly, if the judge has a winning strategy in the game $G_\omega(\kappa,\lambda,\theta)$ for some $\theta$, then there are well-founded generic $\lambda$-supercompactness like embeddings. The next two theorems are generalization of Theorem 4.4 in \cite{NielsenWelch:games_and_Ramsey-like_cardinals}.
\begin{theorem}
If the judge has a winning strategy in the game $G_\omega(\kappa,\lambda,\theta)$, then in some set-forcing extension, there is a weakly amenable good $H_{\theta}$-ultrafilter. Thus, if the judge has a winning stategy in the game $G_\omega(\kappa,\lambda,\theta)$ for every regular $\theta\geq\lambda^+$, then $\kappa$ is generically $\lambda$-supercompact for sets with wa.
\end{theorem}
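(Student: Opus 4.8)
The plan is to transcribe the third paragraph of the proof of Theorem~\ref{th:weakWinningNotQuiteGeneric}, replacing $H_{\lambda^+}$ by $H_\theta$ and $wG_\omega(\kappa,\lambda)$ by the $\theta$-parametrized game $G_\omega(\kappa,\lambda,\theta)$, and then to supply one extra argument — a fusion/density argument — to secure that the union ultrafilter is \emph{good}; this is the only genuinely new point, since goodness, unlike weak amenability, is part of the ``limit'' requirement of the winning condition and is not read off from the local structure. Fix a winning strategy $\sigma$ for the judge in $G_\omega(\kappa,\lambda,\theta)$ and let $\mathbb{P}$ be the forcing whose conditions are the finite partial runs $\la M_0,U_0,\dots,M_n,U_n\ra$ of $G_\omega(\kappa,\lambda,\theta)$ played according to $\sigma$, ordered by end-extension. (Equivalently one may force with $\Coll(\omega,H_\theta)$ and build the run inside the extension; this absorbs into $\mathbb{P}$.) Since $\lambda^{<\kappa}=\lambda$, the challenger can always extend his last move to a basic $(\kappa,\lambda)$-model $M_{n+1}\prec H_\theta$ containing any prescribed element of $H_\theta$ together with $\la M_i,U_i\mid i\le n\ra$, so for each $a\in H_\theta$ the set of conditions in which the challenger has played a model containing $a$ is dense. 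Hence if $G\subseteq\mathbb{P}$ is $V$-generic with generic run $\la M_n,U_n\mid n<\omega\ra$, then $\bigcup_{n<\omega}M_n=H_\theta$; set $U=\bigcup_{n<\omega}U_n$.

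Next I would verify the local properties of $U$. Every $A\in P(P_\kappa(\lambda))^{H_\theta}$ lies in some $M_n$ and is therefore decided by the $M_n$-ultrafilter $U_n\subseteq U$, so $U$ is an ultrafilter on $P(P_\kappa(\lambda))^{H_\theta}$; fineness is inherited from $U_0$; and if a ${<}\kappa$-sequence (resp.\ $\lambda$-sequence) of sets in $U$ lies in $H_\theta$, then it lies in some $M_n$, all of its terms lie in $U_n$, and $M_n$-$\kappa$-completeness (resp.\ $M_n$-normality) of $U_n$ puts its intersection (resp.\ diagonal intersection) into $U_n\subseteq U$. Thus $U$ is an $H_\theta$-ultrafilter. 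Weak amenability is equally routine: given $\vec A=\{A_\xi\mid\xi<\lambda\}\in H_\theta$, pick $n$ with $\vec A\in M_n$; then $U_n$ decides $\vec A$ and $U_n\in M_{n+1}\subseteq H_\theta$, so $\{\xi<\lambda\mid A_\xi\in U\}=\{\xi<\lambda\mid A_\xi\in U_n\}\in H_\theta$.

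The crux is that $U$ is good, i.e.\ $\Ult(H_\theta,U)$ is well-founded. Since ill-foundedness is $\Sigma^1_1$ and so is not destroyed by passing to $V[G]$, this cannot be inferred directly from ``$\sigma$ is winning in $V$''; instead I would argue by contradiction inside $V$. Suppose some condition $p$ forces that $\la[\dot f_i]_{\dot U}\mid i<\omega\ra$ is an $E_{\dot U}$-descending sequence, where each $\dot f_i$ names a function $P_\kappa(\lambda)\to H_\theta$ lying in $H_\theta$. Build in $V$ a descending chain $p=p_0\ge p_1\ge\cdots$ of conditions so that $p_{i+1}$ decides $\dot f_i$ as an actual $f_i^\ast\in H_\theta$, has the challenger play a model containing $f_0^\ast,\dots,f_i^\ast$ as well as all sets $B_j^\ast:=\{x\mid f_{j+1}^\ast(x)\in f_j^\ast(x)\}$ already determined by earlier stages, and is taken long enough that the round in which the challenger plays such a model has $\sigma$ respond with an ultrafilter deciding $B_j^\ast$; since $p_{i+1}\forces B_j^\ast\in\dot U$ and $\dot U$ is forced to be an ultrafilter, that decision must be positive, so $B_j^\ast$ enters the run's ultrafilter. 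Then $b^\ast=\bigcup_i p_i$ is a full run according to $\sigma$, lying in $V$; its union model $M^\ast=\bigcup_n M_n^{b^\ast}$ is a basic weak $(\kappa,\lambda)$-model containing every $f_i^\ast$, and its union ultrafilter $U^\ast$ contains every $B_i^\ast$, so $\la[f_i^\ast]_{U^\ast}\mid i<\omega\ra$ is $E_{U^\ast}$-descending and $\Ult(M^\ast,U^\ast)$ is ill-founded — contradicting that $\sigma$ is a winning strategy in $G_\omega(\kappa,\lambda,\theta)$. Hence no condition forces ill-foundedness, so $U$ is good in $V[G]$. This proves the first assertion, and the second is then immediate: having, for every regular $\theta\ge\lambda^+$, a set-forcing extension with a good weakly amenable $H_\theta$-ultrafilter is by definition exactly generic $\lambda$-supercompactness for sets with weak amenability.

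I expect the main obstacle to be the goodness argument of the third paragraph — in particular the bookkeeping that makes the fusion chain $\la p_i\ra$ an element of $V$ and the verification that $p_{i+1}\forces B_i^\ast\in\dot U$ really forces $B_i^\ast$ into some \emph{individual} round's ultrafilter rather than merely ``eventually''; everything else is a faithful copy of the weak-game argument with $H_\theta$ in place of $H_{\lambda^+}$.
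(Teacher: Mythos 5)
Your proof is correct, and the first two paragraphs coincide with the paper's (which simply cites the construction from the proof of Theorem~\ref{th:weakWinningNotQuiteGeneric} with $H_{\lambda^+}$ replaced by $H_\theta$). Where you diverge is precisely where you predicted: the goodness of $U$. The paper handles this by defining in $V$ a tree $T$ of finite tuples $\la (N_0,U_0,f_0),\ldots,(N_{n-1},U_{n-1},f_{n-1})\ra$ coding partial plays according to $\sigma$ together with functions forming an $E$-descending chain; the ill-founded witness in $V[G]$ gives a branch of $T$ in $V[G]$, and absoluteness of well-foundedness pulls a branch back into $V$, producing a losing run for $\sigma$. You instead work entirely with the forcing relation in $V$: you decide the names $\dot f_i$ along a fusion chain of conditions and steer the challenger so that each determined set $B_j^\ast$ is measured (necessarily positively) by some individual round's ultrafilter, obtaining the losing run as the limit of the chain. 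The two arguments are essentially dual — your chain is a branch of the paper's tree — but they buy slightly different things: the paper's version avoids all bookkeeping with names and conditions at the price of an appeal to Mostowski absoluteness, while yours is more hands-on and makes explicit that \emph{no} condition can force ill-foundedness (so goodness holds for every generic, which also follows from the paper's argument but is not stated). Your worry about the step ``$p_{i+1}\forces B_j^\ast\in\dot U$ forces $B_j^\ast$ into an individual round's ultrafilter'' is resolved exactly as you say: once the challenger has played a model containing $B_j^\ast$, the judge's response measures it, and a negative decision would contradict what $p_{i+1}$ forces. One cosmetic remark: the parenthetical ``ill-foundedness is $\Sigma^1_1$ and so is not destroyed by passing to $V[G]$'' is stated backwards as a motivation (upward absoluteness of ill-foundedness is not the obstacle; the obstacle is that the generic run is not a run in $V$, so the winning condition does not apply to it directly), but this does not affect the proof.
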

\begin{proof} Let $\sigma$ be a winning strategy for the judge in the game $G_\omega(\kappa,\lambda,\theta)$. Let us define a weakly amenable $H_\theta$-ultrafilter $U$ analogously to how we defined the $V$-ultrafilter in the proof Theorem~\ref{th:weakWinningNotQuiteGeneric} (backward direction with $\delta=\omega$) but with $H_{\lambda^+}$ replaced by $H_\theta$ and the challenger playing basic $(\kappa,\lambda)$-models elementary in $H_\theta$. Namely, we have a play according to $\sigma$ $\la M_0,U_0,\ldots,M_n,U_n,\ldots\ra$ such that $H_\theta=\bigcup_{n<\omega}M_n$ and $U=\bigcup_{n<\omega}U_n$. 

It suffices to show that $U$ is good. Suppose the ultrapower of $H_{\theta}$ by $U$ is ill-founded. Then there is a sequence $\{ g_n\mid n<\omega\}$ in $V[G]$ such that each $g_n:P_\kappa(\lambda)\to H_{\theta}$ is an element of $H_{\theta}$ and each $$A_n=\{x\in P_\kappa(\lambda)\mid g_{n+1}(x)\in g_n(x)\}\in U.$$ Choose an increasing sequence $\{i_n\mid n<\omega\}$ such that each $g_n\in M_{i_n}$.

Now we define the following tree $T$ of height $\omega$ in $V$. Elements of $T$ are sequences $$\la (N_0,U_0,f_0),\ldots, (N_{n-1},U_{n-1},f_{n-1})\ra$$ such that $\la N_0,U_0,\ldots,N_{n-1},U_{n-1}\ra$ is a subsequence of a play according to $\sigma$, for all $i< n$, $f_i\in N_i$, and  $$\{x\in P_\kappa(\lambda)\mid f_{i+1}(x)\in f_i(x)\}\in U_{i+1}.$$  The sequences are ordered by end-extension.  The tree $T$ has a branch in $V[G]$ as witnessed by the sequence $\la M_{i_0},U_{i_0},\ldots,M_{i_n},U_{i_n},\ldots\ra$. Therefore, by the absoluteness of well-foundedness, $T$ also has a branch in $V$. But that is impossible since the elements of the tree produce a play according to $\sigma$ where the ultrapower of the union model by the union ultrafilter is ill-founded.
\end{proof}
We would have liked to argue that if $\kappa$ is generically $\lambda$-supercompact for sets with wa, then the judge has a winning strategy in the game $G_\omega(\kappa,\lambda,\theta)$ for every regular $\theta\geq\lambda^+$, but we don't quite get this converse. We can only make the argument with the extra assumption that the target model $M$ of the generic embedding is contained in $V$. 

\begin{theorem}
If in a set-forcing extension, there is an elementary embedding $j:H_\theta\to M$ with $\crit(j)=\kappa$, $j(\kappa)>\lambda$, $j\image\lambda\in M$, and $M\subseteq V$, then the judge has a winning strategy in the game $G_\omega(\kappa,\lambda,\theta)$.
\end{theorem}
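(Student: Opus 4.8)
The plan is to convert the generic embedding into an explicit winning strategy for the judge, working throughout in $V$. Fix in $V$ a poset $\p$, a condition $p\in\p$, and a $\p$-name $\dot j$ such that $p$ forces that $\dot j\colon\check H_\theta\to\dot M$ is elementary with $\crit(\dot j)=\kappa$, $\dot j(\kappa)>\lambda$, $\dot j\image\lambda\in\dot M$ and $\dot M\of\check V$; let $\dot U$ be the induced name for the derived ultrafilter $\{A\of P_\kappa(\lambda)\mid \dot j\image\lambda\in\dot j(A)\}$, which $p$ forces to be a fine normal $\check H_\theta$-ultrafilter. The key preliminary, and the only place the hypothesis $\dot M\of\check V$ enters, is a relativized form of Proposition~\ref{prop:ultrapower}: since $\dot j\restrict\lambda$ is forced to be the increasing enumeration of $\dot j\image\lambda$, it is forced to lie in $\dot M\of\check V$; composing with a fixed $V$-bijection $\lambda\to N$ then gives $p\Vdash\dot j\restrict\check N\in\check V$, and hence $p\Vdash\dot U\cap\check N\in\check V$, for every basic $(\kappa,\lambda)$-model $N\prec H_\theta$ in $V$. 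In particular, below $p$ the set of conditions deciding $\dot U\cap\check N$ to equal some specific element of $V$ is dense.

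Using this, the judge plays coherent pieces of $\dot U$ along a descending sequence of conditions. If the challenger opens with $M_0$, choose $q_0\le p$ and $U_0\in V$ with $q_0\Vdash\dot U\cap\check M_0=\check U_0$, and let the judge answer $U_0$; since $q_0$ forces that $\dot U\cap\check M_0$ is a fine normal $\check M_0$-ultrafilter, $U_0$ really is an $M_0$-ultrafilter. Inductively, given a partial run $\la M_0,U_0,\dots,M_n,U_n\ra$ together with $p\ge q_0\ge\cdots\ge q_n$ such that $q_n\Vdash\dot U\cap\check M_i=\check U_i$ for $i\le n$, when the challenger plays $M_{n+1}$ the judge uses density below $q_n$ to pick $q_{n+1}\le q_n$ and $U_{n+1}\in V$ with $q_{n+1}\Vdash\dot U\cap\check M_{n+1}=\check U_{n+1}$; since $q_{n+1}\le q_n$, the set $U_{n+1}$ is an $M_{n+1}$-ultrafilter extending $U_n$. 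Taking least witnesses relative to fixed $V$-well-orders turns this into a genuine strategy $\sigma\in V$, which is legal, so the judge survives all $\omega$ rounds.

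It remains to check that $\sigma$ actually wins $G_\omega(\kappa,\lambda,\theta)$, i.e.\ that along every run according to $\sigma$ the union $U=\bigcup_{n<\omega}U_n$ is a \emph{good} ultrafilter on $M_\omega=\bigcup_{n<\omega}M_n$. That $U$ is an $M_\omega$-ultrafilter --- in fact a weakly amenable one --- is routine, since any ${<}\kappa$-length sequence, diagonal sequence, or $\lambda$-length sequence from $M_\omega$ lies already in some $M_n$ and is therefore decided by $U_n\of U$. The real content is goodness, and the idea is to recover a \emph{single} generic realizing the entire run: one arranges the construction of the $q_n$ so that the run is compatible with an actual $\p$-generic $G\ni p$, whence $\dot U^{G}\cap M_n=U_n$ for all $n$, so $U=\dot U^{G}\cap M_\omega$, and the factor map embeds $\Ult(M_\omega,U)$ into $\Ult(H_\theta,\dot U^{G})=M^{G}\of V$, which is well-founded. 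Dually --- in the spirit of the tree-and-absoluteness argument from the proof of the preceding theorem --- if some run according to $\sigma$ had an ill-founded ultrapower, one builds in $V$ the tree of finite descending $\dot E_{\dot U}$-chains running along subsequences of $\sigma$-runs, which is then ill-founded, and a branch through it together with the witnessing conditions would yield an infinite descending chain inside the well-founded model $M^{G}$, a contradiction. The delicate point, and the main obstacle, is exactly this coordination of the descending conditions $q_n$ with a genuine generic for $\p$ (equivalently, keeping $\bigwedge_n q_n\ne 0$ in the completion of $\p$): here one must exploit that $\dot M\of\check V$ forces each $\dot U\cap\check M_n$ to be a ground-model object, so that deciding it along the run does not ``use up'' all of $\p$.
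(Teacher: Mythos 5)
Your construction of the strategy itself matches the paper's, but the verification that the union ultrafilter is good --- which you rightly identify as the real content --- has a genuine gap. Your primary route is to realize the whole run by a single $V$-generic $G$ containing every $q_n$, so that $U=\dot U^G\cap M_\omega$ factors into the well-founded $M^G\of V$. But a strictly descending $\omega$-sequence of conditions in an arbitrary poset $\p$ need not be contained in any generic filter: if $\bigwedge_n q_n=0$ in the Boolean completion, then the set of conditions incompatible with some $q_n$ is dense, so every generic omits some $q_n$. Nothing in the hypothesis gives any closure of $\p$, and your suggested remedy (``deciding $\dot U\cap\check M_n$ does not use up all of $\p$'' because the decided objects are ground-model sets) is not an argument --- the conditions $q_n$ can still have meet $0$. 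Your ``dual'' sketch suffers from the same problem, since it again lands the descending chain ``inside $M^G$'' for a single generic $G$.

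The paper's proof circumvents this entirely, and the key move is one your strategy omits: because $j\image M_n\in M\of V$, each condition can be chosen to decide not only $\dot U\restrict\check M_n=\check U_n$ but also $\dot j\restrict\check M_n=\check j_n$ for a concrete function $j_n\in V$, coherently along the descending sequence. Then, given functions $g_n\in M_{i_n}$ witnessing ill-foundedness, the condition $p_{i_{n+1}}$ forces $\dot j(\check g_{n+1})(\check A)\in\dot j(\check g_n)(\check A)$ where $A=j\image\lambda$ is a fixed ground-model set; since both sides are decided to equal the specific ground-model sets $B_{n+1}=j_{i_{n+1}}(g_{n+1})(A)$ and $B_n=j_{i_n}(g_n)(A)$, and a condition forcing $\check B_{n+1}\in\check B_n$ for ground-model sets implies $B_{n+1}\in B_n$ outright, one obtains an infinite $\in$-descending sequence in $V$ --- a contradiction with foundation that requires no generic meeting all the conditions. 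Without deciding $\dot j\restrict M_n$, you cannot name the sets $B_n$ in $V$, and the forced memberships witnessed by different incompatible-in-the-limit conditions cannot be chained together. So you need to augment your strategy to decide the restrictions of $\dot j$ as well, and replace the single-generic argument by this pointwise decoding of the forced $\in$-chain into $V$.
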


\begin{proof}
Suppose that a set-forcing extension $V[G]$ has an embedding $j:H_\theta\to M$ with $\crit(j)=\kappa$, $j(\kappa)>\lambda$, $j\image\lambda\in M$ and $M\subseteq V$. In particular, since $M\subseteq V$, it follows for free that $H_\theta$ and $M$ have the same subsets of $\lambda$. Let $U$ be the $H_\theta$-ultrafilter derived from $j$ using $j\image\lambda$. Since $H_\theta$ and $M$ have the same subsets of $\lambda$, $U$ is weakly amenable. 

Fix $\p$-names $\dot U$, $\dot j$, $\dot M$, and a condition $p\in\p$ forcing that $\dot U$ is a weakly amenable $H_\theta$-ultrafilter derived from the embedding $\dot j:\check H_\theta\to \dot M$ (with the above properties) and \hbox{$\dot j\image \check\lambda=\check A$}. Suppose the challenger plays a $(\kappa,\lambda)$-model $M_0\prec H_\theta$. Observe that \hbox{$j\image M_0\in M\subseteq V$}. Thus, we can choose a condition $p_0\leq\ p$ deciding that $\dot U\restrict M_0=\check U_0$ and $\dot j\restrict \check M_0=\check j_0$. Next, suppose the challenger plays $M_1\prec H_\theta$ extending $M_0$. Again, we can choose a condition $p_1\leq p_0$ such that $p_1$ forces $\dot U\restrict \check M_1=\check U_1$ and $\dot j\restrict \check M_1=\check j_1$. Since $p_1\leq p_0$, it must be the case that $U_1$ extends $U_0$ and $j_1$ extends $j_0$. The judge can continue playing this strategy for $\omega$-many steps. We need to argue that $U^*=\bigcup_{n<\omega}U_n$ produces a well-founded ultrapower of $M^*=\bigcup_{n<\omega} M_n$. So suppose the ultrapower is ill-founded. Thus, there are functions $g_n:P_\kappa(\lambda)\to H_\theta$ in $H_\theta$, for $n<\omega$, such that each $$A_n=\{x\in P_\kappa(\lambda)\mid g_{n+1}(x)\in g_n(x)\}\in U.$$ Choose an increasing sequence $\{i_n\mid n<\omega\}$ such that each $g_n\in M_{i_n}$. Thus,  each $$A_n=\{x\in P_\kappa(\lambda)\mid g_{n+1}(x)\in g_n(x)\}\in U_{i_{n+1}}.$$ It follows that $$p_{i_{n+1}}\forces \dot j(\check g_{n+1})(\check A)\in \dot j(\check g_n)(\check A).$$ Let $B_n=j_n(g_n)(A)$ for $n<\omega$. Then $\{B_n\mid n<\omega\}$ is an $\in$-descending sequence in $V$, which is the desired contradiction.

\end{proof}

\begin{remark}
Let us say that $\kappa$ is \emph{generically measurable for sets with wa} if for every sufficiently large $\theta$ some forcing extension has a weakly amenable good $H_\theta$-ultrafilter. Nielsen and Schindler \cite[4.5\&4.6]{NielsenWelch:games_and_Ramsey-like_cardinals} showed that this property is equiconsistent with the existence of a winning strategy for the judge in the one-cardinal game analogs of $G_\omega(\kappa,\lambda,\theta)$ for every sufficiently large $\theta$. Their argument relies on the fact that the extra assumption of the above theorem, that $M\subseteq V$, is obtained for free in the case $V=L$. In particular, generic measurability for sets with wa is consistent with $L$, and hence much weaker than generic measurability. But, of course, such arguments are not available in our context because consistency-wise our large cardinals are beyond the scope of inner model theory.

\end{remark}
\begin{question}
If $\kappa$ is generically $\lambda$-supercompact for sets with wa, does the judge have a winning strategy in the game $G_\omega(\kappa,\lambda,\theta)$ for every regular $\theta\geq\lambda^+$?
\end{question}
\begin{question}
Are the following equiconsistent?
\begin{enumerate}
\item There is a generically $\lambda$-supercompact with wa cardinal $\kappa$.
\item  For some regular cardinal $\theta$, in a set-forcing extension, there is an elementary embedding $j:H_\theta\to M$ with $\crit(j)=\kappa$, $j\image\lambda\in M$, $j(\kappa)>\lambda$, and $M$ transitive.
\item For some regular cardinal $\theta$, in a set-forcing extension, there is an elementary embedding $j:H_\theta\to M$ with $\crit(j)=\kappa$, $j\image\lambda\in M$, $j(\kappa)>\lambda$, $M$ transitive, and $M\subseteq V$.
\end{enumerate}
\end{question}
The next theorem is a generalization of Theorem 4.21 in \cite{NielsenWelch:games_and_Ramsey-like_cardinals}.
\begin{theorem}\label{th:winStratsGImpliesGenericEmbedding}
If the judge has a winning strategy in the game $sG_\omega(\kappa,\lambda)$, then $\kappa$ is generically $\lambda$-supercompact with wa.
\end{theorem}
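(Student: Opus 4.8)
The plan is to imitate the backward direction of Theorem~\ref{th:weakWinningNotQuiteGeneric} with $\delta=\omega$ in order to produce, in a collapse extension, a weakly amenable $V$-ultrafilter arising from the winning strategy, and then to add a tree-of-attempts argument in the spirit of the proof that a winning strategy in $G_\omega(\kappa,\lambda,\theta)$ yields a good $H_\theta$-ultrafilter — this time exploiting that the \emph{strong} game's winning condition forces any play of $\sigma$ to have a well-founded union ultrapower, so that the strategy cannot ``escape'' into the extension.

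First I would fix a winning strategy $\sigma$ for the judge in $sG_\omega(\kappa,\lambda)$, force with $\p=\Coll(\omega,H_{\lambda^+})$, pass to $V[G]$, and fix an enumeration $H_{\lambda^+}=\{a_n\mid n<\omega\}$. Building a play $\langle M_n,U_n\mid n<\omega\rangle$ according to $\sigma$ in which the challenger plays basic $(\kappa,\lambda)$-models $M_n\prec H_{\lambda^+}$ from $V$ with $a_n\in M_n$ and $\langle M_i,U_i\mid i<n\rangle\in M_n$ — every finite initial segment of which is in $V$, being a finite sequence of elements of $V$ — we get $\bigcup_n M_n=H_{\lambda^+}$, and exactly as in Theorem~\ref{th:weakWinningNotQuiteGeneric} one checks that $U=\bigcup_n U_n$ is a weakly amenable $V$-ultrafilter: any subset of $P_\kappa(\lambda)$ in $V$, and any ${<}\kappa$- or $\lambda$-indexed sequence of such subsets in $V$, has size $\le\lambda$, hence already belongs to some $M_n$, so $U_n$ measures it and closes under the relevant intersections and diagonal intersections, and since $U_n\in M_{n+1}$ this also witnesses weak amenability.

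The remaining, and main, step is to show $U$ is good. Suppose $\Ult(V,U)$ were ill-founded; fix $\langle g_n\mid n<\omega\rangle\in V[G]$ with each $g_n\colon P_\kappa(\lambda)\to V$ in $V$ and $A_n=\{x\mid g_{n+1}(x)\in g_n(x)\}\in U$, and choose an increasing $\langle i_n\mid n<\omega\rangle$ with $g_n\in M_{i_n}$, so that $A_n\in M_{i_{n+1}}$ and therefore $A_n\in U_{i_{n+1}}$. Now, working in $V$, I would form the tree $T$ whose nodes are triples $(s,\iota,\vec f)$ consisting of a finite initial segment $s=\langle P_0,W_0,\dots,P_{l-1},W_{l-1}\rangle$ of a play according to $\sigma$, a strictly increasing map $\iota\colon m\to l$ with $\iota(m-1)=l-1$, and functions $f_i\in P_{\iota(i)}$ on $P_\kappa(\lambda)$ with $\{x\mid f_{i+1}(x)\in f_i(x)\}\in W_{\iota(i+1)}$ for $i<m-1$, ordered by simultaneous end-extension in all three coordinates; $T$ is a set in $V$. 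A branch through $T$ reassembles a genuine length-$\omega$ play of $\sigma$ whose union ultrafilter contains every $\{x\mid f_{i+1}(x)\in f_i(x)\}$, hence makes $\langle[f_i]\mid i<\omega\rangle$ an infinite descending chain in the corresponding ultrapower — contradicting that $\sigma$ is winning, since a weak ultrafilter with the countable intersection property is good. On the other hand, the sequence of nodes $\bigl(\langle M_0,U_0,\dots,M_{i_{m-1}},U_{i_{m-1}}\rangle,\ \langle i_0,\dots,i_{m-1}\rangle,\ \langle g_0,\dots,g_{m-1}\rangle\bigr)$, $m<\omega$, is a branch through $T$ in $V[G]$; since well-foundedness of a relation in $V$ is preserved by forcing, $T$ would already have a branch in $V$, a contradiction. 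Hence $U$ is good, and $\kappa$ is generically $\lambda$-supercompact with wa.

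I expect the only delicate point to be setting up $T$ with precisely the right bookkeeping — the finite initial segment of the $\sigma$-play, the marking function recording which challenger-stages carry a witnessing function, and the functions themselves — so that a branch really does yield a single honest length-$\omega$ play of $\sigma$ on which the judge's union ultrafilter produces an ill-founded ultrapower; once that is arranged, the transfer of the strategy across $\Coll(\omega,H_{\lambda^+})$ and the absoluteness of well-foundedness do the rest.
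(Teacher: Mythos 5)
Your setup (the collapse of $H_{\lambda^+}$, the construction of the run with $\bigcup_n M_n=H_{\lambda^+}$, and the verification that $U=\bigcup_n U_n$ is a weakly amenable $V$-ultrafilter) matches the paper. The gap is in the goodness argument. To show $U$ is a \emph{good $V$-ultrafilter} you must rule out descending chains $[g_{n+1}]_U\mathrel{E_U}[g_n]_U$ for \emph{arbitrary} functions $g_n\colon P_\kappa(\lambda)\to V$ in $V$, and such functions need not belong to $H_{\lambda^+}$ — their ranges can have arbitrarily large rank. Since the challenger's models satisfy $\bigcup_n M_n=H_{\lambda^+}$, your step ``choose an increasing $\langle i_n\rangle$ with $g_n\in M_{i_n}$'' can fail outright, and then the purported branch through $T$ in $V[G]$ does not exist, so no contradiction is reached. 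Your tree (whose nodes demand $f_i\in P_{\iota(i)}$) only detects ill-foundedness witnessed by functions lying in the played models, i.e.\ it shows at best that $U$ is a good $H_{\lambda^+}$-ultrafilter. This is exactly the obstruction the paper flags when discussing why the $\theta$-parameter matters for games whose winning condition involves goodness: the ultrapower of $H_{\lambda^+}$ can be well-founded while the full ultrapower is ill-founded ``because the witnessing function is very large.'' (A secondary point: as defined, $T$ is a proper class, since partial plays of $sG_\omega(\kappa,\lambda)$ range over a proper class of basic $(\kappa,\lambda)$-models; the absoluteness of well-foundedness then needs an extra reflection to a set subtree.)

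The paper avoids this by a different device: fix $\p$-names $\dot{\vec M},\dot{\vec U},\dot{\vec F},\dot{\vec A}$ and a condition $p$ forcing the ill-foundedness data, take a \emph{countable} $N\prec H_\rho$ with $\rho$ large enough that these names (and hence the possibly very large functions $F_n$) are reflected into $N$, and take an $N$-generic $g\ni p$ (which exists in $V$ since $N$ is countable). The interpreted $m_n,u_n,f_n,a_n$ all lie in $N\subseteq V$, the $\langle m_n,u_n\rangle$ form an honest length-$\omega$ run of $\sigma$ in $V$, so $u=\bigcup_n u_n$ has the countable intersection property by the winning condition of the strong game; intersecting the sets $a_n\in u$ then yields an infinite descending $\in$-chain in $V$, a contradiction. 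You could repair your approach by dropping the clause $f_i\in P_{\iota(i)}$ and letting the $f_i$ range over all of $V$, but then $T$ is unavoidably a proper class and you must restrict to $T\cap V_\alpha$ for a suitable $\alpha$ before invoking Mostowski absoluteness; at that point the argument is essentially a reorganization of the paper's reflection argument. As written, the proof does not establish the theorem.
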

\begin{proof}
Let $\sigma$ be the winning strategy for the judge in the game $sG_\omega(\kappa,\lambda)$. Let $\p=\Coll(\omega,H_{\lambda^+})$ and let $G\subseteq\p$ be $V$-generic. Let $U$ be defined as in the proof of Theorem~\ref{th:weakWinningNotQuiteGeneric}, as the union of the plays $U_n$ of the judge according to $\sigma$ in response to plays $M_n\prec H_{\lambda^+}$ of the challenger so that $\bigcup_{n<\omega} M_n=H_{\lambda^+}$. We will argue that the ultrapower of $V$ by $U$ is well-founded. If not, then in $V[G]$, there is a sequence of functions $F_n:P_\kappa(\lambda)\to V$ for $n<\omega$, with each $F_n\in V$, such that each set $$A_n=\{x\in P_\kappa(\lambda)\mid F_{n+1}(x)\in F_n(x)\}\in U.$$ Let $\dot {\vec M}$ be a $\p$-name for the sequence $\vec M=\{M_n\mid n<\omega\}$, $\dot {\vec U}$ be a $\p$-name for the sequence $\vec U=\{U_n\mid n<\omega\}$, $\dot{\vec F}$ be a $\p$-name for the sequence $\{F_n\mid n<\omega\}$, and $\dot {\vec A}$ be a $\p$-name for $\vec A=\{A_n\mid n<\omega\}$. Fix a condition $p$ forcing that $\dot {\vec U}$ are the responses of the judge to $\dot {\vec M}$ according to $\sigma$ and $\dot {\vec F}$ along with $\dot {\vec A}$ witness that the ultrapower is ill-founded.

Choose a regular $\rho$ large enough that $H_{\lambda^+}$, $\sigma$, $\p$, $\dot {\vec M}$, $\dot {\vec U}$, $\dot {\vec F}$, and $\dot {\vec A}$ are in $H_\rho$ and let $N\prec H_{\rho}$ be a countable elementary submodel containing all these sets and the condition $p$. Let $g\subseteq \p$ be $N$-generic with $p\in g$, and consider $N[g]$. Since $N\prec H_\rho$, $N$ knows what is forced by $p$, and so this must be satisfied by $N[g]$. Thus, we have that for every $n<\omega$, the sets $m_n=\dot {\vec M}(n)_g$, $u_n=\dot{\vec U}(n)_g$, $f_n=\dot{\vec F}(n)_g$, and $a_n=\dot {\vec A}(n)_g$ are  all in $N$. The model $N$ must satisfy that each finite initial segment of the sequences $\{m_n\mid n<\omega\}$ and $\{u_n\mid n<\omega\}$ forms a play according to $\sigma$, each $f_n$ is a  function on $P_\kappa(\lambda)$ and, each $a_n=\{x\in P_\kappa(\lambda)\mid f_{n+1}(x)\in f_n(x)\}\in u_k$ for some $k<\omega$. Since $N\prec H_\rho$, it is actually correct about all this. Let $m=\bigcup_{n<\omega}m_n$ and $u=\bigcup_{n<\omega}u_n$. Since $\sigma$ is a winning strategy for the game $sG(\kappa,\lambda)$, $u$ must have the countable intersection property. Thus, the ultrapower of $N$ by $u$ must be well-founded, but this is contradicted by the functions $f_n$ and sets $a_n\in u$. So we have reached a contradiction showing that the ultrapower of $V$ by $U$ must have been well-founded.
\end{proof}
\begin{question}
If $\kappa$ is generically $\lambda$-supercompact with wa, does the judge have a winning strategy in the game $sG_\omega(\kappa,\lambda,\theta)$ for every $\theta$?
\end{question}
We can get a potentially stronger generic embedding property from the assumption that the judge has a winning strategy in the game $sG_\omega(\kappa,\lambda)$. Let us say that $\kappa$ is \emph{generically $\lambda$-supercompact with wa and $\alpha$-iterability} ($0\leq\alpha\leq\omega_1$) if in a set-forcing extension of $V$ there is a weakly amenable $V$-ultrafilter with $\alpha$-many well-founded iterated ultrapowers. Thus, in particular, almost generically $\lambda$-supercompact with wa cardinals are generically $\lambda$-supercompact  with wa and 0-iterability and generically $\lambda$-supercompact cardinals with wa have 1-iterability. Where the ultrapower is taken by an external ultrafilter as in the case of $M$-ultrafilters and $(\kappa,\lambda)$-acceptable models $M$, weak amenability is required for defining successor step ultrafilters for the iteration. For such external ultrafilters, the countable intersection property suffices to ensure that all the iterated ultrapowers are well-founded. \cite{kanamori:higher} (Chapter 19) For ultrafilters that exist in the model, countable completeness is equivalent to the ultrapower being well-founded and indeed is equivalent to all the iterated ultrapowers being well-founded. For external ultrafilters, the countable intersection property is not necessary for all the iterated ultrapowers to be well-founded and there are ultrafilters with exactly $\alpha$-many iterated ultrapowers for any $\alpha<\omega_1$ \cite{gitman:welch} (for both internal and external ultrafilters, if the first $\omega_1$-many iterated ultrapowers are well-founded, then so are all iterated ultrapowers, see  \cite{kanamori:higher} (Chapter 19)).
\begin{theorem}
If the judge has a winning strategy in the game $sG_\omega(\kappa,\lambda)$, then $\kappa$ is generically $\lambda$-supercompact with wa and $\omega_1$-iterability.
\end{theorem}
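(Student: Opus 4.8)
The plan is to refine the proof of Theorem~\ref{th:winStratsGImpliesGenericEmbedding}. I would fix a winning strategy $\sigma$ for the judge in $sG_\omega(\kappa,\lambda)$, force with $\p=\Coll(\omega,H_{\lambda^+})$ to obtain $G$, and work in $V[G]$ with the weakly amenable $V$-ultrafilter $U=\bigcup_{n<\omega}U_n$ built there exactly as in that proof, where $\la M_n,U_n\mid n<\omega\ra$ is a play according to $\sigma$ with $\bigcup_n M_n=H_{\lambda^+}$. That theorem gives that $\Ult(V,U)$ is well-founded; the remaining task is to show that $\Ult^\alpha(V,U)$ is well-founded for every countable $\alpha$, where the iteration is formed in the standard way, using the weak amenability of $U$ to define the iterated ultrafilters at successor stages and direct limits at limits. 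Well-foundedness at $\omega_1$ then follows by reflecting any descending sequence through the direct limit down to a countable stage (the countable sup of the stages involved), and full iterability follows from the facts recalled just before the statement (\cite{kanamori:higher}, Ch.~19); so $U$ would witness that $\kappa$ is generically $\lambda$-supercompact with wa and $\omega_1$-iterability.

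For a fixed countable $\alpha$, I would argue by contradiction, generalizing the ill-foundedness tree of Theorem~\ref{th:winStratsGImpliesGenericEmbedding}. In that proof, ill-foundedness of $\Ult(V,U)$ is exhibited by functions $\la F_n\mid n<\omega\ra$ in $V$ together with the sets $A_n=\{x\mid F_{n+1}(x)\in F_n(x)\}\in U$. Unwinding the iteration level by level, ill-foundedness of $\Ult^\alpha(V,U)$ should likewise be captured by a \emph{bad configuration}: a countable family of functions in $V$, indexed by the finitely-many-seed terms occurring in the iteration of length $\alpha$, together with a countable family of sets $A_\tau\subseteq P_\kappa(\lambda)$ required to lie in $U$ --- the deeper assertions about the iterated ultrafilters reducing, via weak amenability, to $U$-membership assertions over $V$ --- arranged so that a point common to the appropriate countably many of the $A_\tau$ reads off an $\in$-descending $\omega$-sequence in $V$. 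As in the earlier proof, I would then fix $\p$-names $\dot{\vec M},\dot{\vec U}$ for the play and a name $\dot b$ for such a configuration, together with a condition $p$ forcing that $\dot{\vec U}$ is the judge's $\sigma$-response to $\dot{\vec M}$, that $\bigcup_n\dot{\vec M}(n)=\check H_{\lambda^+}$, and that $\dot b$ witnesses ill-foundedness of the $\check\alpha$-th iterated ultrapower of $\check V$ by $\bigcup_n\dot{\vec U}(n)$ with all of its sets forced into $\bigcup_n\dot{\vec U}(n)$.

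Next I would pick a large regular $\rho$, a countable $N\prec H_\rho$ containing $p,\p,\sigma,\alpha,\dot{\vec M},\dot{\vec U},\dot b$, and an $N$-generic $g\subseteq\p\cap N$ with $p\in g$, and pass to the countable set $N[g]\in V$. Exactly as in Theorem~\ref{th:winStratsGImpliesGenericEmbedding}, since $p$ forces the relevant objects to be ground-model objects, the interpretations $m_n=\dot{\vec M}(n)_g$, $u_n=\dot{\vec U}(n)_g$ and the components of $b'=\dot b_g$ all lie in $N$; by elementarity and what $p$ forces, $\la m_n,u_n\mid n<\omega\ra$ is a genuine play of $sG_\omega(\kappa,\lambda)$ following $\sigma$, $b'$ is a bad configuration of functions over $V$, and each set $a_\tau$ of $b'$ lies in some $u_k$, hence in $u=\bigcup_n u_n$. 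Since $N[g]\subseteq V$, this play is an element of $V$, hence a run of $sG_\omega(\kappa,\lambda)$ in $V$ following the winning strategy $\sigma$, so $u$ has the countable intersection property; because $u$ is countable this applies in particular to the countably many sets $a_\tau$, and a point in the appropriate intersection of them reads off an $\in$-descending $\omega$-sequence in $V$, contradicting well-foundedness of $V$. This yields that $\Ult^\alpha(V,U)$ is well-founded for every countable $\alpha$.

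The step I expect to be the main obstacle is the precise formulation of the bad configuration: one must set up the iteration of the proper class $V$ by the external weakly amenable ultrafilter $U$ so that the iterated ultrafilters and seeds at all countable stages are definable from $U$, then verify that ill-foundedness of the length-$\alpha$ iterate is faithfully coded by a countable first-order configuration in which every relevant membership assertion has been reduced to one of the form $A\in U$ with $A\subseteq P_\kappa(\lambda)$ in $V$ (so that it survives reflection through $\dot b$ and $g$ and lands in $u$), and finally check that a common point of the reflected sets really produces an $\in$-descent in $V$. The rest is a routine adaptation of the proof of Theorem~\ref{th:winStratsGImpliesGenericEmbedding}.
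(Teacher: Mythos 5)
Your proposal is correct and follows essentially the same route as the paper's proof: represent elements of the $\alpha$-th iterate by functions with finite seed-supports, code the hypothetical descending chain by countably many sets in the finite products of $U$, and reflect the names for this configuration through a countable $N\prec H_\rho$ with an $N$-generic filter to obtain a genuine $\sigma$-play in $V$ whose union ultrafilter $u$ has the countable intersection property. The only divergence is the last step: where you extract a common thread for the reflected sets by hand (the fusion over sections that you flag as the main obstacle), the paper simply invokes the fact from \cite{kanamori:higher} (Chapter 19) that the countable intersection property of a weakly amenable ultrafilter implies all of its iterated ultrapowers are well-founded, applied to the countable structure $N$ and the reflected ultrafilter $u$.
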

\begin{proof}
Let $\sigma$ be a winning strategy for the judge in the game $sG_\omega(\kappa,\lambda)$.  Let $\p=\Coll(\omega,H_{\lambda^+})$ and let $G\subseteq\p$ be $V$-generic. Let $U$ be defined as in the proof of Theorem~\ref{th:weakWinningNotQuiteGeneric}, as the union of the plays $U_n$ of the judge according to $\sigma$ in response to plays $M_n\prec H_{\lambda^+}$ of the challenger so that $\bigcup_{n<\omega} M_n=H_{\lambda^+}$.  Suppose towards a contradiction that there is $\alpha<\omega_1$ such that the $\alpha$-th iterated ultrapower of $V$ by $U$ is ill-founded. By taking $\alpha$ to be the least such, we can assume that all the previous ultrapowers are well-founded. Let $M^{(\xi)}$ for $\xi\leq\alpha$ be the $\xi$-th iterated ultrapower of $V$ by $U$ ($V=M^{(0)}$), let $i_{\xi\eta}:M^{(\xi)}\to M^{(\eta)}$ be the corresponding embedding maps, and let $a_\xi=i_{0\xi}\image\lambda$. Following arguments from \cite{kanamori:higher} (Chapter 19), it is not difficult to see that every element of $M^{(\alpha)}$ has the form $$i_{0\alpha}(f)(a_{\xi_0},\ldots,a_{\xi_{n-1}}),$$ where $n<\omega$, $f:P_\kappa(\lambda)^n\to V$ and $\xi_0<\cdots <\xi_{n-1}<\alpha$. Thus, to witness the ill-foundedness of $M^{(\alpha)}$, there are functions $f_n:P_\kappa(\lambda)^{k_n}\to V$, with $k_n<\omega$, and sequences $(a_{\gamma^n_0},\ldots,a_{\gamma^{n}_{k_n}})$, with $\gamma^n_0<\cdots<\gamma^n_{k_n}<\alpha$,
such that the elements $$i_{0\alpha}(f_n)(a_{\gamma^n_0},\ldots,a_{\gamma^n_{k_n}})$$ form a descending membership chain. For $n<\omega$, let $k^*_n=k_n+k_{n+1}$, and let $$A_n=\{(x_0,\ldots,x_{k_n^*})\in P_\kappa(\lambda)^{k_n^*}\mid f_{n+1}(x_{j_0},\ldots,x_{j_{k_{n+1}}})\in f_{n}(x_{j'_0},\ldots,x_{j'_{k_n}})\},$$ where the indices $j_i$ and $j'_i$ indicate how the sequences $(\gamma^n_0,\ldots,\gamma^n_{k_n})$ and $(\gamma^{n+1}_0,\ldots,\gamma^{n+1}_{k_{n+1}})$ intertwine. 

As in the proof of Theorem~\ref{th:winStratsGImpliesGenericEmbedding}, we choose a countable elementary substructure $N$ of a large enough $H_\rho$ containing $\p$-names for the $M_n$, $U$, $f_n$, and $A_n$ and a condition $p$ forcing all the relevant information about them. Let $g\subseteq \p$ be $N$-generic. Then there are models $m_n\prec H_{\lambda^+}$ in $N$ whose union is $H_{\lambda^+}^N$ together with $m_n$-ultrafilters $u_n$, so that the models $m_n$ together with the filters $u_n$ form a play according to $\sigma$. It follows that $u=\bigcup_{n<\omega}u_n$ has the countable intersection property. Thus, all the iterated ultrapowers of $N$ by $u$ must be well-founded. But the interpretations of the names for the functions $f_n$ and sets $A_n$ are going to contradict this once we reach stage $\alpha$ in the iteration.
\end{proof}

\section{Games of finite length and indescribability}\label{sec:finitelength}
In this section, we separate the existence of winning strategies for the judge in the games of finite length $n$ using a generalization of indescribable cardinals to the two-cardinal context introduced by Baumgartner. As usual, we suppose that $\kappa\leq\lambda$ are regular cardinals and $\lambda^{{<}\kappa}=\lambda$.

Let's recall Baumgartner's definition of two-cardinal indescribability. Suppose $\kappa$ is a regular cardinal and  $A$ is a set of ordinals of size at least $\kappa$. First, we define a cumulative hierarchy up to $\kappa$, where we start with the set $A$ and take all subsets of size less than $\kappa$ (instead of the complete powerset) together with whatever we obtained so far at successor stages. Let $$V_0(\kappa,A)=A,$$ $$V_{\alpha+1}=P_\kappa(V_\alpha(\kappa,A))\cup V_\alpha(\kappa,A),$$ and for $\alpha$ limit, $$V_\alpha(\kappa,A)=\bigcup_{\beta<\alpha}V_\beta(\kappa,A).$$ If $\kappa$ is inaccessible, then it is easy to see that $V_\kappa\subseteq V_\kappa(\kappa,A)$ and if $A$ is transitive, then so is every $V_\alpha(\kappa,A)$. We don't expect the structure $V_\kappa(\kappa,A)$ to satisfy any significant fragment of $\ZFC$, but this won't be required in any of the arguments. Given a set $x$, we define $\kappa_x=|x\cap\kappa|$. We will make use of the following easy proposition.
\begin{proposition}\label{prop:GenCumHiUnion} $($\cite[Lemma 1.3]{abe}$)$
If $\kappa$ is inaccessible, then $V_\kappa(\kappa,\lambda)=\bigcup_{x\in P_\kappa(\lambda)}V_{\kappa_x}(\kappa_x,x)$.
\end{proposition}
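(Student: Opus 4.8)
The statement to prove is Proposition~\ref{prop:GenCumHiUnion}: if $\kappa$ is inaccessible, then $V_\kappa(\kappa,\lambda)=\bigcup_{x\in P_\kappa(\lambda)}V_{\kappa_x}(\kappa_x,x)$. The plan is to prove the two inclusions separately, with the $\supseteq$ direction being a matter of monotonicity and the $\subseteq$ direction requiring an induction on the rank within the generalized hierarchy together with a closure argument on $P_\kappa(\lambda)$.

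For the inclusion $\supseteq$, fix $x\in P_\kappa(\lambda)$ and set $\mu=\kappa_x=|x\cap\kappa|$. Since $\kappa$ is inaccessible and $|x|<\kappa$, $\mu$ is a cardinal below $\kappa$ (indeed $\mu\leq|x|<\kappa$), and $x\subseteq\lambda$. First I would check by induction on $\alpha$ that $V_\alpha(\mu,x)\subseteq V_\alpha(\kappa,\lambda)$: at the base, $V_0(\mu,x)=x\subseteq\lambda=V_0(\kappa,\lambda)$; at successors, if $y\in P_\mu(V_\alpha(\mu,x))$ then $y\subseteq V_\alpha(\mu,x)\subseteq V_\alpha(\kappa,\lambda)$ and $|y|<\mu<\kappa$, so $y\in P_\kappa(V_\alpha(\kappa,\lambda))\subseteq V_{\alpha+1}(\kappa,\lambda)$, and the "together with whatever we obtained so far" clause is handled by the inductive hypothesis; limits are unions. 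Then since $\mu<\kappa$ we get $V_\mu(\mu,x)\subseteq V_\mu(\kappa,\lambda)\subseteq V_\kappa(\kappa,\lambda)$, giving $\supseteq$.

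For the inclusion $\subseteq$, the key point is that every element of $V_\kappa(\kappa,\lambda)$ appears at some stage $\alpha<\kappa$ and is ``built from'' fewer than $\kappa$ ordinals below $\lambda$. I would argue by induction on $\alpha<\kappa$ that every $z\in V_\alpha(\kappa,\lambda)$ lies in $V_{\kappa_x}(\kappa_x,x)$ for some $x\in P_\kappa(\lambda)$. For $z\in V_0(\kappa,\lambda)=\lambda$, take $x=\{z\}\in P_\kappa(\lambda)$; then $\kappa_x$ is $1$ or $0$ — here one must be slightly careful, so instead take $x$ to be any element of $P_\kappa(\lambda)$ with $z\in x$ and $|x\cap\kappa|\geq 1$, e.g. $x=\{0,z\}$, so $\kappa_x\geq 1$ and $z\in x=V_0(\kappa_x,\kappa_x,x)\subseteq V_{\kappa_x}(\kappa_x,x)$ (using $\kappa_x\geq 1$). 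For the successor step, suppose $z\in V_{\alpha+1}(\kappa,\lambda)$. If $z\in V_\alpha(\kappa,\lambda)$ we are done by the inductive hypothesis, so assume $z\in P_\kappa(V_\alpha(\kappa,\lambda))$, i.e. $z\subseteq V_\alpha(\kappa,\lambda)$ with $|z|<\kappa$. By the inductive hypothesis, for each $w\in z$ choose $x_w\in P_\kappa(\lambda)$ with $w\in V_{\kappa_{x_w}}(\kappa_{x_w},x_w)$; let $x^*=\bigcup_{w\in z}x_w$. Since $|z|<\kappa$, each $|x_w|<\kappa$, and $\kappa$ is regular, $|x^*|<\kappa$, so $x^*\in P_\kappa(\lambda)$; enlarging $x^*$ if necessary I can also arrange $|x^*\cap\kappa|>|z|$ and $|x^*\cap\kappa|\geq\sup_{w\in z}\kappa_{x_w}$ (this is possible since $\kappa$ is a limit cardinal of cofinality $\kappa$ and there are fewer than $\kappa$ constraints). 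Set $\mu^*=\kappa_{x^*}=|x^*\cap\kappa|$. By the monotonicity of the generalized hierarchy in the base set and the ordinal parameter — which I would record as a small lemma, proved by a routine simultaneous induction — each $w\in V_{\kappa_{x_w}}(\kappa_{x_w},x_w)\subseteq V_{\mu^*}(\mu^*,x^*)$, so $z\subseteq V_{\mu^*}(\mu^*,x^*)$; and since $|z|<|x^*\cap\kappa|=\mu^*$, we get $z\in P_{\mu^*}(V_{\mu^*}(\mu^*,x^*))\subseteq V_{\mu^*+1}(\mu^*,x^*)\subseteq V_{\mu^*}(\mu^*,x^*)$? — here one needs $\mu^*$ to be a limit ordinal, which holds because $\mu^*$ is an infinite cardinal, so $V_{\mu^*+1}$ contributions already lie in $V_{\mu^*}$; more cleanly, $z\in V_{\beta+1}(\mu^*,x^*)$ for $\beta=\mathrm{rank}$ of $z$ in that hierarchy, and since $z$ was built at stage $\alpha<\kappa$ the whole construction of $z$ takes place below $\mu^*$ provided I also demanded $\mu^*>\alpha$, which I can add to the finitely-many enlargement constraints. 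The limit step is immediate: if $z\in V_\alpha(\kappa,\lambda)=\bigcup_{\beta<\alpha}V_\beta(\kappa,\lambda)$ then $z\in V_\beta(\kappa,\lambda)$ for some $\beta<\alpha$ and we apply the inductive hypothesis.

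\textbf{Main obstacle.} The genuinely fiddly part is the bookkeeping in the successor step of the $\subseteq$ induction: one must choose a single $x^*\in P_\kappa(\lambda)$ simultaneously large enough (a) to contain all the ordinal ``material'' of all the $x_w$'s, (b) to have $|x^*\cap\kappa|$ exceed $|z|$ so the closure-under-small-subsets clause applies, (c) to have $|x^*\cap\kappa|$ at least the rank $\alpha$ at which $z$ appeared, and (d) to dominate the $\kappa_{x_w}$'s — and to verify, via a monotonicity lemma for $V_\beta(\nu,y)$ in both $\nu$ and $y$, that all the pieces land in $V_{\kappa_{x^*}}(\kappa_{x^*},x^*)$. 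All of this goes through smoothly because $\kappa$ is inaccessible: regularity gives that a $(<\kappa)$-union of $(<\kappa)$-sized sets is $(<\kappa)$-sized, and $\kappa$ being a limit cardinal (in fact that $\kappa$ has cofinality $\kappa$) lets me satisfy the finitely many ``large enough'' demands on $|x^*\cap\kappa|$ by throwing in $<\kappa$ extra ordinals below $\kappa$. I would state the monotonicity lemma explicitly and cite \cite{abe} for the overall statement, keeping the write-up short since, as the authors note, this is an ``easy proposition''.
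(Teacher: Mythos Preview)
The paper does not actually prove this proposition; it merely states it with a citation to \cite[Lemma 1.3]{abe} and calls it ``easy,'' so there is no in-paper argument to compare against. Your proof is correct and is the standard one: monotonicity of $V_\alpha(\nu,y)$ in both parameters for the $\supseteq$ direction, and an induction on rank for $\subseteq$, using regularity of $\kappa$ to close up the $(<\kappa)$-many witnesses $x_w$ into a single $x^*\in P_\kappa(\lambda)$ and padding $x^*\cap\kappa$ to make $\kappa_{x^*}$ large enough.

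Your write-up of the successor step is a bit tangled (the line ending in a question mark, followed by a self-correction), and the constraint you really want is the clean one you eventually reach: demand $\mu^*=\kappa_{x^*}>\sup_{w\in z}\kappa_{x_w}$ and $\mu^*>|z|$. Then all $w\in z$ lie in $V_\beta(\mu^*,x^*)$ for $\beta=\sup_w\kappa_{x_w}<\mu^*$, so $z\in V_{\beta+1}(\mu^*,x^*)\subseteq V_{\mu^*}(\mu^*,x^*)$; the separate constraint ``$\mu^*>\alpha$'' is not needed. Since $\kappa$ is inaccessible, both demands can be met by adjoining fewer than $\kappa$ ordinals below $\kappa$ to $x^*$. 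With that streamlining your argument is complete and matches what one finds in Abe.
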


 Using the generalized cumulative hierarchy, we can now define a version of $\Pi^1_n$-indescribable cardinals for the two-cardinal context. We will say that $x\in P_\kappa(\lambda)$ is \emph{good} if $\kappa_x=x\cap \kappa$. Note that the collection of all good $x$ is a club in $P_\kappa(\lambda)$.
\begin{definition} (Baumgartner)$\,$
\begin{enumerate}
\item A cardinal $\kappa$ is $\Pi^1_n$-$\lambda$-\emph{indescribable} if for every $\Pi^1_n$-formula $\varphi$ and $R\subseteq V_\kappa(\kappa,\lambda)$, if $(V_\kappa(\kappa,\lambda),\in, R)\models \varphi$, then there is a good  $x\in P_\kappa(\lambda)$ such that $$(V_{\kappa_x}(\kappa_x,x),\in, R\cap V_{\kappa_x}(\kappa_x,x))\models\varphi.$$
\item A cardinal $\kappa$ is \emph{totally $\lambda$-indescribable} if it is $\lambda$-$\Pi^1_n$-indescribable for every $n<\omega$.
\end{enumerate}
\end{definition}
Since $\Pi^1_n$-$\lambda$-indescribability is expressible by a $\Pi^1_{n+1}$-formula, it is not difficult to see that any $\Pi^1_{n+1}$-$\lambda$-indescribable cardinal is a limit of $\Pi^1_n$-$\lambda$-indescribable cardinals.
\begin{theorem}\label{th:CodyNearlySupercompactIndescribable} $($\cite[Cody]{Cody:weaklyCompactIdeal}$)$
A cardinal $\kappa$ nearly $\lambda$-supercompact cardinal if and only if it is $\Pi^1_1$-$\lambda$-indescribable. 
\end{theorem}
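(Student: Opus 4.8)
The plan is to use the dictionary between $M$-ultrafilters and elementary embeddings recorded after Propositions~\ref{prop:weakultrapower} and~\ref{prop:ultrapower}, Proposition~\ref{prop:GenCumHiUnion}, and standard reflection bookkeeping; both implications are two-cardinal versions of the classical equivalence ``weakly compact $\Leftrightarrow$ $\Pi^1_1$-indescribable''.

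\emph{Near $\lambda$-supercompactness $\Rightarrow$ $\Pi^1_1$-$\lambda$-indescribability.} Let $\varphi=\forall X\,\psi$ with $\psi$ first-order and $(V_\kappa(\kappa,\lambda),\in,R)\models\varphi$. Since $|V_\kappa(\kappa,\lambda)|=\lambda$ I would fix a $(\kappa,\lambda)$-model $M$ with $R\in M$; then $V_\kappa(\kappa,\lambda)\in M$ and $V_\kappa(\kappa,\lambda)^M=V_\kappa(\kappa,\lambda)$, because $M$ has the real $P_\kappa(\lambda)$ and each $V_\alpha(\kappa,\lambda)$ has $M$-size $\lambda$ for $\alpha<\kappa$. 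Let $U$ be an $M$-ultrafilter (good, as $M$ is closed under $\omega$-sequences) and $j\colon M\to N$ the ultrapower, $N$ transitive. By Proposition~\ref{prop:ultrapower}, $\crit(j)=\kappa$, $j(\kappa)>\lambda$ and $s:=j\image\lambda\in N$; using fineness, $M$-normality (so that every club of $M$ lies in $U$) and $\crit(j)=\kappa$ one gets $s\cap j(\kappa)=\kappa$ and $\kappa_s=\kappa$, so $s$ is a good point of $P_{j(\kappa)}(j(\lambda))^N$. The crucial observation is that, since every member of $V_\kappa(\kappa,\lambda)$ is hereditarily of size $<\kappa$ with ``atoms'' in $\lambda$ and $j$ preserves sizes $<\kappa$, the map $j\restriction V_\kappa(\kappa,\lambda)$ is an $\in$-isomorphism of $(V_\kappa(\kappa,\lambda),\in,R)$ onto $\bigl(V_{\kappa_s}(\kappa_s,s)^N,\in,j(R)\cap V_{\kappa_s}(\kappa_s,s)^N\bigr)$. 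Transporting $\varphi$ across this isomorphism (second-order quantifiers transfer; $\psi$ is absolute) and using that $N$ is transitive, $N\models$ ``some good $x\in P_{j(\kappa)}(j(\lambda))$ satisfies $(V_{\kappa_x}(\kappa_x,x),\in,j(R)\cap\cdots)\models\varphi$''. Pulling this back along $j$ gives the statement in $M$ for some good $x\in P_\kappa(\lambda)$, and it is absolute between $M$ and $V$ because $V_{\kappa_x}(\kappa_x,x)$ and its powerset lie in $V_\kappa\subseteq M$; so $\varphi$ reflects to $x$ in $V$.

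\emph{$\Pi^1_1$-$\lambda$-indescribability $\Rightarrow$ near $\lambda$-supercompactness.} Given $A\subseteq\lambda$, I would take $M$ to be the transitive collapse of a size-$\lambda$, ${<}\kappa$-closed elementary submodel of $H_{\lambda^+}$ containing $A$; it then suffices to find an $M$-ultrafilter (automatically good). Bundle into one predicate $R\subseteq V_\kappa(\kappa,\lambda)$: a $\lambda$-enumeration of $P(P_\kappa(\lambda))^M$ (size $\le\lambda$), the intersection and diagonal-intersection data of the ${<}\kappa$- and $\lambda$-indexed sequences lying in $M$, and a satisfaction relation and well-ordering witnessing that the $R$-definable subsets of $P_\kappa(\lambda)$ form a $\kappa$-complete fine normal $\kappa$-algebra $\mathcal A\supseteq P(P_\kappa(\lambda))^M$. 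Let $\Phi$ be the sentence ``$\kappa$ is inaccessible, $R$ correctly codes its satisfaction relation, $\mathcal A$ is as above, and there is no $\kappa$-complete fine normal nonprincipal ultrafilter on $\mathcal A$''; the first clauses are first-order (the satisfaction-relation clause being the Tarski recursion) and the last is $\Pi^1_1$, so $\Phi$ is $\Pi^1_1$. If $M$ had no $M$-ultrafilter then $\Phi$ would hold of $(V_\kappa(\kappa,\lambda),\in,R)$ and hence reflect to a good $x\in P_\kappa(\lambda)$; correctness of the reflected satisfaction relation makes $(V_{\kappa_x}(\kappa_x,x),\in,\cdots)$ elementary in $(V_\kappa(\kappa,\lambda),\in,\cdots)$, so writing $A_B$ for the subset of $P_\kappa(\lambda)$ defined over the latter by the same formula-with-parameters that defines $B\in\mathcal A_x$ over the former, the collection $U^{\ast}_x=\{B\in\mathcal A_x : x\in A_B\}$ is a $\kappa_x$-complete fine normal nonprincipal ultrafilter on $\mathcal A_x$ (well-defined, and with the required closure, by elementarity; fineness holds since $x\in X_\alpha$ for each $\alpha\in x$). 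This contradicts $\Phi$ at $x$, so $M$ has an $M$-ultrafilter.

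The routine work is the bookkeeping that $V_\kappa(\kappa,\lambda)^M=V_\kappa(\kappa,\lambda)$ and that $j\restriction V_\kappa(\kappa,\lambda)$ maps exactly onto $V_{\kappa_s}(\kappa_s,s)^N$, together with the verification that the encoded data really make $\mathcal A$ a $\kappa$-complete $\kappa$-algebra. The main obstacle is in the converse: arranging the encoding so that the reflection lands at a point $x$ ``correct enough'' that the descended principal ultrafilter $U^{\ast}_x$ is genuinely $\kappa_x$-complete and normal — i.e., so that the reflected instance of $\Phi$ is actually contradictory rather than a harmless shrunken copy. I expect to handle this via the baked-in satisfaction predicate, exactly as in the classical argument that $\Pi^1_1$-indescribable cardinals are weakly compact; see \cite{Cody:weaklyCompactIdeal}.
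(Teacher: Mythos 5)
The paper does not prove this statement at all: it is imported verbatim from Cody's paper (the citation \cite[Cody]{Cody:weaklyCompactIdeal}), so there is no in-paper proof to compare yours against. Judged on its own, your reconstruction is essentially correct and follows the expected route. Your forward direction is, in substance, the same embedding-and-reflection argument the paper itself uses for Theorem~\ref{th:almostGenericSCtotallyIndescribable} and for the $n=1$ case of the $G_n(\kappa,\lambda)$ indescribability theorem: take $M\ni R$ with an $M$-ultrafilter, use that $j\restriction V_\kappa(\kappa,\lambda)$ is an $\in$-isomorphism onto $V_\kappa(\kappa,j\image\lambda)$ (this is exactly Lemma~\ref{lem:VlambdaVjImageLambda}), note that $\Pi^1_1$ statements pass downward to $N$ because $P(V_\kappa(\kappa,j\image\lambda))^N\subseteq P(V_\kappa(\kappa,j\image\lambda))^V$, and pull back by \Los; the absoluteness of the reflected instance is as you say, since for good $x$ the structure $V_{\kappa_x}(\kappa_x,x)$ has size ${<}\kappa$ and $M$ is ${<}\kappa$-closed. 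Your converse is the classical Hanf--Scott coded-algebra argument transplanted to $P_\kappa(\lambda)$, and the one place where real care is needed is the one you flag, but it is worth naming precisely: (i) the clause ``there is no $\kappa$-complete fine normal ultrafilter on $\mathcal A$'' is only $\Pi^1_1$ if the candidate ultrafilter is quantified as a subset of the index set $\lambda=V_0(\kappa,\lambda)$ rather than as a family of second-order objects, which your $\lambda$-enumeration baked into $R$ accomplishes; and (ii) the reason the principal object $W_x=\{i\in x\mid x\in A_i\}$ is genuinely $\kappa_x$-complete, fine, and normal for the reflected algebra is that membership of $x$ in an intersection $\bigcap_{i\in y}A_i$ (for $y\in P_{\kappa_x}(x)$) and in a diagonal intersection $\Delta_{\xi<\lambda}A_{i_\xi}$ is determined entirely by the coordinates indexed inside $x$, so the reflected closure clauses of $\Phi$ (which force $x$ to be closed under the index functions for complement, small intersection, and diagonal intersection) are exactly what is needed. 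With those two points made explicit, the argument goes through; the remaining bookkeeping (that $|V_\kappa(\kappa,\lambda)|=\lambda$ under $\lambda^{<\kappa}=\lambda$, that ``$\kappa$'' is definable in $V_\kappa(\kappa,\lambda)$ and reflects to $\kappa_x$ at good $x$, and that the club of good points lies in any $M$-ultrafilter) is routine.
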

Note that every $(\kappa,\lambda)$-model has $V_\kappa(\kappa,\lambda)$ as an element.
\begin{lemma}\label{lem:VlambdaVjImageLambda} Suppose that $M$ is a  $(\kappa,\lambda)$-model, $U$ is an $M$-ultrafilter, and $j:M\to N$ the ultrapower embedding. Then
    \begin{enumerate}
        \item $(V_\kappa(\kappa,\lambda),\in)\cong (V_\kappa(\kappa,j\image\lambda),\in)$ as witnessed by $j\restriction V_\kappa(\kappa,\lambda)$.
        \item If $A_1,\ldots,A_n\subseteq V_\kappa(\kappa,\lambda)$ and $\varphi(X_1,\ldots,X_n)$ is a second-order formula then 
        $$(V_\kappa(\kappa,\lambda),\in)\models \varphi(A_1,\ldots,A_n)\Longleftrightarrow(V_\kappa(\kappa,j\image\lambda),\in)\models \varphi(j\image A_1,\ldots,j\image A_n).$$
    \end{enumerate}
\end{lemma}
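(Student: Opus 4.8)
The plan is to establish (1) by induction on $\alpha\le\kappa$, showing that $j\restrict V_\alpha(\kappa,\lambda)$ is an $\in$-isomorphism of $(V_\alpha(\kappa,\lambda),\in)$ onto $(V_\alpha(\kappa,j\image\lambda),\in)$ and that $j\image V_\alpha(\kappa,\lambda)=V_\alpha(\kappa,j\image\lambda)$ as sets; part (2) then follows formally. Three facts will be used throughout: first, $\crit(j)=\kappa$, so $j(\gamma)=\gamma$ for $\gamma<\kappa$ and $j(Y)=j\image Y$ for every $Y\in M$ of size $<\kappa$ (apply $j$ to an enumeration of $Y$ of length $<\kappa$); second, Proposition~\ref{prop:ultrapower}, which gives $\lambda+1\subseteq N$, so the ordinals of $N$ up to $\lambda$ are genuine and $j\image\lambda\subseteq\lambda$; third, that $V_\kappa(\kappa,\lambda)\in M$ and $M$ is transitive, so $V_\kappa(\kappa,\lambda)\subseteq M$ and every ${<}\kappa$-sized subset of a set in $V_\kappa(\kappa,\lambda)$ again lies in $M$ (using that $M$ is closed under ${<}\kappa$-sequences, which also guarantees that $M$'s generalized hierarchy below $\kappa$ agrees with $V$'s).

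For the base case $\alpha=0$ the map $j\restrict\lambda$ is an order-preserving bijection of $\lambda$ onto $j\image\lambda$, hence an $\in$-isomorphism. For the successor step, granting the statement for $\alpha$, I would observe that for $Y\in P_\kappa(V_\alpha(\kappa,\lambda))$ we have $Y\in M$ and $j(Y)=j\image Y$, which by the inductive hypothesis is a ${<}\kappa$-sized subset of $V_\alpha(\kappa,j\image\lambda)$, hence an element of $V_{\alpha+1}(\kappa,j\image\lambda)$; conversely any ${<}\kappa$-sized $Z\subseteq V_\alpha(\kappa,j\image\lambda)$ equals $j\image(j^{-1}\image Z)$, so $j\image V_{\alpha+1}(\kappa,\lambda)=V_{\alpha+1}(\kappa,j\image\lambda)$. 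The point that needs care is that membership is preserved \emph{and reflected for the real $\in$} on the $N$-side, even though $N$ may be ill-founded: this holds because $j$ acts pointwise on the new elements, so for $a\in P_\kappa(V_\alpha(\kappa,\lambda))$ the relation $x\in a$ is literally $j(x)\in j(a)=j\image a$, and membership into an element of $V_\alpha(\kappa,\lambda)$ reduces to the inductive hypothesis, since the elements of a set in $V_\alpha(\kappa,\lambda)$ again lie in $V_\alpha(\kappa,\lambda)$. The limit step is just a union.

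Part (2) I would deduce purely from (1). An isomorphism $\pi$ of first-order $\in$-structures induces bijections $\Power(X)\to\Power(Y)$ and $\Power(X^k)\to\Power(Y^k)$ by $B\mapsto\pi\image B$, hence an isomorphism of the corresponding full second-order structures; since $\pi=j\restrict V_\kappa(\kappa,\lambda)$ sends each $A_i$ — which lies in $M$ elementwise, being a subset of $V_\kappa(\kappa,\lambda)$ — to $j\image A_i$, and truth of any second-order formula is preserved by isomorphisms of full second-order structures, the equivalence in (2) follows for every second-order $\varphi$.

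I expect the only genuine obstacle to be the bookkeeping in the successor step of (1): confirming that the restriction of $j$ really is an $\in$-isomorphism onto the genuine set $V_\kappa(\kappa,j\image\lambda)$ built in $V$, despite the possible ill-foundedness of $N$. This is precisely what the pointwise identity $j(Y)=j\image Y$ for small $Y$ resolves, and no further difficulty is anticipated.
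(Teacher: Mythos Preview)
Your argument is correct and follows essentially the same route as the paper: induction on $\alpha\leq\kappa$ showing $j\image V_\alpha(\kappa,\lambda)=V_\alpha(\kappa,j\image\lambda)$ using that $j(Y)=j\image Y$ for $Y$ of size ${<}\kappa$, then deducing (2) from the general fact that an $\in$-isomorphism lifts to an isomorphism of the full second-order structures. One small slip: the claim $j\image\lambda\subseteq\lambda$ is false (if $\kappa<\lambda$ then $j(\kappa)>\lambda$ by Proposition~\ref{prop:ultrapower}), but you never actually use it, so the proof stands.
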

\begin{proof}
First, let's prove (1). It suffices to argue that $j$ maps $V_\kappa(\kappa,\lambda)$ onto $V_\kappa(\kappa,j\image\lambda)$. Clearly, $j$ maps $V_0(\kappa,\lambda)=\lambda$ onto $V_0(\kappa,j\image\lambda)=j\image\lambda$. So suppose inductively that $j$ maps $V_\alpha(\kappa,\lambda)$ onto $V_\alpha(\kappa,j\image\lambda)$ for all $\alpha<\beta$. If $\beta$ is a limit ordinal, then it is immediate that $j$ maps $V_\beta(\kappa,\lambda)$ onto $V_\beta(\kappa,j\image\lambda)$. So suppose that $\beta=\alpha+1$. Fix $a\in V_{\alpha+1}(\kappa,\lambda)$. Then either $a\in V_\alpha(\kappa,\lambda)$, in which case $j(a)\in V_\alpha(\kappa,j\image\lambda)\subseteq V_{\alpha+1}(\kappa,j\image\lambda)$, or $a$ is a subset of $V_\alpha(\kappa,\lambda)$ of size less than $\kappa$, and so $j(a)=j\image a$. Since  $j\image a$ is a subset of $V_\alpha(\kappa,j\image\lambda)$ by our inductive assumption, $j\image a$ is a subset of $V_\alpha(\kappa,j\image \lambda)$ of size less than $\kappa$, and so $j(a)=j\image a\in V_{\alpha+1}(\kappa,j\image\lambda)$. Next, suppose that $b\in V_{\alpha+1}(\kappa,j\image\lambda)$. Then either $b\in V_\alpha(\kappa,j\image\lambda)$, in which case $b=j(a)$ for some $a\in V_\alpha(\kappa,\lambda)\subseteq V_{\alpha+1}(\kappa,\lambda)$, or $b$ is a subset of $V_\alpha(\kappa,j\image\lambda)$ of size less than $\kappa$. By our inductive assumption, every $x\in b$ has the form $j(y)$ for some $y\in V_\alpha(\kappa,\lambda)$. Let $a$ be the collection of these $y$. Then $a$ is a subset of $V_\alpha(\kappa,\lambda)$ of size less than $\kappa$, and so $a\in V_{\alpha+1}(\kappa,\lambda)$. Now clearly, $j(a)=j\image a=b$.

Item $(2)$ is a more general fact: if $f:A\to B$ is a $\in$-isomorphism, then for any $X_1,\ldots,X_n\subseteq A$, $f$ is also an isomorphism of the structures $(A,\in, X_1,\ldots,X_n)$ and\break $(B,\in, f\image X_1,f\image X_2,\ldots,f\image X_n)$. 
\end{proof}


\begin{theorem}\label{th:almostGenericSCtotallyIndescribable}
Every almost generically $\lambda$-supercompact with wa cardinal $\kappa$ is totally $\lambda$-indescribable.
\end{theorem}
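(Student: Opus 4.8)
The plan is to argue directly that $\kappa$ is $\Pi^1_n$-$\lambda$-indescribable for every $n<\omega$, by unwinding a generic supercompactness embedding in the same way one shows a measurable cardinal is totally indescribable, with the generalized hierarchy $V_\kappa(\kappa,\lambda)$ playing the role of $V_\kappa$ and $j\image\lambda$ the role of $\kappa$. By hypothesis there is a set-forcing extension $V[G]$ containing a weakly amenable $V$-ultrafilter $U$; let $j\colon V\to M$ be the ultrapower embedding, where $M=\Ult(V,U)$ is a (possibly ill-founded, well-founded part collapsed) model of $\ZFC$ — full \Los\ applies here because $V$ satisfies $\AC$. By Propositions~\ref{prop:ultrapower} and~\ref{prop:wa}, $\crit(j)=\kappa$, $j(\kappa)>\lambda$, $j\image\lambda=[\mathrm{id}]_U\in M$, $M$ is well-founded up through $(\lambda^+)^M=\lambda^+$, and $M$ has exactly the same subsets of $\lambda$ and the same $H_{\lambda^+}$ as $V$. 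Moreover $\kappa$ is inaccessible (it is nearly $\lambda$-supercompact), and by weak amenability $M$ and $V$ agree on subsets of size ${<}\kappa$ of sets of size ${\le}\lambda$, so $V_\kappa^M=V_\kappa$, $\kappa$ is inaccessible in $M$, and $M$ builds $V_\kappa(\kappa,\lambda)$, the sets $V_{\kappa_x}(\kappa_x,x)$ for $x\in P_\kappa(\lambda)$, and the full powerset of $V_\kappa(\kappa,\lambda)$ exactly as $V$ does.

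Next I would fix a $\Pi^1_n$ sentence $\varphi$ and a set $R\subseteq V_\kappa(\kappa,\lambda)$ with $(V_\kappa(\kappa,\lambda),\in,R)\models\varphi$, and run the reflection. Since $M$ contains the true powerset of $V_\kappa(\kappa,\lambda)$, it computes second-order satisfaction over $(V_\kappa(\kappa,\lambda),\in,R)$ exactly as $V$ does, so $M\models{}$``$(V_\kappa(\kappa,\lambda),\in,R)\models\varphi$''. Inside $M$, the restriction $f=j\restriction V_\kappa(\kappa,\lambda)$ is an $\in$-isomorphism of $(V_\kappa(\kappa,\lambda),\in)$ onto $(V_\kappa(\kappa,j\image\lambda),\in)$ carrying $R$ to $j(R)\cap V_\kappa(\kappa,j\image\lambda)$; this is Lemma~\ref{lem:VlambdaVjImageLambda}, whose proof (a transfinite induction over the ${<}\kappa$ levels of the hierarchy, together with the general fact that an $\in$-isomorphism transfers second-order truth) goes through verbatim for the ultrapower of $V$ and is available internally in $M$. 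Hence $M\models{}$``$(V_\kappa(\kappa,j\image\lambda),\in,\,j(R)\cap V_\kappa(\kappa,j\image\lambda))\models\varphi$''. Still inside $M$: $j\image\lambda\cap j(\kappa)=\kappa$ and $\kappa$ is a cardinal in $M$, so $j\image\lambda$ is a ``good'' element of $P_{j(\kappa)}(j(\lambda))$ with $|j\image\lambda\cap j(\kappa)|=\kappa$, and by Proposition~\ref{prop:GenCumHiUnion} applied in $M$ (to $j(V_\kappa(\kappa,\lambda))=V_{j(\kappa)}(j(\kappa),j(\lambda))=\bigcup_{x}V_{|x\cap j(\kappa)|}(|x\cap j(\kappa)|,x)$) the structure just displayed is exactly the one attached to the point $j\image\lambda$ in the conclusion of $\Pi^1_n$-$\lambda$-indescribability for the sentence $\varphi$ with predicate $j(R)$. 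Therefore $M$ satisfies ``there is a good $x\in P_{j(\kappa)}(j(\lambda))$ reflecting $\varphi$ with respect to $j(R)$,'' which is precisely $j$ applied to the statement ``there is a good $x\in P_\kappa(\lambda)$ with $(V_{\kappa_x}(\kappa_x,x),\in,R\cap V_{\kappa_x}(\kappa_x,x))\models\varphi$.'' By elementarity of $j$ this holds in $V$, i.e.\ $\kappa$ reflects $(\varphi,R)$; as $\varphi$, $R$ and $n$ were arbitrary, $\kappa$ is totally $\lambda$-indescribable. (The case $n=1$ also follows immediately from Theorem~\ref{th:CodyNearlySupercompactIndescribable}, since $\kappa$ is nearly $\lambda$-supercompact.)

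The hard part will not be conceptual but the absoluteness bookkeeping. The argument must never invoke the powerset of $V[G]$: forcing could in principle destroy the truth of $\varphi$ in $(V_\kappa(\kappa,\lambda),\in,R)$, so it is essential that $M=\Ult(V,U)$ has the same subsets of $\lambda$ — hence the same subsets of $V_\kappa(\kappa,\lambda)$ — as $V$ and not as $V[G]$; this is exactly what weak amenability of $U$ buys (Proposition~\ref{prop:wa}), making $M$'s internal second-order satisfaction predicate agree with the true one on structures built from $V_\kappa(\kappa,\lambda)$. A second delicate point is that $V_\kappa(\kappa,j\image\lambda)$ lies in the ill-founded part of $M$ (the ordinals $j(\alpha)$ for $\kappa\le\alpha<\lambda$ are ${\ge}j(\kappa)\ge\lambda^+$, beyond the well-founded part of $M$), so ``$(V_\kappa(\kappa,j\image\lambda),\in,\dots)\models\varphi$'' has to be read as $M$'s \emph{internal} Tarski satisfaction over a formal structure; this is harmless because it is only used as the antecedent of an elementarity pull-back, and the final conclusion concerns the genuinely well-founded sets $V_{\kappa_x}(\kappa_x,x)\subseteq V_\kappa$. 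The remaining verifications — that $M$ recognizes $f=j\restriction V_\kappa(\kappa,\lambda)\in M$ as an $\in$-isomorphism with the stated image of $R$, that $\kappa$ is a cardinal in $M$ so $j\image\lambda$ is genuinely good, and that $M$ and $V$ agree on the relevant generalized cumulative hierarchies — are routine consequences of $\crit(j)=\kappa$ together with weak amenability, and I would only indicate them briefly.
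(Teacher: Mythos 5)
Your proposal is correct and follows essentially the same route as the paper: pass to the ultrapower $j\colon V\to M$ by the weakly amenable $V$-ultrafilter, use $P(\lambda)^M=P(\lambda)^V$ to see that $M$ computes second-order satisfaction over $(V_\kappa(\kappa,\lambda),\in,R)$ correctly, transfer $\varphi$ along the isomorphism of Lemma~\ref{lem:VlambdaVjImageLambda} to $(V_\kappa(\kappa,j\image\lambda),\in,j\image R)$ inside $M$, and reflect. The only (cosmetic) difference is at the last step: you exhibit $j\image\lambda$ as a good witness in $M$ and pull back by elementarity of $j$, whereas the paper phrases the same step via \Los's theorem, concluding that the set of reflecting $x$ lies in $U$.
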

\begin{proof}
Fix a forcing extension $V[G]$ with a weakly amenable $V$-ultrafilter $U$ and let \hbox{$j:V\to M$} be the ultrapower embedding by $U$, with $M$ not necessarily well-founded. Also, fix $n<\omega$. Let $\varphi$ be a $\Pi^1_n$-formula and let $R\subseteq V_\kappa(\kappa,\lambda)$ be such that $$(V_\kappa(\kappa,\lambda),\in, R)\models\varphi.$$  Since, by weak amenability, $H_{\lambda^+}^V=H_{\lambda^+}^M$, $M$ agrees that $(V_\kappa(\kappa,\lambda),\in, R)\models\varphi$.

Note that since $V_\kappa(\kappa,\lambda)$ has size $\lambda$ in $V$, $j\image V_\kappa(\kappa,\lambda)$ is in $M$. To see this, take any bijection $h:\lambda\rightarrow V_\kappa(\kappa,\lambda)$ in $V$, then $j\image V_\kappa(\kappa,\lambda)=j(h)\image (j\image\lambda)\in M$. Similarly, $j\image R\in M$. 

By Lemma~\ref{lem:VlambdaVjImageLambda}, $j\restrict V_\kappa(\kappa,\lambda)$ maps $(V_\kappa(\kappa,\lambda),\in,R)$ onto $(V_\kappa(\kappa,j\image\lambda),\in,j\image R)$ witnessing that the two structures are isomorphic in $M$. Hence 
$$M\models (V_\kappa(\kappa,j\image\lambda),\in, j\image R)\models\varphi.$$
To reflect that, we note that in $M$, 
$$[x\mapsto \kappa_x]_U=\kappa, \ [x\mapsto V_{\kappa_x}(\kappa_x,x)]_U=V_\kappa(\kappa,j\image\lambda), \ [x\mapsto R]_U\cap [x\mapsto V_{\kappa_x}(\kappa_x,x)]_U=j\image R.$$
The last equality holds since  $$j\image R=j(R)\cap j\image V_\kappa(\kappa,\lambda)=V_\kappa(\kappa,j\image\lambda)=V_{[x\mapsto\kappa_x]_U}([x\mapsto \kappa_x]_U,[id]_U).$$ 
By Lo\'{s} theorem,
$$\{x\in P_\kappa(\lambda)\mid (V_{\kappa_x}(\kappa_x,x),\in, R\cap V_{\kappa_x}(\kappa_x,x))\models\varphi\}\in U,$$ verifying that $\kappa$ is $\Pi^1_n$-$\lambda$-indescribable. Since $n$ was arbitrary, we have verified that $\kappa$ is totally $\lambda$-indescribable.
\end{proof}
\begin{corollary}
Every completely $\lambda$-ineffable cardinal is totally $\lambda$-indescribable.
\end{corollary}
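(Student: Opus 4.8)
The plan is to chain together the two equivalences and implications already established in the excerpt, so that no genuinely new argument is needed. First I would invoke Theorem~\ref{th:almostGenericallySupercompactCompletelyIneffable}, which states that a cardinal $\kappa$ is almost generically $\lambda$-supercompact with wa if and only if it is completely $\lambda$-ineffable. Thus, from the hypothesis that $\kappa$ is completely $\lambda$-ineffable, we immediately obtain that $\kappa$ is almost generically $\lambda$-supercompact with wa.

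Next I would apply Theorem~\ref{th:almostGenericSCtotallyIndescribable}, which asserts that every almost generically $\lambda$-supercompact with wa cardinal is totally $\lambda$-indescribable. Composing the two steps gives that $\kappa$ is totally $\lambda$-indescribable, completing the argument. Concretely: fix the forcing extension $V[G]$ with a weakly amenable $V$-ultrafilter $U$ provided by the complete $\lambda$-ineffability (via Theorem~\ref{th:almostGenericallySupercompactCompletelyIneffable}), let $j \colon V \to M$ be the ultrapower embedding, and then the reflection argument of Theorem~\ref{th:almostGenericSCtotallyIndescribable} — using weak amenability to get $H_{\lambda^+}^V = H_{\lambda^+}^M$, Lemma~\ref{lem:VlambdaVjImageLambda} to transfer satisfaction of a $\Pi^1_n$-formula from $(V_\kappa(\kappa,\lambda),\in,R)$ to $(V_\kappa(\kappa,j\image\lambda),\in,j\image R)$ inside $M$, and \L o\'s's theorem — yields, for every $n<\omega$, that $\{x\in P_\kappa(\lambda)\mid (V_{\kappa_x}(\kappa_x,x),\in, R\cap V_{\kappa_x}(\kappa_x,x))\models\varphi\}\in U$, witnessing $\Pi^1_n$-$\lambda$-indescribability for each $n$.

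Since essentially everything is already proved, there is no real obstacle: the corollary is a one-line deduction. If one wanted a self-contained write-up one could also note the chain $\text{completely }\lambda\text{-ineffable} \Rightarrow \lambda\text{-ineffable} \Rightarrow \text{nearly }\lambda\text{-supercompact} \Rightarrow \Pi^1_1$-$\lambda$-indescribable (the last step by Theorem~\ref{th:CodyNearlySupercompactIndescribable}), but this only gives $\Pi^1_1$, not the full ``totally'' conclusion, so routing through almost generic $\lambda$-supercompactness with wa is the efficient path. The only thing to double-check is that the hypotheses $\lambda^{<\kappa}=\lambda$ and $\kappa\leq\lambda$ regular, under which Theorems~\ref{th:almostGenericallySupercompactCompletelyIneffable} and \ref{th:almostGenericSCtotallyIndescribable} are stated, are in force here as well — and they are, since this corollary sits in the same running-hypothesis block of Section~\ref{sec:finitelength}.

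\begin{proof}
By Theorem~\ref{th:almostGenericallySupercompactCompletelyIneffable}, a completely $\lambda$-ineffable cardinal $\kappa$ is almost generically $\lambda$-supercompact with wa, and by Theorem~\ref{th:almostGenericSCtotallyIndescribable} it is therefore totally $\lambda$-indescribable.
\end{proof}
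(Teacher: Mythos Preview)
Your proposal is correct and is exactly the argument the paper intends: the corollary is stated immediately after Theorem~\ref{th:almostGenericSCtotallyIndescribable} with no separate proof, precisely because it follows by combining that theorem with the equivalence of Theorem~\ref{th:almostGenericallySupercompactCompletelyIneffable}. Your one-line proof is the intended deduction.
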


\begin{proposition}
The property that the judge has a winning strategy in the game $G_n(\kappa,\lambda)$ for some $1\leq n<\omega$ is expressible by a $\Pi^1_{2n}$-$\lambda$-formula.
\end{proposition}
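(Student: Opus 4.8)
The plan is to unfold the finite game $G_n(\kappa,\lambda)$ into a strictly alternating block of $2n$ second-order quantifiers over the structure $(V_\kappa(\kappa,\lambda),\in,R)$, with $R=\kappa$ (a legitimate second-order parameter, since $\kappa\subseteq\lambda\subseteq V_\kappa(\kappa,\lambda)$), and to verify that the resulting matrix is first-order. Since $\lambda^{<\kappa}=\lambda$ one has $|V_\kappa(\kappa,\lambda)|=\lambda$, so a basic $(\kappa,\lambda)$-model $M$, being of size $\lambda$, is represented by a binary relation $E$ on a subset $\bar M\subseteq V_\kappa(\kappa,\lambda)$ with $(\bar M,E)\cong(M,\in)$; as $V_\kappa(\kappa,\lambda)$ is closed under ordered pairs, $E$ is a single second-order object. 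An $M$-ultrafilter $U\subseteq P(P_\kappa(\lambda))^M$ is likewise coded by a subset $U\subseteq\bar M$. Over $(V_\kappa(\kappa,\lambda),\in,\kappa)$ one defines $P_\kappa(\lambda)$, and then, relative to such codes, all the ``bookkeeping'' clauses of the game become first-order: ``$U_i$ is an $M_i$-ultrafilter'' (ultrafilter, fine, $M_i$-$\kappa$-complete, $M_i$-normal), ``$U_i\supseteq\bigcup_{j<i}U_j$'', ``$\bar M_j\subseteq\bar M_i$ and the record of the previous moves is an element of $(\bar M_i,E_i)$'', ``$M_i\supseteq\lambda+1$ and has the real $P_\kappa(\lambda)$'', and closure of $M_i$ under ${<}\kappa$-sequences. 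The last of these, and the winning clause ``$U=\bigcup_{j<n}U_j$ is a good $M=\bigcup_{j<n}M_j$-ultrafilter'', are first-order precisely because every potential bad witness -- a ${<}\kappa$-sequence of elements of $\bar M_i$ not recorded inside $M_i$, or an $\omega$-sequence of functions of $M$ witnessing ill-foundedness of $\Ult(M,U)$ -- is a ${<}\kappa$-sized subset of $V_\kappa(\kappa,\lambda)$, hence an element of $V_\kappa(\kappa,\lambda)$ (using that $\kappa$ is regular and uncountable, so $V_\kappa(\kappa,\lambda)$ is closed under ${<}\kappa$-sized subsets).

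The genuinely delicate point is the clauses ``$M_i\models\ZFC^-$'' and ``$M_{i-1}\prec M_i$'' (full elementarity): unlike $V_\kappa$ in the one-cardinal case, the structure $V_\kappa(\kappa,\lambda)$ satisfies no substantial fragment of $\ZFC$ and so has no internal satisfaction predicate. I would resolve this by taking the code of a model to be the pair $(E_i,\mathrm{Sat}_i)$, where $\mathrm{Sat}_i$ is the unique satisfaction relation of $(\bar M_i,E_i)$; note $\mathrm{Sat}_i\subseteq V_\kappa(\kappa,\lambda)$, since assignments into $\bar M_i$ are finite partial functions into $V_\kappa(\kappa,\lambda)$ and hence elements of it. Then ``$\mathrm{Sat}_i$ is a genuine satisfaction relation for $(\bar M_i,E_i)$'' is first-order (Tarski's clauses), ``$M_i\models\ZFC^-$'' becomes ``the pair consisting of the code of $\sigma$ and $\emptyset$ belongs to $\mathrm{Sat}_i$, for every axiom $\sigma$ of $\ZFC^-$'', which is first-order since the enumeration of axioms is arithmetically definable, and ``$M_{i-1}\prec M_i$'' is first-order given $\mathrm{Sat}_{i-1}$ and $\mathrm{Sat}_i$. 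Because the satisfaction relation of a structure is unique, quantifying over pairs $(E_i,\mathrm{Sat}_i)$ coding basic $(\kappa,\lambda)$-models is equivalent to quantifying over basic $(\kappa,\lambda)$-models, so nothing is lost; and bundling $\mathrm{Sat}_i$ with $E_i$ into a single second-order object keeps the cost of each model at exactly one quantifier.

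With every legality clause and the winning clause first-order over $(V_\kappa(\kappa,\lambda),\in,\kappa)$, I unfold ``the judge has a winning strategy in $G_n(\kappa,\lambda)$'' in the standard fashion for a game of length $n$. Since the challenger moves first, the property is equivalent to
\[
\forall M_0^*\Bigl(\mathrm{Leg}_0\to\exists U_0\bigl(\mathrm{Leg}_0'\wedge\forall M_1^*\bigl(\mathrm{Leg}_1\to\exists U_1(\cdots\exists U_{n-1}(\mathrm{Leg}_{n-1}'\wedge\mathrm{Good}))\bigr)\bigr)\Bigr),
\]
where each $M_i^*$ is a single second-order object coding $(E_i,\mathrm{Sat}_i)$, each $U_i$ is a single second-order object, and $\mathrm{Leg}_i,\mathrm{Leg}_i',\mathrm{Good}$ are first-order; the backward direction uses the axiom of choice to assemble the pointwise witnesses into a strategy, which is routine for a finite game. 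This prefix consists of $n$ universal and $n$ existential second-order blocks, strictly alternating and beginning with $\forall$, over a first-order matrix -- that is, a $\Pi^1_{2n}$-$\lambda$-formula.

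The main obstacle is precisely the satisfaction-predicate issue: in the one-cardinal setting $V_\kappa\models\ZFC$ and the point is vacuous, but here one must carry the satisfaction relations explicitly, check that this is harmless by their uniqueness, and confirm that it introduces no extra quantifier alternations. The only other thing needing care is the bookkeeping ensuring that every ``small'' witness truly lies inside $V_\kappa(\kappa,\lambda)$, so that the ${<}\kappa$-closure clause and the goodness clause remain first-order rather than $\Pi^1_1$.
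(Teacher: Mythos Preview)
Your proof is correct and takes essentially the same approach as the paper: unfold the finite game into $2n$ alternating second-order quantifiers over coded models and ultrafilters, with a first-order matrix. The paper's own proof is a two-line sketch via the $n=2$ case that simply asserts the coding ``clearly'' works; you have supplied considerably more detail, notably the satisfaction-predicate device for expressing $M_i\models\ZFC^-$ and $M_{i-1}\prec M_i$, a point the paper glosses over entirely.
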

\begin{proof}
For example, the judge has a winning strategy in the game $G_2(\kappa,\lambda)$ if and only if for every basic $(\kappa,\lambda)$-model $M$, there is an $M$-ultrafilter $U$ such that for every basic $(\kappa,\lambda)$-model $N$ with $(M,U)\in N$, there is an $N$-ultrafilter $W$ extending $U$. Via coding the $(\kappa,\lambda)$-models and their ultrafilters as subsets of $\lambda$, this is clearly a $\Pi^1_4$-$\lambda$-formula over the structure $V_\kappa(\kappa,\lambda)$. The general statement is obtained similarly.
\end{proof}

\begin{theorem}
Suppose the judge has a winning strategy in the game $G_n(\kappa,\lambda)$ for some $1\leq n<\omega$. Then  $\kappa$ is $\Pi^{1}_{2n-1}$-$\lambda$-indescribable.
\end{theorem}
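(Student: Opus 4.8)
The plan is to induct on $n$. For the base case $n=1$, a winning strategy for the judge in $G_1(\kappa,\lambda)$ is equivalent, by Proposition~\ref{Prop: G_1 wining strategy}, to $\kappa$ being nearly $\lambda$-supercompact, which by Theorem~\ref{th:CodyNearlySupercompactIndescribable} is equivalent to $\kappa$ being $\Pi^1_1$-$\lambda$-indescribable. For the inductive step, assume the judge has a winning strategy $\sigma$ in $G_{n+1}(\kappa,\lambda)$; since $\text{cof}(n+1)\neq\omega$, Corollary~\ref{cor:goodGameNotCofOmega} lets us fix a large regular $\theta$ for which $\sigma$ is a winning strategy in $G_{n+1}(\kappa,\lambda,\theta)$ and for which $H_\theta$ correctly computes the statement ``the judge has a winning strategy in $G_{n+1}(\kappa,\lambda)$''. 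Let $\varphi$ be a $\Pi^1_{2n+1}$-formula, write $\varphi\equiv\forall X\,\psi(X)$ with $\psi\in\Sigma^1_{2n}$, fix $R\subseteq V_\kappa(\kappa,\lambda)$ with $(V_\kappa(\kappa,\lambda),\in,R)\models\varphi$, and fix a system $\vec F$ of Skolem functions witnessing this satisfaction.

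I would have the challenger play an increasing chain of $(\kappa,\lambda)$-models $M_0\prec M_1\prec\cdots\prec M_n$, all elementary in $H_\theta$, each containing $R$, $\vec F$, $\sigma$, and the earlier moves (possible since all these objects lie in $H_\theta$). By the choice of $\theta$ every $M_i$ then satisfies ``$(V_\kappa(\kappa,\lambda),\in,R)\models\varphi$'', and since $M_i\prec H_\theta$ and $\lambda+1\subseteq M_i$, each $M_i$ contains the full powerset of $V_{\kappa_x}(\kappa_x,x)$ for every $x\in P_\kappa(\lambda)$ and hence evaluates ``$(V_{\kappa_x}(\kappa_x,x),\in,R\cap V_{\kappa_x}(\kappa_x,x))\models\varphi$'' correctly. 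Writing $U_i$ for the judge's responses under $\sigma$ and $U:=U_n$, a good $M_n$-ultrafilter, let $j\colon M_n\to N$ be its well-founded transitive ultrapower (so $N\subseteq V$); using that $U_i=U\cap M_i$, form the induced tower $M_0\xrightarrow{j_0}N_0\xrightarrow{k_0}N_1\to\cdots\to N_n=N$ of ultrapowers and factor maps, the $N_i$ for $i<n$ a priori ill-founded. By Lemma~\ref{lem:VlambdaVjImageLambda} and the identity $j(R)\cap V_\kappa(\kappa,j\image\lambda)=j\image R$ from the proof of Theorem~\ref{th:almostGenericSCtotallyIndescribable}, $j\restrict V_\kappa(\kappa,\lambda)$ is an isomorphism of $(V_\kappa(\kappa,\lambda),\in,R)$ onto $(V_\kappa(\kappa,j\image\lambda),\in,j\image R)$, and $j\image\lambda$ is a good element of $P_{j(\kappa)}(j(\lambda))$ in $N$ with $\kappa_{j\image\lambda}=\kappa$.

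The heart of the proof is then to feed $\varphi$ down this tower: elementarity of $j_0$ carries ``$(V_\kappa(\kappa,\lambda),\in,R)\models\varphi$'' into $N_0$ as a statement about the full hierarchy $(V_{j_0(\kappa)}(j_0(\kappa),j_0(\lambda)),\in,j_0(R))$, and one then uses the remaining $n$ rounds of the game — matching the $2n$ remaining quantifier alternations of $\psi$ (equivalently, the $n$ universal second-order blocks of $\neg\psi$) against those $n$ rounds, and invoking the induction hypothesis inside the ultrapowers, exactly along the lines of the proof of Theorem~3.4 of \cite{NielsenWelch:games_and_Ramsey-like_cardinals} — to conclude $N\models\big((V_\kappa(\kappa,j\image\lambda),\in,j\image R)\models\varphi\big)$. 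Since $U=\{A\in M_n\mid j\image\lambda\in j(A)\}$ is derived from $j$, \Los's theorem gives $\{x\in P_\kappa(\lambda)\mid (V_{\kappa_x}(\kappa_x,x),\in,R\cap V_{\kappa_x}(\kappa_x,x))\models\varphi\}\in U$; intersecting with the club of good $x$, which lies in $U$ since $j\image\lambda$ is good in $N$, produces a good $x$ reflecting $\varphi$, so $\kappa$ is $\Pi^1_{2n+1}$-$\lambda$-indescribable.

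The step I expect to be the main obstacle is precisely this descent. One must coordinate the $2n{+}1$ second-order quantifier alternations of $\varphi$ with the $n{+}1$ rounds of $G_{n+1}(\kappa,\lambda)$, carefully tracking which of the structures $V_\kappa(\kappa,\lambda)$, its images $V_\kappa(\kappa,j_i\image\lambda)$, or the full hierarchies $V_{j_i(\kappa)}(j_i(\kappa),j_i(\lambda))$ each quantifier is interpreted over as one moves up the tower, and one must ensure the challenger's models see enough of $\sigma$ and of the Skolem data of $\varphi$ both to continue the run and to transport the formula through the — only eventually well-founded — intermediate ultrapowers. What makes the argument go through is that $\Sigma_0$-elementarity of the models, together with their containing the full powersets of all the small structures $V_{\kappa_x}(\kappa_x,x)$, keeps the innermost arithmetic part of $\varphi$ absolutely evaluated throughout; the difficulty is organizational rather than conceptual.
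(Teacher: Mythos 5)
Your scaffolding matches the paper's: the base case via Proposition~\ref{Prop: G_1 wining strategy} and Theorem~\ref{th:CodyNearlySupercompactIndescribable}, the use of Lemma~\ref{lem:VlambdaVjImageLambda} to identify $(V_\kappa(\kappa,\lambda),\in,R)$ with $(V_\kappa(\kappa,j\image\lambda),\in,j\image R)$, and the final appeal to \Los{} over the club of good $x$. But the paragraph you flag as ``the main obstacle'' is not an obstacle to be smoothed over later --- it is the entire theorem, and the frame you have set up cannot carry it. The sticking point is that \Los{} for the ultrapower of $M_n$ only relates ``$(V_{\kappa_x}(\kappa_x,x),\ldots)\models\varphi$ for $U$-most $x$'' to satisfaction of $\varphi$ in $(V_\kappa(\kappa,j\image\lambda),\ldots)$ \emph{with second-order quantifiers ranging over $N$'s power set of that structure}, i.e.\ over sets of the form $[x\mapsto A_x]_U$ with $x\mapsto A_x\in M_n$. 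That range is neither $P(V_\kappa(\kappa,j\image\lambda))^V$ nor the pointwise $j$-image of $P(V_\kappa(\kappa,\lambda))^V$, so the isomorphism of Lemma~\ref{lem:VlambdaVjImageLambda} does not by itself let you ``feed $\varphi$ down the tower'': only the $\Delta_0$ matrix $\psi$ is absolute between $N$ and $V$, not $\varphi$. Bridging this mismatch, one quantifier block per round, is exactly what the argument has to do, and you have not supplied it.

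Moreover, the specific way you propose to organize the run blocks the repair. The paper argues contrapositively: assuming no good $x$ reflects $\varphi$, it picks for each good $x$ a counterexample $A^1_x$ to the outermost $\forall$, has the challenger play a model containing $x\mapsto A^1_x$, pulls the resulting $[x\mapsto A^1_x]_{U_1}$ back to $B^1=j_1^{-1}[A^1]\subseteq V_\kappa(\kappa,\lambda)$, lets the \emph{true} structure supply a witness $C^1$ to the next $\exists$, and only then defines $x\mapsto A^2_x$ (depending on $C^1$, hence on $U_1$) to be placed in the challenger's \emph{second} model --- together with a nontrivial coherence check (the sets $S^i_x$ and Proposition~\ref{prop:GenCumHiUnion}) ensuring $j_{i'}\image B^i=[x\mapsto A^i_x]_{U_{i'}}$ at later stages, since $B^i\notin M_i$. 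Your plan fixes all of $M_0\prec\cdots\prec M_n$ in advance, which is incompatible with the fact that the challenger's $(i{+}1)$-st move must contain objects defined only after the judge's $i$-th response; the Skolem system $\vec F$ cannot substitute for this, since a second-order Skolem function for $\varphi$ over $V_\kappa(\kappa,\lambda)$ lives on $P(V_\kappa(\kappa,\lambda))$ and a $\lambda$-sized model $M_i\ni\vec F$ can only evaluate it at arguments belonging to $M_i$, whereas the arguments that matter (the $B^i$) are external to $M_i$ and produced mid-run. Finally, the induction on $n$ with ``the induction hypothesis invoked inside the ultrapowers'' has no clear meaning (the intermediate $N_i$ are possibly ill-founded ultrapowers of $\lambda$-sized models and do not satisfy the hypothesis of the theorem); the paper's proof is a single uniform $n$-round argument, not an induction.
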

\begin{proof}
The proof is similar to the proof \cite[Thm. 3.4]{NielsenWelch:games_and_Ramsey-like_cardinals}, but requires some adjustments for the two-cardinal setting. Let us start with $n=1$, although it already follows from Cody's result (Theorem~\ref{th:CodyNearlySupercompactIndescribable}) and Proposition \ref{Prop: G_1 wining strategy}, we include it as we believe it includes most of the ideas from the general case. Suppose towards a contradiction that $\kappa$ is not $\Pi^1_1$-$\lambda$-indescribable.  Let $\varphi=\forall X \psi(X)$ be a $\Pi^1_1$-$\lambda$-formula such that
$$(V_\kappa(\kappa,\lambda),\in,R)\models \varphi,$$ but there is no good $x\in P_\kappa(\lambda)$ such that $(V_{\kappa_x}(\kappa_x,x),\in, V_{\kappa_x}(\kappa_x,x)\cap R)\models\varphi$.
So for every good $x\in P_\kappa(\lambda)$ there is $A_x\subseteq V_{\kappa_x}(\kappa_x,x)$, such that $$(V_{\kappa_x}(\kappa_x,x),\in,R\cap V_{\kappa_x}(\kappa_x,x))\models \neg \psi(A_x)$$ (we can let $A_x$ be anything if $x$ is not good).  Let $M$ be a $(\kappa,\lambda)$-model with $x\mapsto A_x,R\in M$. 
By the assumption that the judge has a winning strategy for the game $G_1(\kappa,\lambda)$, we can find an $M$-ultrafilter $U$ and let $j:M\to N$ be the ultrapower embedding. Let $$A=[x\mapsto A_x]_U\subseteq V_\kappa(\kappa,j\image\lambda)$$ in $N$. Then by \Los\ theorem,
$$N\models (V_\kappa(\kappa,j\image\lambda), \in , j\image R)\models \neg\psi(A).$$ 
Since $\neg\psi$ is a $\Delta_0$-formula, it is absolute and therefore in $V$, $$(V_\kappa(\kappa,j\image\lambda), \in , j\image R)\models \neg\psi(A).$$ By Lemma~\ref{lem:VlambdaVjImageLambda},  $(V_\kappa(\kappa,j\image\lambda), \in , j\image R)\cong (V_\kappa(\kappa,R),\in,R)$, contradicting our assumption that $(V_\kappa(\kappa,\lambda),\in,R)\models \varphi$.

For the general case, suppose again towards a contradiction that $\varphi$ is a $\Pi^1_{2n-1}$-$\lambda$-formula  $$\varphi=\forall X_1\exists Y_1\forall X_2\exists Y_2\ldots\forall X_{n-1}\exists Y_{n-1}\forall X_{n}\psi(X_1,Y_1,
X_2,Y_2,\ldots,X_{n-1},Y_{n-1},X_{n})$$ where $\psi$ is a $\Delta_0$-formula. Suppose that
$$(V_\kappa(\kappa,\lambda),\in,R)\models \varphi$$
and for every good $x\in P_\kappa(\lambda)$
$$(V_{\kappa_x}(\kappa_x,x),\in,R\cap V_{\kappa_x}(\kappa_x,x))\models \neg \varphi.$$
Then, for every good $x$, there is $A^1_x\subseteq V_{\kappa_x}(\kappa_x,x)$, such that 
$$(V_{\kappa_x}(\kappa_x,x),\in,R\cap V_{\kappa_x}(\kappa_x,x))\models$$ $$ \forall Y_1\exists X_2\forall Y_2\ldots\exists X_{n-1}\forall Y_{n-1}\exists X_{n}\neg\psi(A^1_x,Y_1,
X_2,Y_2,\ldots,X_{n-1},Y_{n-1},X_{n}).$$
Let $M_1$ be a $(\kappa,\lambda)$-model with $x\mapsto A^1_x,R\in M$.
Let $U_1$ be an $M_1$-ultrafilter given by the winning strategy against the challenger's move $\langle M_1 \rangle$ and let $j_1:M_1\to N_1$ be the ultrapower embedding. Let $$A^1=[x\mapsto A_x^1]_{U_1}\subseteq V_\kappa(\kappa,j_1\image\lambda)$$ in $N_1$.   Let $$B^1=j_1^{-1}[A^1]\subseteq V_\kappa(\kappa, \lambda).$$ Although, $B^1$ may not be an element of $M_1$, for every good $x\in P_\kappa(\lambda)$, $$B^1\cap V_{\kappa_x}(\kappa_x,x)\in M_1$$ because it is a set of size less than $\kappa$. Given a good $x\in P_\kappa(\lambda)$, consider the set  $$S^1_x=\{y\in P_\kappa(\lambda) \mid B^1\cap V_{\kappa_x}(\kappa_x,x)= A^1_y\cap V_{\kappa_x}(\kappa_x,x)\},$$ which is in $M_1$. In $N_1$, we have $$j_1(B^1\cap V_{\kappa_x}(\kappa_x,x))= j_1\image (B^1\cap V_{\kappa_x}(\kappa_x,x))=j_1\image B^1\cap j_1\image V_{\kappa_x}(\kappa_x,x)$$  since it is a set of size less than $\kappa$. Thus, 
\begin{equation} \label{eq1}
\begin{split}
j_1(B^1\cap V_{\kappa_x}(\kappa_x,x)) & = j_1\image B^1\cap V_{\kappa_x}(\kappa_x,j_1\image x)\\
 & = A^1\cap V_{\kappa_x}(\kappa_x,j_1\image x)\\
 &= [y\mapsto A^1_y]_{U_1}\cap V_{\kappa_x}(\kappa_x,j_1\image x).
\end{split}
\end{equation}
Hence, $S^1_x\in U_1$.

We continue with the run of the game.  By our assumption 
there is $C^1$ such that 
$$(V_\kappa(\kappa,\lambda),\in,R)\models \forall X_2\exists Y_2\ldots\forall X_{n-1}\exists Y_{n-1}\forall X_{n}\psi(B^1,C^1,
X_2,Y_2,\ldots,X_{n-1},Y_{n-1},X_{n})$$
Also, by our choice of $A^1_x$, $$(V_{\kappa_x}(\kappa_x,x),\in,R\cap V_{\kappa_x}(\kappa_x,x))\models$$
$$ \exists X_2\forall Y_2\dots\exists X_{n-1}\forall Y_{n-1}\exists X_{n}\neg\psi(A^1_x,C^1\cap V_{\kappa_x}(\kappa_x,x),
X_2,Y_2,\ldots,X_{n-1},Y_{n-1},X_{n}).$$
So let $A^2_x\subseteq V_{\kappa_x}(\kappa_x,x)$ be such that $$(V_{\kappa_x}(\kappa_x,x),\in,R\cap V_{\kappa_x}(\kappa_x,x))\models$$
$$ \forall Y_2\ldots\exists X_{n-1}\forall Y_{n-1}\exists X_{n}\neg\psi(A^1_x,C^1\cap V_{\kappa_x}(\kappa_x,x),
A^2_x,Y_2,\ldots,Y_{n-1},X_{n}).$$
Next, we find a model $M_1\subseteq M_2$, with $x\mapsto  A^2_x\in M_2$ and $C^1\in M_2$, and let $U_2$ be an $M_2$-ultrafilter given by the strategy applied to the run of the game $\langle M_1,U_1,M_2\rangle$. Let $$A^2=[x\mapsto A^2_x]_{U_2}\text{ and }B^2=j_{U_2}^{-1}[A^2].$$ Now we would like to argue that it is still the case that $j_2\image B^1=[y\mapsto A^1_y]_{U_2}$. Since $U_2\supseteq U_1$, for every good $x\in P_\kappa(\lambda)$, $S^1_x\in U_2$. It follows, by reversing the argument in the previous paragraph, that $$j_2\image B^1\cap V_{\kappa_x}(\kappa_x,j_2\image x)=[y\mapsto A^1_y]_{U_2}\cap V_{\kappa_x}(\kappa_x,j_2\image x).$$ But now since, by Proposition~\ref{prop:GenCumHiUnion}, $$V_\kappa(\kappa,j_2\image\lambda)=\bigcup_{x\in P_\kappa(j_2\image\lambda)}V_{\kappa_x}(\kappa_x,x)=\bigcup_{x\in P_\kappa(\lambda)}V_\kappa(\kappa_x,j_2\image x)$$ and $[y\mapsto A^1_y]_{U_2}\subseteq V_{\kappa}(\kappa,j_2\image\lambda)$, it follows that $$j_2\image B^1=[y\mapsto A^1_y]_{U_2}.$$

Since the judge plays according to a winning strategy in the game $G_n(\kappa,\lambda)$, we can keep going this way for $n$-many stages and obtain a winning run $\langle M_1,U_1,M_2,U_2,\ldots,M_n,U_n\rangle$ along with sets $C^1,\ldots,C^{n-1}\subseteq V_\kappa(\kappa,\lambda)$  such that $C^1,\ldots,C^{n-1}\in M_n$ and letting $$A^i=[x\mapsto A^i_x]_{U_i}\text{ and }B^i=j_{U_i}^{-1}[A^i]$$ we have:
    \begin{enumerate}
        \item     
    $(V_\kappa(\kappa,\lambda),\in,R)\models \psi(B^1,C^1,
B^2,C^2,\ldots,B^{n-1},C^{n-1},B^{n})$.
    \item For every good $x\in P_\kappa(\lambda)$, $$(V_{\kappa_x}(\kappa_x,x),\in,R\cap V_{\kappa_x}(\kappa_x,x))\models$$
    $$ \neg\psi(A^1_x,C_1\cap V_{\kappa_x}(\kappa_x,x),A^2_x,
C_2\cap V_{\kappa_x}(\kappa_x,x),\dots,A^{n-1}_x,C_{n-1}\cap V_{\kappa_x}(\kappa_x,x),A^{n}_x).$$
    \end{enumerate}
Considering the elements $[x\mapsto A^1_x]_{U_n},\ldots,[x\mapsto A^n_x]_{U_n}$ of the ultrapower $N_n$, we can argue as before that we have that $j_n\image B^i=[x\mapsto A^i_x]_{U_n}$ and $$j_n\image C^i=j(C^i)\cap V_\kappa(\kappa,j_n\image\lambda)=[x\mapsto C^i\cap V_{\kappa_x}(\kappa_x,x)]_{U_n}$$ are both elements of $N_n$.

Thus, by (2) and elementarity, we conclude that in $N_n$,
$$ (V_\kappa(\kappa,j_n\image\lambda),\in, j_n\image R)\models\neg\psi(j_n\image B^1,j_n\image 
C^1,j_n\image B^2,j_n\image
C^2,\ldots,j_n\image B^{n-1},j_n\image C^{n-1},j_n\image B^n)$$
Since $\neg\psi$ is a $\Delta_0$-formula, in $V$ we have $$ (V_\kappa(\kappa,j_n\image \lambda),\in, j_n\image R)\models\neg\psi(j_n\image B^1,j_n\image 
C^1,j_n\image B^2,j_n\image 
C^2,\ldots,j_n\image B^{n-1},j_n\image C^{n-1},j_n\image B^n)$$
Thus, by Lemma~\ref{lem:VlambdaVjImageLambda},
$$(V_\kappa(\kappa,\lambda),\in,R)\models \neg \psi(B^1,C^1,B^2,\ldots,C^{n-1},B^n)$$
contradicting $(1)$ above.
\end{proof}

\section{Precipitous ideals}\label{sec:precipitousideals} 
As usual, we suppose that $\kappa\leq\lambda$ are regular cardinals and $\lambda^{{<}\kappa}=\lambda$. We also additionally assume that $2^\lambda=\lambda^+$. In this section, we show that if the judge has a winning strategy in the game $sG^*_{\omega}(\kappa,\lambda)$ (where the judge plays weak $M$-ultrafilters), then there is a precipitous ideal on $P_\kappa(\lambda)$, and if the judge has a winning strategy in the game $sG_{\omega}(\kappa,\lambda)$, then the precipitous ideal is normal. We start with some relevant definitions.

Suppose that $\mathcal I$ is an ideal on $P_\kappa(\lambda)$. We denote by $\mathcal I^+$, the complement of $\mathcal I$ in $P(P_\kappa(\lambda))$. We shall say that $\mathcal I$ is:
\begin{enumerate}
\item \emph{fine} if it contains the complement of every set $X_\alpha=\{x\in P_\kappa(\lambda)\mid\alpha\in x\}$ for $\alpha<\lambda$,
\item $\kappa$-\emph{complete} if it is closed under unions of size less than $\kappa$,
\item \emph{normal} if it is closed under diagonal unions of length $\lambda$,
\item $\gamma$-\emph{saturated} if $P(P_\kappa(\lambda))/{\mathcal I}$ is $\gamma$-cc.
\item $\lambda$-\emph{measuring} if for every set $B\in \mathcal I^+$ and sequence $\vec A=\{A_\xi\subseteq P_\kappa(\lambda)\mid\xi<\lambda\}$, there is $\bar B\subseteq B $ in $\mathcal I^+$ such that for every $\xi<\lambda$, either $\bar B\subseteq_{\mathcal I} A_\xi$ or $\bar B\subseteq_{\mathcal I}\kappa\setminus A_\xi$.
\end{enumerate}
Let $\p_{\mathcal I}$ be the poset $(\mathcal I^+,\supseteq)$ and suppose that $G\subseteq \p_{\mathcal I}$ is $V$-generic. Combining standard arguments (see e.g. \cite{jech:settheory}[Lemma 22.13]) and our arguments in Section~\ref{sec:modelsFilters}, it is easy to see that if $\mathcal I$ is fine and $\kappa$-complete, then $G$ is a weak $V$-ultrafilter in $V[G]$ and if $\mathcal I$ is additionally normal, then $G$ is a $V$-ultrafilter in $V[G]$. We shall say that $\mathcal I$ is \emph{precipitous} if it is fine and $\kappa$-complete and $G$ is good. Given a sequence $\vec A=\{A_\xi\subseteq P_\kappa(\lambda)\mid\xi<\lambda\}$, we will say that a condition $B\in \p_{\mathcal I}$ \emph{measures} $\vec A$ if for every $\xi<\lambda$, $B\forces \check A\in \dot G$  or $B\forces\check\kappa\setminus \check A\in \dot G$. 
\begin{proposition}\label{prop:decisiveWeaklyAmenable}
Suppose that $\mathcal I$ is a fine $\kappa$-complete ideal on $P_\kappa(\lambda)$. Then the following are equivalent.
\begin{enumerate}
    \item $\mathcal I$ is $\lambda$-measuring.
    \item For every sequence $\vec A=\{A_\xi\subseteq P_\kappa(\lambda)\mid\xi<\lambda\}$, the set of conditions in $\p_{\mathcal I}$ measuring $\vec A$ is dense. 
    \item Every $V$-generic $G\subseteq \p_{\mathcal I}$ is weakly amenable.
\end{enumerate}
\end{proposition}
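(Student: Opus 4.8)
The plan is to reduce everything to the standard description of the ultrafilter added by $\p_{\mathcal I}=(\mathcal I^+,\supseteq)$. The one ingredient I will use throughout is the elementary forcing fact that for $A\subseteq P_\kappa(\lambda)$ and $B\in\mathcal I^+$ we have $B\forces\check A\in\dot G$ if and only if $B\subseteq_{\mathcal I}A$ (that is, $B\setminus A\in\mathcal I$), and hence $B\forces (P_\kappa(\lambda)\setminus\check A)\in\dot G$ if and only if $B\cap A\in\mathcal I$; this is standard (see \cite{jech:settheory}). Combined with the remark made just before the proposition that, since $\mathcal I$ is fine and $\kappa$-complete, it is forced that $\dot G$ is a weak $V$-ultrafilter, hence in particular an ultrafilter on $P(P_\kappa(\lambda))^V$, we also get that $B\forces\check A\notin\dot G$ is equivalent to $B\forces(P_\kappa(\lambda)\setminus\check A)\in\dot G$, so that $B$ deciding $\dot G$-membership of $A$ amounts to $B$ being $\subseteq_{\mathcal I}$-below exactly one of $A$, $P_\kappa(\lambda)\setminus A$.

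Granting this, $(1)\Leftrightarrow(2)$ becomes bookkeeping. By the forcing fact, a condition $B$ measures $\vec A=\{A_\xi\mid\xi<\lambda\}$ in the sense defined before the proposition exactly when for every $\xi<\lambda$ either $B\subseteq_{\mathcal I}A_\xi$ or $B\subseteq_{\mathcal I}P_\kappa(\lambda)\setminus A_\xi$. I will observe that this property is downward closed in $\p_{\mathcal I}$ (if $\bar B\subseteq B$ then $\bar B\setminus A_\xi\subseteq B\setminus A_\xi$ and $\bar B\cap A_\xi\subseteq B\cap A_\xi$), so "the set of conditions measuring $\vec A$ is dense" says precisely that every $B\in\mathcal I^+$ has a subset $\bar B\in\mathcal I^+$ measuring $\vec A$, which is verbatim the definition of $\lambda$-measuring.

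For $(2)\Rightarrow(3)$: given a $V$-generic $G$ and $\vec A\in V$, density supplies some $B\in G$ measuring $\vec A$; for each $\xi$ exactly one of $B\subseteq_{\mathcal I}A_\xi$, $B\subseteq_{\mathcal I}P_\kappa(\lambda)\setminus A_\xi$ holds (both would force $B\in\mathcal I$), and this determines whether $A_\xi\in G$, so $\{\xi<\lambda\mid A_\xi\in G\}=\{\xi<\lambda\mid B\subseteq_{\mathcal I}A_\xi\}$, which is computed in $V$ from $B$ and $\vec A$; hence $G$ is weakly amenable. For $(3)\Rightarrow(1)$: fix $B\in\mathcal I^+$ and $\vec A$, and take a $V$-generic $G$ with $B\in G$; by weak amenability $s:=\{\xi<\lambda\mid A_\xi\in G\}\in V$, so by the forcing theorem some $\bar B\in G$ forces $\{\xi<\lambda\mid\check A_\xi\in\dot G\}=\check s$, and after intersecting with $B$ (which keeps the condition in $G$) we may assume $\bar B\subseteq B$; then $\bar B\subseteq_{\mathcal I}A_\xi$ for $\xi\in s$ and $\bar B\subseteq_{\mathcal I}P_\kappa(\lambda)\setminus A_\xi$ for $\xi\notin s$, so $\bar B$ witnesses $\lambda$-measuring for $(B,\vec A)$.

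I do not expect a genuine obstacle. The only point needing a little care is the step inside $(3)\Rightarrow(1)$ where one passes from the mere fact that $\{\xi\mid A_\xi\in G\}$ lands in $V$ to a \emph{single} condition uniformly deciding all the $A_\xi$: this is the routine "forced into the ground model $\Rightarrow$ densely decided by a ground-model value" argument, and it is exactly what upgrades the pointwise decisions into the uniform $\bar B$ that $\lambda$-measuring demands. (Equivalently, one can run $(3)\Rightarrow(2)$ directly: $\forces\dot x\in\check V$ for the name $\dot x$ of $\{\xi\mid A_\xi\in\dot G\}$ makes the set of conditions deciding $\dot x$ dense, and any such condition measures $\vec A$.)
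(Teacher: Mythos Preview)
Your proof is correct and follows essentially the same approach as the paper: both arguments hinge on the standard forcing fact that $B\forces\check A\in\dot G$ iff $B\subseteq_{\mathcal I}A$, and then cycle through the equivalences via density and the forcing theorem. The only cosmetic difference is organizational---you front-load the forcing fact and obtain $(1)\Leftrightarrow(2)$ directly, whereas the paper proves the cycle $(1)\Rightarrow(2)\Rightarrow(3)\Rightarrow(1)$ and invokes the forcing fact only at the end of $(3)\Rightarrow(1)$.
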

\begin{proof} Suppose that $(1)$ holds. Fix a sequence $\vec A=\{A_\xi\subseteq P_\kappa(\lambda)\mid\xi<\lambda\}$ and $B\in\p_{\mathcal I}$. Then there is $\bar B\subseteq B$ in $\p_{\mathcal I}$ such that for every $\xi<\lambda$, either $\bar B\subseteq_{\mathcal I} A_\xi$ or $\bar B\subseteq_{\mathcal I} \kappa\setminus A_\xi$. Clearly, $\bar B$ then measures $\vec A$ because if $\bar B$ is in a $V$-generic $G$ for $\p_{\mathcal I}$ and $\bar B\subseteq_{\mathcal I}A$, then $A\in G$ as well. Thus, $(1)$ implies $(2)$. 

Next, suppose that $(2)$ holds. Let $G\subseteq \p_{\mathcal I}$ be $V$-generic. Fix a sequence $$\vec A=\{A_\xi\subseteq P_\kappa(\lambda)\mid\xi<\lambda\}$$ in $V$. Then, by density, there is a condition $B\in G$ measuring $\vec A$. Thus, $$\{\xi<\lambda\mid A_\xi\in G\}=\{\xi<\lambda\mid B\forces \check{A}_\xi\in \dot G\},$$ and the later set exists in $V$. Thus, $(2)$ implies $(3)$.

Finally, suppose that $(3)$ holds. Again, fix $\vec A=\{A_\xi\subseteq P_\kappa(\lambda)\mid\xi<\lambda\}$. Fix a condition $\bar B\in \p_{\mathcal I}$ and let $G\subseteq \p_{\mathcal I}$ be $V$-generic with $\bar B\in G$. Since $G$ is weakly amenable, the set $D=\{\xi<\lambda\mid A_\xi\in G\}\in V$. Thus, there is a condition $B\subseteq \bar B$ in $G$ such that $$B\forces \check D=\{\xi<\check\lambda\mid \check A_\xi\in \dot G\}.$$ So clearly $B$ measures $\vec A$. But, now observe that if $B\forces\check A\in \dot G$, then $B\subseteq_{\mathcal I}A$. Thus, $(3)$ implies $(1)$.
\end{proof}
This gives the following immediate corollary.
\begin{corollary} Suppose that $\mathcal I$ is a fine $\kappa$-complete $\lambda$-measuring ideal on $P_\kappa(\lambda)$.
\begin{enumerate}
    \item Then $\kappa$ is almost generically $\lambda$-strongly compact with wa.
    \item If $\mathcal I$ is additionally precipitous, then $\kappa$ is generically $\lambda$-strongly compact with wa.
    \item If $\mathcal I$ is additionally normal, then $\kappa$ is almost generically $\lambda$-supercompact with wa.
    \item If $\mathcal I$ is additionally normal and precipitous, then $\kappa$ is generically $\lambda$-supercompact with wa.
\end{enumerate}
\end{corollary}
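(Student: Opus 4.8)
The plan is simply to unpack the relevant definitions and read everything off from Proposition~\ref{prop:decisiveWeaklyAmenable} together with the standard facts about generic ultrafilters derived from ideals. Fix a fine $\kappa$-complete ideal $\mathcal I$ on $P_\kappa(\lambda)$ and let $G\subseteq\p_{\mathcal I}$ be $V$-generic. As recalled just before Proposition~\ref{prop:decisiveWeaklyAmenable}, $G$ is then a weak $V$-ultrafilter in $V[G]$; if $\mathcal I$ is moreover normal, $G$ is a $V$-ultrafilter; and if $\mathcal I$ is precipitous, then by definition the ultrapower of $V$ by $G$ is well-founded, i.e.\ $G$ is good. The one non-routine ingredient is that, since $\mathcal I$ is $\lambda$-measuring, the implication $(1)\Rightarrow(3)$ of Proposition~\ref{prop:decisiveWeaklyAmenable} tells us that every $V$-generic $G\subseteq\p_{\mathcal I}$ is weakly amenable. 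Note also that $\p_{\mathcal I}$ is a set forcing, since $P(P_\kappa(\lambda))$ is a set, so each conclusion is genuinely a statement about a set-forcing extension of $V$.

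Combining these observations yields the four clauses directly. For $(1)$, $V[G]$ contains the weakly amenable weak $V$-ultrafilter $G$, which is exactly the witness required for $\kappa$ to be almost generically $\lambda$-strongly compact with wa. For $(2)$, adding precipitousness makes $G$ additionally good, so $V[G]$ contains a good weakly amenable weak $V$-ultrafilter, witnessing that $\kappa$ is generically $\lambda$-strongly compact with wa. For $(3)$, adding normality upgrades $G$ from a weak $V$-ultrafilter to a genuine $V$-ultrafilter, still weakly amenable, so $\kappa$ is almost generically $\lambda$-supercompact with wa by Theorem~\ref{th:almostGenericallySupercompactCompletelyIneffable}'s notion. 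For $(4)$, assuming both normality and precipitousness makes $G$ a good $V$-ultrafilter that is weakly amenable, which is precisely generic $\lambda$-supercompactness with wa.

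There is no real obstacle here: the argument is a bookkeeping exercise once Proposition~\ref{prop:decisiveWeaklyAmenable} is in hand. The only point requiring a little care is to keep straight which pair of properties --- ``weak'' versus ``normal'' and ``not necessarily good'' versus ``good'' --- is being asked for in each of the four target notions, and to confirm that the generic filter $G$ inherits exactly the combination dictated by the hypotheses on $\mathcal I$. All of that is immediate from the definitions of fine, $\kappa$-complete, normal, and precipitous ideals together with the characterization of weak amenability via the $\lambda$-measuring property.
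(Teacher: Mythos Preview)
Your proposal is correct and matches the paper's approach: the paper states this as an immediate corollary of Proposition~\ref{prop:decisiveWeaklyAmenable} with no written proof, and what you have done is exactly the intended unpacking of definitions. One cosmetic point: the notion in clause~(3) is taken from the definition block in Section~\ref{sec:genericSupercompactness}, not from Theorem~\ref{th:almostGenericallySupercompactCompletelyIneffable}, so the parenthetical reference there is slightly off, but this does not affect the argument.
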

By Proposition~\ref{prop:nearlyStrongCompact} and Corollary~\ref{th:weakWinningOmegaNotQuiteGeneric}, we immediately get the following corollary.
\begin{corollary}
Suppose that $\mathcal I$ is a fine $\kappa$-complete $\lambda$-measuring ideal on $P_\kappa(\lambda)$.
\begin{enumerate}
    \item Then the judge has a winning strategy in the game $wG_\omega^*(\kappa,\lambda)$.
    \item If $\mathcal I$ is additionally normal, then the judge has a winning strategy in the game $wG_\omega(\kappa,\lambda)$.
\end{enumerate}
\end{corollary}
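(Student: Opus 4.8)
The plan is to deduce both parts immediately from the preceding corollary together with the game characterizations of nearly $\lambda$-strong compactness and of complete $\lambda$-ineffability that were already established, so that no new argument is required.

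For part~(1), I would first apply part~(1) of the immediately preceding corollary: since $\mathcal I$ is a fine, $\kappa$-complete, $\lambda$-measuring ideal on $P_\kappa(\lambda)$, the cardinal $\kappa$ is almost generically $\lambda$-strongly compact with wa. Next I would invoke the equivalence of items~(2) and~(4) of Proposition~\ref{prop:nearlyStrongCompact}, which states precisely that $\kappa$ being almost generically $\lambda$-strongly compact with wa is equivalent to the judge having a winning strategy in the game $wG^*_\omega(\kappa,\lambda)$. Chaining these two steps gives~(1).

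For part~(2), under the extra hypothesis that $\mathcal I$ is normal, I would instead use part~(3) of the preceding corollary to conclude that $\kappa$ is almost generically $\lambda$-supercompact with wa, and then appeal to the equivalence of items~(1) and~(3) of Corollary~\ref{th:weakWinningOmegaNotQuiteGeneric} to obtain a winning strategy for the judge in the game $wG_\omega(\kappa,\lambda)$.

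I do not expect any genuine obstacle here: all the substance has already been absorbed into the preceding corollary, which in turn rests on Proposition~\ref{prop:decisiveWeaklyAmenable} --- the identification of $\lambda$-measuring with weak amenability of the generic (weak) $V$-ultrafilter added by $\p_{\mathcal I}$ --- together with the earlier characterization theorems. The only point requiring a little care is to cite the correct clause of each multi-part equivalence, and to keep track of the role of normality: it is exactly what upgrades the generic weak $V$-ultrafilter to a genuine $V$-ultrafilter and hence moves us from the non-normal game $wG^*_\omega(\kappa,\lambda)$ to $wG_\omega(\kappa,\lambda)$.
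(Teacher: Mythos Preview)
Your proposal is correct and matches the paper's own justification exactly: the paper simply writes ``By Proposition~\ref{prop:nearlyStrongCompact} and Corollary~\ref{th:weakWinningOmegaNotQuiteGeneric}, we immediately get the following corollary,'' which is precisely the chain of implications you spell out via the preceding corollary.
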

Given an ideal $\mathcal{I}$, the \emph{ideal game $G_{\mathcal I}$} is a game of perfect information played by two players Player I and Player II for $\omega$-many steps, both taking turns playing to produce a $\subseteq$-decreasing sequence of $\mathcal I^+$-sets. Player I wins if the intersection of the sequence is empty and otherwise, Player II wins.
\begin{theorem}[Jech, Lemma 22.21 \cite{jech:settheory}]
An ideal $\mathcal I$ is precipitous if and only if Player I does not have a winning strategy in the game $G_{\mathcal I}$.
\end{theorem}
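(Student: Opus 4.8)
The plan is to reproduce Jech's argument, whose engine is the equivalence — obtained by combining the standard analysis of ideal forcing (cf.\ \cite[Lemma 22.13]{jech:settheory}) with the ultrapower computations of Section~\ref{sec:modelsFilters} — that a fine, $\kappa$-complete ideal $\mathcal I$ on $P_\kappa(\lambda)$ is precipitous exactly when the generic ultrapower $\Ult(V,G)$ by a $V$-generic $G\subseteq\p_{\mathcal I}$ is well-founded; equivalently, exactly when there is no sequence $\langle f_n\mid n<\omega\rangle$, lying in some extension $V[G]$, of functions $f_n\colon P_\kappa(\lambda)\to\Ord$ each of which belongs to $V$ and with $\{x\mid f_{n+1}(x)<f_n(x)\}\in G$ for every $n$. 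I would prove both implications by contraposition.

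Suppose first that Player I has a winning strategy $\sigma$, and set $S_0=\sigma(\emptyset)$. For $x\in P_\kappa(\lambda)$ let $T_x$ be the tree, ordered by extension, of finite partial runs of $G_{\mathcal I}$ consistent with $\sigma$ along which every played set contains $x$. Each $T_x$ is well-founded, since an infinite branch would be a full $\sigma$-consistent run whose every condition contains $x$, and then Player II would win it, contradicting that $\sigma$ wins. Let $\mathrm{rk}_x$ be the rank function of $T_x$. Force below $S_0$ to get $V$-generic $G$; a routine density argument (given any $q'$ below the current condition, have Player II respond with $q'$ itself, so that Player I's prescribed reply $r$ satisfies $r\subseteq q'$, and $r$ is the required refinement) produces, in $V[G]$, a run $\langle S_n\mid n<\omega\rangle$ consistent with $\sigma$ with every $S_n\in G$. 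Put $f_n(x)=\mathrm{rk}_x(\langle S_0,\dots,S_n\rangle)$ when $x\in S_n$, and $f_n(x)=0$ otherwise. Since $f_n$ is definable over $V$ from $\sigma$ and the finite tuple $(S_0,\dots,S_n)$, we have $f_n\in V$; and because $S_{n+1}\subseteq S_n$, for each $x\in S_{n+1}$ the node $\langle S_0,\dots,S_{n+1}\rangle$ is an immediate successor in $T_x$ of $\langle S_0,\dots,S_n\rangle$, so $f_{n+1}(x)<f_n(x)$, whence $\{x\mid f_{n+1}(x)<f_n(x)\}\supseteq S_{n+1}\in G$. By the equivalence above, $\mathcal I$ is not precipitous.

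Conversely, suppose $\mathcal I$ is not precipitous. Since the ill-foundedness of a generic ultrapower is always witnessed by ground-model functions on $P_\kappa(\lambda)$, the forcing theorem yields $S_0\in\mathcal I^+$ and $\p_{\mathcal I}$-names $\langle\dot h_n\mid n<\omega\rangle$ such that $S_0$ forces: each $\dot h_n$ is a ground-model function $P_\kappa(\lambda)\to\Ord$, and $\{x\mid\dot h_{n+1}(x)<\dot h_n(x)\}\in\dot G$. The key point is that, because each $\dot h_n$ is forced to be a ground-model function with a bounded range, a dense set of conditions below $S_0$ decides $\dot h_n$ outright as a fixed $g\in V$. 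Player I's strategy is therefore to play $S_0$ and then, at stage $2n+2$ of the game, to refine the current condition first to one deciding $\dot h_n=\check g_n$ for some $g_n\in V$, and then (using $S_0\forces\{x\mid\dot h_n(x)<\dot h_{n-1}(x)\}\in\dot G$, which below that condition reads $\{x\mid g_n(x)<g_{n-1}(x)\}\in\dot G$, i.e.\ the current condition is $\subseteq_{\mathcal I}$ that set) to intersect with $\{x\mid g_n(x)<g_{n-1}(x)\}$, which remains in $\mathcal I^+$. If a run of this strategy reached length $\omega$ with some $x$ in the intersection, then $x\in\{y\mid g_n(y)<g_{n-1}(y)\}$ for all $n\geq 1$, giving a strictly descending $\omega$-sequence $\langle g_n(x)\mid n<\omega\rangle$ of ordinals — impossible. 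Hence the intersection of every run of this strategy is empty, so Player I wins.

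The delicate point of the whole argument is the one flagged in the previous paragraph: one must notice that the functions witnessing ill-foundedness of a generic ultrapower — being represented by ground-model functions on $P_\kappa(\lambda)$ — are forced to be ground-model objects, so that Player I really can pin them down by passing to a stronger condition, with no Cohen-real-style obstruction to deciding the names. Granting that, both directions are short, and the underlying set $P_\kappa(\lambda)$ plays no role whatsoever, so this is verbatim Jech's proof for ideals on a cardinal.
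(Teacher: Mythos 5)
Your proof is correct: both directions (the rank-function argument on the well-founded trees $T_x$ to convert a winning strategy for Player I into an ill-founded generic ultrapower, and the strategy of successively deciding the names $\dot h_n$ and intersecting with the sets $\{x\mid g_n(x)<g_{n-1}(x)\}$ for the converse) are exactly the standard argument of Jech's Lemma 22.21, which transfers verbatim to $P_\kappa(\lambda)$. The paper gives no proof of this statement and simply cites Jech, so your write-up matches the intended (cited) argument.
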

For the arguments in this section, we need to introduce the following auxiliary game. A sequence $\{N_\alpha\mid\alpha<\lambda^+\}$ of $(\kappa,\lambda)$-models is said to be \emph{internally approachable} if it is elementary, continuous and for every $\alpha'<\alpha<\lambda^+$, $\{N_\eta\mid\eta<\alpha'\}\in N_\alpha$.
\begin{definition} Fix an internally approachable sequence $$\vec N=\{N_\alpha\mid\alpha<\lambda^+\}$$ of $(\kappa,\lambda)$-models such that every subset of $P(P_\kappa(\lambda))$ of size $\lambda$ is contained in some $N_\alpha$.
Let $(s/w)G^{\vec N}_\delta(\kappa,\lambda)$ be an analogous game to $(s/w)G_\delta(\kappa,\lambda)$, but where the moves of the challenger are limited to the models $N_\alpha$. Let $(s/w)G^{\vec N*}(\kappa,\lambda)$ be the non-normal version of this game.
\end{definition}
Given a winning strategy $\sigma$ for the judge in one of the filter extension games of length $\delta$, we can define the corresponding \emph{hopeless} ideal $\mathcal I(\sigma)$ to consist of $A\subseteq P_\kappa(\lambda)$ such that there is no run according to $\sigma$ whose union ultrafilter $U$ contains $A$. Given a partial run $R_\gamma$ of length $\gamma<\delta$ according to $\sigma$, we can also consider the \textit{conditional hopeless ideal}  $\mathcal I(R_\gamma,\sigma)$ consisting of $A\subseteq P_\kappa(\lambda)$ such that there is no run according to $\sigma$ extending $R_\gamma$ whose union ultrafilter $U$ contains $A$. The next lemma applies to all of the games we introduced.

\begin{lemma}\label{lem:propertiesHopelessIdeal}
Suppose that $\sigma$ is a winning strategy for the judge in a filter game of length some limit ordinal $\delta$. For the non-normal games, $\mathcal I(\sigma)$ and each $\mathcal I(R_\gamma,\sigma)$ is a fine $\kappa$-complete ideal. For the normal games, the ideals are additionally normal and $\lambda$-measuring.
\end{lemma}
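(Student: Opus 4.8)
The plan is to handle all of the games at once, noting that only two features of the set-up enter: (a) since $\sigma$ is a winning strategy, any partial run played according to $\sigma$ can be prolonged to a full run played according to $\sigma$; and (b) every move $U_\xi$ of the judge is a (weak) $M_\xi$-ultrafilter, hence a fine, $M_\xi$-$\kappa$-complete ultrafilter on $P(P_\kappa(\lambda))^{M_\xi}$, and is moreover $M_\xi$-normal in the normal games. I carry out the argument for $\mathcal I(\sigma)$; the conditional ideal $\mathcal I(R_\gamma,\sigma)$ is treated verbatim once we restrict attention throughout to runs according to $\sigma$ extending $R_\gamma$, making sure that every deviation below takes place at a stage past the length of $R_\gamma$ — harmless, since any set once placed into a judge's move stays in all later judge moves, and possible because $\delta$ is a limit ordinal.

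The one basic maneuver is as follows. Suppose $C\subseteq P_\kappa(\lambda)$ is not in $\mathcal I(\sigma)$, witnessed by a run $R=\la M_\xi,U_\xi\mid\xi<\delta\ra$ according to $\sigma$ with $C\in U_\beta$ for some $\beta<\delta$; then $C\in M_\beta$. Given finitely many prescribed sets each of hereditary size $\leq\lambda$, the challenger may legally play, at stage $\beta+1$ after the initial segment of $R$ of length $\beta+1$, a basic $(\kappa,\lambda)$-model $M_{\beta+1}$ (elementary in $H_\theta$ for the $\theta$-versions) that elementarily extends the earlier $M_\xi$, has $\la M_\xi,U_\xi\mid\xi\leq\beta\ra$ as an element, and has the prescribed sets as elements — such models exist by the usual Skolem-hull construction, using $\lambda^{<\kappa}=\lambda$. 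The judge's reply $U_{\beta+1}$ dictated by $\sigma$ is then a (weak) $M_{\beta+1}$-ultrafilter with $U_\beta\subseteq U_{\beta+1}$, so in particular $C\in U_{\beta+1}$; by (a) this partial run of length $\beta+2$ prolongs to a full run according to $\sigma$, and therefore every set belonging to $U_{\beta+1}$ lies in the union ultrafilter of a run according to $\sigma$, hence is outside $\mathcal I(\sigma)$. When needed we repeat the maneuver once more at stage $\beta+2$; the point is that $M_{\beta+2}$ automatically has $\la M_\xi,U_\xi\mid\xi\leq\beta+1\ra$, and hence $U_{\beta+1}$, as an element.

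From this, every clause follows. There is at least one run according to $\sigma$, and for any run $P_\kappa(\lambda)$ is in, and $\emptyset$ out of, every $U_\xi$; so $\mathcal I(\sigma)$ is a proper ideal containing $\emptyset$. Fineness: in any run $X_\alpha\in M_\xi$ by Separation and $X_\alpha\in U_\xi$ since $U_\xi$ is fine, so $P_\kappa(\lambda)\setminus X_\alpha$ lies in no union ultrafilter, i.e.\ in $\mathcal I(\sigma)$. Downward closure: if $B\subseteq A$ and $B\notin\mathcal I(\sigma)$ via $B\in U_\beta$, apply the maneuver prescribing $A\in M_{\beta+1}$; then $B\in U_{\beta+1}$ and $A\in P(P_\kappa(\lambda))^{M_{\beta+1}}$, so $A\in U_{\beta+1}$ by upward closure of the ultrafilter, whence $A\notin\mathcal I(\sigma)$. $\kappa$-completeness: if $\mu<\kappa$, each $A_i\in\mathcal I(\sigma)$, but $A=\bigcup_{i<\mu}A_i\notin\mathcal I(\sigma)$ via $A\in U_\beta$, apply the maneuver prescribing $\la A_i\mid i<\mu\ra\in M_{\beta+1}$; then this sequence and $A$ are in $M_{\beta+1}$, and were every $P_\kappa(\lambda)\setminus A_i$ in $U_{\beta+1}$, $M_{\beta+1}$-$\kappa$-completeness would give $P_\kappa(\lambda)\setminus A=\bigcap_{i<\mu}(P_\kappa(\lambda)\setminus A_i)\in U_{\beta+1}$, contradicting $A\in U_{\beta+1}$; so some $A_i\in U_{\beta+1}$, contradicting $A_i\in\mathcal I(\sigma)$. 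Normality of the normal-game ideals is the same computation with a diagonal intersection: if each $A_\xi\in\mathcal I(\sigma)$ but the diagonal union $D=\{x\in P_\kappa(\lambda)\mid\exists\xi\in x\ (x\in A_\xi)\}\notin\mathcal I(\sigma)$ via $D\in U_\beta$, prescribe $\la A_\xi\mid\xi<\lambda\ra\in M_{\beta+1}$; then $D\in U_{\beta+1}$, and were every $P_\kappa(\lambda)\setminus A_\xi\in U_{\beta+1}$, $M_{\beta+1}$-normality would give $\Delta_{\xi<\lambda}(P_\kappa(\lambda)\setminus A_\xi)=P_\kappa(\lambda)\setminus D\in U_{\beta+1}$, a contradiction.

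Finally, $\lambda$-measuring for the normal-game ideals is the only point that needs the two-stage maneuver. Fix $B\in\mathcal I(\sigma)^+$, witnessed by a run with $B\in U_\beta$, and a sequence $\vec A=\la A_\xi\mid\xi<\lambda\ra$. Deviate at $\beta+1$ prescribing $\vec A\in M_{\beta+1}$; then $B\in U_{\beta+1}$ and $U_{\beta+1}$ decides each $A_\xi$, so let $\bar A_\xi$ be whichever of $A_\xi$, $P_\kappa(\lambda)\setminus A_\xi$ lies in $U_{\beta+1}$. Deviate once more at $\beta+2$; since $M_{\beta+2}$ has $U_{\beta+1}$ and $\vec A$ as elements, it forms $\la\bar A_\xi\mid\xi<\lambda\ra$ and $\bar B:=B\cap\Delta_{\xi<\lambda}\bar A_\xi$. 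Now $B\in U_{\beta+2}$ and each $\bar A_\xi\in U_{\beta+1}\subseteq U_{\beta+2}$, so $M_{\beta+2}$-normality gives $\Delta_{\xi<\lambda}\bar A_\xi\in U_{\beta+2}$, hence $\bar B\in U_{\beta+2}$ and $\bar B\notin\mathcal I(\sigma)$. Moreover, for each $\xi<\lambda$ we have $\bar B\setminus\bar A_\xi\subseteq P_\kappa(\lambda)\setminus X_\xi$ — since $x\in\Delta_{\eta<\lambda}\bar A_\eta$ and $\xi\in x$ force $x\in\bar A_\xi$ — and $P_\kappa(\lambda)\setminus X_\xi\in\mathcal I(\sigma)$ by fineness, so $\bar B\subseteq_{\mathcal I(\sigma)}\bar A_\xi$ by downward closure; as $\bar A_\xi\in\{A_\xi,\ P_\kappa(\lambda)\setminus A_\xi\}$, this is exactly the $\lambda$-measuring requirement. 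I expect this last clause to be the only real obstacle: the flipped sequence $\la\bar A_\xi\ra$ depends on the judge's move $U_{\beta+1}$ and so need not lie in $M_{\beta+1}$, which is precisely why the extra deviation stage $\beta+2$ — whose model sees $U_{\beta+1}$ for free — is needed.
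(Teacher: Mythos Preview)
Your proof is correct and follows essentially the same approach as the paper: deviate at stage $\beta+1$ to place prescribed parameters in the challenger's model, use the (weak) $M$-ultrafilter properties of the judge's reply, and for $\lambda$-measuring perform exactly the two-stage deviation so that $M_{\beta+2}$ sees the flipped sequence via $U_{\beta+1}$. Your write-up is in fact slightly more careful than the paper's in spelling out why $\bar B\subseteq_{\mathcal I(\sigma)}\bar A_\xi$ (via fineness and $\bar B\setminus\bar A_\xi\subseteq P_\kappa(\lambda)\setminus X_\xi$), a step the paper leaves implicit.
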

\begin{proof} We will prove the result for the ideal $\mathcal I(\sigma)$ and note that the proof for the ideals $\mathcal I(R_\gamma,\sigma)$ is completely analogous. First, we argue that $\mathcal I(\sigma)$ is a $\kappa$-complete ideal. Suppose $A\in \mathcal I(\sigma)$ and $B\subseteq A$. Suppose that there is a run according to $\sigma$ with a union filter containing $B$. Then there is $\gamma<\delta$ such that $B\in U_\gamma$, the filter from the judge's move at stage $\gamma$. But then the challenger can play a model containing $A$ at the next stage $\gamma+1$. Let $U_{\gamma+1}$ be the filter chosen by the judge according to $\sigma$. Then $A\in U_{\gamma+1}\supseteq U_\gamma$, but this contradicts that $A\in \mathcal I(\sigma)$.  Suppose that $A_\xi$ for $\xi<\alpha$ are elements of $\mathcal I(\sigma)$, but $A=\bigcup_{\xi<\alpha}A_\xi\in \mathcal I(\sigma)^+$. Then there is a partial run according to $\sigma$ such that $A\in U_\gamma$, but then the challenger can put $\{A_\xi\mid\xi<\alpha\}$ into their next move, forcing the judge to put some $A_\xi$ into $U_{\gamma+1}$, her move according to $\sigma$, which is impossible since every $A_\xi\in \mathcal I(\sigma)$. Clearly, $\mathcal I(\sigma)$ is fine. Now assume that we have a normal game. Normality is shown similarly to $\kappa$-compeleteness using that the judge now has to play $M$-ultrafilters. So it remains to show  $\lambda$-measuring. Fix a sequence $\vec A=\{A_\xi\subseteq P_\kappa(\lambda)\mid\xi<\lambda\}$ and a set $B\in \mathcal I(\sigma)^+$. Thus, there is a run according to $\sigma$ with a union filter containing $B$. Then there is $\gamma<\delta$ such that $B\in U_\gamma$, the filter from the judge's move at stage $\gamma$. But then the challenger can play a model containing $\vec A$ at the next stage $\gamma+1$. Let $U_{\gamma+1}$ be the filter chosen by the judge according to $\sigma$ and for every $\xi<\lambda$, let $\bar A_\xi$ be either $A_\xi$ or its complement depending on which is in $U_{\gamma+1}$. Let $M_{\gamma+2}$ be the next move of the challenger and let $U_{\gamma+2}$ be the response of the judge according to $\sigma$. Then $\{\bar A_\xi\mid\xi<\lambda\}\in M_{\gamma+2}$ since $U_{\gamma+1}\in M_{\gamma+2}$. Thus, $\Delta_{\xi<\lambda}\bar A_\xi\in U_{\gamma+2}$ and $B\in U_{\gamma+2}$, but then $C=B\cap \Delta_{\xi<\lambda}\bar A_\xi\in U_{\gamma+2}$. It follows that $C\in \mathcal I(\sigma)^+$, $C\subseteq B$ and for every $\xi<\lambda$, $C\subseteq_{\mathcal I}A_\xi$ or $C\subseteq_{\mathcal I}\kappa\setminus A_\xi$. Since the set $B$ was arbitrary, we have verified that $\mathcal I(\sigma)$ is $\lambda$-measuring.
\end{proof}
\begin{theorem} \label{th:precipitousIdeal}
Suppose there is no fine $\kappa$-complete $\lambda^+$-saturated ideal on $P_\kappa(\lambda)$.
\begin{enumerate}
\item If the judge has a winning strategy in the game $sG^*_\omega(\kappa,\lambda)$, then there is a precipitous ideal on $P_\kappa(\lambda)$. 
\item If the judge has a winning strategy in the game $sG_\omega(\kappa,\lambda)$, then there is a $\lambda$-measuring normal precipitous ideal on $P_\kappa(\lambda)$.
\end{enumerate}
\end{theorem}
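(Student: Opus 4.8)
The plan is to prove, under the assumed non-saturation, that the hopeless ideal of the winning strategy is precipitous. Fix a winning strategy $\sigma$ for the judge in the relevant strong game ($sG^*_\omega(\kappa,\lambda)$ for (1), $sG_\omega(\kappa,\lambda)$ for (2)). By Lemma~\ref{lem:propertiesHopelessIdeal}, the hopeless ideal $\mathcal I(\sigma)$ on $P_\kappa(\lambda)$ is fine and $\kappa$-complete, and in case (2) it is in addition normal and $\lambda$-measuring; so it suffices to show that $\mathcal I(\sigma)$ is precipitous, after which (2) follows from (1) together with the extra conclusions of the Lemma. By the theorem of Jech quoted above, precipitousness of $\mathcal I(\sigma)$ is equivalent to the assertion that Player I has no winning strategy in the ideal game $G_{\mathcal I(\sigma)}$, so that is what I would establish. (Should it turn out that one must instead pass to a conditional hopeless ideal $\mathcal I(R,\sigma)$ for a suitable partial run $R$, the argument is unchanged, since $\mathcal I(R,\sigma)$ has exactly the same properties by the Lemma.)

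Assume toward a contradiction that $\tau$ is a winning strategy for Player I in $G_{\mathcal I(\sigma)}$. I would construct a single run of $G_{\mathcal I(\sigma)}$ in which Player I follows $\tau$ but nonetheless loses. The construction runs $G_{\mathcal I(\sigma)}$ and the strong filter game in tandem: one builds a descending sequence $B_0\supseteq B_1\supseteq B_2\supseteq\cdots$ of $\mathcal I(\sigma)^+$-sets, with Player I playing the even-indexed moves according to $\tau$, together with an increasing chain $t_0\subseteq t_1\subseteq\cdots$ of finite partial runs of the filter game played according to $\sigma$, arranged so that each of Player II's moves $B_{2n+1}$ lies in the $M$-ultrafilter at the last node of $t_{n+1}$. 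Here the standing hypotheses $\lambda^{<\kappa}=\lambda$ and $2^\lambda=\lambda^+$, together with the fact that the challenger may extend the current partial run with any basic $(\kappa,\lambda)$-model containing the relevant data, ensure that the tree of partial runs according to $\sigma$ is rich enough that, after Player II names a set $B_{2n+1}$, the challenger can play a model containing $B_{2n+1}$ and all previous data, forcing $\sigma$ to decide $B_{2n+1}$.

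The heart of the matter, and the place where the non-saturation hypothesis enters, is to maintain the following invariant along the construction: $B_{2n+1}$ can be chosen positive with respect to the conditional hopeless ideal $\mathcal I(t_{n+1},\sigma)$ and, on a further positive subset, this conditional ideal \emph{stabilizes}, in the sense that it agrees with $\mathcal I(\sigma)$ below that set. Stabilization is what lets the construction survive the next move of Player I: when Player I plays an $\mathcal I(\sigma)^+$-subset $B_{2n+2}$ of $B_{2n+1}$, stabilization forces $B_{2n+2}\notin\mathcal I(t_{n+1},\sigma)$, so Player II again has a legal response that is conditionally positive. To obtain stabilization I would argue by contradiction: if below some positive set no positive subset ever became stable along the tree of partial runs, then, using the $\lambda^+$ many nodes of that tree, one could extract an antichain of size $\lambda^+$ in $\mathcal I(\sigma)^+$; but $\mathcal I(\sigma)$ is a fine $\kappa$-complete ideal, so such an antichain contradicts the assumption that there is no fine $\kappa$-complete $\lambda^+$-saturated ideal on $P_\kappa(\lambda)$. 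Carrying out this tree-and-antichain bookkeeping cleanly is the step I expect to be the main obstacle; in case (2) the $\lambda$-measuring property of $\mathcal I(\sigma)$ (and normality) should make the positivity-preserving choices somewhat more transparent.

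Finally, once the tandem construction is complete, $t_\omega=\bigcup_{n<\omega}t_n$ is a full run of the strong game played according to $\sigma$, hence a win for the judge, so its union ultrafilter $U$ has the countable intersection property. Since $\{B_{2n+1}\mid n<\omega\}\subseteq U$ and this family is $\subseteq$-decreasing with the same intersection as the full sequence $\langle B_n\mid n<\omega\rangle$, we get $\bigcap_{n<\omega}B_n=\bigcap_{n<\omega}B_{2n+1}\neq\emptyset$. Thus Player I loses the run just constructed, contradicting the choice of $\tau$. Hence $\mathcal I(\sigma)$ is precipitous, which proves (1); and in case (2) the very same ideal $\mathcal I(\sigma)$ is moreover normal and $\lambda$-measuring by the Lemma, which proves (2).
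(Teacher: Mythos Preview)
Your overall shape---run the ideal game and the filter game in tandem, and use the countable intersection property of the union ultrafilter to defeat Player~I---matches the paper's endgame. But the paper does \emph{not} try to show that $\mathcal I(\sigma)$ itself is precipitous; instead it first uses non-saturation to build an $\omega$-tree $T(\sigma)$ of ultrafilters (each node has $\lambda^+$-many successors, obtained from a maximal antichain in the conditional hopeless ideal at that node), then defines from $T(\sigma)$ a \emph{new} strategy $\tau$ with the key property $\mathcal I(\tau)^+=\bigcup_{\vec\xi}U^{\vec\xi}$. This equality is exactly what makes the tandem argument go through: any $\mathcal I(\tau)^+$-set Player~I plays must already lie in some tree ultrafilter, and the antichain structure forces the resulting addresses to be end-extending, so Player~II's responses trace a genuine branch.

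Your ``stabilization'' step is where your proof is incomplete, and it is doing the same work as the paper's tree construction without the construction. You need: given a partial run $t$ and $B\in\mathcal I(t,\sigma)^+$, a positive $\bar B\subseteq B$ and an extension $t'\supseteq t$ such that every $\mathcal I(\sigma)^+$-subset of $\bar B$ is still $\mathcal I(t',\sigma)^+$. Your justification (``if stabilization fails, extract a $\lambda^+$-antichain from the $\lambda^+$ many nodes of the tree of partial runs'') is not an argument: the tree of partial runs has no a~priori $\lambda^+$-branching, and even if it did, you have not explained why failure of stabilization below $B$ produces \emph{pairwise $\mathcal I(\sigma)$-incompatible} subsets rather than merely many distinct ones. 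The paper's approach sidesteps this entirely: it \emph{builds} the $\lambda^+$-splitting tree first (this is where non-saturation is actually invoked, once per node), and then the derived ideal $\mathcal I(\tau)$ automatically has the property you are trying to squeeze out of $\mathcal I(\sigma)$. If you want to repair your argument, you will essentially have to carry out that tree construction; passing to a single conditional ideal $\mathcal I(R,\sigma)$ is not enough.
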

\begin{proof}
Suppose the judge has a winning strategy $\sigma'$ in the game  $sG^*_\omega(\kappa,\lambda)$. Fix an internally approachable sequence $\{N_\alpha\mid\alpha<\lambda^+\}$ of $(\kappa,\lambda)$-models and restrict the strategy $\sigma'$ to a winning strategy $\sigma$ for the judge in the game $sG^{\vec N*}_\omega(\kappa,\lambda)$. We will build the following tree $T(\sigma)$ of height $\omega$ corresponding to the strategy $\sigma$.

Since the ideal $\mathcal I(\sigma)$ is not $\lambda^+$-saturated, there is a collection $$\{A_{\la \xi\ra}\subseteq P_\kappa(\lambda)\mid \xi<\lambda^+\}$$ of sets in $\mathcal I(\sigma)^+$ such that for $\xi\neq\eta$, $A_{\la \xi\ra}\cap A_{\la \eta\ra}\in \mathcal I(\sigma)$. Choose a partial winning run $R_{\la 0\ra}$ according to $\sigma$ having the model $N_{\gamma_{\la 0\ra}}$ as the last move of the challenger and $A_{\la 0\ra}\in U_{\gamma_{\la 0\ra}}$, the response of the judge, so that we minimized $\gamma_{\la 0\ra}$. Given that we have chosen partial runs $R_{\la \xi\ra}$ for all $\xi<\xi'$, we choose a run $R_{\la \xi'\ra}$ according to $\sigma$ having $N_{\gamma_{\la \xi'\ra}}$ as the last move of the challenger and $A_{\la \xi'\ra}\in U_{\gamma_{\la \xi'\ra}}$, the response of the judge, so that we minimized so that $\gamma_{\la \xi'\ra}>\gamma_{\la \xi\ra}$ for all $\xi<\xi'$. Level 1 of our tree will now consist of the filters $U^{\la \xi\ra}=U_{\gamma_{\la \xi\ra}}$ for $\xi<\lambda^+$. 

Next, let's fix a node $U^{\la \xi\ra}$ on level 1 and show how to construct its successors. Consider now the ideal $\mathcal I(R_{\la \xi\ra},\sigma)$, which also cannot be $\lambda^+$-saturated by our assumption. Thus, there is a collection $$\{A_{\la \xi\eta\ra}\subseteq\kappa\mid \eta<\lambda^+\}$$ of sets in $\mathcal I(R_{\la \xi\ra},\sigma)^+$ such that for $\eta_1\neq\eta_2$, $A_{\la\xi\eta_1\ra}\cap A_{\la\xi\eta_2\ra}\in \mathcal I(R_{\la \xi\ra},\sigma)$. Choose a partial run $R_{\la \xi0\ra }$ extending $R_{\la \xi\ra}$ according to $\sigma$ having  $N_{\gamma_{\la \xi0\ra}}$ as the last move of the challenger and $A_{\la \xi 0\ra}\in U_{\gamma_{\la \xi 0\ra}}$, the response of the judge,  so that we have minimized $\gamma_{\la \xi 0\ra}$. Given that we have chosen partial runs $R_{\la\xi\eta\ra}$ for all $\eta<\eta'$, we choose $R_{\la \xi\eta'\ra}$ to witness that $A_{\la \xi\eta'\ra}\in \mathcal I(R_{\la \xi\ra},\sigma)^+$ with $N_{\la \gamma_{\xi\eta'}\ra}$ minimized so that $\gamma_{\la \xi\eta'\ra}>\gamma_{\la\xi\eta\ra}$ for all $\eta<\eta'$. Successors of $U^{\la \xi\ra}$ will now consist of filters $U^{\la \xi\eta\ra}=U_{\gamma_{\la \xi\eta\ra}}$ for $\eta<\lambda^+$. This also shows how we are going to define successor levels in general, thus completing the construction of the tree $T(\sigma)$.

Observe that if $M$ and $N$ are $(\kappa,\lambda)$-models such that $P(P_\kappa(\lambda))^M\subseteq N$ and $W$ is a weak $N$-ultrafilter, then clearly $U=W\cap M$ is a weak $M$-ultrafilter. It follows that given any $(\kappa,\lambda)$-model $M$ and $n<\omega$, there is some weak $N_\gamma$-ultrafilter on level $n$ of $T(\sigma)$ such that $U\restrict M$ is a weak $M$-ultrafilter. 

Now we are going to use the tree $T(\sigma)$ to define a new winning strategy $\tau$ for the judge in the game $sG^
*_\omega(\kappa,\lambda)$. Suppose the challenger plays a $(\kappa,\lambda)$-model $M_0$ as their first move. We choose the least $\gamma_{\la \xi_0\ra}$ such that $P(P_\kappa(\lambda))^{M_0}\subseteq N_{\gamma_{\la \xi_0\ra}}$. It follows that $U_0=U_{\gamma_{\la \xi_0\ra}}\cap M_0$ is a weak $M_0$-ultrafilter, and we have the judge respond with $U_0$. Next, suppose the challenger responds with $M_1$.  Let $\gamma_{\la \xi_0\xi_1\ra}$ be least such that $P(P_\kappa(\lambda))^{M_1}\subseteq N_{\gamma_{\la \xi_0\xi_1\ra}}$ and have the judge play $U_1=U_{\gamma_{\la \xi_0\xi_1\ra}}\cap M_1$. The tree will always provide a next move for the judge, so it remains to check that the union ultrafilter has the countable intersection property. But this is true because the ultrafilters $U_{\vec\xi}$ whose restrictions are played by the judge, formed a run of the game according to $\sigma$. Consider the hopeless ideal $\mathcal I(\tau)$ and crucially observe that $\mathcal I(\tau)^+=\bigcup_{\vec \xi\in (\lambda^+)^{{<}\omega}}U^{\vec\xi}$. 

By Lemma~\ref{lem:propertiesHopelessIdeal}, the ideal $\mathcal I(\tau)$ is fine and $\kappa$-complete. Finally, we argue that the ideal $\mathcal I(\tau)$ is precipitous by verifying that Player II has a winning strategy in the ideal game $G_{\mathcal I(\tau)}$. Let $X_0\in \mathcal I(\tau)^+$ be the first move of Player I. Choose $U^{\la \xi_0\ldots\xi_n\ra}$ with $X_0\in U^{\la \xi_0\ldots\xi_n\ra}$ and have Player II play $$Y_0=X_0\cap A_{\la \xi_0\ra}\cap\cdots\cap A_{\la\xi_0\ldots\xi_n\ra}.$$ Next, Player I plays $X_1\subseteq Y_0$. Choose $U^{\la \eta_0\ldots\eta_m\ra}$ with $X_1\in U^{\la \eta_0\ldots\eta_m\ra}$ and $m\geq n$ (which we can do without loss of generality). Let's argue that $\la \eta_0\ldots\eta_m\ra$ end-extends $\la \xi_0\ldots\xi_n\ra$. Suppose to the contrary that there is $i\leq n$ such that $\eta_i\neq \xi_i$ and it is least such. We have $X_1\subseteq A_{\la \xi_0\ldots\xi_{i-1}\xi_i\ra}$ by construction. Let $X=X_1\cap A_{\la \xi_0\ldots \xi_{i-1}\eta_i\ra}$. Since $X_1\in U^{\la \eta_0\ldots\eta_m\ra}$ and $ A_{\la \xi_0\ldots \xi_{i-1}\eta_i\ra}\in U^{\la \eta_0\ldots\eta_m\ra}$, it follows that $X\in U^{\la \eta_0\ldots\eta_m\ra}$. Thus, $X\in \mathcal I(\tau)^+\subseteq \mathcal I(\sigma)^+$. But then also we have $X\subseteq A_{\la \xi_0\ldots \xi_{i-1}\xi_i\ra}$ and $X\subseteq A_{\la \xi_0\ldots \xi_{i-1}\eta_i\ra}$ contradicting our assumption that $A_{\la \xi_0\ldots \xi_{i-1}\xi_i\ra}$ and $A_{\la \xi_0\ldots \xi_{i-1}\eta_i\ra}$ are incompatible modulo $\mathcal I(\sigma)$. If Player II continues to play in this fashion, we obtain at the end a branch through the tree $T(\sigma)$, which is a play according to the strategy $\sigma$. Thus, $U$, the union of all the filters on the branch, has the countable intersection property, meaning that $$\bigcap_{n<\omega}Y_n\neq\emptyset.$$

We can carry out an analogous argument starting with a winning strategy for the judge in the game $sG(\omega,\kappa)$ and constructing a tree $T$ of $N_\gamma$-ultrafilters (instead of weak $N_\gamma$-ultrafilters). We only need to observe that
if $M$ and $N$ are $(\kappa,\lambda)$-models such that $P(P_\kappa(\lambda))^M\subseteq N$ and $W$ is an $N$-ultrafilter, then $U=W\cap M$ is an $M$-ultrafilter. The key issue here is verifying normality for sequences from $M$, but it suffices to note that a sequence $\{A_\xi\mid\xi<\lambda\}$ of subsets of $P_\kappa(\lambda)$ can be coded by a subset of $P_\kappa(\lambda)$ in such a way that a basic $(\kappa,\lambda)$-model can decode it. Thus, we can verify as above that the hopeless ideal $\mathcal I(\tau)$ is precipitous. But by Lemma~\ref{lem:propertiesHopelessIdeal}, any hopeless ideal resulting from a strategy in the game $sG_\omega(\kappa,\lambda)$ is automatically normal and $\lambda$-measuring.
\end{proof}
Suppose that $\mathcal I$ is an ideal on $P_\kappa(\lambda)$ and $T$ is a tree whose elements are weak $N$-ultrafilters for some basic $(\kappa,\lambda)$-model $N$ ordered by inclusion.  We say that $T$ is \emph{dense} in $\mathcal I^+$ if $\bigcup T=\mathcal I^+$ and that $T$ is \emph{$\lambda$-measuring} if for every $\mathcal A\subseteq P(P_\kappa(\lambda))$ of size $\lambda$ and every filter $W\in T$ there is some filter $U\in T$ extending $W$ deciding all the sets in $\mathcal A$. 

\begin{theorem}\label{Thm:Ultrafilter Method}
Suppose there is no fine $\kappa$-complete $\lambda^+$-saturated ideal on $P_\kappa(\lambda)$. If the judge has a winning strategy in the game $G^*_\delta(\kappa,\lambda)$ for a regular cardinal $\delta\leq\kappa$, then there is a  tree $T$ of height $\delta$ with the following properties:
\begin{enumerate}
\item The ordering of $T$ corresponds to inclusion.
\item Successor level elements of $T$ are weak $N$-ultrafilters for some basic $(\kappa,\lambda)$-model $N$.
\item Unions are taken at limit levels (and, in particular, $T$ is $\delta$-closed).
\item $T$ is $\lambda$-measuring.
\end{enumerate}
If the judge has a winning strategy in the game $G_\delta(\kappa,\lambda)$, then there is an analogous tree consisting of $N$-ultrafilters. 
\end{theorem}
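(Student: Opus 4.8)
The plan is to mimic the proof of Theorem~\ref{th:precipitousIdeal}, but instead of building a single strategy $\tau$ and then arguing precipitousness of $\mathcal{I}(\tau)$, we keep the tree of ultrafilters that naturally arises as the object of interest. Fix an internally approachable sequence $\vec N=\{N_\alpha\mid\alpha<\lambda^+\}$ of $(\kappa,\lambda)$-models such that every subset of $P(P_\kappa(\lambda))$ of size $\lambda$ lies in some $N_\alpha$ (this uses $2^\lambda=\lambda^+$), and restrict the judge's winning strategy $\sigma'$ in $G^*_\delta(\kappa,\lambda)$ to a winning strategy $\sigma$ in the game $G^{\vec N*}_\delta(\kappa,\lambda)$ where the challenger is confined to the $N_\alpha$. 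Build the tree $T=T(\sigma)$ recursively, exactly as in the proof of Theorem~\ref{th:precipitousIdeal}: at a node corresponding to a partial run $R_{\vec\xi}$ according to $\sigma$, use the failure of $\lambda^+$-saturation of the conditional hopeless ideal $\mathcal{I}(R_{\vec\xi},\sigma)$ to find a $\lambda^+$-sized almost disjoint (mod $\mathcal{I}(R_{\vec\xi},\sigma)$) family $\{A_{\vec\xi\,\eta}\mid\eta<\lambda^+\}$ of $\mathcal{I}(R_{\vec\xi},\sigma)^+$-sets, and for each $\eta$ extend $R_{\vec\xi}$ to a run $R_{\vec\xi\,\eta}$ with some $A_{\vec\xi\,\eta}$ entering the judge's ultrafilter, minimizing the index $\gamma_{\vec\xi\,\eta}$ of the last challenger model (and keeping these indices strictly increasing along each level). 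Since the challenger is restricted to $\vec N$, at every limit level $\ell<\delta$ a run has a well-defined union, so we take unions at limit levels; here is where $\delta\le\kappa$ matters, since it guarantees $U^{\vec\xi}$ at a limit level remains a weak $N$-ultrafilter (completeness of the $N$-model's version of $P_\kappa(\lambda)$ is only available for $\kappa$-sequences). This gives properties (1), (2), (3).

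For property (4), $\lambda$-measuring: given a node $W=U^{\vec\xi}\in T$ and a family $\mathcal{A}\subseteq P(P_\kappa(\lambda))$ of size $\lambda$, pick $\alpha<\lambda^+$ with $\mathcal{A}\in N_\alpha$ and large enough that $N_\alpha$ end-extends the last challenger model of the run $R_{\vec\xi}$; since the tree's construction forces the challenger, at each successor step, to advance to an arbitrarily high $N_\gamma$, there is a successor $U^{\vec\xi\,\eta}$ of $W$ (in fact we should choose the recursion so that the last challenger model at every node is pushed past all previously demanded models, or alternatively re-run the argument: extend $R_{\vec\xi}$ by one more move of the challenger playing $N_\alpha\ni\mathcal{A}$, then by the next move — this is exactly the ``put $\vec A$ into the next model, then diagonalize'' trick from Lemma~\ref{lem:propertiesHopelessIdeal}) whose associated ultrafilter decides every set in $\mathcal{A}$. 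One must be slightly careful that this extended run is still a node of $T$ and not merely a run according to $\sigma$; the cleanest fix is to interleave, in the recursive construction of $T$, alongside the almost-disjoint-family step, an additional ``bookkeeping'' step at each node that, for a cofinal set of successors, arranges that $\mathcal{A}$ gets decided — since there are only $\lambda^{<\kappa}=\lambda$ many relevant $\mathcal{A}$'s hitting below any node and $\lambda^+$ many successors, this does not disturb the almost-disjointness requirement. Then $\bigcup T=\mathcal{I}(\sigma)^+$ follows exactly as in Theorem~\ref{th:precipitousIdeal}, giving density; but note the statement only asks for the four listed properties, not density, so we need only observe that each successor-level ultrafilter genuinely is a weak $N$-ultrafilter, which is immediate since it arises as a judge's move in a legal run of $G^{\vec N*}_\delta(\kappa,\lambda)$.

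For the $G_\delta(\kappa,\lambda)$ version, repeat verbatim with $N$-ultrafilters replacing weak $N$-ultrafilters: the only new point is that at limit levels the union of an increasing sequence of $N_\xi$-ultrafilters, along a run where the challenger's models form an internally approachable chain and $\delta\le\kappa$, is an $N$-ultrafilter — normality of the union follows because any $\lambda$-sequence from the union model appears in some challenger model $N_\xi$ with $\xi<\ell$, hence its relevant flip was already decided by $U_\xi$ and the diagonal intersection entered $U_{\xi+1}$; this is the same observation used in the normal case of Lemma~\ref{lem:propertiesHopelessIdeal}. The main obstacle I anticipate is precisely the bookkeeping in property (4): ensuring that the ``decide $\mathcal{A}$'' extensions and the ``almost disjoint family'' extensions can be carried out simultaneously within a single recursively-defined tree, rather than in two incompatible constructions. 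This is resolved by noting that deciding a size-$\lambda$ family only costs two extra challenger/judge rounds (one to insert the family into a model, one to take the diagonal intersection / Boolean combination), which can be absorbed into the gap between levels without affecting the cardinality arithmetic, since $\delta\le\kappa\le\lambda$ and $\lambda^{<\kappa}=\lambda$ keep the total number of demands below a node bounded by $\lambda$.
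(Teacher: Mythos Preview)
Your approach is the same as the paper's: the paper's ``proof'' of this theorem is a single sentence saying the tree is constructed as in the proof of Theorem~\ref{th:precipitousIdeal} with unions taken at limit levels, and you have correctly expanded this.

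However, you over-complicate property~(4). No bookkeeping is needed. In the construction from Theorem~\ref{th:precipitousIdeal}, the indices $\gamma_{\vec\xi\concat\la\eta\ra}$ of the successor nodes of $U^{\vec\xi}$ are chosen to be strictly increasing in $\eta$, and there are $\lambda^+$ many successors; since $\lambda^+$ is regular, these indices are automatically cofinal in $\lambda^+$. So given $W=U^{\vec\xi}\in T$ and $\mathcal A\subseteq P(P_\kappa(\lambda))$ of size $\lambda$, pick $\alpha<\lambda^+$ with $\mathcal A\subseteq N_\alpha$, and then any successor $U^{\vec\xi\concat\la\eta\ra}$ with $\gamma_{\vec\xi\concat\la\eta\ra}>\alpha$ is a weak $N_{\gamma_{\vec\xi\concat\la\eta\ra}}$-ultrafilter, hence decides every set in $N_{\gamma_{\vec\xi\concat\la\eta\ra}}\supseteq N_\alpha\supseteq\mathcal A$. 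Your first instinct (``the tree's construction forces the challenger, at each successor step, to advance to an arbitrarily high $N_\gamma$'') was exactly right; the subsequent worry about whether such an extension ``is still a node of $T$'' and the proposed interleaving/bookkeeping are unnecessary detours.

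A minor point: your explanation of why $\delta\leq\kappa$ matters is not quite on target. The theorem only requires that \emph{successor}-level nodes be weak $N$-ultrafilters; limit-level nodes are bare unions and need not be ultrafilters for any single model, so there is nothing to verify about $M$-$\kappa$-completeness there. The role of $\delta\leq\kappa$ is rather to keep the construction aligned with the one-cardinal analogue and to ensure the runs corresponding to branches remain legal plays of length ${<}\delta$ in the game (so that $\sigma$ continues to apply at every stage).
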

The tree $T$ is constructed as in the proof of Theorem~\ref{th:precipitousIdeal} with unions taken at limit levels\footnote{Recall that the construction of the ultrafilters in the tree $T$ follow runs of the game played according to a winning strategy and taking unions at limit steps aligns with the rules of the game.}.  As in the proof of Theorem~\ref{th:precipitousIdeal}, the tree $T$ can then be used to define a new winning strategy $\tau$ for the judge in the game $G_\delta^{*}(\kappa,\lambda)$ ($G_\delta(\kappa,\lambda)$). This is described in the next proposition, which is almost a converse to Theorem \ref{th:precipitousIdeal}.
\begin{proposition}
Suppose $\delta\leq\kappa$ is regular cardinal and there is a $\delta$-closed tree $T$ of height $\delta$ with the following properties: 
\begin{enumerate}
\item The ordering on $T$ corresponds to inclusion.
\item Successor level elements of $T$ are weak $N$-ultrafilters for some basic $(\kappa,\lambda)$-model $N$.
\item Unions are taken at limit levels.
\item $T$ is $\lambda$-measuring. 
\end{enumerate}
Then there is a winning strategy $\tau$ for the judge in the game $wG^*_\delta(\kappa,\lambda)$. If the tree $T$ consists of $N$-ultrafilters, then there is such a winning strategy for the judge in the game $wG_\delta(\kappa,\lambda)$. 
\end{proposition}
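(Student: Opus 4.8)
The plan is to have the judge walk up a single branch of $T$: at each stage she selects a node far enough up the branch to decide every subset of $P_\kappa(\lambda)$ lying in the challenger's current model, and then plays the restriction of that node to the model. The one non-bookkeeping ingredient is the observation made in the proof of Theorem~\ref{th:precipitousIdeal}: if $M$ and $N$ are $(\kappa,\lambda)$-models with $P(P_\kappa(\lambda))^M\subseteq N$ and $W$ is a weak $N$-ultrafilter, then $W\cap M$ is a weak $M$-ultrafilter, and if $W$ is moreover an $N$-ultrafilter then $W\cap M$ is an $M$-ultrafilter --- here one uses that a $\lambda$-indexed sequence of subsets of $P_\kappa(\lambda)$ lying in $M$ is coded by a single subset of $P_\kappa(\lambda)$ that the basic model $N$ decodes, so normality transfers.

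Now I would define $\tau$. Throughout write $\mathcal A_\gamma=P(P_\kappa(\lambda))^{M_\gamma}$, a set of size ${\le}\lambda$, where $M_\gamma$ is the challenger's $\gamma$-th model. At stage $0$, using property (4) pick a successor node $W^0\in T$ deciding every set in $\mathcal A_0$; by property (2), $W^0$ is a weak $N_{W^0}$-ultrafilter for a basic model $N_{W^0}$, and $\mathcal A_0\subseteq\dom(W^0)$ forces $P(P_\kappa(\lambda))^{M_0}\subseteq N_{W^0}$, so the judge may legally play $U_0=W^0\cap M_0$. At a successor stage $\gamma+1$, with the branch segment $\langle W^\xi\mid\xi\le\gamma\rangle$ and the moves $U_\xi=W^\xi\cap M_\xi$ already in hand, apply property (4) to $W^\gamma$ to obtain $W^{\gamma+1}\supseteq W^\gamma$ in $T$ deciding $\mathcal A_{\gamma+1}$, and have the judge play $U_{\gamma+1}=W^{\gamma+1}\cap M_{\gamma+1}$. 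At a limit stage $\gamma<\delta$, the chain $\langle W^\xi\mid\xi<\gamma\rangle$ has length ${<}\delta$, so by $\delta$-closure (property (3)) its union $W^{*\gamma}$ is a node of $T$; apply property (4) to $W^{*\gamma}$ to get $W^\gamma\supseteq W^{*\gamma}$ deciding $\mathcal A_\gamma$, and let the judge play $U_\gamma=W^\gamma\cap M_\gamma$.

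It then remains to check the rules. Each $U_\gamma=W^\gamma\cap M_\gamma$ is a weak $M_\gamma$-ultrafilter --- an $M_\gamma$-ultrafilter in the case that $T$ consists of $N$-ultrafilters --- by the observation above, since $\mathcal A_\gamma\subseteq\dom(W^\gamma)$ gives $P(P_\kappa(\lambda))^{M_\gamma}\subseteq N_{W^\gamma}$. Also $U_\gamma$ extends $\bigcup_{\xi<\gamma}U_\xi$: for $\xi<\gamma$ the tree order gives $W^\xi\subseteq W^\gamma$, and $M_\xi\prec M_\gamma$ gives that every set in $U_\xi$ lies in $M_\gamma$, whence $U_\xi=W^\xi\cap M_\xi\subseteq W^\gamma\cap M_\gamma=U_\gamma$. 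Finally the construction never exhausts $T$: the level of $W^\gamma$ is bounded by the supremum of the earlier levels plus a bounded amount, and since $\delta$ is regular, a supremum of fewer than $\delta$ ordinals below $\delta$ stays below $\delta$, which is the height of $T$. Hence the judge can continue for all $\delta$ stages, so $\tau$ is winning for $wG^*_\delta(\kappa,\lambda)$, and for $wG_\delta(\kappa,\lambda)$ when $T$ consists of $N$-ultrafilters. The only genuine point of care is coherence of the bookkeeping --- at each stage producing a single node that simultaneously extends the branch built so far and is rich enough to restrict onto the new model $M_\gamma$ --- which is exactly what $\lambda$-measuring together with $\delta$-closure is set up to deliver; the transfer of (weak) ultrafilter-hood and of normality to submodels is handled by the observation imported from Theorem~\ref{th:precipitousIdeal}.
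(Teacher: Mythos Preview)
Your argument is correct and follows essentially the same approach as the paper's proof: walk up a single branch of $T$, at each stage using $\lambda$-measuring to find a node above the current branch that decides $P(P_\kappa(\lambda))^{M_\gamma}$, and play its restriction to $M_\gamma$; $\delta$-closure handles limit stages. Your write-up is more explicit about successor versus limit stages and about coherence, and you correctly import from Theorem~\ref{th:precipitousIdeal} the observation that restrictions of (weak) $N$-ultrafilters to submodels are (weak) $M$-ultrafilters; the only superfluous bit is the level-bounding remark at the end, since all that is needed is that $\delta$-closure applies to chains of length ${<}\delta$, which is automatic.
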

\begin{proof}
Suppose the challenger plays a $(\kappa,\lambda)$-model $M_0$. Since $T$ is $\lambda$-measuring, there is some weak $N_0$-ultrafilter $W_0\in T$ deciding $P(P_\kappa(\lambda))^{M_0}$. We have the judge play $U_0=W_0\cap P(P_\kappa(\lambda))^{M_0}$. Suppose inductively that we have a play $\la M_0,U_0,\ldots,M_\xi,U_\xi,\ldots\ra$ for $\xi<\gamma<\delta$ and a corresponding branch of $N_\xi$-ultrafilters $W_\xi$ through the tree $T$ such that $U_\xi=P(P_\kappa(\lambda))^{M_\xi}\cap W_\xi$. Suppose that the challenger plays a $(\kappa,\lambda)$-model $M_\gamma$. Since $T$ is $\delta$-closed, there is $W'_\gamma\in T$ above the branch of the $W_\xi$, and therefore we can find some $N_\gamma$-ultrafilter $W_\gamma\in T$ above $W'_\gamma$ deciding $P(P_\kappa(\lambda))^{M_\gamma}$ and have the judge play $U_\gamma=W_\gamma\cap P(P_\kappa(\lambda))^{M_\gamma}$. Clearly, this is a winning strategy for the judge. 
\end{proof}
\section{More on the measurable case}\label{sec:moreon themeasurable}
Theorem~\ref{th:precipitousIdeal} is the analogue of one of the main results of \cite{MagForZem} for games where the challenger plays certain $\kappa$-algebras ($\kappa$-complete sub-algebras of $P(\kappa)$ of size $\kappa$) and the judge responds with certain $\kappa$-complete ultrafilters for these algebras.\footnote{These games can be reformulated into equivalent games with $\kappa$-models and their ultrafilters studied by Holy and Schlicht in \cite{HolySchlicht:HierarchyRamseyLikeCardinals}.} A crucial component in the argument from \cite{MagForZem} is the ability to pass to a game where the challenger plays \emph{normal} $\kappa$-algebras \footnote{If a set $A$ is in the $\kappa$-algebra, then all sets coded on the slices of $A$ (using a fixed bijection between $\kappa$ and $\kappa\times\kappa$) are in the $\kappa$-algebra and if $\{A_\xi\mid\xi<\kappa\}$ is coded by a set $A$ in the $\kappa$-algebra, then $\Delta_{\xi<\kappa}A_\xi$ is in the $\kappa$-algebra.} and the judge plays uniform \emph{normal} ultrafilters, i.e. ultrafilters normal for sequences coded in the $\kappa$-algebra. This allows the transfer of strategies from the game where the judge plays filters to \emph{set} games where the judge plays sets determining the ultrafilters.{ In turn, the set game was used to construct a special tree of sets from which the authors of \cite{MagForZem} constructed the precipitous ideal.} This modification is unclear when considering the two-cardinal games. The way we overcame this obstacle in the previous section was to construct a tree of filters rather than a tree of sets. These filters form, similar to Dedekind cuts, exact cuts of the ultrafilter cunstructed along a run of the game. As proven in the previous section, this suffices to construct a precipitous ideal from a winning strategy of the judge. It remains open whether there is an analog of the set games which is equivalent to the filter games in the two-cardinal setting (see Question \ref{question: analog set game}).

Let us recall the games from \cite{MagForZem}, discussed in the introduction. In order to unify the notation for all the relevant variations of the games considered in \cite{MagForZem}, we will we follow the template: $$G^{\text{target cardinal}}_{\text{length of game}}(\text{challenger object type},\text{winning condition},\text{judge object type})$$
For the \textit{challenger object types} we have either $W$ (Welch) where the challenger plays a $\kappa$-algebra (see $(1)$ below) or $E^{\vec N}$ (extension) where he plays an index of a model from a fixed internally approachable sequence $\vec N$ (see $(4)$ below). For the \textit{winning condition} we have either $w$ (weak) where the judge only has to survive all the stages of the game, and $s$ (strong) which puts an extra requirement (see $(2)$ below). Finally, the \textit{judge object types} can either be a $uf$ which stands for a $\kappa$-complete ultrafilter on $\kappa$, or $nuf$, corresponding to a normal ultrafilter on $\kappa$, $s$ (set) a set which diagonalizes\footnote{We say that $Y$ diagonalizes $\mathcal{A}$, if for every $A\in\mathcal{A}$, $Y\subseteq^* A$.} an ultrafilter or $ns$ (normal set) a set which  diagonalizes a normal ultrafilter (see $(4)$ below). 
\begin{definition}\label{definition: measurable games} The games $G^\kappa_\delta(W/E^{\vec N},w/s,uf/nuf/s/ns)$ are defined as follows:
    \begin{enumerate}
        \item (weak Welch game) The game $G^\kappa_\delta(W,w,uf)$ proceeds for $\delta$-many rounds, with the challenger playing a $\subseteq$-increasing sequence of $\kappa$-algebras, $\{\mathcal{A}_i\mid \xi<\delta\}$ and the judge playing an increasing sequence of $\kappa$-complete $\mathcal{A}_\xi$-ultrafilters $U_\xi$. The judge wins the game if she can survive all stages $\xi<\delta$.\footnote{This game is equivalent to the game $wG^*_\delta(\kappa)$ from the introduction.}
        \item (strong Welch game) The game $G^\kappa_\delta(W,s,uf)$ is played analogously, but the winning condition is that $\bigcup_{\xi<\delta} U_\xi$ is a $\kappa$-complete ultrafilter on the $\kappa$-algebra generated by $\bigcup_{\xi<\delta}\mathcal{A}_\xi$.\footnote{This game is sandwiched between the game $sG^*_\delta(\kappa)$, where the union ultrafilter of the judge's moves is required to have the countable intersection property, and the game $G_{\delta+1}(\kappa)$.}
        \item (normal Welch game) The game $G^\kappa_\delta(W,w/s,nuf)$  is played analogously, but with the challenger playing normal $\kappa$-algebras and the judge responding with normal ultrafilters. The winning conditions are the same for the $w/s$ games, adding the requirement that  $\bigcup_{\xi<\delta} U_\xi$ is a normal ultrafilter in the strong version.\footnote{This game is equivalent to the game $wG(\kappa)$ from the introduction.}
        \item (set/normal set game) The game $G^\kappa_\delta(W,w/s,s/ns)$ is played analogously, but the judge plays a $\subseteq^*$-decreasing sequence $\{Y_\xi\mid \xi<\kappa\}$ of sets such that $Y_\xi$ determines an ultrafilter (with appropriate restrictions) on $\mathcal{A}_\xi$. The winning conditions are identical to the games $G^\kappa_\delta(W,w/s,uf/nuf)$.
        \item (extension game) We fix an internally approachable elementary sequence\break $\vec N=\{ N_\xi\mid \xi<\kappa^+\}$ of $\kappa$-models. The game $G^\kappa_\delta(E^{\vec N},w/s,uf/nuf/s/ns)$ is played analogously, but the challenger now plays an index of a model $\alpha_\xi<\kappa^+$, and the judge responds with an appropriate object (according to $uf/nuf/s/ns$) determining an ultrafilter on $N_{\alpha_\xi+1}$. The winning condition is again determined by $w/s$.
        
    \end{enumerate}
\end{definition}
Translating the definition of the games $G_0,G_1,G_2$ of length $\gamma$ from \cite{MagForZem}, we have:
$$G_0=G^\kappa_\gamma(W,w,uf), \ G_1=G^\kappa_\gamma(E^{\vec N},w,nuf),\ G_2=G^\kappa_\gamma(E^{\vec N},w,ns).$$
Also the games $G_0(Q_\gamma),G_1(Q_\gamma),G_2(Q_\gamma)$ from \cite{MagForZem}, correspond to the games $G^\kappa_\gamma(W,s,uf)$, $G^\kappa_\gamma(E^{\vec N},s,nuf)$, $G^\kappa_\gamma(E^{\vec N},s,ns)$ respectively. In terms of winning strategies, if $2^\kappa=\kappa^+$, the Welch games and the extension games are equivalent. 

Let us introduce another type of ultrafilters which is useful to consider. Given a\break  $\kappa$-model $M$, we shall say, that an ultrafilter $U$ on the subsets of $\kappa$ of $M$ is a \emph{weak $M$-ultrafilter} if it is $M$-$\kappa$-complete (but not necessarily $M$-normal). 
\begin{definition}
    Let $M$ be a $\kappa$-model. A weak $M$-ultrafilter $U$ is called an \emph{$M$-$p$-point} if for every sequence $\{ X_\alpha\mid \alpha<\kappa\}\in M$ with all $X_\alpha\in U$, there is a set $X\in U$ such that $X\subseteq^* X_\alpha$ for all $\alpha<\kappa$.
\end{definition}
\noindent It is easy to see, using normality, that every $M$-ultrafilter is an $M$-p-point.

We need to develop a small portion of ultrafilter theory relative to a $\kappa$-model $M$. 
\begin{definition}
    Let $U,W$ be weak $M$-ultrafilters. We say that $U$ is \emph{Rudin-Keisler below} $W$, and denote it by $U\leq_{RK}W$, if there is a function $f:\kappa\to\kappa$ in $M$ such that
    $$f_*(W)=\{X\in P^M(\kappa)\mid f^{-1}[X]\in W\}=U.$$
\end{definition}
The following lemma is an adaptation of a folklore result:
\begin{lemma}
    Let $M$ be a $\kappa$-model and $U,W$ be weak $M$-ultrafilters. Then $U\leq_{RK} W$ if and only if  there is an elementary embedding $k:M_U\to M_W$ such that $j_W=k\circ j_U$.
\end{lemma}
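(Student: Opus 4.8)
The plan is to prove both implications by explicit computation with the ultrapower embeddings, being careful that, since $U$ and $W$ are only weak $M$-ultrafilters, the ultrapowers $M_U$ and $M_W$ may be ill-founded; accordingly I would work throughout with the (possibly uncollapsed) ultrapower structures together with their membership relations $\mathrel{E_U}$, $\mathrel{E_W}$, and with \Los's theorem, which holds in this setting.

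For the direction ``$U\leq_{RK}W$ $\Rightarrow$ existence of $k$'': fix $f\colon\kappa\to\kappa$ in $M$ with $f_*(W)=U$, and define $k\colon M_U\to M_W$ by $k([g]_U)=[g\circ f]_W$ for every $g\in M$ with $\dom(g)=\kappa$ (note $g\circ f\in M$ since $M\models\ZFC^-$). The verification that $k$ is a well-defined elementary embedding is one uniform computation: for a formula $\varphi$ and $g_1,\dots,g_n\in M$, the set $\{\alpha<\kappa : M\models\varphi(g_1(\alpha),\dots,g_n(\alpha))\}$ lies in $U$ iff its $f$-preimage lies in $W$ (because $f_*(W)=U$), and that preimage is exactly $\{\beta<\kappa : M\models\varphi((g_1\circ f)(\beta),\dots,(g_n\circ f)(\beta))\}$; applying \Los\ in $M_U$ and then in $M_W$ turns this into the assertion that $k$ preserves and reflects $\varphi$. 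Taking $\varphi$ to be equality yields well-definedness and injectivity; taking it to be $\in$ gives that $k$ is an $E$-embedding; the general case gives full elementarity. Finally, since a constant function $c_a$ with value $a\in M$ satisfies $c_a\circ f=c_a$, we get $k(j_U(a))=k([c_a]_U)=[c_a]_W=j_W(a)$, so $j_W=k\circ j_U$.

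For the converse, suppose $k\colon M_U\to M_W$ is elementary with $j_W=k\circ j_U$. The key point is that $U$ is read off from $M_U$ via $U=\{X\in P^M(\kappa) : [\mathrm{id}]_U\mathrel{E_U} j_U(X)\}$, where $\mathrm{id}\colon\kappa\to\kappa$ is the identity (an element of $M$); this is immediate from \Los. Put $\mu=k([\mathrm{id}]_U)\in M_W$; then $X\in U$ iff $\mu\mathrel{E_W} j_W(X)$, by elementarity of $k$ together with commutativity. Since $M_U$ thinks $[\mathrm{id}]_U$ is an ordinal below $j_U(\kappa)$, elementarity together with $k(j_U(\kappa))=j_W(\kappa)$ shows $M_W$ thinks $\mu$ is an ordinal below $j_W(\kappa)$; hence $\mu=[h]_W$ for some $h\in M$ with $\{\alpha : M\models h(\alpha)\in\kappa\}\in W$, and truncating $h$ to $0$ off this $W$-large set produces $f\colon\kappa\to\kappa$ in $M$ with $[f]_W=\mu$. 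Then $X\in U$ iff $[f]_W\mathrel{E_W} j_W(X)=[c_X]_W$ iff $\{\alpha : f(\alpha)\in X\}=f^{-1}[X]\in W$, that is, $U=f_*(W)$ and $U\leq_{RK}W$.

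This is the classical proof of the embedding/Rudin--Keisler correspondence, so I do not expect a real obstacle; the only delicate points are bookkeeping ones --- checking that each auxiliary function ($g\circ f$, $\mathrm{id}$, the constant functions, the truncation of $h$) genuinely lies in $M$, which is exactly where $M\models\ZFC^-$ is used, and reading all ``ordinal'' and ``membership'' statements inside the possibly ill-founded $M_U$, $M_W$ with their relations $\mathrel{E_U}$, $\mathrel{E_W}$ rather than pretending these structures are transitive.
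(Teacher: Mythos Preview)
Your proof is correct and is exactly the classical argument; the paper does not supply its own proof, stating the lemma only as ``an adaptation of a folklore result,'' so there is nothing to compare against beyond noting that your write-up is the standard one the authors have in mind.
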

The Rudin-Keisler order is really a special case of the Katetov order which is defined for any two filters $F,G$ on $X,Y$ (resp.) by $F\leq_K G$ if and only if there is a function $f:Y\to X$ such that $f_*(G)=F$. The difference is that in the Rudin-Keisler ordering, we require that the witnessing function $f$ belongs to $M$.

\begin{proposition}\label{prop:p-point}
 Let $M$ be a $\kappa$-model and $U,W$ be weak $M$-ultrafilters.  If $U\leq_{RK}W$ and $W$ is an $M$-$p$-point, then $U$ is an $M$-$p$-point.
\end{proposition}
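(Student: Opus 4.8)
The plan is to chase the Rudin-Keisler function through the diagonal intersection structure. Suppose $U \leq_{RK} W$ via a function $f : \kappa \to \kappa$ in $M$ with $f_*(W) = U$, and suppose $W$ is an $M$-$p$-point. I want to show $U$ is an $M$-$p$-point. So fix a sequence $\{X_\alpha \mid \alpha < \kappa\} \in M$ with each $X_\alpha \in U$; I must produce a single $X \in U$ with $X \subseteq^* X_\alpha$ for all $\alpha$.

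First I would pull the sequence back along $f$: set $Y_\alpha = f^{-1}[X_\alpha]$. Since $X_\alpha \in U = f_*(W)$, by definition $Y_\alpha \in W$ for every $\alpha < \kappa$. Moreover the sequence $\{Y_\alpha \mid \alpha < \kappa\}$ is an element of $M$ because it is definable from $f \in M$ and $\{X_\alpha \mid \alpha < \kappa\} \in M$, using that $M \models \ZFC^-$. Now apply the $M$-$p$-point property of $W$ to get a set $Y \in W$ with $Y \subseteq^* Y_\alpha$ for all $\alpha < \kappa$. The natural candidate for the witness on the $U$-side is $X = f\image Y$ — or, to be safe about measure-one-ness, $X = \{ \beta < \kappa \mid f^{-1}[\{\beta\}] \cap Y \in (\text{something of size } \ldots)\}$; more simply, I would just take $X$ to be any set in $U$ that is $\subseteq^*$ below the relevant pushforwards, obtained as follows.

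The cleaner route: I would work with $Z_\alpha = f\image (Y \cap Y_\alpha)$ but actually the slickest approach is to observe that $f_*$ respects almost-inclusion in the relevant direction. Concretely, for each $\alpha$, $Y \setminus Y_\alpha$ has size $<\kappa$ (this is what $Y \subseteq^* Y_\alpha$ means, since $Y\setminus Y_\alpha \notin W$ and $W$ is $M$-$\kappa$-complete — wait, that only gives $Y \setminus Y_\alpha \notin W$, not small). Here is the genuine subtlety: in this context $\subseteq^*$ presumably means "inclusion modulo a set of size $<\kappa$", so $Y\setminus Y_\alpha$ genuinely has size $<\kappa$. Then $f\image(Y\setminus Y_\alpha)$ has size $<\kappa$ as well. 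Let $X = f\image Y$, which lies in $M$ and, since $Y \in W$ and $f_*(W) = U$, one checks $X \in U$ (because $f^{-1}[X] \supseteq Y \in W$). Then $X \setminus X_\alpha \subseteq f\image(Y \setminus Y_\alpha)$: indeed if $\beta \in X \setminus X_\alpha$ then $\beta = f(\gamma)$ for some $\gamma \in Y$, and $\gamma \notin f^{-1}[X_\alpha] = Y_\alpha$, so $\gamma \in Y \setminus Y_\alpha$. Hence $|X \setminus X_\alpha| < \kappa$, i.e. $X \subseteq^* X_\alpha$, as required.

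The main obstacle I anticipate is pinning down exactly which notion of $\subseteq^*$ is in force and whether "size $<\kappa$" is the right reading; if instead $\subseteq^*$ is modulo the dual ideal of the ultrafilter, the size bound above fails and one must argue more carefully that $X \setminus X_\alpha \notin U$ using $f^{-1}[X\setminus X_\alpha] \subseteq Y\setminus Y_\alpha \notin W$ together with $f_*(W) = U$. A secondary point to verify carefully is that all the auxiliary objects ($\{Y_\alpha\}$, $X$, $f\image Y$) genuinely lie in $M$, which follows from $\ZFC^-$ in $M$ and closure properties, but should be stated. Modulo resolving the $\subseteq^*$ convention, the proof is a short and direct pushforward/pullback argument.
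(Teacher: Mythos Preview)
Your proof is correct and follows essentially the same route as the paper: pull the sequence back along $f$, apply the $M$-$p$-point property of $W$ to the preimages, and push the witness forward via $f\image Y$. The paper leaves the verification that $f[X]\subseteq^* X_\alpha$ as ``easy to see,'' whereas you spell out the inclusion $f\image Y\setminus X_\alpha\subseteq f\image(Y\setminus Y_\alpha)$ explicitly; your worry about the meaning of $\subseteq^*$ is reasonable, and the intended reading here is indeed inclusion modulo a set of size ${<}\kappa$, under which your size argument goes through directly.
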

\begin{proof}
Let $f\in M$ be a function witnessing that $U\leq_{RK}W$ and $\{ X_\alpha\mid \alpha<\kappa\}\in M$ be a sequence of sets in $U$. Then $\{ f^{-1}[X_\alpha]\mid \alpha<\kappa\}\in M$ is a sequence of sets in $W$. Since $W$ is an $M$-$p$-point, there is $X\in W$ such that for every $\alpha<\kappa$, $X\subseteq^* f^{-1}[X_\alpha]$. It is now easy to see that $f[X]$ is a set in $U$ such that $f[X]\subseteq^* X_\alpha$ for every $\alpha<\kappa$.
\end{proof}
\begin{theorem}\label{th:p-point}
Suppose that  $M\subseteq N$ are $\kappa$-models and $W\in N$ is a weak $M$-ultrafilter. If there exists some $N$-$p$-point, then there is an $N$-$p$-point extending $W$. In particular, if $\kappa$ is weakly compact, then whenever $W$ is a weak $M$-ultrafilter, $M\subseteq N$ and $W\in N$, then $W$ can be extended to an $N$-$p$-point.
\end{theorem}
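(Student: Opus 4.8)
The plan is to exploit the hypothesized $N$-$p$-point $Z$ twice: once to produce an ultrafilter on $P(\kappa)^N$ that extends $W$, via a derived-ultrafilter construction over the ultrapower by $Z$, and once to get the $p$-point property for free, by observing that \emph{any} such derived ultrafilter is automatically Rudin--Keisler below $Z$, so that Proposition~\ref{prop:p-point} applies.

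Concretely, first pass to the ultrapower $j:=j_Z\colon N\to N^*:=\Ult(N,Z)$. Since $N$ is closed under $\omega$-sequences, $Z$ has the countable intersection property, so $N^*$ is well-founded; identify it with its transitive collapse. Then $\crit(j)=\kappa$. The heart of the argument is the claim that there is an ordinal $c$ with $\kappa\le c<j(\kappa)$ and $c\in j(A)$ for every $A\in W$. To see this, work in $N$: since $W$ has size $\kappa$ in $N$ (it is an ultrafilter on $P(\kappa)^M$ for the $\kappa$-model $M$), fix an enumeration $\vec A=\la A_\xi\mid\xi<\kappa\ra\in N$ of $W$. As $\crit(j)=\kappa$, the sequence $j(\vec A)\restrict\kappa\in N^*$ has $\xi$-th entry $j(A_\xi)$ for each $\xi<\kappa$, so its range is $j\image W$. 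Now in $N^*$ the model $j(M)$ is a $j(\kappa)$-model, hence closed under sequences of length ${<}j(\kappa)$; since each $j(A_\xi)\in j(M)$ and $\kappa<j(\kappa)$, we get $j(\vec A)\restrict\kappa\in j(M)$. Moreover, by elementarity $N^*$ believes $j(W)$ is a weak $j(M)$-ultrafilter, hence $j(M)$-$j(\kappa)$-complete, and every $j(A_\xi)\in j(W)$; therefore $\bigcap_{\xi<\kappa}j(A_\xi)=\bigcap_{A\in W}j(A)\in j(W)$, in particular it is nonempty. Any $c$ in it satisfies $c<j(\kappa)$ (as $c\in j(A_0)\of j(\kappa)$) and, using uniformity of $W$ (so $\kappa\setminus\gamma\in W$ and $j(\kappa)\setminus\gamma\ni c$ for all $\gamma<\kappa$), $c\ge\kappa$.

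Granting the claim, put $U:=\{A\in P(\kappa)^N\mid c\in j(A)\}$. Since $j$ commutes with complements and finite intersections and $c<j(\kappa)$, $U$ is an ultrafilter on $P(\kappa)^N$; it is $N$-$\kappa$-complete because $j$ carries a ${<}\kappa$-sequence from $N$ to its pointwise image, and it is uniform because $c\ge\kappa$, so $U$ is a weak $N$-ultrafilter. For $A\in W$ we have $A\in P(\kappa)^M\of P(\kappa)^N$ and $c\in j(A)$, so $W\of U$, and as $W$ and $U\cap P(\kappa)^M$ are both ultrafilters on $P(\kappa)^M$ they coincide; thus $U$ extends $W$. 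Finally, every ordinal below $j(\kappa)$ in $N^*$ has the form $[f]_Z$ for some $f\colon\kappa\to\kappa$ in $N$; fixing such an $f$ with $[f]_Z=c$ and applying the \Los\ theorem, $f_*(Z)=\{X\in P(\kappa)^N\mid [f]_Z\in j(X)\}=\{X\in P(\kappa)^N\mid c\in j(X)\}=U$, so $U\le_{RK}Z$, and Proposition~\ref{prop:p-point} then gives that $U$ is an $N$-$p$-point. For the ``in particular'' clause, if $\kappa$ is weakly compact then every $\kappa$-model carries an $M$-ultrafilter, which (using normality) is an $M$-$p$-point, so the hypothesis is automatically satisfied.

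The main obstacle is the claim. The enumeration $\vec A$ of $W$ is not an element of $M$ (generally $W\notin M$), so a priori $j\image W$ is not a sequence ``coming from $j(M)$'' and one cannot directly invoke the $j(\kappa)$-completeness of $j(W)$. What rescues the argument is that $j\image W$ has length only $\kappa<j(\kappa)$ and consists of elements of $j(M)$, so it is recaptured inside $N^*$ by the closure of $j(M)$ under short sequences; this is precisely the place where it is essential that $W$ (equivalently $M$) has size $\kappa$ in $N$, rather than larger.
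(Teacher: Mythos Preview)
Your proof is correct and follows essentially the same route as the paper: form the ultrapower $j\colon N\to N^*$ by the given $N$-$p$-point, find a seed $c\ge\kappa$ in $\bigcap_{A\in W} j(A)$, derive the weak $N$-ultrafilter $U$ from $c$, and then observe $U\le_{RK} Z$ so that Proposition~\ref{prop:p-point} yields the $p$-point property. The only cosmetic difference is in justifying the existence of the seed: the paper argues that $j\image W\in N^*$ has size $\kappa<j(\kappa)$ and invokes the strong $j(\kappa)$-intersection property of $j(W)$ to get an unbounded intersection, whereas you route through the closure of $j(M)$ under ${<}j(\kappa)$-sequences in $N^*$ and the $j(M)$-$j(\kappa)$-completeness of $j(W)$---but these are two phrasings of the same idea.
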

\begin{proof}
The second part of the theorem follows from the first part and the fact that weak compactness ensures the existence of an $N$-ultrafilter for every $\kappa$-model $N$. Let $W$ be a weak $M$-ultrafilter, $M\subseteq N$, $W\in N$ and $U$ be an $N$-p-point. Let $j_U:N\rightarrow N_U$ be the ultrapower of $N$ by $U$. First, we note that in $N$, by the closure of $M$ under ${<}\kappa$-sequences, $W$ is a collection of sets with the strong $\kappa$-intersection property i.e. the intersection of fewer than $\kappa$-many sets in $W$ is unbounded in $\kappa$. By elementarity, in $N_U$, $j_U(W)$ has the strong $j_U(\kappa)$-intersection property.  Next, let's observe that for any set $A\in N$ of size $\kappa$ in $N$, $j_U\image A\in N_U$.  In particular, $j_U\image W\in N_U$ and $N_U$ satisfies that it has size $\kappa<j_U(\kappa)$. By the  $j_U(\kappa)$-intersection property, $\bigcap j_U\image W$ is unbounded in $j_U(\kappa)$. Take any $\alpha\in \bigcap j_U\image W$ such that $\alpha\geq\kappa$. Then the weak $N$-ultrafilter $W^*$ on $\kappa$ derived from $j_U$ and $\alpha$ extends $W$. To see that $W^*$ is an $N$-$p$-point, we will show that  $W^*\leq_{RK} U$, then since $U$ is an $N$-$p$-point, Proposition~\ref{prop:p-point} can be used to conclude that $W^*$ is an $N$-$p$-point. Let $g:\kappa\to\kappa$ be a function in $N$ such that $[g]_U=\alpha$. We claim that $g_*(U)=W^*$. By maximality of ultrafilters, it suffices to prove that $g_*(U)\subseteq W^*$. Let $X\subseteq \kappa$ in $N$, then 
    $$X\in g_*(U)\Rightarrow g^{-1}[X]\in U\Rightarrow \{\nu<\kappa\mid g(\nu)\in X\}\in U\Rightarrow [g]_U\in j_U(X)\Rightarrow X\in W^*.$$

\end{proof}
\begin{remark}\label{remark: ramsey extension}$\,$
   \begin{enumerate}
       \item By a result of the second author (Proposition 2.13 in \cite{HolySchlicht:HierarchyRamseyLikeCardinals}), there is no hope of strengthening the above theorem to replace $M$-$p$-points by $M$-ultrafilters. 
       \item It is possible to generalize the above result to a suitable definition of $M$-Ramsey ultrafilters.
   \end{enumerate} 
\end{remark}
Let's now consider the games $G^\kappa_{\delta}(W/E,w/s,p)$ which are defined analogously to the games in Definition~\ref{definition: measurable games}, but where the challenger plays $\kappa$-models $M$ and the judge plays $M$-$p$-points.

In \cite{MagForZem}, the authors used a clever argument to translate a winning strategy for the judge in the game $G^\kappa_\delta(W,s,uf)$ to a winning strategy in the game $G^\kappa_\delta(W,s,nuf)$, for regular cardinals $\delta\geq\omega$. Hence, also the $p$-point game $G^\kappa_\delta(W,s,p)$ is equivalent to those games. 

However, for a general $\delta$, a winning strategy for the judge in $G^\kappa_\delta(W,s,uf)$ does not induce a winning strategy in the game $G^\kappa_\delta(W,s,nuf)$. For example, for $\delta=3$, weakly compact cardinals ensure the existence of a winning strategy in the game $G^\kappa_3(W,s,uf)$, but a winning strategy for $G^\kappa_3(W,s,nuf)$ implies that $\kappa$
 is $\Pi^1_3$-indescribable by Neilsen's theorem from \cite{NielsenWelch:games_and_Ramsey-like_cardinals}. With $M$-$p$-points we have a slightly stronger relation.
 \begin{proposition}
   Suppose that $\kappa$ is inaccessible and the judge has a winning strategy in the game $G^\kappa_\delta(W,w/s,p)$. Then she also has a winning strategy in the games $G^\kappa_{\delta+n}(W,w/s,p)$ for every $n<\omega$.
 \end{proposition}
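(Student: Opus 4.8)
The plan is to play the given winning strategy $\sigma$ for the first $\delta$ rounds of $G^\kappa_{\delta+n}(W,w/s,p)$ and then squeeze the remaining $n$ moves out of Theorem~\ref{th:p-point}. Concretely, in a run of $G^\kappa_{\delta+n}(W,w/s,p)$ I would have the judge answer the challenger's $\kappa$-models $M_\xi$, for $\xi<\delta$, by the $M_\xi$-$p$-points $U_\xi$ dictated by $\sigma$; since $\sigma$ is winning this yields a legal partial run $\langle M_\xi,U_\xi\mid\xi<\delta\rangle$ whose filters form an increasing chain. Two routine observations set up the rest. First, running $\sigma$ for a single round already shows that every $\kappa$-model $N$ carries an $N$-$p$-point, so the existence hypothesis in Theorem~\ref{th:p-point} is met for every $\kappa$-model (in particular $\kappa$ is weakly compact). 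Second, the union $U=\bigcup_{\xi<\delta}U_\xi$ is an $M$-$p$-point for $M=\bigcup_{\xi<\delta}M_\xi$: any sequence witnessing a failure of $M$-$\kappa$-completeness or of the $p$-point property lies in $M$, hence in some $M_\xi$, and there $U_\xi$ already decides all of its entries (the chain is coherent and $U$ is an ultrafilter), so $M_\xi$-$\kappa$-completeness, resp.\ $M_\xi$-$p$-point-ness, of $U_\xi$ finishes the job.

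Next I would handle the remaining rounds $\delta,\delta+1,\dots,\delta+n-1$ one at a time. At round $\delta+k$ with $1\le k<n$ — a successor round — the challenger plays a $\kappa$-model $M_{\delta+k}\supseteq M_{\delta+k-1}$ with $U_{\delta+k-1}\in M_{\delta+k}$, and $U_{\delta+k-1}$ is a weak $M_{\delta+k-1}$-ultrafilter; at round $\delta$ the challenger plays $M_\delta$ with $\{\langle M_\xi,U_\xi\rangle\mid\xi<\delta\}\in M_\delta$, so $M\subseteq M_\delta$ and $U\in M_\delta$ (when $\delta$ is itself a successor this is again just the successor case, with $M=M_{\delta-1}$, $U=U_{\delta-1}$). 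In each case Theorem~\ref{th:p-point} hands back an $M_{\delta+k}$-$p$-point $U_{\delta+k}$ extending $\bigcup_{\xi<\delta+k}U_\xi$, which the judge plays. The resulting strategy survives all $\delta+n$ rounds, so it wins the weak game; and for $n\ge 1$ the ordinal $\delta+n$ is a successor, so $\bigcup_{\xi<\delta+n}U_\xi=U_{\delta+n-1}$ is an $M_{\delta+n-1}$-$p$-point with $M_{\delta+n-1}=\bigcup_{\xi<\delta+n}M_\xi$, which is exactly the extra requirement of the strong game. The case $n=0$ is the hypothesis.

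The hard part will be the single extension at round $\delta$ when $\delta$ is a \emph{limit}, since then $M=\bigcup_{\xi<\delta}M_\xi$ need not be closed under ${<}\kappa$-sequences, while Theorem~\ref{th:p-point} is stated for $\kappa$-models. The point is that its proof uses about the ultrafilter being extended only that, inside the larger model, fewer than $\kappa$ many of its sets have unbounded intersection — the ${<}\kappa$-intersection property — and this is all I really need to verify for $U$ inside $M_\delta$. If $\cof(\delta)\ge\kappa$ this is free, because then $M$ is a genuine $\kappa$-model and $U$ is an $M$-$p$-point over it. If $\cof(\delta)<\kappa$ I would deduce it from the hypothesis itself: surviving each limit stage $\gamma<\delta$ forces $\bigcup_{\xi<\gamma}U_\xi$ to have the ${<}\kappa$-intersection property, since otherwise a bad ${<}\kappa$-sequence could be inserted into a legal $\kappa$-model $M_\gamma$ and leave the judge no legal reply; and splitting a hypothetical bad ${<}\kappa$-sequence drawn from the full union $\bigcup_{\xi<\delta}U_\xi$ into blocks and collapsing each block via the $M_\xi$-$\kappa$-completeness of the corresponding $U_\xi$ reduces that case to these earlier stages. (If one instead starts from a winning strategy in the strong game $G^\kappa_\delta(W,s,p)$, the union already carries the needed completeness and this step is immediate.) Once the ${<}\kappa$-intersection property is in hand the argument of Theorem~\ref{th:p-point} applies verbatim to $U$, the first extension goes through, and the construction closes up; I expect this limit-stage bookkeeping to be the only genuine obstacle, the successor steps being pure applications of Theorem~\ref{th:p-point}.
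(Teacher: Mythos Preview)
Your approach is the paper's: play $\sigma$ for the first $\delta$ rounds, then invoke Theorem~\ref{th:p-point} finitely often. The paper's proof is a single paragraph and does not isolate the point you raise --- that at a limit $\delta$ with $\cof(\delta)<\kappa$ the union model $\bigcup_\xi M_\xi$ need not be a $\kappa$-model, so one must check directly that $W=\bigcup_\xi U_\xi$ has the strong ${<}\kappa$-intersection property before the proof of Theorem~\ref{th:p-point} goes through. You handle the successor case, the case $\cof(\delta)\ge\kappa$, and the strong game correctly; in those cases your argument and the paper's coincide.

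Your fix for the weak game when $\cof(\delta)<\kappa$ has a gap. The block-collapsing step is fine (each $U_{\gamma_i}$ is genuinely $\kappa$-complete because $M_{\gamma_i}$ is ${<}\kappa$-closed, so intersecting the block lying in $U_{\gamma_i}$ yields some $C_i\in U_{\gamma_i}$). The ``reduces to earlier stages'' step, however, does not work: after collapsing you hold a sequence $\{C_i\mid i<\cof(\delta)\}$ with $C_i\in U_{\gamma_i}$ for a cofinal $(\gamma_i)_{i<\cof(\delta)}$ in $\delta$. This sequence lies in no single $U_\xi$, nor in any $\bigcup_{\xi<\gamma}U_\xi$ for a limit $\gamma<\delta$, so neither the completeness of an individual $U_\xi$ nor the ${<}\kappa$-intersection property at earlier limit stages says anything about $\bigcap_i C_i$. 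For $\delta=\omega$ there are no earlier limit stages to appeal to at all. The paper's own proof is silent on this case; so you have not done worse than the paper, but the weak-game clause for limit $\delta$ of small cofinality remains unproved on both accounts.
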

 \begin{proof}
Suppose the challenger and the judge have already played $\delta$-many grounds according a winning strategy for the judge, with $\{M_\xi\mid\xi<\delta\}$ being the moves of the challenger and $\{U_\xi\mid\xi<\delta\}$ being the moves of the judge . Let $M_\delta$ be the $\delta$-th move of the challenger. Since $\kappa$ must be weakly compact, there is some weak $M_\delta$-ultrafilter extending $\bigcup_{\xi<\delta}U_\xi$, and so by Theorem~\ref{th:p-point}, there is an $M_\delta$-$p$-point $U_\delta$ extending $W$. We have the judge play $U_\delta$. The judge can clearly repeat this strategy for finitely many steps.
 \end{proof}
 \begin{question}
Is it true for every $\delta$, that if the judge has a  winning strategy in the game $G^\kappa_\delta(W,s,uf)$, then the judge has a winning strategy in the game $G^\kappa_\delta(W,s,p)$?
 \end{question}
Adapting our approach of a tree of ultrafilters in the one-cardinal settings recovers \cite[Thm. 1.1]{MagForZem} without having to pass through the set games.  Namely, Theorem \ref{th:precipitousIdeal} and Theorem \ref{Thm:Ultrafilter Method} in the case $\kappa=\lambda$ yield:
\begin{theorem}
    Assume $\kappa$ is inaccessible and $2^\kappa=\kappa^+$. If the judge has a winning strategy in the game $G^\kappa_\omega(W,s,uf)$, then there is a precipitous ideal on $\kappa$. 
\end{theorem}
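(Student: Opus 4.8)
The plan is to deduce this one-cardinal statement directly from the two-cardinal machinery developed in Section~\ref{sec:precipitousideals}, specializing to $\lambda = \kappa$. The first step is to translate the hypothesis. Since $\kappa$ is inaccessible, $\kappa^{<\kappa}=\kappa$, so the standing assumptions $\kappa\leq\lambda$ regular and $\lambda^{<\kappa}=\lambda$ are met with $\lambda=\kappa$, and we are also assuming $2^\kappa=\kappa^+$, which is the extra hypothesis of Theorem~\ref{th:precipitousIdeal}. As noted in the introduction, uniform filters on $\kappa$ are identified with fine filters on $P_\kappa(\kappa)$, and the notions of $\kappa$-completeness and normality coincide in this setting; moreover the Welch game $G^\kappa_\omega(W,s,uf)$ is (by the footnotes in Definition~\ref{definition: measurable games}) sandwiched between $sG^*_\omega(\kappa)$ and $G_{\omega+1}(\kappa)$, and in particular a winning strategy for the judge in $G^\kappa_\omega(W,s,uf)$ yields a winning strategy for the judge in $sG^*_\omega(\kappa) = sG^*_\omega(\kappa,\kappa)$ (the union ultrafilter being a genuine $\kappa$-complete ultrafilter on the generated algebra certainly has the countable intersection property, since a countable decreasing-to-empty sequence would contradict $\kappa$-completeness once absorbed into one algebra).

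Next, I would dispose of the saturation hypothesis. Theorem~\ref{th:precipitousIdeal} requires that there be no fine $\kappa$-complete $\kappa^+$-saturated ideal on $P_\kappa(\kappa)$, i.e. no $\kappa$-complete $\kappa^+$-saturated ideal on $\kappa$. If such an ideal \emph{does} exist, then $\kappa$ already carries a $\kappa$-complete $\kappa^+$-saturated ideal, and by the classical theorem of Kunen (generic ultrapower / precipitousness from saturation — see e.g. Jech's \emph{Set Theory}, the chapter on saturated ideals) a $\kappa^+$-saturated $\kappa$-complete ideal is precipitous. So in the remaining case the conclusion is immediate. In the principal case, where no such saturated ideal exists, we apply Theorem~\ref{th:precipitousIdeal}(1) with $\lambda=\kappa$: from the winning strategy for the judge in $sG^*_\omega(\kappa,\kappa)$ we obtain a precipitous ideal on $P_\kappa(\kappa)$, which under the identification above is a precipitous ideal on $\kappa$. (Equivalently, one can run the tree-of-ultrafilters construction of the proof of Theorem~\ref{th:precipitousIdeal} directly, using the internally approachable sequence of $\kappa$-models $\{N_\alpha\mid\alpha<\kappa^+\}$; the argument is verbatim that of Theorem~\ref{th:precipitousIdeal} once $\lambda$ is replaced by $\kappa$ throughout.)

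The only genuine point that needs care — and the step I expect to be the main obstacle — is verifying that a winning strategy for the judge in $G^\kappa_\omega(W,s,uf)$ really does hand us a winning strategy in $sG^*_\omega(\kappa,\kappa)$ as defined in Definition~\ref{Definition:Filter Games}, rather than merely in some superficially similar game. The subtleties are: (i) the challenger in $G^\kappa_\omega(W,s,uf)$ plays $\kappa$-algebras, whereas in $sG^*_\omega(\kappa,\kappa)$ he plays basic $(\kappa,\kappa)$-models $M_\gamma$ with $\{\langle M_\xi,U_\xi\rangle\mid\xi<\gamma\}\in M_\gamma$ — but as the paper remarks, any $\kappa$-algebra can be absorbed into the powerset of a $\kappa$-model and conversely $P(P_\kappa(\kappa))^M$ is a $\kappa$-algebra, so a strategy for one induces a strategy for the other by having the judge internally simulate a run of the algebra game against the $\kappa$-algebras $P(P_\kappa(\kappa))^{M_\gamma}$; (ii) the winning condition ``$\bigcup_\xi U_\xi$ is a $\kappa$-complete ultrafilter on the generated $\kappa$-algebra'' must be checked to imply ``$\bigcup_\xi U_\xi$ has the countable intersection property,'' which follows because any countably many sets in $\bigcup_\xi U_\xi$ lie in a single $\mathcal A_\xi$ (the algebras being $\subseteq$-increasing) and hence their intersection is in the $\kappa$-complete ultrafilter $\bigcup_\xi U_\xi$ restricted to that algebra, so is nonempty. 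Once these two bookkeeping points are dispatched, everything else is a direct citation of Theorem~\ref{th:precipitousIdeal}.
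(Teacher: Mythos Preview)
Your approach is correct and is exactly what the paper does: it states the theorem as an immediate consequence of Theorem~\ref{th:precipitousIdeal} (and Theorem~\ref{Thm:Ultrafilter Method}) specialized to $\lambda=\kappa$, with the saturated-ideal case handled by the classical fact that $\kappa^+$-saturated $\kappa$-complete ideals are precipitous.

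One small slip to fix in your point~(ii): it is \emph{not} true that countably many sets from $\bigcup_{\xi<\omega}U_\xi$ must all lie in a single $\mathcal A_\xi$, since the indices may be cofinal in $\omega$. The correct argument is the one you gestured at in your first paragraph: the winning condition says $\bigcup_\xi U_\xi$ is a $\kappa$-complete ultrafilter on the $\kappa$-algebra \emph{generated} by $\bigcup_\xi\mathcal A_\xi$; any countably many sets from the union ultrafilter lie in this generated algebra, their intersection lies there too (the generated algebra being ${<}\kappa$-closed), and hence that intersection is in the ultrafilter and in particular nonempty.
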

\noindent (Note that $\kappa$-completeness is included in the standard definition of precipitousness.)

\begin{theorem}\label{Thm: measurable therem with trees}
    Assume $\kappa$ is inaccessible, $2^\kappa=\kappa^+$, and that $\kappa$ does not carry a $\kappa$-complete $\kappa^+$-saturated ideal. For any regular cardinal $\delta\leq\kappa$, if the judge has a winning strategy in the game $G^\kappa_\delta(E^{\vec N},w,uf)$, then there is a $\kappa$-complete ideal $\mathcal{I}$ on $\kappa$ with a tree $T$ of height $\delta$ such that:
    \begin{enumerate}
        \item The ordering of $T$ corresponds to inclusion.
        \item Successor level elements of $T$ are weak $N_\xi$-ultrafilters.
\item Unions are taken at limit levels (and, in particular, $T$ is $\delta$-closed).
        \item $T$ is dense, i.e. $\mathcal{I}^+=\bigcup T$.
        \item $T$ is $\kappa$-\emph{measuring}, i.e. for every filter $U\in T$ and every $\xi<\kappa^+$, there is $W$ on a successor level of $D$ such that $W\cap N_\xi$ is a weak $N_\xi$-ultafilter.
    \end{enumerate}
\end{theorem}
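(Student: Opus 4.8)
The plan is to adapt the construction from the proof of Theorem~\ref{th:precipitousIdeal} to the one-cardinal, weak-game setting, with the extension game $G^\kappa_\delta(E^{\vec N},w,uf)$ (where the judge plays weak $M$-ultrafilters) in place of the strong two-cardinal game. Since $2^\kappa=\kappa^+$, the internally approachable sequence $\vec N=\{N_\xi\mid\xi<\kappa^+\}$ can be chosen so that every subset of $P(\kappa)$ of size $\kappa$ lies in some $N_\xi$; fix a winning strategy $\sigma$ for the judge in $G^\kappa_\delta(E^{\vec N},w,uf)$. We build the tree $T$ level by level exactly as in the proof of Theorem~\ref{th:precipitousIdeal}: since by hypothesis $\kappa$ carries no $\kappa$-complete $\kappa^+$-saturated ideal, the hopeless ideal $\mathcal I(\sigma)$ and each conditional hopeless ideal $\mathcal I(R_\gamma,\sigma)$ (which are $\kappa$-complete ideals on $\kappa$ by Lemma~\ref{lem:propertiesHopelessIdeal}, in its one-cardinal instance) are not $\kappa^+$-saturated; this gives, at each node, a $\kappa^+$-sized almost-disjoint family of positive sets, and following runs of $\sigma$ that realize each member of the family (minimizing the index $\gamma$ of the challenger's model at each step, to ensure cofinality of the chosen indices) produces the successor nodes. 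At limit levels we take unions, which is legitimate because the rules of the extension game require the judge to play an ultrafilter extending the union of the previous moves; thus $T$ is $\delta$-closed. The ideal $\mathcal I$ in the statement is the hopeless ideal $\mathcal I(\tau)$ of the derived strategy $\tau$, constructed from $T$ as in Theorem~\ref{th:precipitousIdeal}, and crucially $\mathcal I(\tau)^+=\bigcup T$, which gives density (item (4)).

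For item (2): successor-level elements of $T$ are the filters $U_{\gamma_{\vec\xi}}$ from runs of $\sigma$, and since the challenger in the extension game plays indices of the $N_\xi$, each such filter is a weak $N_{\alpha}$-ultrafilter for the corresponding model $N_\alpha$. For item (5), the $\kappa$-measuring property of $T$: given $U\in T$ and $\xi<\kappa^+$, I would extend the partial run witnessing $U\in T$ by having the challenger play (an index $\geq$ the current position of) $N_\xi$; the judge's response $W$ according to $\sigma$ is then a weak $N_\xi$-ultrafilter on a successor level of $T$ above $U$ — here I use that any subset of $P(\kappa)$ of size $\kappa$, in particular $P(\kappa)^{N_\xi}$, is contained in some $N_\eta$, so the challenger can always reach a model absorbing $N_\xi$. (This is exactly the observation in Theorem~\ref{th:precipitousIdeal} that for $\kappa$-models $M\subseteq N$ with $P(\kappa)^M\subseteq N$ and $W$ a weak $N$-ultrafilter, $W\cap M$ is a weak $M$-ultrafilter.)

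Finally, $\kappa$-completeness of $\mathcal I=\mathcal I(\tau)$ follows from Lemma~\ref{lem:propertiesHopelessIdeal} applied to $\tau$ as a winning strategy in $G^\kappa_\omega(W,w,uf)$ (equivalently $wG^*_\omega(\kappa)$), exactly as in the final paragraph of the proof of Theorem~\ref{th:precipitousIdeal}; fineness is automatic in the one-cardinal setting. I expect the main obstacle to be bookkeeping rather than a conceptual difficulty: one must check that, because we are in the \emph{weak} game (the union ultrafilter need not have the countable intersection property), the tree still has the required closure and measuring properties without any well-foundedness claim — which is precisely why $\delta\le\kappa$ and only the structural conclusions (1)--(5) are asserted, with no precipitousness. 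Care is also needed to verify that minimizing the model indices along each branch keeps the indices $\gamma_{\vec\xi}$ cofinal in $\kappa^+$ at every node, so that every $N_\xi$ is eventually available to the challenger in the derived strategy and in the $\kappa$-measuring argument. Since these are direct transcriptions of the corresponding steps in Theorem~\ref{th:precipitousIdeal} and Theorem~\ref{Thm:Ultrafilter Method} to the case $\lambda=\kappa$ with weak $M$-ultrafilters, no genuinely new idea is required.
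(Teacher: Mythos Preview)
Your proposal is correct and follows exactly the paper's approach: the paper does not give an independent proof of this theorem but simply observes that it is the case $\kappa=\lambda$ of Theorems~\ref{th:precipitousIdeal} and~\ref{Thm:Ultrafilter Method}, with unions taken at limit levels. One small clarification on your argument for item~(5): extending the partial run by having the challenger play an arbitrary index and invoking $\sigma$ does not directly produce a node of $T$, since $T$ consists only of the specific filters chosen during the recursive construction; the correct reason (which you identify in your final paragraph) is that at every node the successor indices $\gamma_{\vec\xi}$ are forced to be cofinal in $\kappa^+$, so some successor node already measures $P(\kappa)^{N_\xi}$.
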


The theorem above is a variant of Theorem~\ref{thm: dense tree measurable intro} from \cite{MagForZem}, but weaker since it does not yield an actual dense subset of $\mathcal I^+$ that is a tree because in our construction the elements of the tree are ultrafilters and not $\mathcal I$-positive sets.

Let us call an ideal $\mathcal I$ on $\kappa$ is $\kappa$-\emph{measuring} if for every $B\in\mathcal I^+$ and sequence\break $\vec A=\{A_\xi\subseteq\kappa\mid\xi<\kappa\}$, there exists $\bar B\subseteq B$ in $\mathcal I^+$ such that for each $\xi<\kappa$, $\bar B\subseteq_{\mathcal I}A_\xi$ or $\bar B\subseteq_{\mathcal I}\kappa\setminus A_\xi$. We then say $\bar B$ \emph{measures} $\vec A$. Analogously to Proposition~\ref{prop:decisiveWeaklyAmenable}, we have that a $\kappa$-complete ideal $\mathcal I$ is $\kappa$-measuring if and only if the weak $V$-ultrafilter added by forcing with $\p_{\mathcal I}$ is weakly amenable.

Recall that in Theorem~\ref{thm: dense tree measurable intro} from \cite{MagForZem} the precipitous ideal $\mathcal{I}$ is defined by constructing the tree $D=T(\sigma)$ (of subsets of $\kappa$) from a winning strategy $\sigma$ for the judge in $G^\kappa_\delta(E^{\vec N},w,s)$, then defining a new strategy $\tau$ with $\mathcal{I}=\mathcal{I}(\tau)$. The assumption that $\kappa$ carries no $\kappa$-complete $\kappa^+$-saturated ideal is used to construct successor levels of the tree $D$ as in Theorem~\ref{th:precipitousIdeal}. Recall that given a partial run $R$ according to $\sigma$, we define the associated conditional hopeless ideal $\mathcal{I}(R,\sigma)$ as the collection of sets $X$ for which no run extending $R$ places $X$ in the ultrafilter built along the run. Given a set $X_0\in D$ and the associated fixed partial run $R_{X_0}$ of the game played according to  $\sigma$ ending with $X_0$, we use that $\mathcal{I}(R_{X_0},\sigma)^+$ contains an antichain $\mathcal{A}$ of size $\kappa^+$ to define $Y_X\subseteq^* X$ for every $X\in\mathcal{A}$, forming the successors of $X_0$ in $T(\sigma)$.

\begin{proposition}
$\mathcal{I}=\mathcal{I}(\tau)$ above is $\kappa$-measuring. Moreover, the same property holds replacing $\subseteq_{\mathcal{I}}$ by $\subseteq^*$.
\end{proposition}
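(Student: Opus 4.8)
The plan is to read off the $\kappa$-measuring of $\mathcal{I}=\mathcal{I}(\tau)$ from two features of the tree $D=T(\sigma)$ that are already present in its construction, and are the one-cardinal instances of what is proved in Section~\ref{sec:precipitousideals}. First, $D\subseteq\mathcal{I}^+$, and for every $B\in\mathcal{I}^+$ there is a node $Y_0\in D$, at some level $n_0<\delta$, with $Y_0\subseteq^* B$ (this is the one-cardinal form of the identity ``$\mathcal{I}(\tau)^+=\bigcup_{\vec\xi}U^{\vec\xi}$'' in the proof of Theorem~\ref{th:precipitousIdeal}: a set enters the ultrafilter built along a run of $\tau$ only at some stage below $\delta$, by way of a tree node almost containing it). Second, every node $Y_0\in D$ has $\kappa^+$-many immediate successors in $D$, whose associated model indices form a strictly increasing, hence cofinal, sequence in $\kappa^+$; this is the ``$\kappa$-measuring of the tree'' recorded in clause (5) of Theorem~\ref{Thm: measurable therem with trees}. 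We also use that $\mathcal{I}$ is a $\kappa$-complete uniform ideal (Lemma~\ref{lem:propertiesHopelessIdeal}), so it contains every bounded subset of $\kappa$, and that $\delta$, being regular, is a limit ordinal, so every node of $D$ at a level $<\delta$ genuinely has successors.

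Now fix $B\in\mathcal{I}^+$ and a sequence $\vec A=\{A_\xi\mid\xi<\kappa\}$ of subsets of $\kappa$. Choose $Y_0\in D$ at a level $n_0<\delta$ with $Y_0\subseteq^* B$. Since $\vec A$ is a $\kappa$-sized family of subsets of $\kappa$, the choice of the internally approachable sequence $\vec N$ gives some $\beta<\kappa^+$ with $A_\xi\in N_\beta$ for all $\xi<\kappa$. Using the cofinality of the successor indices, pick an immediate successor $Y_1$ of $Y_0$ in $D$ whose model index is at least $\beta$. Then the model on which the ultrafilter $U_{Y_1}$ determined by $Y_1$ concentrates contains every $A_\xi$, so $U_{Y_1}$ decides each $A_\xi$: for every $\xi<\kappa$, either $Y_1\subseteq^* A_\xi$ or $Y_1\subseteq^*\kappa\setminus A_\xi$. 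Moreover $Y_1\subseteq^* Y_0\subseteq^* B$, since the judge's sets are almost decreasing along any run. Put $\bar B=Y_1\cap B$.

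It remains to verify that $\bar B$ works. Clearly $\bar B\subseteq B$. Since $Y_1\subseteq^* B$, the set $Y_1\setminus B$ is bounded in $\kappa$, hence lies in $\mathcal{I}$; since $Y_1\in D\subseteq\mathcal{I}^+$, it follows that $\bar B=Y_1\setminus(Y_1\setminus B)\in\mathcal{I}^+$. Finally $\bar B\subseteq^* Y_1$, so for each $\xi<\kappa$: if $Y_1\subseteq^* A_\xi$ then $\bar B\subseteq^* A_\xi$, and if $Y_1\subseteq^*\kappa\setminus A_\xi$ then $\bar B\subseteq^*\kappa\setminus A_\xi$; in either case also $\bar B\subseteq_{\mathcal{I}}A_\xi$ or $\bar B\subseteq_{\mathcal{I}}\kappa\setminus A_\xi$. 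Thus $\bar B$ measures $\vec A$ in the strong ($\subseteq^*$) sense, and a fortiori in the $\subseteq_{\mathcal{I}}$ sense, which proves both assertions of the proposition at once.

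The argument above is essentially bookkeeping once the two structural facts of the first paragraph are granted, and neither fact is new: both were extracted when $D$ and $\tau$ were built (one only needs to check that the one-cardinal, $\kappa=\lambda$, version of that construction delivers them verbatim, which it does by the same reasoning). The one point worth flagging is that the successor $Y_1$ is \emph{not} chosen with $\vec A$ in mind — it comes from the fixed $\kappa^+$-antichain in the conditional hopeless ideal that was used to build the successors of $Y_0$ — yet it still decides $\vec A$, precisely because deciding $\vec A$ only requires $Y_1$'s model index to exceed the stage of $\vec N$ where $\{A_\xi\mid\xi<\kappa\}$ first appears, and those indices are cofinal in $\kappa^+$.
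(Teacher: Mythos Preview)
Your proof is correct and takes essentially the same approach as the paper's: both locate a node of $D$ below $B$, then climb in the tree to a node whose associated model index exceeds the stage at which $\vec A$ enters the internally approachable chain, and intersect with $B$. Your version is in fact a bit more careful than the paper's, which simply starts with $B\in D$ and leaves the passage from an arbitrary $B\in\mathcal{I}^+$ (via the $\subseteq^*$-density of $D$) and the verification that $\bar B\cap B\in\mathcal{I}^+$ implicit.
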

\begin{proof}
    Fix some $B\in D$ and a sequence $\vec A=\{A_\xi\subseteq\kappa\mid\xi<\kappa\}$ . Find a large enough index $\gamma<\kappa^+$ so that $\vec{A}\in N_\gamma$ and find $\bar B$ above $B$ in $D$ such that $\bar B$ was obtained by diagonalizing an $N_\alpha$-ultrafilter with $\alpha>\gamma$. Then $\bar B\subseteq^* B$ and $\bar B$ measures $\vec{A}$. Then $\bar B\cap B$ is as desired.
\end{proof}

With the condition of $\kappa$-measuring, it is easy to prove the converse of Theorem \ref{thm: dense tree measurable intro} (which improves \cite[Thm. 1.4]{MagForZem}):
\begin{theorem}\label{th:decisiveIdealWinningStrategyMeasurableGame}
Suppose that there is a $\kappa$-complete (uniform normal) $\kappa$-measuring ideal $\mathcal I$ on $\kappa$ with a $\delta$-closed dense tree $D$. Then the judge has a winning strategy in the game $G^\kappa_\delta(W,w,uf)$ ($G^\kappa_\delta(W,w,nuf)$). 
\end{theorem}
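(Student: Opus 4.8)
The plan is to adapt the construction of the strategy $\sigma^D$ from Theorem~\ref{thm: Partial converse} and from the proposition following Theorem~\ref{Thm:Ultrafilter Method}, replacing the $(\kappa^+,\infty)$-distributivity hypothesis by the $\kappa$-measuring property; the latter is exactly what allows the judge to decide an entire $\kappa$-algebra at once rather than merely a single set. Concretely, I would have the judge maintain along a run a $\subseteq_{\mathcal I}$-descending sequence $\{B_\xi\mid\xi<\delta\}$ of conditions in $D$ such that $B_\xi$ decides (mod $\mathcal I$) every set in the challenger's $\xi$-th algebra $\mathcal A_\xi$. When the challenger opens with $\mathcal A_0$, enumerate it as $\{A_\eta\mid\eta<\kappa\}$, apply $\kappa$-measuring to $(\kappa,\mathcal A_0)$ to get a positive $\bar B_0$ deciding $\mathcal A_0$, and use density of $D$ to fix $B_0\in D$ with $B_0\subseteq\bar B_0$; since $B_0\subseteq_{\mathcal I}\bar B_0$, $B_0$ still decides $\mathcal A_0$. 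The judge plays $U_0=\{A\in\mathcal A_0\mid B_0\subseteq_{\mathcal I}A\}$.

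At a stage $\gamma>0$, given the $\subseteq_{\mathcal I}$-descending sequence $\{B_\xi\mid\xi<\gamma\}\subseteq D$ built so far, use $\delta$-closure of $D$ (note $\gamma<\delta$) to fix $B'_\gamma\in D$ with $B'_\gamma\subseteq_{\mathcal I}B_\xi$ for all $\xi<\gamma$; then, given the challenger's move $\mathcal A_\gamma$, apply $\kappa$-measuring to the condition $B'_\gamma$ and the sequence enumerating $\mathcal A_\gamma$, followed by density, to obtain $B_\gamma\in D$ with $B_\gamma\subseteq B'_\gamma$ deciding $\mathcal A_\gamma$, and have the judge play $U_\gamma=\{A\in\mathcal A_\gamma\mid B_\gamma\subseteq_{\mathcal I}A\}$. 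Since the challenger plays a $\subseteq$-increasing sequence of algebras and the judge only needs to survive all stages $\xi<\delta$ (the weak winning condition), $\delta$-closure guarantees a legal move is always available, so the strategy is well-defined.

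It then remains to verify that each $U_\gamma$ is a $\kappa$-complete $\mathcal A_\gamma$-ultrafilter extending $\bigcup_{\xi<\gamma}U_\xi$. Ultrafilterhood and fineness/uniformity are immediate from $B_\gamma$ deciding $\mathcal A_\gamma$ and $B_\gamma\in\mathcal I^+$ together with $\mathcal I$ being a proper fine ideal. $\kappa$-completeness holds because if $\{A_i\mid i<\rho\}\in\mathcal A_\gamma$ with $\rho<\kappa$ and each $A_i\in U_\gamma$, then $\bigcap_i A_i\in\mathcal A_\gamma$ (the algebra being $\kappa$-complete) and $B_\gamma\setminus\bigcap_i A_i\subseteq\bigcup_i(B_\gamma\setminus A_i)\in\mathcal I$ by $\kappa$-completeness of $\mathcal I$, so $\bigcap_i A_i\in U_\gamma$. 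For the extension property, if $A\in U_\xi$ with $\xi<\gamma$, then $A\in\mathcal A_\xi\subseteq\mathcal A_\gamma$ and $B_\gamma\subseteq_{\mathcal I}B'_\gamma\subseteq_{\mathcal I}B_\xi\subseteq_{\mathcal I}A$, whence $A\in U_\gamma$. This gives the statement for $G^\kappa_\delta(W,w,uf)$.

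For the normal case, $\mathcal I$ is a uniform normal ideal and the challenger plays normal $\kappa$-algebras; I would run the identical construction and additionally check normality of each $U_\gamma$: if $\{A_\alpha\mid\alpha<\kappa\}$ is coded by a set of $\mathcal A_\gamma$ with all $A_\alpha\in U_\gamma$, then $\Delta_{\alpha<\kappa}A_\alpha\in\mathcal A_\gamma$ (the algebra being normal) and $B_\gamma\setminus\Delta_{\alpha<\kappa}A_\alpha\subseteq\nabla_{\alpha<\kappa}(B_\gamma\setminus A_\alpha)\in\mathcal I$ by normality of $\mathcal I$, so $\Delta_{\alpha<\kappa}A_\alpha\in U_\gamma$; the extension property then also shows $\bigcup_{\xi<\gamma}U_\xi$ is normal. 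The only genuinely delicate point is that $\kappa$-measuring (rather than decisiveness on single sets, which would only come from weaker distributivity hypotheses) is precisely what is needed so that one condition $B_\gamma\in D$ simultaneously decides the whole $\kappa$-sized algebra $\mathcal A_\gamma$; the remaining work is routine bookkeeping, the main care being to keep the conditions $\subseteq_{\mathcal I}$-descending through limit stages via $\delta$-closure.
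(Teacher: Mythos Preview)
Your proof is correct and takes essentially the same approach as the paper: maintain a $\subseteq_{\mathcal I}$-descending path through $D$ using $\kappa$-measuring plus density at successor steps and $\delta$-closure at limits, and have the judge play the ultrafilter determined by the current node. The only difference is cosmetic—the paper notes that $U_\gamma$ is automatically a (weak) ultrafilter because it is the restriction to $\mathcal A_\gamma$ of any $V$-generic for $\p_{\mathcal I}$ containing $B_\gamma$, whereas you verify $\kappa$-completeness and normality by hand; both are fine.
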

\begin{proof}
Suppose the challenger starts out by playing $M_0$. By $\kappa$-measuring and density, there is $B_0\in D$ measuring $P(\kappa)^{M_0}$. Let $U_0=\{A\in P(\kappa)^{M_0}\mid B_0\subseteq_{\mathcal I} A\}$. Since $U_0$ will be the restriction of any $V$-generic filter for $\p_{\mathcal I}$ containing $B_0$, it follows that $U$ is a weak $M_0$-ultrafilter. We have the judge play $U_0$. Now suppose inductively that we have chosen a path $\{B_\xi\mid\xi<\gamma\}$ through the tree $D$ such that $B_\xi$ decides $P(\kappa)^{M_\xi}$ for the $\xi$-th move $M_\xi$ of the challenger. Suppose the challenger plays $M_\gamma$. By $\kappa$-measuring, density, and $\delta$-closure, there is $B_\gamma$ above $\bigcup_{\xi<\gamma}B_\xi$ measuring $P(\kappa)^{M_\gamma}$ and we can let $U_\gamma$, consisting of $A\in P(\kappa)^{M_\gamma}$ such that $B_\gamma\subseteq_{\mathcal I}A$, be the response of the judge. Clearly, this is a winning strategy for the judge.
\end{proof}
The above strategy can also be used to give an answer to the following question:
\begin{question}[{\cite[Q.1]{MagForZem}}]\label{Question1}
Theorem \ref{thm: dense tree measurable intro} requires
the non-existence of saturated ideals on $\kappa$. Is this hypotheses necessary?
\end{question}
Recall that successor nodes of a node $X_0$ in the tree $D=T(\sigma)$ are constructed using the fact that the hopeless ideal $\mathcal{I}^+(R_{X_0},\sigma)$ is not $\kappa^+$-saturated, where $R_{X_0}$ is a partial run according to $\sigma$ ending in the judge's move $X_0$. Although $T(\sigma)$ is not dense in $\mathcal{I}^+(R_{X_0},\sigma)$, every antichain in $T(\sigma)$ remains an antichain in $\mathcal{I}^+(R_{X_0},\sigma)$. Thus, to run the construction, we don't actually need the assumption there are no $\kappa$-complete $\kappa^+$-saturated ideals at all, only that there is a strategy $\sigma$ for the judge in $G^\kappa_\delta(E^{\vec N},w,uf)$ such that the hopeless ideals associated with $\sigma$ are not $\kappa^+$-saturated. But we will show that this statement reverses because if there exists a $\kappa$-measuring $\kappa$-complete ideal with a $\delta$-closed dense tree, then the conditional hopeless ideals from the induced strategy are likewise not $\kappa^+$-saturated. Hence, in this reformation of the hypothesis, it becomes necessary.

\begin{theorem}\label{Thm: answer to question 1}
Let $\delta\leq \kappa$ be a regular cardinal. Suppose that $\mathcal{I}$ is a $\kappa$-measuring $\kappa$-complete  ideal on $\kappa$ and there is $D\subseteq \mathcal{I}^+$ such that:
\begin{enumerate}
\item $(D,\subseteq_\mathcal{I})$ is a tree of height $\delta$.
 \item $D$ is $\delta$-closed.
    \item $D$ is dense in $\mathcal{I}^+$.
   
\end{enumerate}
Then the judge has a winning strategy $\sigma$ in the game $G^\kappa_\delta(E^{\vec N},w,uf)$ such that for every partial run of the game $R$ played according to $\sigma$, the conditional hopeless ideal $\mathcal{I}(R,\sigma)$ is not $\kappa^+$-saturated.
\end{theorem}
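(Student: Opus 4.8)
The plan is to let $\sigma$ be precisely the strategy the judge obtains from the tree $D$ in the proof of Theorem~\ref{th:decisiveIdealWinningStrategyMeasurableGame} (translated from the Welch game to the equivalent extension game using $2^\kappa=\kappa^+$): when the challenger plays an index of a model $N_\alpha$, the judge replies with a node $B_\gamma\in D$ that properly tree-extends all of its earlier nodes and measures $P(\kappa)^{N_\alpha}$ — such a node exists by $\kappa$-measuring together with density of $D$, and at limit rounds one first uses $\delta$-closure (legitimately, since the round is $<\delta$) to obtain an upper bound of the branch built so far — and then the judge plays the weak $N_\alpha$-ultrafilter $U_\gamma=\{A\in P(\kappa)^{N_\alpha}\mid B_\gamma\subseteq_{\mathcal I}A\}$. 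As in Theorem~\ref{th:decisiveIdealWinningStrategyMeasurableGame} this is a winning strategy, and every node $\sigma$ ever plays sits at a level $<\delta$ of $D$ and properly tree-extends to a further node of $D$ (the judge's next reply against a sufficiently rich model); in particular no such node is an atom of $\p_{\mathcal I}$.

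Next I would fix a partial run $R$ of length $\gamma<\delta$ played according to $\sigma$, with judge nodes $\langle B_\xi\mid\xi<\gamma\rangle$. For each $\alpha<\kappa^+$ large enough that $N_\alpha$ end-extends every model occurring in $R$, set $B^\alpha:=\sigma(R\concat\langle N_\alpha\rangle)\in D$; it properly tree-extends each $B_\xi$ and measures $P(\kappa)^{N_\alpha}$. I would first record that $B^\alpha\in\mathcal I(R,\sigma)^+$: continue the run by having the challenger next play some $N_{\alpha'}\ni B^\alpha$ and then follow $\sigma$ to a full run, and note that whatever node $\sigma$ plays against $N_{\alpha'}$ is $\subseteq_{\mathcal I}B^\alpha$, so $B^\alpha$ belongs to the union ultrafilter of that run. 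I also note $\mathcal I\subseteq\mathcal I(\sigma)\subseteq\mathcal I(R,\sigma)$, since no ultrafilter of the form $\{A\mid B\subseteq_{\mathcal I}A\}$ with $B\in D$ contains a set from $\mathcal I$ (as $B\in\mathcal I^+$).

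The heart of the argument is to show $|\{B^\alpha\mid\alpha<\kappa^+\}|=\kappa^+$. If this set had size $\le\kappa$, then by regularity of $\kappa^+$ some value $B^*$ would equal $B^\alpha$ on a cofinal subset of $\kappa^+$; since $\vec N$ is internally approachable and every subset of $\kappa$ appears in some $N_\alpha$, we would get $\bigcup_\alpha P(\kappa)^{N_\alpha}=P(\kappa)$ and hence that $B^*$ measures every subset of $\kappa$, i.e. $\{A\mid B^*\subseteq_{\mathcal I}A\}$ is a $\kappa$-complete ultrafilter on $P(\kappa)$; this contradicts the fact that $B^*$ properly tree-extends to some $C\in D$, which splits $B^*$ into the two $\mathcal I$-positive pieces $B^*\cap C$ and $B^*\setminus C$. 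So $\{B^\alpha\mid\alpha<\kappa^+\}$ has size $\kappa^+$. Let $\bar D$ be its downward closure in the tree $D$; then $|\bar D|\ge\kappa^+$ while $\bar D$ has at most $\delta\le\kappa$ levels, so some level $L$ of $\bar D$ has size $\ge\kappa^+$. Finally $L$ witnesses that $\mathcal I(R,\sigma)$ is not $\kappa^+$-saturated: each $C\in L$ lies tree-below some $B^\alpha$, hence $C\supseteq_{\mathcal I}B^\alpha$ and so $C\in\mathcal I(R,\sigma)^+$; and distinct $C,C'\in L$ are tree-incomparable, which forces $C\cap C'\in\mathcal I$, since otherwise density of $D$ would give $E\in D$ with $E\subseteq_{\mathcal I}C\cap C'$, making $C$ and $C'$ both tree-predecessors of $E$ and therefore comparable. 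Thus $C\cap C'\in\mathcal I\subseteq\mathcal I(R,\sigma)$, so $L$ is a $\kappa^+$-sized antichain in $\mathcal I(R,\sigma)^+$.

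I expect the main obstacle to be this middle step — bounding from below the number of $\sigma$-responses $\{B^\alpha\}$ — together with the closely related point of making precise that the strategy built from $D$ only ever visits non-atomic, properly extendible nodes of $D$ (this is exactly where the hypothesis that $D$ is a tree of \emph{height} $\delta$, and not merely dense and $\delta$-closed, is essential). By contrast, the conversion of a wide level of $\bar D$ into an $\mathcal I(R,\sigma)$-antichain and the verification that $\mathcal I(R,\sigma)$ is genuinely a fine $\kappa$-complete ideal are routine, the latter being Lemma~\ref{lem:propertiesHopelessIdeal}.
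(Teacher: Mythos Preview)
Your overall strategy is close to the paper's, but the step you yourself flag as the ``main obstacle'' contains a genuine gap, and your proposed fix via the height-$\delta$ hypothesis does not work.

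The issue is the claim that every node $B^*$ your strategy plays has a proper tree-extension in $D$. Your justification---``the judge's next reply against a sufficiently rich model''---is circular: the strategy only produces a proper extension if one exists, and $\kappa$-measuring together with density of $D$ only gives you some $C\in D$ with $C\subseteq_{\mathcal I}B^*$ measuring the new model, which may well be $B^*$ itself. If $B^*$ happens to be an atom of $\p_{\mathcal I}$ (equivalently a leaf of $D$), no proper extension exists, your strategy is ill-defined past that point, and your contradiction in the $|\{B^\alpha\}|\le\kappa$ case evaporates. The hypothesis that $D$ has height $\delta$ says only that some branch reaches every level below $\delta$; it does not rule out short dead-end branches ending at an atom.

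The paper handles exactly this point with a separate diagonalization lemma: for a proper ideal on $\kappa$ containing all bounded sets, $\mathcal I^+$ cannot be generated by $\le\kappa$ sets. Applied to $\mathcal I(\kappa\setminus A)$ this gives $|D_A|=\kappa^+$ for every $A\in D$, so in particular $D$ has no leaves. The paper then goes further and refines $D$ so that every node has $\kappa^+$ immediate successors, each already measuring some $N_\alpha$; the antichain witnessing non-saturation of $\mathcal I(R,\sigma)$ is then simply the set of immediate successors of $A_R$. Once you plug in this lemma, your alternative endgame (show the $\sigma$-responses $\{B^\alpha\}$ have size $\kappa^+$, take their downward closure, and extract a large level) is a legitimate and slightly more direct variant that avoids the refinement step---but it does not avoid the lemma.
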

\begin{proof}
Let us start with some simple general lemmas.

\begin{lemma}\label{Lemma: generation of positive sets}
  If $\mathcal I$ is a proper ideal on $\kappa$ containing all the bounded sets, then $\mathcal I^+$ is not generated by less than $\kappa^+$-many sets. 
\end{lemma}
\begin{proof}
Suppose to the contrary that $\mathcal I$ is a proper ideal on $\kappa$ containing all the bounded sets and $\mathcal I^+$ is generated by the sets $\{ X_\alpha\mid \alpha<\kappa\}$. We can find (by a standard recursive diagonalization argument and using that $|\kappa\times\kappa|=\kappa$ to ensure that we go over each set $X_\alpha$ cofinally many times) $X$ such that for every $\alpha<\kappa$, $X_\alpha\not\subseteq^* X$ and $X_\alpha\not\subseteq^* \kappa\setminus X$. So neither $X$ nor $\kappa\setminus X$ are in $\mathcal I^+$, but then $\kappa\in \mathcal I$, contradicting that $\mathcal I$ is proper.
\end{proof}
Given a tree $T$, we will denote by $\mathcal L_\alpha(T)$ the $\alpha$-th level of $T$. Given a set $A\in D$, we let $D_A=\{B\in D\mid B\subseteq_{\mathcal I} A\}$, and note that $D_A$ is a tree. 
\begin{proposition}
    Suppose that there is an ideal $\mathcal I$ and a tree $D\subseteq \mathcal I^+$ satisfying $(1)$-$(3)$ as in the hypothesis of the theorem, then for every set $A\in D$ there is a level $\alpha<\delta$ such that $\mathcal{L}_\alpha(D_A)$ has size $\kappa^+$.
\end{proposition}
\begin{proof}
   Fix any set $A\in D$. Let $\mathcal I(\kappa\setminus A)$ be the ideal generated by $\mathcal I$ and $\kappa\setminus A$. Note that the tree $D_A$ is dense in $\mathcal I(\kappa\setminus A)^+$. Indeed, if $C\in\mathcal I(\kappa\setminus A)^+$, then $C\in \mathcal I^+$ and $C\subseteq_{\mathcal I} A$.  By density, there must be some $B\in D$ such that $B\subseteq_{\mathcal I} C$, and clearly  $B\in D_A$. By Lemma \ref{Lemma: generation of positive sets} applied to $\mathcal I(\kappa\setminus A)$, $|D_A|=\kappa^+$ and since the height of the tree is $\delta\leq \kappa$, there is a level of the tree $\alpha<\kappa$, such that $|\mathcal{L}_\alpha(D_A)|=\kappa^+$. \end{proof}

 Let us prove now Theorem \ref{Thm: answer to question 1}. Let $\delta$, $D$ and $\mathcal{I}$ be as in the theorem. 
\begin{corollary}
    There is a dense subtree $D'\subseteq D$ of height $\delta$ such that $D'$ satisfies $(1)$-$(3)$ from the hypothesis of the theorem and:
    \begin{enumerate}
        \item [(a)] each $A\in D'$ has $\kappa^+$-many immediate successors.
        \item [(b)] for each $\alpha<\kappa^+$, each $A\in D'$ has an immediate successor which measures $P(\kappa)^{N_\alpha}$.
    \end{enumerate} 
\end{corollary}
\begin{proof}
    We construct the tree inductively, at limit steps exploiting the closure of the original tree $D$. So given  $A\in D'$, let us define its immediate successors. By the previous lemma, there is a level $\alpha<\delta$ such that $\mathcal{L}_\alpha(D_A)$ has size $\kappa^+$. This clearly persists for levels $\beta\geq\alpha$. Since $\mathcal I$ is $\kappa$-measuring and $D$ is dense in $\mathcal I^+$, for every $\rho<\kappa^+$, there is $B_\rho\in D_A$ such that $B_\rho$ measures $P(\kappa)^{N_\rho}$. Let $\alpha(\rho)<\delta$ be the level of $B_\rho$. By the pigeonhole principle, there is $\alpha^*<\delta$ such that $\mathcal{L}_{\alpha^*}(D_A)$ contains measuring sets for cofinally many $N_\rho$ (and thus every $N_\rho$). Let us define the successors of $A$ in $D'$ to be $\mathcal{L}_{\alpha^*}(D_A)$. Note that such a tree will remain dense in $\mathcal{I}^+$ since every element in $D_A$ of level $<\alpha^*$ has an extension to $\mathcal{L}_{\alpha^*}(D_A)$. 
\end{proof}

Without loss of generality, let us  assume that already $D$ satisfies properties $(1)$-$(3)$ and $(a)$-$(b)$ above. 
We can then define a winning strategy $\sigma^D=\sigma$ for the judge in the game $G^\kappa_\delta(E^{\vec N},w,uf)$ as in the proof of Theorem~\ref{th:decisiveIdealWinningStrategyMeasurableGame}. Assume that $\mathcal{L}_0(D)=\{\kappa\}$.  We define the strategy $\sigma$ inductively. Along the way we also define auxiliary sets $A_R\in D$, for a run of the game $R$ played according to $\sigma$. At the beginning, $A_{\langle\ra}=\emptyset$. Suppose that the challenger plays $\beta_0<\kappa^+$. The strategy of the judge is to find a successor  $A_0$ of $A_{\langle\rangle}$ (minimal in a fixed well ordering of $H_\theta$), which measures $P(\kappa)^{N_{\beta_0+1}}$ (such a set $A$ exists by assumption $(b)$) and play the ultrafilter $U_0$ derived from $A_0$. Suppose that we have defined the strategy up to runs of stage $\alpha<\gamma$ and let $R=\langle \beta_0,U_0,\ldots,\beta_\xi,U_\xi,\ldots,\rangle$ be a run of the game of length $\gamma$. Associated with the run $R$ is a path  $\{A_\alpha\mid \alpha<\gamma\}$ through the tree of the sets deciding ultrafilters $U_\xi$.  Since the tree $D$ is $\delta$-closed, the sequence $\{A_\alpha\mid \alpha<\gamma\}$ has a $\subseteq_{\mathcal I}$-greatest lower bound in $D$, call it $A_R$ (note that if $\gamma=\alpha+1$, then $A_R=A_\alpha$). 
    Let $\beta_\gamma\geq \bigcup_{\alpha<\gamma}\beta_\alpha$ be the move of the challenger. The strategy for the judge is to choose  
 a successor $A_\gamma$ of $A_R$ (minimal in a fixed well ordering of $H_\theta$), which measures $P(\kappa)^{N_{\beta_\gamma+1}}$. This defines the strategy $\sigma$, which is clearly winning in the game $G^\kappa_\gamma(E^{\vec N},w,uf)$. Also note that $\mathcal{I}\subseteq \mathcal{I}(\sigma)$. This is because any ultrafilter appearing in a run of the game is generated by a set $A\in D\subseteq\mathcal{I}^+$. Hence $\mathcal{I}(\sigma)^+\subseteq \mathcal{I}^+$.
   To finish the proof of the theorem, let us prove that the conditional hopeless ideals $\mathcal{I}(R,\sigma)$ are non-$\kappa^+$-saturated for every run of the game $R$. 
\begin{proposition}
    Suppose that $R$ is a partial run in the game played according to $\sigma$. Then $\mathcal{I}(R,\sigma)$ is not $\kappa^+$-saturated. 
\end{proposition}
\begin{proof}
Fix a run $R$ of the game. The run $R$ corresponds to a path through the tree $D$ and so we can choose an element $X\in D$ above this path. Each of the $\kappa^+$-many successors of $X$ is an element of $\mathcal I(R,\sigma)$ and they are incompatible modulo $\mathcal I$ and hence also modulo $\mathcal I(R,\sigma)$.

\end{proof}
This completes the proof of the theorem.
\end{proof}
As noted in the proof of Theorem~\ref{Thm: answer to question 1}, for any tree $D$ dense in $\mathcal{I}^+$ as in the theorem, we have $\mathcal{I}\subseteq\mathcal{I}(\sigma)$.
Q.~2 from \cite{MagForZem} asks whether more can be said about the relation between $\mathcal I$ and $\mathcal I(\sigma)$—specifically, whether they are equal.
  We will show that equality can be arranged, but depends on the choice of the dense tree $D$. 
To see this, consider the subtree $D^*\subseteq D$ consisting of the sets $A_R$ where $R$ is a partial run of the game $G^\kappa_\delta(E^{\vec N},w,uf)$  played according to $\sigma$. 
\begin{proposition}
    $\,$
\begin{enumerate}
\item $D^*$ is dense in $\mathcal{I}(\sigma)$.
    \item $D^*$ satisfies $(1)$-$(3)$ and $(a)$-$(b)$.
    \item $D^*$ is dense in $\mathcal{I}$ if and only if $D=D^*$. \item $\sigma^{D^*}=\sigma$.
    \end{enumerate}
\end{proposition}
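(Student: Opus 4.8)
The plan is to verify the four clauses in turn, leaning heavily on the structure of the strategy $\sigma=\sigma^D$ and its auxiliary sets $A_R$, which by construction generate the ultrafilters played by the judge. Throughout I will use the fact established in the proof of Theorem~\ref{Thm: answer to question 1} that $\mathcal I\subseteq\mathcal I(\sigma)$ and that for a partial run $R$ played according to $\sigma$, the associated set $A_R\in D$ is precisely the $\subseteq_{\mathcal I}$-greatest lower bound of the path through $D$ that $R$ traces out, and that the judge's move $U_\xi$ at stage $\xi$ is the ultrafilter $\{A\in P(\kappa)^{N_{\beta_\xi+1}}\mid A_\xi\subseteq_{\mathcal I}A\}$ where $A_\xi=A_{R\restrict\xi+1}$.

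For (1): given $B\in\mathcal I(\sigma)^+$, by definition there is a run $R$ according to $\sigma$ whose union ultrafilter $U$ contains $B$; then at some finite stage $\xi$ the judge has already played $B\in U_\xi$, which forces $A_\xi\subseteq_{\mathcal I}B$, and $A_\xi=A_{R\restrict\xi+1}\in D^*$. So $D^*$ is dense in $\mathcal I(\sigma)^+$. For (2): properties (1)--(3) (tree under $\subseteq_{\mathcal I}$, $\delta$-closed, dense) transfer from $D$ essentially because $D^*$ is a downward-closed-enough subtree — one checks that the $A_R$ for runs $R$ of a fixed length $\gamma$ form the $\gamma$-th level, that at limits $A_R$ is the glb computed already inside $D$ (hence inside $D^*$ by $\delta$-closure of the run-building process), and density is clause (1) together with $\mathcal I(\sigma)^+\subseteq\mathcal I^+$; properties (a)--(b) hold because for each node $A_R$ and each $\beta<\kappa^+$ the challenger may play $\beta$ at the next stage and the strategy responds with a successor measuring $P(\kappa)^{N_{\beta+1}}$, and distinct large enough challenger moves $\beta<\beta'$ yield distinct successors (they measure different $P(\kappa)^{N_{\cdot}}$, or one appeals to the $\kappa^+$-many incompatible successors already present in $D$ from clause (a) for $D$). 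For (3): if $D=D^*$ then trivially $D^*$ is dense in $\mathcal I^+=\mathcal I(\sigma)^+$... wait — this is the crux and the main obstacle, see below. For (4): $\sigma^{D^*}$ is defined by the same recipe as $\sigma^D$ but searching for successors inside $D^*$; since every successor used by $\sigma=\sigma^D$ is of the form $A_R$ and hence already lies in $D^*$, and the ``minimal in a fixed well-ordering of $H_\theta$'' selection picks the same set whether we range over $\mathrm{succ}_D(A_R)$ or $\mathrm{succ}_{D^*}(A_R)$ (because the chosen one is in the smaller set), an easy induction on the length of runs shows $\sigma^{D^*}$ and $\sigma$ make identical moves against every challenger play, i.e. $\sigma^{D^*}=\sigma$.

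The step I expect to be the main obstacle is clause (3), and in particular the forward direction of the ``if and only if'': showing that if $D^*$ is dense in $\mathcal I$ (equivalently in $\mathcal I^+$, i.e. $\mathcal I(\sigma)^+=\mathcal I^+$, i.e. $\mathcal I=\mathcal I(\sigma)$) then $D=D^*$. The subtle point is that $D^*\subseteq D$ could a priori be a proper dense subtree; one must argue that the construction of $\sigma^D$ actually visits \emph{every} node of $D$ as some $A_R$. This should follow from property (b) of $D$ (refined in the Corollary inside the proof of Theorem~\ref{Thm: answer to question 1}, which we are entitled to assume $D$ already enjoys): given any $A\in D$ at level $\gamma$, one builds by recursion on $\gamma$ a partial run $R$ according to $\sigma$ with $A_R=A$, at each successor step choosing the challenger move $\beta$ so that the minimal-successor rule is \emph{forced} to return the next node on the path down to $A$ — this is possible precisely because each node has a successor measuring $P(\kappa)^{N_{\beta+1}}$ for every $\beta$, and by choosing $\beta$ large and by a bookkeeping argument one can steer the minimum. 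If $D^*$ is dense in $\mathcal I^+$ this steering argument will exhaust $D$; conversely if some $A\in D\setminus D^*$ then $A\notin\mathcal I(\sigma)^+$ while $A\in\mathcal I^+$ — wait, that is not quite immediate either since $D^*$ being dense in $\mathcal I(\sigma)^+$ does not instantly say $D\setminus D^*\subseteq\mathcal I(\sigma)$. So the honest argument for the reverse direction is: assume $D\neq D^*$, pick $A\in D\setminus D^*$; then no run according to $\sigma$ has $A$ among its auxiliary sets, so I claim $A\in\mathcal I(\sigma)$ — because any $B\subseteq_{\mathcal I}A$ in $\mathcal I(\sigma)^+$ would, by density of $D^*$ in $\mathcal I(\sigma)^+$, have some $A_R\subseteq_{\mathcal I}B\subseteq_{\mathcal I}A$, placing $A_R$ below $A$ in the tree $D$, and then tracing $R$ one level higher one finds a node strictly below $A$ that $\sigma$ actually played, contradicting that the successors of each $A_R$ are exactly the ones $\sigma$ can produce unless $A$ itself equals some $A_{R'}$. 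Making this last contradiction airtight — that the path from $A_R$ up cannot ``skip over'' $A$ — is the delicate bookkeeping, and I would isolate it as a short lemma about how paths in $D$ below a fixed $A_R$ interact with the linear order of $D$ below $A$.
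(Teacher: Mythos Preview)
Your treatment of clauses (1), (2), and (4) is essentially the paper's argument and is fine.

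For clause (3), you have overcomplicated matters and missed the one-line argument. The ``steering the minimum'' approach you sketch first is both unnecessary and dubious: there is no reason the challenger can force the minimal-in-$H_\theta$ successor to be any prescribed node of $D$. Your second (contrapositive) attempt is in the right direction, but the step you flag as ``delicate bookkeeping'' is in fact immediate. Recall that, after the refinement in the Corollary, at every successor stage the strategy $\sigma$ picks an \emph{immediate} successor in $D$ of the current node, and at limit stages $A_R$ is the greatest lower bound in $D$ of the path so far. Consequently, for any partial run $R$, the set of predecessors of $A_R$ in the tree $(D,\subseteq_{\mathcal I})$ is exactly $\{A_{R\restrict\xi}:\xi<\mathrm{lh}(R)\}\subseteq D^*$; the path never skips a level. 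Now the proof of (3) is: suppose $D^*$ is dense in $\mathcal I^+$ and $B\in D$. Since $B\in\mathcal I^+$, there is $C=A_R\in D^*$ with $C\subseteq_{\mathcal I}B$. As both lie in the tree $D$, $B$ is a predecessor of $A_R$, hence $B=A_{R\restrict\xi}\in D^*$ for some $\xi$. Thus $D\subseteq D^*$, so $D=D^*$. The converse is trivial since $D$ is dense in $\mathcal I^+$.

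In short, the ``path cannot skip over $A$'' observation is not a lemma to be isolated but a direct consequence of the strategy moving by immediate successors in $D$; once you see this, clause (3) is shorter than clause (1).
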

\begin{proof}
To see $(1)$, Given $X\in \mathcal{I}(\sigma)^+$, there is a run $R$ played according to $\sigma$ such that $X$ is in the ultrafilter determined by $R$. Hence one of the sets appearing in $R$ will be $\subseteq_I$-below $X$. By the definition of $A_R\in D^*$, $A_R\subseteq_I X$. $(2)$ is a routine verification.    
To see $(3)$, if $B\in D\setminus D^*$, suppose towards contradiction that some $C\in D^*$ would be a subset of $B$, then $C=A_R$ for some run $R$. Since $B\in D$, $C$ is below $B$ in the tree order and therefore $B$ has to be of the form $A_{R\restriction\xi}$.

To see $(4)$, at stage  $0$ of the game, suppose that the challenger played $\beta_0<\kappa^+$. Then the judge finds $A$ on the first level of the tree which decides $P(\kappa)^{N_{\beta_0+1}}$ and minimal with that property in the fixed well-ordering of $H_\theta$. Then $A=A_{R}$ where $R=\langle \beta_0\rangle$. Hence $A\in D^*$ and also has to be minimal. It follows that $\sigma^{D^*}(R)=A=\sigma^D(R)$.  Suppose that we have proven that $\sigma^{D^*}$ and $\sigma^{D}$ agree on up to runs of stage $\alpha<\gamma$ and let $R=\langle \beta_0,A_0,\ldots,\beta_\xi,A_\xi,\ldots,\rangle$ be a run of the game of length $\gamma$, which by our inductive assumption is the same for $\sigma^{D}$ and $\sigma^{D^*}$. Since the tree is $\gamma$-closed, the sequence $\{A_\alpha\mid \alpha<\gamma\}$ has a $\subseteq_I$-greatest lower bound in $D$, which by definition of $D^*$ also belongs to $D^*$, call it $A_R$. 
    Let $\beta_\gamma$ be the move of the challenger. Let $A_\gamma$ be a successor of $A_R$ minimal in the fixed well order of $H_\theta$ which decides $P(\kappa)^{N_{\beta_{\gamma+1}}}$ and therefore this is the same choice that $\sigma^{D^*}$ made. This shows that the strategy $\sigma^{D}(R)=\sigma^{D^*}(R)$. 
Finally, we have that $\mathcal{I}(\sigma^D)=\mathcal{I}(\sigma^{D^*})$. 

\end{proof}
\begin{corollary}
    Let $D$ be a dense subtree of $\mathcal{I}^+$ satisfying $(1)$-$(3)$ and $(a)$-$(b)$. Then $\mathcal{I}(\sigma)=\mathcal{I}$ if and only if $D=D^*$.
\end{corollary}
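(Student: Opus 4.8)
The plan is to read the corollary off the four parts of the preceding Proposition, together with two facts already in hand: that $\mathcal I\subseteq\mathcal I(\sigma)$ (equivalently $\mathcal I(\sigma)^+\subseteq\mathcal I^+$), noted in the proof of Theorem~\ref{Thm: answer to question 1} because every ultrafilter occurring in a run of the game is generated by a set in $D\subseteq\mathcal I^+$, and that $\mathcal I(\sigma)$ is an ideal, so $\mathcal I(\sigma)^+$ is upward closed (cf.\ Lemma~\ref{lem:propertiesHopelessIdeal}). Throughout, $\sigma=\sigma^D$, and $D^*$ denotes the subtree of $D$ consisting of the sets $A_R$ for partial runs $R$ played according to $\sigma$; recall from the Proposition that $D^*$ is dense in $\mathcal I(\sigma)^+$ and that every element of $D^*$, being the set that generates the ultrafilter along its run, lies in $\mathcal I(\sigma)^+$.

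For the forward direction I would argue: assume $\mathcal I(\sigma)=\mathcal I$. By part (1) of the Proposition, $D^*$ is dense in $\mathcal I(\sigma)^+=\mathcal I^+$. Part (3) of the Proposition states precisely that $D^*$ dense in $\mathcal I$ implies $D=D^*$, which is the desired conclusion.

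For the backward direction I would argue: assume $D=D^*$. Since $\mathcal I(\sigma)^+\subseteq\mathcal I^+$ already, it suffices to prove $\mathcal I^+\subseteq\mathcal I(\sigma)^+$. Fix $X\in\mathcal I^+$. As $D$ is dense in $\mathcal I^+$, choose $B\in D=D^*$ with $B\subseteq_{\mathcal I}X$, i.e.\ $B\setminus X\in\mathcal I$. Since $B\in D^*\subseteq\mathcal I(\sigma)^+$ and $B\setminus X\in\mathcal I\subseteq\mathcal I(\sigma)$, we get $B\cap X=B\setminus(B\setminus X)\in\mathcal I(\sigma)^+$; and since $B\cap X\subseteq X$ and $\mathcal I(\sigma)^+$ is upward closed, $X\in\mathcal I(\sigma)^+$. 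Hence $\mathcal I(\sigma)^+=\mathcal I^+$, so $\mathcal I(\sigma)=\mathcal I$.

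I expect no serious obstacle: the entire combinatorial content sits in the preceding Proposition (especially the density statement (1) and the dichotomy (3)), and the only mildly delicate point in assembling the corollary is to remember that the backward direction genuinely needs all three of $\mathcal I\subseteq\mathcal I(\sigma)$, the density of $D^*$ in $\mathcal I(\sigma)^+$, and the upward closure of $\mathcal I(\sigma)^+$; the identity $\sigma^{D^*}=\sigma$ of part (4) is what makes the operation $D\mapsto D^*$ coherent but is not itself invoked in the proof of this corollary.
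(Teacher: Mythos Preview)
Your proof is correct and is precisely the argument the paper intends: the corollary is stated without proof immediately after the four-part Proposition, and you have spelled out exactly how it follows from parts (1) and (3) together with the already-noted inclusion $\mathcal I\subseteq\mathcal I(\sigma)$. There is nothing to add.
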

\section{Open problems}\label{sec:Problems}
We collect here the questions raised along the paper and several others.
\begin{question}
Do nearly $\lambda$-supercompact cardinals have a characterization in terms of the existence of generic elementary embeddings of $V$?
\end{question}
\begin{question}
Can we separate generically $\lambda$-supercompact with wa cardinals from generically $\lambda$-supercompact with wa for sets cardinals?
\end{question}
\begin{question}
    Are $\lambda$-ineffable cardinal $\Pi^1_2$-$\lambda$-indescribable?
\end{question}
In \cite{abe}, Abe proves this for a slightly stronger version of $\lambda$-ineffability, which does not seem to be equivalent to Jech's definition. We do not know whether the two definitions coincide. We also conjecture that the judge having a winning strategy in the game $G_2(\kappa,\lambda)$ implies that $\kappa$ is $\lambda$-ineffable in Abe's sense.  
\begin{question}
If $\kappa$ is generically $\lambda$-supercompact for sets with wa, does the judge have a winning strategy in the game $G_\omega(\kappa,\lambda,\theta)$ for every regular $\theta\geq\lambda^+$?
\end{question}
\begin{question}
Are the following equiconsistent?
\begin{enumerate}
\item There is a generically $\lambda$-supercompact with wa cardinal $\kappa$.
\item  For some regular cardinal $\theta$, in a set-forcing extension, there is an elementary embedding $j:H_\theta\to M$ with $\crit(j)=\kappa$, $j\image\lambda\in M$, $j(\kappa)>\lambda$, and $M$ transitive.
\item For some regular cardinal $\theta$, in a set-forcing extension, there is an elementary embedding $j:H_\theta\to M$ with $\crit(j)=\kappa$, $j\image\lambda\in M$, $j(\kappa)>\lambda$, $M$ transitive, and $M\subseteq V$.
\end{enumerate}
\end{question}
\begin{question}
If $\kappa$ is generically $\lambda$-supercompact with wa, does the judge have a winning strategy in the game $sG_\omega(\kappa,\lambda,\theta)$ for every $\theta$?
\end{question}
 \begin{question}
Is it true for every $\delta$, that if the judge has a  winning strategy in the game $G^\kappa_\delta(W,s,uf)$, then the judge has a winning strategy in the game $G^\kappa_\delta(W,s,p)$?
 \end{question}
\begin{question}\label{question: analog set game}
    Is there an equivalent version of the two-cardinal games where we play sets instead of ultrafilters? 
\end{question}
A natural approach would be to replace the $\subseteq^*$-order by inclusion modulo the filter generated by the sets $\{x\in P_\kappa(\lambda)\mid x_0\prec x\}$. It is unclear if the translation between the filter games and the games where we play set from \cite{MagForZem} generalizes to the two cardinal settings.
\newpage
\begin{center}
\begin{figure}[H]
\centering
    
    \adjustbox{scale=0.8,center}{
\begin{tikzcd}
	&& {\lambda\text{-supercompact}} & {\text{w.s in } G_{\lambda^+}(\kappa,\lambda)} \\
	{\text{gen. }\lambda\text{-s.c. with wa by }\omega_1\text{-closed}} &&& {\text{w.s in } G_{\omega_1}(\kappa,\lambda,)} \\
	{\text{gen. }\lambda\text{-s.c. with wa and }\omega_1-\text{iterability}} &&& {\text{w.s in } s G_{\omega}(\kappa,\lambda)} \\
	{\text{gen. }\lambda\text{-s.c. with wa for sets}} &&& {\text{w.s in } G_\omega(\kappa,\lambda,\theta)} \\
	{\text{al. gen. }\lambda\text{-s.c. with wa}} && {\lambda-\text{completely ineffable}} & {\text{w.s in } wG_\omega(\kappa,\lambda)} \\
	& {\lambda-\text{totally ind.}} \\
	& {\lambda-\Pi^1_{2n}-ind.} \\
	& {\lambda-\Pi^1_{2n-1}-ind.} && {\text{w.s in }G_n(\kappa,\lambda) } \\
	& {\lambda-\Pi^1_4-ind.} \\
	& {\lambda-\Pi^1_3-ind.} && {\text{w.s in } G_2(\kappa,\lambda)} \\
	& {\lambda-\Pi^1_2-ind.} & {\lambda\text{-ineffable}} \\
	& {\lambda-\Pi^1_1-ind.} & {\text{nearly }\lambda\text{-supercompact}} & {\text{w.s in } G_1(\kappa,\lambda)} \\
	{\text{al. gen. }\lambda\text{-st. com. with wa}} && {\text{nearly }\lambda\text{-strongly compact}} & {\text{w.s. in } G^*_1(\kappa,\lambda)}
	\arrow[from=1-3, to=2-1]
	\arrow[dashed, no head, from=1-4, to=1-3]
	\arrow[dashed, no head, from=2-1, to=2-4]
	\arrow[from=3-4, to=3-1]
	\arrow[from=4-4, to=4-1]
	\arrow[dashed, no head, from=5-1, to=5-3]
	\arrow[dashed, no head, from=5-3, to=5-4]
	\arrow[from=5-3, to=6-2]
	\arrow[from=7-2, to=8-4]
	\arrow[from=8-4, to=8-2]
	\arrow[from=9-2, to=10-4]
	\arrow[from=10-2, to=11-3]
	\arrow[from=10-4, to=10-2]
	\arrow[from=11-3, to=12-3]
	\arrow[dashed, no head, from=12-2, to=12-3]
	\arrow[dashed, no head, from=12-4, to=12-3]
	\arrow[dashed, no head, from=13-1, to=13-3]
	\arrow[dashed, no head, from=13-3, to=13-4]
\end{tikzcd}}  \caption{Implication table}
\end{figure}
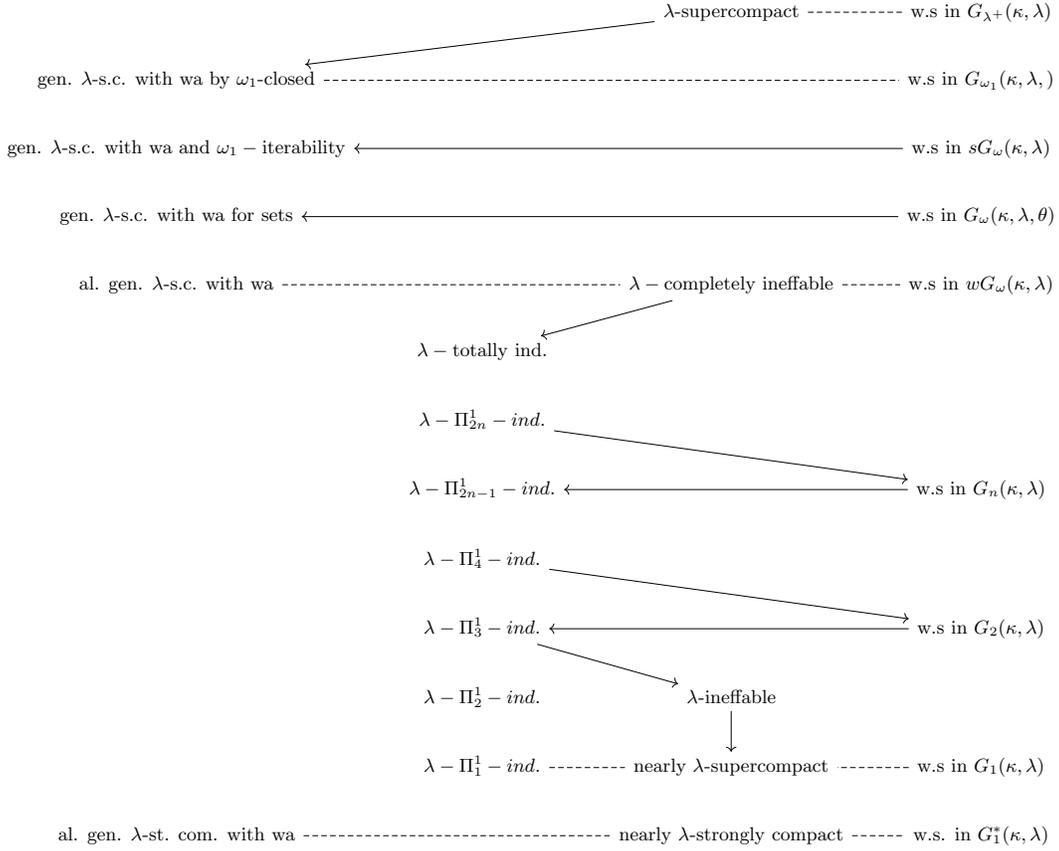
\end{center}

\bibliographystyle{amsplain}
\bibliography{games}
\end{document}